\newcommand{\nir}[1]{{\color{red}{Nir: #1}}}
\newtheorem{theorem}{Theorem}[section]
\newtheorem{proposition}[theorem]{Proposition}
\newtheorem{corollary}[theorem]{Corollary}
\newtheorem{lemma}[theorem]{Lemma}
\newtheorem{assumption}[theorem]{Assumption}
\newtheorem{setting}[theorem]{Setting}
\newtheorem*{lemma*}{Lemma}
\newtheorem{definition}[theorem]{Definition}
\newtheorem{notation}[theorem]{Notation}
\newtheorem{remark}[theorem]{Remark}
\newtheorem{conjecture}[theorem]{Conjecture}
\newtheorem{claim}[theorem]{Claim}
\def\N{\Bbb N}
\def\Q{\Bbb Q}
\def\Z{\Bbb Z}
\def\Gr{G}
\DeclareMathOperator{\Aut}{Aut}
\DeclareMathOperator{\Cent}{Cent}
\DeclareMathOperator{\disc}{disc}
\DeclareMathOperator{\dist}{dist}
\DeclareMathOperator{\ccl}{gcl}
\DeclareMathOperator{\gcl}{gcl}
\DeclareMathOperator{\id}{id}
\DeclareMathOperator{\Mat}{Mat}
\DeclareMathOperator{\SL}{SL}
\DeclareMathOperator{\SO}{SO}
\DeclareMathOperator{\Span}{Span}
\DeclareMathOperator{\Spec}{Spec}
\DeclareMathOperator{\Spin}{Spin}
\DeclareMathOperator{\val}{val}
\DeclareMathOperator{\Th}{Th}
\DeclareMathOperator{\supp}{supp}
\DeclareMathOperator{\Ad}{Ad}
\DeclareMathOperator{\Id}{Id}
\DeclareMathOperator{\GL}{GL}
\DeclareMathOperator{\Stab}{Stab}
\DeclareMathOperator{\rank}{rank}
\DeclareMathOperator{\PSL}{PSL}
\DeclareMathOperator{\Lie}{Lie}
\newcounter{my_counter}
\title{On the model theory of higher rank arithmetic groups}
\author{Nir Avni and Chen Meiri}
\date{\today}
\begin{document}

\maketitle
\begin{abstract} 
Let $\Gamma$ be a centerless irreducible higher rank arithmetic lattice in characteristic zero. We prove that if $\Gamma$ is either non-uniform or is uniform of orthogonal type and dimension at least 9, then $\Gamma$ is bi-interpretable with the ring $\mathbb{Z}$ of integers. It follows that the first order theory of $\Gamma$ is undecidable, that all finitely generated subgroups of $\Gamma$ are definable, and that $\Gamma$ is characterized by a single first order sentence among all finitely generated groups.  
 \end{abstract}

\section{Introduction}\label{sec:intro}

In this paper, we continue the study, initiated in \cite{ALM}, of the model theory of higher rank arithmetic groups. One of the  central themes in the study of arithmetic groups is the contrast between arithmetic groups of $S$-rank 1 and arithmetic groups of $S$-rank bigger than 1. Examples of such dichotomy are Margulis's Normal Subgroup Theorem, the Congruence Subgroup Property, and, to lesser extent, superrigidity. The results of this paper, together with results of Sela (\cite{Sel,Sel2}) and Kharlampovich--Myasnikov (\cite{KM,KM2}) for hyperbolic groups (which include, in particular, free groups and uniform rank one orthogonal groups), show that there is also a sharp contrast between the model theories of rank one arithmetic groups and higher rank arithmetic groups.

This paper focuses on higher rank non-uniform arithmetic groups and uniform arithmetic groups of orthogonal type. We show that the model theories of these groups are closely related to the model theory of the ring of integers. More precisely, we show that such groups are bi-interpretable with the ring $\mathbb{Z}$. The notion of bi-interpretability is defined in \S\ref{sec:logic}; in category-theoretic terminology, two structures are bi-interpretable when their categories of imaginaries are equivalent as categories over the category of sets, see Remark \ref{remark:nir} for the precise statement.\\

The setup is the following:

\begin{setting} \label{nota:Gamma} $K$ is a number field, $S$ is a finite set of places containing all archimedean ones, $A$ is the ring of $S$-integers in $K$, $G$ is a connected group scheme over $A$ such that $G_K$ is connected, simply connected, absolutely simple, $\rank_S G \geq 2$ and $G$ satisfies one of the following:  \begin{enumerate}
\item $G_K$ is isotropic.
\item $G=\Spin_q$, where $q:A^n \rightarrow A$ is a regular quadratic form on $A^n$ and there is a place $w\in S$ such that $\rank_w G \geq 2$ (note that this rank is the Witt index, denoted by $i_q(K_w^n)$, of $q$ on $K_w^n$).
\end{enumerate} 
Finally, $\Gamma$ is a centerless congruence subgroup of $G(A)$. 
\end{setting}

For a specific example of a uniform lattice that satisfies the assumptions in Setting \ref{nota:Gamma}, let $D$ to be a square-free positive integer, let $A$ to be the ring of integers of $\Q(\sqrt{D})$, let $q$ be the quadratic from $q(\vec{x}) = x_1^2+\ldots+x_7^n-\sqrt{D}x_8^2-\sqrt{D}x_9^2$, and let $\Gamma$ be the $\mathfrak{p}$th congruence subgroup of $\SO_q(A)$, for some non-dyadic $\mathfrak{p} \triangleleft A$.

The main result of this paper is:

\begin{theorem} \label{thm:main.biint} Under Setting \ref{nota:Gamma}, assume further that $n\ge 9$ if $\Gr=\Spin_q$. The group ${\Gamma}$ is bi-interpretable with the ring $\mathbb{Z}$.
\end{theorem}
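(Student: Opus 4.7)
The plan is to prove bi-interpretability by constructing interpretations in both directions and then verifying the rigidity of their composition. The direction ``$\Gamma$ is interpretable in $\mathbb{Z}$'' is essentially formal: $A$ is a finitely generated $\mathbb{Z}$-algebra and hence interpretable in $\mathbb{Z}$; the group scheme $G$ over $A$ makes $G(A)$ a definable subgroup of $\GL_N(A)$ for some $N$; and the congruence subgroup $\Gamma$ is the definable preimage in $G(A)$ of a specified subgroup of $G(A/\mathfrak{a})$. All the substantive work lies in the opposite direction and in the final rigidity step.

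For interpreting $\mathbb{Z}$ in $\Gamma$ I would proceed in three stages: first define a distinguished abelian subgroup $V\le\Gamma$ carrying a natural $A$-module structure; then recover scalar multiplication by $A$ from the definable conjugation action of a Cartan-type subgroup on $V$; and finally carve $\mathbb{Z}$ out of the resulting definable copy of $A$ using a Julia Robinson / Rumely / Shlapentokh style arithmetic formula for the $S$-integers of a number field. In the isotropic setting (which covers all non-uniform $\Gamma$ in the Setting), $V$ is best taken to be $\Gamma\cap U_\alpha$ for a $K$-root subgroup $U_\alpha$ of $G$; its group-theoretic definability inside $\Gamma$ follows by centralizer and iterated-commutator arguments of the type developed in \cite{ALM}, using that the hypothesis $\rank_S G\ge 2$ supplies enough commuting roots to single out $U_\alpha$ uniquely. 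The scalar action is witnessed by conjugation by the definable subgroup $\Gamma\cap T(A)$, with $T$ a maximal $K$-split torus normalizing $U_\alpha$.

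The main novelty of the theorem is the uniform orthogonal case, in which $\Gamma$ contains no nontrivial unipotents and the root-subgroup strategy fails. Here one exploits the assumption $i_q(K_w^n)\ge 2$ for some $w\in S$: the form $q$ then contains two orthogonal hyperbolic planes over $K_w$, giving rise to $\SL_2$-type subgroups of $G_{K_w}$ whose intersections with $\Gamma$ are higher-rank $S$-arithmetic lattices with nontrivial torus parts (by Dirichlet's unit theorem applied to the stabilizers of the relevant isotropic flags). The subgroup $V$ is produced from such a torus and is recognized inside $\Gamma$ as the centralizer of a carefully chosen commuting system of involutions; comparison of different such systems, obtained by varying the isotropic flag, then encodes both the additive and the multiplicative structure on $A$. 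The dimension bound $n\ge 9$ enters because it guarantees that the anisotropic complement of $q$ leaves enough room for these varying flags, their stabilizers, and the commutation relations between them to coexist first-order definably.

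The final step upgrades mutual interpretability to bi-interpretability by checking that the two composed interpretations are $\Gamma$- and $\mathbb{Z}$-definable isomorphisms. This is a rigidity statement whose main input is that every abstract automorphism of $\Gamma$ is virtually algebraic --- a consequence of Margulis superrigidity together with the congruence subgroup property --- so that any copy of $\Gamma$ recovered from the interpreted $A$ must coincide with $\Gamma$ up to an inner (hence definable) automorphism. I expect the principal obstacle of the whole proof to be the uniform orthogonal case: both the construction of $V$ and the rigidity verification have to be carried out with no access to unipotents, and the hypotheses ``$n\ge 9$'' and ``$i_q(K_w^n)\ge 2$'' appear calibrated precisely so that these two steps simultaneously go through.
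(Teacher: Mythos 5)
Your overall architecture (interpret in both directions, then verify the composed interpretations are trivial) matches the paper's, and your first direction --- interpreting $\Gamma$ in $\mathbb{Z}$ via Gödel definability of recursively enumerable sets --- is exactly what the paper does via Theorem \ref{thm:Godel} and Lemma \ref{lemma:Godel2}.

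For the interpretation of $\mathbb{Z}$ in $\Gamma$, your root-subgroup-and-torus plan is the approach of Segal--Tent \cite{ST} and Sohrabi--Myasnikov \cite{MM}, which the paper explicitly sets out to avoid (see the remark at the end of \S\ref{sec:intro}): the paper instead interprets $\mathbb{Z}$ as a cyclic group $\langle\alpha\rangle$ and recovers multiplication from divisibility via Robinson's $(\mathbb{Z},+,\mid)$ result, with the definability of $\langle\alpha\rangle$ coming from a finiteness theorem of Noskov (Theorem \ref{thm:Noskov}) applied inside a definable copy of $\Spin_f$, $f$ a ternary form. Both routes likely work for non-uniform $\Gamma$, but the paper's route is the one that also handles the uniform orthogonal case and more general rings (Theorem \ref{thm:app}), whereas your sketch of the orthogonal case (``commuting systems of involutions,'' ``varying isotropic flags'') is too vague to evaluate; the paper's replacement for the unavailable unipotents is the definable subgroup $\Lambda$ fixing a 3-dimensional subspace pointwise up to sign, together with the effective Kneser theorem (Theorem \ref{thm:effective_kneser}) to get open-orbit control.

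The genuine gap is your final rigidity step. Margulis superrigidity (plus CSP) does tell you that abstract automorphisms of $\Gamma$ are virtually algebraic, but that is not what is needed: the composed interpretation $\mathscr{H}=\mathscr{E}\circ\mathscr{R}$ produces an \emph{imaginary} $H$ in $\Gamma$ with a definable group structure and a bijection $h:H\to\Gamma$, and the task is to show that this specific bijection is \emph{definable}, not to classify the abstract isomorphisms $H\cong\Gamma$. Superrigidity gives no first-order handle on $h$, and ``inner hence definable'' doesn't apply because $h$ is not a self-map of $\Gamma$. The paper's argument at this point is unavoidably quantitative: one shows $h^{-1}$ is definable on a bounded product $D$ of cyclic subgroups $\langle\beta_i\alpha\beta_i^{-1}\rangle$ (using that $h^{-1}|_{\langle\alpha\rangle}$ is definable by construction), then invokes Pyber--Szab\'o growth in finite simple groups (Theorem \ref{thm:BGT}) and Lemma \ref{lemma:pro_boinded_gen} to arrange that $D$ surjects onto $\Gamma/\Gamma[\mathfrak{p}]$ for infinitely many $\mathfrak{p}$, and finally matches congruence subgroups on the two sides via the uniformly definable family from Theorem \ref{thm:principal.definable} and the elementary fiber lemma \ref{lemma:fibres}. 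None of this is a consequence of superrigidity, and without some substitute for it your proof of definability of $h^{-1}$ does not close.
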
 

Theorem \ref{thm:main.biint} together with the results of Sela and Kharlampovich--Myasnikov on hyperbolic groups lead to following conjecture: 
\begin{conjecture} Let $\Delta$ be an irreducible lattice in a semisimple group $\prod_{v\in S}G(K_v)$. Then $\Delta$ is bi-interpretable with the ring of integers if and only if $\rank_SG \geq 2$.
\end{conjecture}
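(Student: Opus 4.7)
The plan is to exhibit the two interpretations and the definable round-trip isomorphisms that together constitute bi-interpretability. Of the two, the direction ``$\Gamma$ is interpretable in $\mathbb{Z}$'' is essentially routine: the ring $A$ is a finitely generated $\mathbb{Z}$-algebra and hence interpretable in $\mathbb{Z}$; the matrix group $G(A)$ is then a definable subset of a power of $A$; and $\Gamma$, being a congruence subgroup, is cut out inside $G(A)$ by finitely many congruence conditions on matrix entries. In higher rank the congruence subgroup property guarantees that these conditions characterize $\Gamma$ exactly.

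For the opposite direction the heart of the matter is to interpret the ring $A$ in $\Gamma$; once this is achieved, Julia Robinson's theorem on the first-order definability of $\mathbb{Z}$ in rings of $S$-integers of a number field supplies $\mathbb{Z}$. In the isotropic case (Setting \ref{nota:Gamma}(1)) the natural tool is a pair of non-proportional root subgroups $U_\alpha, U_\beta$ arising from the $K$- or $K_w$-rational structure: $U_\alpha(A)\cap\Gamma$ is a finite-index subgroup of a copy of $(A,+)$, and the Chevalley commutator $[U_\alpha,U_\beta]$ recovers the multiplicative structure of $A$. First-order definability of the relevant subgroups of $\Gamma$ can be arranged via centralizer conditions on suitably chosen regular unipotents.

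In the uniform orthogonal case (Setting \ref{nota:Gamma}(2)) $\Gamma$ contains no nontrivial unipotent elements, so this scheme must be replaced. The hypothesis $\rank_w G\geq 2$ at some $w\in S$ provides, after base change to $K_w$, two orthogonal hyperbolic planes in $q$, and the bound $n\geq 9$ leaves an anisotropic complement of dimension at least five. One then works with definable families of commuting semisimple elements of $\Gamma$ whose centralizers decompose as products of tori with lower-dimensional $\Spin$-groups; by successive centralizer reductions one isolates a definable substructure from which the ring $A$ can still be reconstructed. The threshold $n\geq 9$ is chosen precisely so that these intermediate centralizers remain non-degenerate throughout the reduction.

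Finally, the bi-interpretation identity must be verified: both compositions $\Gamma\leadsto\mathbb{Z}\leadsto\Gamma'$ and $\mathbb{Z}\leadsto\Gamma\leadsto\mathbb{Z}'$ must yield isomorphisms that are themselves definable in $\Gamma$ and $\mathbb{Z}$ respectively. Concretely, one needs to show that each $\gamma\in\Gamma$ can be definably matched to the tuple of matrix entries of its image in the interpreted copy of $A$. The main obstacle is the uniform orthogonal case: with only semisimple elements available, defining $A$ purely from semisimple structure and then showing that one really recovers the full ring---rather than a subring, a quotient, or a localization---is the technically delicate step. A secondary difficulty is carrying out the constructions parameter-free and uniformly across all $\Gamma$ as in Setting \ref{nota:Gamma}; strong approximation and the congruence subgroup property are the natural tools for this last step.
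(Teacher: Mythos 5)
You have attempted to prove a statement that the paper explicitly labels a \emph{Conjecture} and does \emph{not} prove. The paper only establishes the ``if'' direction in the restricted special case of Setting~\ref{nota:Gamma} (centerless congruence subgroup, $G$ either $K$-isotropic or $\Spin_q$ with $n\geq 9$ and $\rank_w G\geq 2$); that is Theorem~\ref{thm:main.biint}, and the conjecture is stated afterward as a natural generalization suggested by combining Theorem~\ref{thm:main.biint} with results of Sela and Kharlampovich--Myasnikov. Your proposal never acknowledges this, and in fact it tacitly shrinks the conjecture back down to the two cases of Setting~\ref{nota:Gamma}: a general irreducible higher-rank lattice need not be non-uniform, need not be of orthogonal type, and need not be centerless, so your case analysis does not cover the conjectured statement at all. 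Worse, the ``only if'' direction (rank one implies failure of bi-interpretability) is part of the conjecture and is entirely missing from your proposal.

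Even restricted to the cases where the paper has a proof, your sketch diverges from what the paper actually does in a way that matters. For the isotropic case you suggest interpreting the ring $A$ in $\Gamma$ by picking out root subgroups and using Chevalley commutator relations; this is essentially the Segal--Tent / Myasnikov--Sohrabi route, which the authors explicitly set aside in the remark in \S\ref{sec:intro} precisely because it relies on root subgroups and bounded generation and therefore does not extend to general (in particular uniform) higher-rank lattices. The paper's actual strategy (Theorem~\ref{thm:interpretation}) is to interpret $\mathbb{Z}$ directly: it produces a single infinite-order element $\alpha$ with $\langle\alpha\rangle$ definable and multiplication of exponents definable, via Robinson's theorem that $(\mathbb{Z},+,\times)$ is definable in $(\mathbb{Z},+,\mid)$. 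This in turn rests on a uniformly definable family of congruence subgroups (Theorem~\ref{thm:principal.definable}), which is proved by completely different means --- conjugacy-class product estimates in the non-uniform case, and the effective Kneser orbit theorem (Theorem~\ref{thm:effective_kneser}) in the orthogonal case. Your centralizer-reduction sketch for $\Spin_q$ gestures at some of these ingredients, but you never explain how to get a definable \emph{infinite} piece of structure (the ring $A$ or the group $\mathbb{Z}$) out of a family of semisimple centralizers, and in the paper this is exactly the hard step (Propositions~\ref{prop:good_subgroup} and \ref{prop:newnew}, plus the Noskov-type finiteness argument).

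In short: the statement you have chosen has no proof in the paper to compare against, your sketch does not address the ``only if'' half or the full generality of the ``if'' half, and the part that does overlap with proved material follows a route the authors deliberately avoided rather than the one they actually use.
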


Bi-interpretability with the integers has many consequences, we discuss a few of them. 

\begin{corollary} \label{cor:undecidable} Under Setting \ref{nota:Gamma}, assume that $n\ge 9$ if $\Gr=\Spin_q$. The first order theory of $\Gamma$ is undecidable. 	
\end{corollary}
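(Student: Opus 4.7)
The plan is to deduce this directly from Theorem \ref{thm:main.biint} by transferring the undecidability of arithmetic through the interpretation. The only substantive content is Theorem \ref{thm:main.biint} itself; the corollary is the standard model-theoretic observation that interpretability preserves undecidability.

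More concretely, I would first recall that bi-interpretability of $\Gamma$ with $\mathbb{Z}$ implies in particular that the ring $(\mathbb{Z},+,\cdot)$ is interpretable in the group $\Gamma$. Unwinding the definition from \S\ref{sec:logic}, an interpretation of $\mathbb{Z}$ in $\Gamma$ is given by a definable subset $D\subseteq\Gamma^k$, a definable equivalence relation $\sim$ on $D$, and definable relations on $D/{\sim}$ that make it isomorphic to $(\mathbb{Z},+,\cdot)$ as a ring. From this data one obtains, in a purely syntactic and algorithmic way, a computable map $\varphi\mapsto \varphi^{*}$ from first order sentences in the language of rings to first order sentences in the language of groups, such that $\mathbb{Z}\models\varphi$ if and only if $\Gamma\models\varphi^{*}$.

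Now suppose, for contradiction, that the first order theory $\mathrm{Th}(\Gamma)$ is decidable. Then, given any ring sentence $\varphi$, one could compute $\varphi^{*}$ and then decide whether $\varphi^{*}\in\mathrm{Th}(\Gamma)$, which by the equivalence above is the same as deciding whether $\varphi\in\mathrm{Th}(\mathbb{Z})$. This would yield a decision procedure for $\mathrm{Th}(\mathbb{Z},+,\cdot)$, contradicting the classical theorem of G\"odel--Church that the first order theory of the ring of integers is undecidable. Hence $\mathrm{Th}(\Gamma)$ is undecidable.

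There is no real obstacle here: the entire weight of the corollary rests on Theorem \ref{thm:main.biint}, and what remains is a one-line application of the well-known principle that if a structure with undecidable theory is interpretable in $M$, then $\mathrm{Th}(M)$ is undecidable. The only thing one has to be slightly careful about is that the interpretation supplied by Theorem \ref{thm:main.biint} is given by explicit formulas, so the translation $\varphi\mapsto\varphi^{*}$ is genuinely computable; this is automatic from the way interpretations are defined in \S\ref{sec:logic}.
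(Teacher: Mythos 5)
Your proof is correct and follows the same route the paper takes: the paper's own argument is the one-line observation that $\Gamma$ interprets $\mathbb{Z}$ (via Theorem \ref{thm:main.biint}) and that any structure interpreting $\mathbb{Z}$ has undecidable theory. Your version merely unwinds that standard principle into the explicit computable translation $\varphi\mapsto\varphi^{*}$ and the contradiction with G\"odel--Church, which is the content the paper leaves implicit.
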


Indeed,  $\Gamma$  interprets the ring of integers and the first order theory of every structure that interprets the ring of integers is undecidable. On the other hand, Kharlampovich--Myasnikov \cite{KM} proved that the first order theory of non-abelian free groups in decidable. In the preprint \cite{KM3}, they extended this result to all torsion free hyperbolic groups. 

\begin{corollary}\label{cor:def_fg_sub} Under Setting \ref{nota:Gamma}, assume that $n\ge 9$ if $\Gr=\Spin_q$. All the finitely generated subgroups of $\Gamma$ are definable.
\end{corollary}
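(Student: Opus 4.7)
My plan is to combine Theorem~\ref{thm:main.biint} with the standard observation that every recursively enumerable subset of $\mathbb{Z}^n$ is first-order definable in the ring $\mathbb{Z}$ (a consequence of the Matiyasevich--Robinson--Davis--Putnam theorem). The bi-interpretability with $\mathbb{Z}$ yields, in particular, an interpretation of $\Gamma$ in $\mathbb{Z}$: a definable set $E \subseteq \mathbb{Z}^l$, a definable equivalence relation $\approx$ on $E$, and a bijection $\phi : E/{\approx} \to \Gamma$ under which the group multiplication of $\Gamma$ is induced by a definable operation on $E$. The transfer principle built into bi-interpretability guarantees that a subset $X \subseteq \Gamma$ is definable in $\Gamma$ over parameters $g_1,\dots,g_k$ if and only if the pullback $\phi^{-1}(X) \subseteq E$ is definable in $\mathbb{Z}$ over any choice of representatives $\bar z_i \in E$ of the $g_i$. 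So the problem reduces to showing that, given such representatives, the pullback of the subgroup $H := \langle g_1,\dots,g_k\rangle$ is definable in $\mathbb{Z}$.

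To establish this, I would encode each word $w$ in the free group on $k$ letters by a natural number $\ulcorner w \urcorner$ in a standard way. Using the definable group operation on $E$, there is a recursive function $\mathbb{N} \to \mathbb{Z}^l$ sending $\ulcorner w \urcorner$ to a representative of $w(g_1,\dots,g_k)$ (formally, the recursion is carried out inside $\mathbb{Z}$ via the usual arithmetic coding of finite sequences). The pullback $\phi^{-1}(H)$ is then precisely the set of $\bar z \in E$ that are $\approx$-equivalent to the image of some $\ulcorner w \urcorner$ under this function. This exhibits $\phi^{-1}(H)$ as the image (modulo $\approx$) of a recursive function, hence as a recursively enumerable set, so by Matiyasevich--Robinson--Davis--Putnam it is $\Sigma_1$-definable in $\mathbb{Z}$ with parameters $\bar z_1,\dots,\bar z_k$. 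Transferring back via bi-interpretability yields the desired formula defining $H$ in $\Gamma$ with parameters $g_1,\dots,g_k$.

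The only delicate point I foresee is verifying that the interpretation really does let one simulate the recursive word-evaluation inside $\mathbb{Z}$ using the chosen parameters; but this is routine once the bi-interpretation is unpacked, since all arithmetic constructions are then automatically available. I do not expect any serious obstacle beyond this bookkeeping, and in particular the result is independent of the specific geometric details of Setting~\ref{nota:Gamma}---it depends only on the bi-interpretability output of Theorem~\ref{thm:main.biint}.
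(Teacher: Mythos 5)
Your approach is correct and is essentially the paper's own, which disposes of the corollary in one line by combining Theorem~\ref{thm:main.biint} with the fact (attributed there to G\"{o}del rather than to Matiyasevich--Robinson--Davis--Putnam, but either suffices) that recursively enumerable subsets of $\mathbb{Z}^n$ are definable. One small clarification on the point you flag as ``delicate'': the definable group operation on the interpreted copy $E$ is not automatically recursive, so the word-evaluation map is not \emph{a priori} a recursive function; this is resolved either by using the specific interpretation of $\Gamma$ in $\mathbb{Z}$ constructed in the paper's proof of Theorem~\ref{thm:main.biint} (via matrix entries over $A$), which is manifestly recursive, or---more in the spirit of your $\beta$-function remark---by observing that the G\"{o}del coding of finite products already makes the set of codes of elements of $H$ \emph{definable} outright, so the detour through recursive enumerability and MRDP is in fact unnecessary.
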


 Corollary \ref{cor:def_fg_sub} is an immediate consequence of Theorem \ref{thm:main.biint} and  a theorem of G\"{o}del which states that any recursively enumerable set in $\mathbb{Z} ^n$ is definable. In contrast, Sela \cite{Sel2} and Kharlampovich--Myasnikov \cite{KM2} proved that the only definable  non-trivial proper subgroups of a torsion free hyperbolic group are its cyclic subgroups.


\begin{corollary} \label{cor:main.sfor} Under Setting \ref{nota:Gamma}, assume that $n\ge 9$ if $\Gr=\Spin_q$. Then, in the class of finitely generated groups, $\Gamma$ is determined by a single first order sentence, in the following sense: there is a first order sentence $\varphi$ such that, if $\Lambda$ is a finitely generated group, then $\Lambda$ satisfies $\varphi$ if and only if $\Lambda \cong \Gamma$.
\end{corollary}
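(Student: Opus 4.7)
The plan is to derive this corollary from Theorem~\ref{thm:main.biint} via the standard principle that any finitely generated group bi-interpretable with $\mathbb{Z}$ is quasi-finitely axiomatizable. First I fix the bi-interpretation data provided by the theorem: formulas $\Phi_*$ in the language of groups (with parameters $\bar a\in\Gamma$) interpreting $\mathbb{Z}$ in $\Gamma$, formulas $\Psi_*$ in the language of rings interpreting $\Gamma$ in $\mathbb{Z}$, and definable isomorphisms $\chi:\Gamma\to\Psi_*(\Phi_*(\Gamma))$ and $\tilde\chi:\mathbb{Z}\to\Phi_*(\Psi_*(\mathbb{Z}))$. Since $\Gamma$ is finitely generated, I fix a finite generating set $\gamma_1,\ldots,\gamma_m$, whose $\chi$-images are represented by specific integer tuples $\bar n_1,\ldots,\bar n_m\in\mathbb{Z}^\ell$ definable by ring-language terms.

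The sentence $\varphi$ asserts, in the language of groups, the existence of parameters $\bar c$ and elements $y_1,\ldots,y_m$ such that: the formulas $\Phi_*(\cdot,\bar c)$ define a commutative ring $R$ satisfying a suitable finite fragment $\Sigma_0\subseteq\mathrm{Th}(\mathbb{Z})$; the formula $\chi$ defines an isomorphism from the ambient group onto $\Psi_*(R)$ and $\tilde\chi$ defines a ring isomorphism $R\to\Phi_*(\Psi_*(R))$; and $\chi(y_i)$ is the class of $\bar n_i$ in $\Psi_*(R)$ for every $i$. By construction $\Gamma\models\varphi$ on taking $\bar c=\bar a$ and $y_i=\gamma_i$.

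The substantive step is to show that every finitely generated $\Lambda\models\varphi$ is isomorphic to $\Gamma$. The witnessing data yield a ring $R$ and a definable isomorphism $\Lambda\cong\Psi_*(R)$, so I only need to prove $R\cong\mathbb{Z}$. I first show that $R$ is finitely generated as a commutative ring: if $\lambda_1,\ldots,\lambda_N$ generate $\Lambda$ abstractly and $\chi(\lambda_j)$ is represented by a tuple $\bar r_j\in R^\ell$, then, since the group operation on $\Psi_*(R)$ is polynomial in the coordinates, every element of $\Psi_*(R)\cong\Lambda$ has a representative with coordinates in the subring $R_0\subseteq R$ generated by the coordinates of the $\bar r_j$ and the $\bar n_i$; hence $\Psi_*(R_0)=\Psi_*(R)$, and applying $\tilde\chi$ forces $R_0=R$.

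The main obstacle is the final identification $R\cong\mathbb{Z}$. I would enlarge $\Sigma_0$ to include, besides the bi-interpretation axioms, characteristic zero, the integral domain property, the Lagrange four-square trichotomy, and a small fragment of arithmetic sufficient for G\"odel coding of finite sequences. With $\Sigma_0$ so chosen, the classical fact that $\mathbb{Z}$ is quasi-finitely axiomatizable among finitely generated commutative rings (provable by combining the discreteness of the Lagrange-positive cone with the fact that finitely generated commutative rings are finitely presented by Hilbert's basis theorem and hence have computable ring operations, to which a Tennenbaum-style obstruction applies) yields $R\cong\mathbb{Z}$, and therefore $\Lambda\cong\Psi_*(R)\cong\Psi_*(\mathbb{Z})\cong\Gamma$. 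The most delicate bookkeeping of the argument is the precise choice of $\Sigma_0$ and the verification that all required assertions---the bi-interpretation axioms, the arithmetical axioms, and the QFA-inducing axioms for commutative rings---can be packaged as a single first-order sentence in the language of groups; beyond that, no further mathematical input is needed.
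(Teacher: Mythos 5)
The paper disposes of this corollary in one line by citing Khelif's Lemma~1, i.e., the general principle that a finitely generated group bi-interpretable with $\mathbb{Z}$ is quasi-finitely axiomatizable; your proposal instead tries to re-derive that principle from scratch. Your high-level plan---package the bi-interpretation data, the generators, and a finite fragment of $\mathrm{Th}(\mathbb{Z})$ into one sentence $\varphi$, then argue that for a finitely generated model $\Lambda\models\varphi$ the interpreted ring $R$ must be $\mathbb{Z}$---is indeed the shape of Khelif's argument, so this is a legitimate ``unpack the cited lemma'' route rather than a fundamentally different idea.

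However, two steps of the reconstruction have genuine gaps. First, the claim that the group operation on $\Psi_*(R)$ is polynomial in the $R$-coordinates is not supported by the paper's construction: the interpretation $\mathscr{E}$ of $\Gamma$ in $\mathbb{Z}$ (Lemma~\ref{lemma:Godel2}) factors through the bi-interpretation of $A$ with $\mathbb{Z}$ from \cite{AKNS}, and the resulting group law on $\mathbb{Z}$-tuples is merely definable, not polynomial. And even granting polynomial operations, the step ``$\Psi_*(R_0)=\Psi_*(R)$ implies, via $\tilde\chi$, that $R_0=R$'' is not justified: $\tilde\chi$ is a bijection from $R$ onto the imaginary $\Phi_*(\Psi_*(R))$ computed \emph{inside} $\Lambda$, and nothing in the setup forces its inverse to send tuples with coordinates in $R_0$ back into $R_0$ (definable functions need not preserve subrings). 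So the finite generation of $R$ is not established. Second, the final identification $R\cong\mathbb{Z}$ invokes ``QFA of $\mathbb{Z}$ among finitely generated commutative rings'' with a sketch that is off-target. Tennenbaum's theorem concerns the non-computability of operations in nonstandard models of PA and does not apply here; and the phrase ``finitely generated, hence finitely presented, hence computable operations'' begs the question, since a finitely generated commutative ring satisfying a \emph{finite} fragment of arithmetic need not be recursively saturated or standard without further argument. The actual mechanism in Khelif's proof (and in \cite{AKNS}) uses the discrete ordering coming from the Lagrange four-square trichotomy together with the finite generation of $\Lambda$ itself, pushed carefully through the triviality of both compositions, rather than a blanket appeal to a QFA theorem for rings. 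As written, the proposal has the right skeleton but the two load-bearing verifications are missing.
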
 

Corollary \ref{cor:main.sfor} follows from a result of Khelif \cite[Lemma 1]{Kh}. 
This corollary should be compared with the following result of Sela (\cite{Sel2}): for any torsion-free hyperbolic group $\Gamma$, any first order sentence that holds in $\Gamma$, also holds in $\Gamma * F_n$ (and vice versa).

The property that a group is determined by a single first order sentence (in the class of all finitely generated groups) was first studied by Nies in a more general setting (see \cite{Nie} and \cite{Kh, Las0, Las, Nie1.5, Nie2, OS} for more recent work). Nies called this property {\em quasi-finitely axiomatizable}. 
There are groups which are quasi-finitely axiomatizable but are not bi-interpretable with the integers. For example,  Khelif and Nies (see \cite{Kh} and \cite{Nie2}) proved that the Heisenberg group $\mathrm{U}_3(\Z)$ is quasi-finitely axiomatizable but not  bi-interpretable with the integers. 

We now discuss the assumption about the triviality of the center. It is possible to extend Corollary \ref{cor:main.sfor} and prove that any central extension of $\Gamma$ by a finite group is quasi-finitely axiomatizable. It turns out that generalizing  Theorem \ref{thm:main.biint} to central extensions is related to word width in $\Gamma$. Recall that every group $\Gamma$ as in Setting  \ref{nota:Gamma} is finitely presented (see \cite[Theroem 5.11]{PR} and the reference therein).

\begin{theorem}\label{thm:width}
	Let $\Gamma$ be a finitely presented group which is bi-interpretable with the ring $\Z$. Let $\Delta$ be a central extension of $\Gamma$ by a group of size $d$. Then $\Delta$ is bi-interpretable with $\Z$ if and only if the word $x^d[y,z]$ has finite width in $\Gamma$. 
\end{theorem}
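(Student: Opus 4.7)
I would prove the equivalence in both directions.

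\textbf{Direction ($\Leftarrow$).} Assume $x^d[y,z]$ has finite width $k$ in $\Gamma$. The plan is to build an interpretation of $\Delta$ in $\mathbb{Z}$ compatible with the given bi-interpretation $\Gamma \leftrightarrow \mathbb{Z}$. The crucial observation is that the map $w\colon \Gamma^3 \to \Delta$, $(x, y, z) \mapsto \tilde x^d[\tilde y, \tilde z]$ for any lifts $\tilde x, \tilde y, \tilde z \in \Delta$, is well-defined: letting $N$ be the kernel of $\Delta \twoheadrightarrow \Gamma$ (of size $d$), changing lifts by elements of $N$ leaves the commutator unchanged (by centrality of $N$) and multiplies $\tilde x^d$ by $n^d = 1$ (by Lagrange in $N$). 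Consequently, every factorization $h = \prod_{j=1}^{k} x_j^d[y_j, z_j]$ in $\Gamma$ yields a well-defined lift $\prod_j w(x_j, y_j, z_j) \in \Delta$. Since $\Gamma/\Gamma^d[\Gamma, \Gamma]$ is a finite abelian group (finitely generated, of exponent dividing $d$), I fix coset representatives $g_1, \ldots, g_m$ with chosen lifts $\tilde g_i \in \Delta$ and generators of $N$. Elements of $\Delta$ are then represented by tuples $(i, (x_j, y_j, z_j)_{j=1}^k, \nu) \in \{1, \ldots, m\} \times \Gamma^{3k} \times N$, encoding $\tilde g_i \cdot \prod_j w(x_j, y_j, z_j) \cdot \nu$. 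Equivalence of tuples and the group operation are definable in $\Gamma$ (hence in $\mathbb{Z}$) by tracking the $N$-discrepancy between different decompositions, which reduces to finitely many cocycle constants from a fixed finite presentation of $\Gamma$. One then checks that the composition with the tautological interpretation $\Gamma = \Delta/Z(\Delta) \leftrightarrow \mathbb{Z}$ yields a bi-interpretation.

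\textbf{Direction ($\Rightarrow$).} Suppose $\Delta$ is bi-interpretable with $\mathbb{Z}$. The subgroup $\Delta^d[\Delta, \Delta]$ is r.e.-definable in $\Delta$, so via the bi-interpretation it corresponds to an r.e.\ subset of a power of $\mathbb{Z}$, which is first-order definable by G\"odel's theorem; hence $\Delta^d[\Delta, \Delta]$ is first-order definable in $\Delta$. A compactness/saturation argument exploiting the rigidity of the bi-interpretation (that elementary extensions of $\Delta$ correspond to elementary extensions of $\mathbb{Z}$ in a controlled way) then forces the unbounded existential definition to stabilize at some fixed $k$, yielding finite width of $x^d[y, z]$ in $\Delta$. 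Projecting along $\Delta \twoheadrightarrow \Gamma$, any length-$k$ decomposition descends to one of the same length in $\Gamma$, so $x^d[y, z]$ has finite width in $\Gamma$ as well.

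\textbf{Main obstacle.} The crux of ($\Leftarrow$) is expressing the $N$-discrepancy between two $k$-factor decompositions of the same $h \in \Gamma^d[\Gamma, \Gamma]$ as a first-order definable function purely in $\Gamma$ (plus finitely many fixed cocycle constants). Finite width is precisely what turns this into a bounded computation via cocycle evaluations on a presentation of $\Gamma$, rather than an unbounded r.e.\ search over how two decompositions are related in a free cover; without it, the interpretation of $\Delta$ in $\mathbb{Z}$ cannot be made first-order. In the forward direction, the delicate point is upgrading first-order definability of $\Delta^d[\Delta, \Delta]$ to a uniformly bounded existential form, which requires more than mere definability of the subgroup and genuinely uses the structure of the bi-interpretation with $\mathbb{Z}$.
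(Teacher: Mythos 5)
Your backward direction rests on exactly the right key observation --- because the kernel $N$ of $\Delta\twoheadrightarrow\Gamma$ is central of order $d$, the word $w=x^d[y,z]$ evaluated on arbitrary lifts is independent of the lifts, so $w$ descends to a well-defined map $\Gamma^3\to\Delta$ --- and this is precisely what the paper exploits. But you then try to build an explicit interpretation of $\Delta$ in $\Gamma$ by encoding elements as tuples in $\{1,\dots,m\}\times\Gamma^{3k}\times N$, and you leave the equivalence relation and group law to ``tracking the $N$-discrepancy\ldots\ which reduces to finitely many cocycle constants.'' That step is not justified (the $2$-cocycle is a function $\Gamma\times\Gamma\to N$, not finitely many constants, and comparing two width-$k$ decompositions of the same element of $\Gamma^d[\Gamma,\Gamma]$ is not a bounded computation in a presentation of $\Gamma$), and you yourself flag it as the main obstacle. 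The paper avoids it entirely: since $\Delta$ is finitely presented, $\mathbb{Z}$ --- and hence $\Gamma$, via the given bi-interpretation --- already interprets $\Delta$ by a G\"{o}delian interpretation $\mathscr{D}=(D,d)$ (Lemma~\ref{lemma:Z_int_fp}). Triviality of the composite self-interpretation of $\Gamma$ gives that $\rho\circ d$ is $\Delta$-definable (with $\rho:\Delta\to\Gamma$ the quotient); your well-definedness observation then shows $d^{-1}$ is definable on $w(\Delta)$; finite width extends this to $\langle w(\Delta)\rangle$; and finite generation of $\Delta$ gives $[\Delta:\langle w(\Delta)\rangle]<\infty$, finishing. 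Fleshing out your encoding would essentially reprove a special case of Lemma~\ref{lemma:Z_int_fp}, the long way around.

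The forward direction has a genuine gap. You argue that $\langle w(\Delta)\rangle$ is r.e., hence definable via G\"{o}del and the bi-interpretation, and then that ``a compactness/saturation argument\ldots\ forces the unbounded existential definition to stabilize at some fixed $k$.'' There is no such mechanism: definability of a verbal subgroup does not imply the word has finite width, and compactness points the other way --- if width is infinite, it produces a model $\Delta'\equiv\Delta$ containing an element that satisfies the defining formula of $\langle w(\Delta)\rangle$ but lies in no $w(\Delta')^k$, which by itself contradicts nothing. The paper instead proves the contrapositive concretely. The interpretation $\mathscr{C}=(C,c)$ of $\Delta$ in $\Gamma=\Delta/\Lambda$ (from Lemma~\ref{lemma:Z_int_fp}) is a self-interpretation of $\Delta$; if $\Delta$ were bi-interpretable with $\mathbb{Z}$, $c$ would be definable, and then by Lemma~\ref{lemma:nec_def} no $\mathrm{Th}_\Delta$-model could admit a non-identity $L_\Delta$-automorphism acting as the identity on $C$. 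With $w$ of infinite width, the ultrapower $\Delta'=\prod_n\Delta/U$ has $\Delta'/\langle w(\Delta')\rangle$ an uncountable abelian group of exponent dividing $d$, so one can choose a non-trivial homomorphism $\rho:\Delta'\to\Lambda$ vanishing on the (finitely generated) diagonal copy of $\Delta$; then $\varphi(x)=x\rho(x)$ is a non-identity $L_\Delta$-automorphism trivial modulo $\Lambda$, hence trivial on $C(\Delta')$, giving the contradiction. Your sketch neither constructs such an automorphism nor supplies a workable substitute, so the $(\Rightarrow)$ direction is not established.
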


We find the connection between bi-interpretability and width of words exciting because, even though width of words has been extensively studied (see, for example, \cite{Seg} and \cite{Sha} and the reference therein), to the best of our knowledge, there is not even one example of a non-silly\footnote{$w\in F_n$ is called silly if its image in the abelianization $\mathbb{Z} ^n$ of $F_n$ is primitive. If $w$ is silly, then $w(\Gamma)=\Gamma$, for any group $\Gamma$.} word $w$ and a higher rank uniform lattice $\Gamma$ for which it is known whether the width of $w$ in $\Gamma$ is finite or not. We note that more is known about widths of words in non-uniform lattices, see, for example, \cite{AM} for widths of words in $\SL_n(\mathbb{Z})$. We conjecture that higher rank lattices have finite word widths:

\begin{conjecture} Let $\Delta$ be an irreducible lattice in a semisimple group $\prod_{v\in S} G(O_S)$ and let $w\in F_n$ be a word. Then the width of $w$ in $\Delta$ is finite if and only if $\rank_S G \geq 2$.
\end{conjecture}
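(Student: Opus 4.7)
The conjecture splits into two directions. The ``only if'' direction---that a rank-one lattice has some word of infinite width---should follow from well-developed negative-curvature machinery (quasimorphisms and stable commutator length). The ``if'' direction---that in higher rank every word has finite width---is accessible for non-uniform lattices via bounded generation, but is the main obstacle for uniform ones.

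For the ``only if'' direction, suppose $\rank_S G = 1$. Then $\Delta$ acts either geometrically on the associated rank-one symmetric space $X_G$ (if $\Delta$ is uniform) or cocompactly on its thick part with virtually nilpotent parabolic cusp stabilisers (if non-uniform), so $\Delta$ is in either case a non-elementary acylindrically hyperbolic group. By the Bestvina--Fujiwara construction of independent pseudocharacters, the space of homogeneous quasimorphisms on $\Delta$ modulo homomorphisms is infinite-dimensional; Bavard duality then forces the stable commutator length to be non-zero on $[\Delta,\Delta]$, so the commutator word $w=[x,y]$ has infinite width in $\Delta$. This furnishes the required word of infinite width.

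For the ``if'' direction in the non-uniform higher-rank case, I would invoke bounded generation of $\Delta$ by unipotent one-parameter subgroups, as established for higher-rank $S$-arithmetic groups by Carter--Keller, Tavgen, Erovenko--Rapinchuk and others. Each such unipotent subgroup is a finitely generated abelian group of bounded rank over $A$, on which every word $w$ has uniformly bounded width (the $w$-values on $\mathbb{Z}^r$ generate a finite-index subgroup with bounded length). A standard bootstrap then combines these local finite widths with bounded generation of $\Delta$ to realise each element of $w(\Delta)$ as a product of boundedly many $w$-values, as in the argument of \cite{AM}.

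The main obstacle is the ``if'' direction in the uniform higher-rank case. No bounded generation by virtually solvable subgroups is available, and, as the authors emphasise, not a single non-silly word is known to have finite width in such a lattice. My proposal is to leverage bi-interpretability: Theorem \ref{thm:main.biint} already yields bi-interpretability with $\mathbb{Z}$ for the uniform orthogonal cases with $n\ge 9$, and Theorem \ref{thm:width} converts bi-interpretability of a finite central extension $\tilde\Delta$ into finite width of $x^d[y,z]$ in $\tilde\Delta$. To promote this to an arbitrary $w$, one would try to show that, through the bi-interpretation, the assertion ``$w$-width in $\Delta$ is at most $N$'' translates into a first-order statement about $\mathbb{Z}$ that can then be verified, exploiting Kazhdan's property~(T) and Margulis arithmeticity to pin down a uniform-in-$w$ bound. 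Developing such a rigidity theorem---a discrete higher-rank analogue of the Nikolov--Segal theorem for compact groups---is where I expect the genuinely new input to be required.
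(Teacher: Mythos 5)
The statement in question is a Conjecture, not a theorem: the paper offers no proof, and its only citation is to \cite{BBF} for the ``only if'' direction. Your proposal is a roadmap rather than a proof and is commendably honest about the genuine obstacle, but a few points need sharpening.

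In the ``only if'' direction, your argument exhibits infinite width only for the commutator word $[x,y]$ via nonvanishing of stable commutator length. The conjecture, read for non-silly words, asserts infinite width for \emph{every} non-silly $w$; that is the actual content of \cite{BBF} for acylindrically hyperbolic groups, so you should appeal to their full result rather than stopping at Bavard duality for commutators.

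For the ``if'' direction in the non-uniform case, you overstate the state of the art: bounded generation by unipotents is established only in restricted generality (split or quasi-split cases), and the bootstrap from bounded generation to bounded word width has been carried out only in special cases such as $\SL_n(\mathbb{Z})$ in \cite{AM}; the general higher-rank non-uniform case remains open, not a matter of invoking standard machinery. For the uniform case you correctly identify the crux, but the bi-interpretability route does not circumvent it: translating ``$w$-width $\le N$'' into a first-order sentence over $\mathbb{Z}$ via Theorem \ref{thm:main.biint} does not make that sentence decidable or provable (Corollary \ref{cor:undecidable} records undecidability of the theory of $\Gamma$, which is the theory of $\mathbb{Z}$ up to bi-interpretation), and Theorem \ref{thm:width} uses finite width of $x^d[y,z]$ as an \emph{input} to promote bi-interpretability to central extensions, not the other way around. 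As the authors emphasise, not a single non-silly word is currently known to have finite width in any uniform higher-rank lattice; your concluding assessment that genuinely new input is required is exactly the state of the art.
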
 

For the only if direction, see \cite{BBF}.

In the proof of Theorem \ref{thm:main.biint}, we prove an effective version of a theorem of Kneser which might be interesting on its own. In order to state it, we need a couple of definitions. 
 

\begin{definition} Under Setting \ref{nota:Gamma}, assume that $\Gr=\Spin_q$ and let $S_{def}$ be the set of real places $v \in S$, for which $\Spin_q(K_v)$ is compact. 
\begin{enumerate}
\item For every $v \in S_{def}$, let $||\cdot||_v$ be the norm on $K_v^n$ defined by $||a||_v:=\sqrt{q(a)}$ for every $a \in K_v^n$.
\item For every $v \in S_{def}$, let $\dist_v(\cdot,\cdot)$ be the   bi-invariant metric on $\SO_q(K_v)$ defined  by for all $\alpha,\beta \in \SO_q(K_v)$,
$$\dist_v(\alpha,\beta):=\sup\{||(\alpha-\beta) a||_v \mid a \in  K_v^n \text{ and } ||a||_v=1 \}.$$ 
\item An element $\alpha \in \SO_q(K)$ is called $\epsilon$-separated if for every $v \in S_{def}$, $\dist_v(\alpha,Z(\SO_q(K_v))) \geq \epsilon$.
\item An element $\alpha \in \Gamma$ is called $\epsilon$-separated if its image in  $\SO_q(K)$ is $\epsilon$-separated.
\end{enumerate}
\end{definition}

\begin{definition}
For every element $\alpha $ in a group $\Gamma$ denote $$\ \gcl_\Gamma(\alpha):=\{\beta \alpha \beta^{-1}, \beta \alpha^{-1} \beta^{-1},\id \mid \beta \in \Gamma \}.$$ Note that $\gcl_\Gamma(\alpha)$ is a symmetric and normal set.\end{definition}

The following is an effective version of Theorem 6.1 of \cite{Kne}:

\begin{theorem}\label{thm:effective_kneser} 
Under Setting \ref{nota:Gamma}, assume that $\Gr=\Spin_q$ and $n \ge 7$. For every  $\epsilon>0$ there exists $N=N(n,\epsilon)$ such that for every  $\epsilon$-separated element $\alpha \in \Gamma$ and every non-isotropic $a \in A^n$, $\gcl_\Gamma(\alpha)^Na$ contains an $S$-adelic open neighborhood of $\Gamma a$. \end{theorem}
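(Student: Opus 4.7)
The plan is to combine strong approximation for $\Spin_q$ with effective Lie-algebraic commutator estimates at each local factor, using the $\epsilon$-separation to obtain uniform control. Since $\Gr=\Spin_q$ is simply connected and $\rank_S \Gr \geq 2$, strong approximation identifies the $S$-adelic closure of $\Gamma a$ with the local orbit at each place, so an $S$-adelic open neighborhood of $\Gamma a$ is contained in $\gcl_\Gamma(\alpha)^N a$ as soon as $\gcl_\Gamma(\alpha)^N$ contains a neighborhood of the identity in each relevant local factor $\Spin_q(K_v)$ (for $v\in S_{def}$) or $\Spin_q(\mathcal{O}_v)$ (for $v\notin S$). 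The task reduces to showing that a bounded number of conjugates of $\alpha^{\pm 1}$ covers a uniform identity neighborhood in each such factor.

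At each compact real place $v\in S_{def}$, the $\epsilon$-separation condition yields a lower bound $\|\log\alpha\|_v \geq c(\epsilon)$ after replacing $\alpha$ by $\pm\alpha$ (the centre being $\{\pm 1\}$). For $\beta\in\Gamma$, dense in $\Spin_q(K_v)$ by strong approximation, the commutator $[\alpha,\beta]=\alpha\cdot(\beta\alpha^{-1}\beta^{-1})$ lies in $\gcl_\Gamma(\alpha)^2$, and linearization near the identity gives $\log[\alpha,\beta]\approx(\Id-\Ad(\beta))\log\alpha$ to first order. As $\beta$ varies, these vectors span an $\Ad$-invariant subspace of $\mathfrak{so}_q(K_v)$ which is non-trivial by $\epsilon$-separation and therefore, by simplicity of $\mathfrak{so}_q$, all of $\mathfrak{so}_q(K_v)$. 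A quantitative Baker--Campbell--Hausdorff argument then shows that a product of $O_n(1)$ such commutators covers an identity neighborhood of radius bounded below by an explicit function of $\epsilon$ and $n$. An analogous $p$-adic Lie-theoretic estimate applies at each non-archimedean place $v$, once $\alpha$ has been conjugated into a congruence subgroup small enough for the logarithm to converge; the hypothesis $n\geq 7$ enters here to guarantee enough Witt index locally to realize the requisite non-vanishing root-subgroup brackets.

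The main obstacle is ensuring that the constant $N$ depends only on $n$ and $\epsilon$, independent of $\alpha$ and of the place $v$. Uniformity in $\alpha$ at the real places follows from compactness of the space of $\epsilon$-separated elements modulo the centre. Uniformity across the infinitely many finite places is subtler: for any fixed $\alpha$, almost every local projection of $\alpha$ is automatically close to the identity, so the Lie-algebra estimate applies, but one must rule out a global obstruction in which $\alpha$ is congruent to a central element modulo a proper congruence subgroup of arbitrarily high level. This is handled by propagating the real $\epsilon$-separation through strong approximation to rule out local centrality at each finite place, and by invoking the classification of normal subgroups of $\Spin_q$ over finite rings to bound the number of conjugates needed at the finitely many bad residue characteristics (those dividing $2\disc(q)$). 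Assembling the local bounds via strong approximation and applying $\gcl_\Gamma(\alpha)^N$ to the non-isotropic vector $a$ then produces the required $S$-adelic open neighborhood of $\Gamma a$, with $N=N(n,\epsilon)$.
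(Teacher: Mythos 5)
Your reduction at the very start is where the argument breaks, and the break is structural, not cosmetic. You assert that ``an $S$-adelic open neighborhood of $\Gamma a$ is contained in $\gcl_\Gamma(\alpha)^N a$ as soon as $\gcl_\Gamma(\alpha)^N$ contains a neighborhood of the identity in each relevant local factor.'' But $\gcl_\Gamma(\alpha)^N$ is a countable subset of $\Gamma$, so at best one can hope that its \emph{closure} at each place, or adelically, contains an identity neighborhood; this is exactly what Propositions \ref{cor:bounded_prod} and \ref{cor:bound_width_cong} establish. The problem is that density of $\gcl_\Gamma(\alpha)^N$ in some congruence subgroup $\Gamma[\mathfrak{q}']$ does \emph{not} imply that $\gcl_\Gamma(\alpha)^N a$ contains a congruence neighborhood of $a$ in the orbit $\Gamma a$. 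The set $\{\gamma\in\Gamma[\mathfrak{q}']:\gamma a=a'\}$ is a single coset of $\mathrm{Stab}_{\Gamma[\mathfrak{q}']}(a)$, a subgroup of infinite index (it is a lattice in the spin group of the $(n-1)$-dimensional orthogonal complement), and such a coset has empty interior in the congruence topology. So density does not force $\gcl_\Gamma(\alpha)^N$ to meet that coset, and you cannot conclude $a'\in \gcl_\Gamma(\alpha)^N a$. To run your argument you would in effect need $\gcl_\Gamma(\alpha)^N$ to \emph{contain} an actual congruence subgroup, and for uniform lattices (precisely the case $G=\Spin_q$ is meant to cover) this amounts to a finite-width statement that, as the authors note in the introduction, is open; their whole point is to get around it.

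The paper's proof works very differently, following Kneser. The key device is the notion of an $A$-good triple (Definition \ref{def:good_trip}) together with the explicit group identity in Lemma \ref{lem:main_idea}, which shows that if $(a_1,\beta a_2,\gamma a_1)$ is $A$-good with $\beta,\gamma\in\gcl_\Gamma(\alpha)^N$ then $a_2\in\gcl_\Gamma(\alpha)^{3N}a_1$. The local-to-global principle (Lemma \ref{lem:kne_8.1}, from Kneser's Lemma~8.1) is a genuinely Diophantine statement about the quadratic form, not a density statement, and the local criteria in Lemma \ref{lem:local_conditions} are verified place-by-place. The conjugacy-class propositions from \S\ref{sec:local} are used, but only to manufacture the auxiliary elements $\beta,\gamma$ with the right local behaviour (via Lemma \ref{lem:kneser_5.27}), not to conclude openness directly. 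Finally, the uniformity $N=N(n,\epsilon)$ --- independent of $q$, $\Gamma$, and $a$ --- is earned by Lemma \ref{lem:kne_4.8} (producing strongly $1$-separated elements in boundedly many steps) and the effective Lemma \ref{lem:kneser_5.27}; your appeal to ``compactness of the space of $\epsilon$-separated elements modulo the centre'' would only give a constant depending on the specific compact group $\Spin_q(K_v)$, hence on $q$, not just on $n$ and $\epsilon$.
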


\begin{remark} We make the following convention: when we write $N=N(X,Y,Z)$, it means that the constant $N$ depends only on $X$, $Y$ and $Z$. For example, the constant in Theorem \ref{thm:effective_kneser} does not depend on the quadratic from $q$ nor on $\Gamma$. 
\end{remark}



The paper is organized as follows. In \S\ref{sec:logic}, we collect the model-theoretic definitions we use. In \S\ref{sec:local}, we study products of conjugacy classes in locally compact and arithmetic groups. In \S\ref{sec:cong} we show that the collection of congruence subgroups of $\Gamma$ is uniformly definable. In \S\ref{sec:inter}, we show that $\Gamma$ interprets the ring $\mathbb{Z}$. In \S\ref{sec:bi}, we complete the proof of Theorem \ref{thm:main.biint} and show that $\Gamma$ is bi-interpretable with $\mathbb{Z}$. In \S\ref{sec:width}, we prove Theorem \ref{thm:width}, and, in \S\ref{sec:orbit}, we prove Theorem \ref{thm:effective_kneser}.

\begin{remark} In the final stages of writing this paper, we became aware of the preprint \cite{ST} which proves that a higher rank Chevalley group $G(R)$ is bi-interpretable with $R$, under some conditions on $R$ (which include the case $R=O_S$); see also \cite{MM} for the case $\SL_n(\mathcal{O}_S)$. The proofs in these two papers go along the proof of the bi-interpretability of a field $F$ and $\SL_n(F)$, although the analysis in the case of Chevalley groups is harder. In particular, the proof uses in a crucial way information about rational tori and root subgroups, as well as bounded generation. Our goal here is to develop techniques that could be applied to general higher rank lattices. Even in the case of $\SL_n$, the techniques in this paper can be used to prove stronger results: Theorem \ref{thm:app} implies that, for every $n \geq 3$ and every integral domain $R$ with trivial Jacobson radical and finite Krull dimension, $\PSL_n(R)$ and $R$ are bi-interpretable. {In particular, there is no assumption on the stable range of $A$. For example, Theorem \ref{thm:app} implies that, for every $m \ge 1$, $\PSL_3(\Z[X_1,\ldots,X_m])$ is bi-interpretable with $\Z[X_1,\ldots,X_m]$ (and thus also with $\Z$).  }
\end{remark} 

{\noindent \bf Acknowledgement} We thank Alex Lubotzky and Peter Sarnak for helpful discussions. N.A. was partially supported by NSF grant DMS-1902041, BSF grant 2018201, and a Simons Fellowship. C.M. was partially supported by ISF grant 1226/19 and BSF grant 2014099.

\section{Model Theory}\label{sec:logic}  

In this section, we collect the definitions of definable sets, imaginaries, uniformly definable collections, and interpretations. 

\begin{enumerate}

\item For a first-order language $L$ and an $L$-structure $M$, we let $L_M$ be the language $L$ together with a constant symbol for every element of $M$. We denote the $L_M$-theory of $M$ by $\Th_M$.
\item Let $M$ be an $L$-structure. For every $L_M$-formula $F(x_1,\ldots,x_n)$, denote $$F(M):=\left\{ (a_1,\ldots,a_n) \in M^n \mid \text{$F(a_1,\ldots,a_n)$ holds in $M$} \right\}.$$ A subset of  $M^n$ of the form $F(M) $ is called a definable set (in $M$). Note that we say that $F$ is definable in $M$ although it is a subset of some power of $M$. A function  between two definable sets is called definable if its graph is definable. 

\item\label{item:def_col}  Let $Y$ and $Z$ be two definable subsets in $M$. For every definable subset $X \subseteq Y \times Z$ and every $y \in Y$, denote $X_y:=\{z \in Z\mid (y,z)\in X\}$.  A collection $\mathcal{Z}$ of subsets  of a definable set $Z$ is called uniformly definable by a parameter set $Y$ if there exists a definable subset $X \subseteq Y \times Z$ such that $\mathcal{Z}=\{X_y \mid y \in Y\}$. 

\item Given a definable set  $X$ and a definable equivalence relation $E \subseteq X \times X$, the set of $E$-equivalence classes is called an imaginary.  Note that any definable set is also an imaginary. The notions of subset, cartesian product, relation, and function are generalized in the obvious way to  imaginaries. 

\item Suppose that $L_1,L_2$ are two (possibly different) first-order languages, and that, for $i=1,2$, $M_i$ is a structure of $L_i$. An interpretation of $M_2$ in $M_1$ is a pair $\mathscr{F}=(F,f)$, where $F$ is an imaginary in $M_1$ and $f$ is a bijection between the sets $F$ and $M_2$ such that \begin{enumerate}
\item For each $n$-ary relation symbol $r$ of $L_2$, the imaginary $f ^{-1} (r^{M_2}) \subset F^n$ is definable.
\item For every function symbol $g$ of $L_2$, say of arity $(r,s)$, the function $f ^{-1} \circ g^{M_2} \circ f:F^r \rightarrow F^s$ is definable.
\end{enumerate}

\item Suppose that $\mathscr{F}=(F ,f)$ is an interpretation of $M_2$ in $M_1$. By induction on the length of a defining formula, we can define, for each imaginary $X$ in $M_2$, an imaginary $\mathscr{F}^*X$ of $M_1$ and a bijection $f_X:\mathscr{F}^*{X}\rightarrow X$. Similarly, if $X$ and $Y$ are imaginaries in $M_2$ and $g:X\rightarrow Y$ is a definable function,  then there exists a definable  function $\mathscr{F}^*g:\mathscr{F}^*X \rightarrow \mathscr{F}^*Y$ in $M_2$.

\item Suppose that, for $i=1,2,3$, $L_i$ is a first order language and $M_i$ is a structure of $L_i$. Suppose that $\mathscr{F}=(F,f)$ is an interpretation of $M_2$ in $M_1$ and that $\mathscr{H}=(H,h)$ is an interpretation of $M_3$ in $M_2$. The composition $\mathscr{H} \circ \mathscr{F}$ of $\mathscr{F}$ and $\mathscr{H}$ is the interpretation of $M_3$ in $M_1$ given by$(\mathscr{F}^*H,h\circ f_H)$.

\item If $M$ is a structure of a language $L$ and $\mathscr{F}=(\mathcal{F},f)$ is a self interpretation of $M$, we say that $\mathscr{F}$ is trivial if $f$ is definable.

\item Let $L_1,L_2$ be first order languages, and, for $i=1,2$, let $M_i$ be a structure of $L_i$. Given an interpretation $\mathscr{F}_{1,2}$ of $M_2$ in $M_1$ and an interpretation $\mathscr{F}_{2,1}$ of $M_1$ in $M_2$, we say that $(\mathscr{F}_{1,2},\mathscr{F}_{2,1})$ is a bi-interpretation if both compositions $\mathscr{F}_{1,2} \circ \mathscr{F}_{2,3}$ and $\mathscr{F}_{2,3} \circ \mathscr{F}_{1,2}$ are trivial. If there is a bi-interpretation between $M_1$ and $M_2$, we say that they are bi-interpretable.

\end{enumerate}

\begin{remark}\label{remark:nir}
	We can reformulate the notions of interpretations and bi-interpretations in a categorical language. Let $Cat$ be the 2-category of categories, and let $Sets$ be the category of sets. Given a first order language $L$ and an $L$-structure $M$, denote by $Def_M$ the category whose objects are imaginary sets in $M$ and whose morphisms are definable maps between them. We have a functor $M_{pts}:Def_M \rightarrow Sets$ sending a definable set $X$ to $X(M)$. The pair $(Def_M,M_{pts})$ is an object in the slice 2-category $Cat_{/Sets}$. Under these definitions, an interpretation between the structures $M,N$ is a 1-morphism between the objects $(M,M_{pts})$ and $(N,N_{pts})$; a bi-interpretation is an equivalence of these objects.
\end{remark}

\section{Uniform density of conjugacy class products}\label{sec:local}

\subsection{Statement of the results}

In this section, our setting is more general than Setting \ref{nota:Gamma}. Instead, we use the following:

\begin{setting} \label{ass:gcc} Assume that \begin{itemize} 
\item $K$ is a number field.
\item $S$ is a finite set of places of $K$ containing all archimedean places. Denote the ring of $S$-integers of $K$ by $A$.
\item $w$ is a place in $S$.
\item $D$ is a natural number and $\underline{G} \subset (\GL_D)_A$ is an algebraic group scheme defined over $A$ such that $\underline{G}_K$ is connected, simply connected, and simple, and such that $\underline{G}(K_w)$ is non-compact.
\item $\Gamma$ is a subgroup of finite index in $\underline{G}(A)$. If $v$ is a place of $K$, denote the closure of $\Gamma$ in $\underline{G}(K_v)$ by $\Gamma_v$.
\end{itemize} 
\end{setting} 

\begin{definition} \label{defn:good.reduction.big.char} Let $F$ be a local field with ring of integers $R$ and maximal ideal $\mathfrak{m}$. Let $D$ be a natural number and let $\underline{H} \subseteq \GL_D$ be a semisimple algebraic group scheme over $R$. Denote the Lie ring of $\underline{H}$ by $\mathfrak{h}$. We say that $\underline{H}$ is good if the following conditions hold: \begin{enumerate} 
\item \label{cond:good.reduction.smooth} $\underline{H}$ is smooth over $\Spec R$.
\item \label{cond:good.reduction.Z.smooth} The reduction map $Z(\underline{H}(R)) \rightarrow Z(\underline{H}(R/\mathfrak{m}))$ is onto.
\item \label{cond:good.reduction.irr} $\underline{H}(R/\mathfrak{m})$ acts irreducibly on $\mathfrak{h}(R/\mathfrak{m})$.
\item \label{cond:good.reduction.Ad} The kernel of $\Ad:\underline{H}(R/\mathfrak{m}) \rightarrow \Aut(\mathfrak{h}(R/\mathfrak{m}))$ is $Z \left( \underline{H}(R/\mathfrak{m}) \right)$.
\item \label{cond:good.reduction.char} The characteristic of $R/\mathfrak{m}$ is greater than $|Z(G)| + (4D\dim \underline{H})^4$.
\end{enumerate} 

Note that, by the classification of simple algebraic groups over finite fields, we get that $\underline{H}(\mathbb{F}_q)$ acts irreducibly on its Lie algebra.
\end{definition} 

\begin{definition}\label{def:T_Gamma} In Setting \ref{ass:gcc}, define $T_\Gamma$ to be the set of all places $v\notin S$ such that $\underline{G}_{A_v}$ is good and $\Gamma$ is dense in $\underline{G}(A_v)$.
\end{definition} 

\begin{remark} In Setting \ref{ass:gcc}, $T_\Gamma$ is always finite. Indeed, Condition \ref{cond:good.reduction.smooth} follows from generic smoothness, Condition \ref{cond:good.reduction.Z.smooth} follows from generic smoothness of the group scheme $Z(\underline{G})$ and Hensel's lemma, Condition \ref{cond:good.reduction.irr} follows because $\underline{G}(K)$ acts irreducibly on $\mathfrak{g}(K)$ and this is a Zariski open condition, and Condition \ref{cond:good.reduction.Ad} follows because it holds over $\overline{K}$. 
\end{remark}

\begin{definition} \label{def:std.metric} In Setting \ref{ass:gcc}, for every real place $v$ of $K$ such that $\underline{G}(K_v)$ is compact, we define the standard metric on $\underline{G}(K_v)$ as follows: \begin{enumerate} 
\item If $\underline{G}_{K_v}=\Spin_f$, for some (positive-definite) quadratic form $f$ on $K_v ^n$, let $d_v$ be the metric induced by the norm $f$: 
\[
d_v(g_1,g_2)=\max \left\{ \sqrt{f(g_1x-g_2 x)} \mid x\in K_v^n, f(x)=1 \right\}.
\]
\item In all other cases, let $d_v$ be the translation-invariant Riemannian metric on $\underline{G}(K_v)$ whose restriction to the Lie algebra of $\underline{G}(K_v)$ is the Killing form, and normalized such that the diameter of $d_v$ is one.
\end{enumerate} 
\end{definition} 

\begin{remark} The reason for the different definition for spin groups is that it simplifies the notations in the proof of Theorem \ref{thm:effective_kneser} in \S\ref{sec:orbit}. Since the norm metric and the Killing metric are bi-Lipschitz, one can use the Killing metric in both cases after changing some constants in \S\ref{sec:orbit}.
\end{remark} 

\begin{definition} In Setting \ref{ass:gcc}, given $g\in \Gamma$ and $\epsilon >0$, we say that $g$ is $\epsilon$-separated if, for any real valuation $v$ of $K$ such that $\underline{G}(K_v)$ is compact, we have $d_v(g,Z(\underline{G}(K_v)))>\epsilon$, where $d_v$ is the standard metric on $\underline{G}(K_v)$ from Definition \ref{def:std.metric}.
\end{definition}

The main results of this section are the following three claims:

\begin{proposition} \label{cor:bounded_prod} In Setting \ref{ass:gcc}, for every $\epsilon>0$, there is a natural number $N=N(K,S,D,\epsilon)$ such that the following holds:

If $\underline{G}(K_w)$ is non-compact, and $g\in \Gamma$ is $\epsilon$-separated, then there is a neighborhood $W$ of the identity in $\prod_{v\in T_\Gamma} \underline{G}(K_v)$ such that the set $\gcl_\Gamma(g)^N$ contains a dense subset of $W \times \prod_{v\notin T_\Gamma \cup \left\{ w \right\}} \langle \gcl_{\Gamma_v} (g) \rangle$.
\end{proposition}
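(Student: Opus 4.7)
The plan is to split the proof into a local analysis at each place $v\neq w$ and then glue via strong approximation. Since $\underline{G}_K$ is connected, simply connected, and $\underline{G}(K_w)$ is non-compact, strong approximation gives that $\Gamma$ is dense in the restricted direct product $\prod'_{v\neq w}\underline{G}(K_v)$. In particular, for any finite set $S'$ of places not containing $w$, the closure of $\gcl_\Gamma(g)^N$ inside $\prod_{v\in S'}\underline{G}(K_v)$ factors as $\prod_{v\in S'}\overline{\gcl_{\Gamma_v}(g)^N}$. This reduces the proposition to a uniform local claim at each $v\neq w$: a bounded number of local conjugates of $g^{\pm 1}$ should either contain a neighborhood of the identity (at $v\in T_\Gamma$) or be dense in the normal closure $\langle\gcl_{\Gamma_v}(g)\rangle$ (at the remaining, finitely many, places, since the complement of $T_\Gamma$ in the non-$S$ places is finite).

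At a place $v\in T_\Gamma$ the core argument is Lie-theoretic. For sign choices $\epsilon_1,\ldots,\epsilon_k\in\{\pm 1\}$, consider the map
\[
\Phi:\underline{G}(K_v)^k\longrightarrow \underline{G}(K_v),\qquad (\beta_1,\ldots,\beta_k)\longmapsto \prod_{i=1}^k \beta_i g^{\epsilon_i}\beta_i^{-1}\cdot g^{-\sum_i \epsilon_i}.
\]
Linearizing at $(\id,\ldots,\id)$ yields a differential built from the operator $I-\Ad(g)$ acting on the Lie algebra $\mathfrak{g}(K_v)$. The $\epsilon$-separation hypothesis forces $g$ to be non-central in $\underline{G}(K)$---a central $g$ would be central at every compact real place, contradicting $\epsilon$-separation---hence non-central at every place. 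Simplicity of $\underline{G}_K$ then implies that the image of $I-\Ad(g)$ generates $\mathfrak{g}(K_v)$ as a Lie ideal, and replacing plain conjugates by iterated group commutators realizes all necessary Lie brackets at the infinitesimal level. A bounded number of factors---depending only on $\dim \underline{G}\le D^2$---suffices to make the differential of some such $\Phi$ surjective, and then the implicit function theorem (archimedean or $p$-adic as appropriate) produces an explicit neighborhood of the identity inside the image. Uniformity of the size of this neighborhood across $v\in T_\Gamma$ is precisely what the ``good'' reduction hypothesis buys us, particularly Conditions \eqref{cond:good.reduction.irr} and \eqref{cond:good.reduction.char}, which let all $\Ad$-based estimates be carried out with constants depending only on $D$.

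For the finitely many remaining places $v\notin T_\Gamma\cup\{w\}$, the task is weaker: density in the closed normal subgroup $\langle\gcl_{\Gamma_v}(g)\rangle$, not containment of an open neighborhood. At archimedean $v$ this subgroup is a closed Lie subgroup of $\underline{G}(K_v)$, and a variant of the above differential argument applied within it, combined with $\epsilon$-separation at compact real places (for uniformity), shows that $N$-fold products already fill it densely. Analogous $p$-adic analytic arguments handle the finitely many ``bad'' non-archimedean places. To glue everything: any basic open set $U\subset W\times\prod_{v\notin T_\Gamma\cup\{w\}}\langle\gcl_{\Gamma_v}(g)\rangle$ depends on only finitely many coordinates $S'$, and combining the local density conclusions with density of the diagonal $\Gamma\hookrightarrow\prod_{v\in S'}\underline{G}(K_v)$ produces a single element of $\gcl_\Gamma(g)^N$ lying in $U$.

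The main obstacle is the uniform-in-$v$ Lie-theoretic claim at places $v\in T_\Gamma$: the bound $N$ must not depend on the residue characteristic $p_v$, even though $T_\Gamma$ contains places with arbitrarily large $p_v$. This is exactly why the ``good'' reduction conditions are framed as they are---the large-characteristic and adjoint-irreducibility clauses allow the linearization and implicit function theorem estimates to be carried out with constants depending only on $D$, avoiding any $p$-dependence. A secondary subtlety is that, for a fixed bound $N$, the size of the guaranteed neighborhood of the identity shrinks as $g$ approaches a central element; this is exactly the role of the $\epsilon$-separation hypothesis, which supplies the quantitative lower bound needed to keep that neighborhood non-degenerate across all compact real places.
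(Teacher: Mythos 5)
Your high-level skeleton---glue via strong approximation, argue locally, distinguish good places $v\in T_\Gamma$ from the finitely many exceptional ones---matches the paper's strategy (which separates $v\in S$, $v\in T_\Gamma$, and $v\notin T_\Gamma\cup S$). But there is a substantive gap in your local analysis at $v\in T_\Gamma$, and it is precisely the point where the paper invests the most work.

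The commutator-map $+$ implicit-function-theorem argument you describe does produce, at each non-archimedean good place, an open neighborhood $W_v$ inside $\gcl_{\Gamma_v}(g)^N$, with $N$ a fixed multiple of $\dim\underline{G}$. The trouble is twofold. First, for $W=\prod_{v\in T_\Gamma}W_v$ to be a neighborhood of the identity in the restricted product, one needs $W_v=\Gamma_v$ (the \emph{full} compact group $\underline{G}(A_v)$) for all but finitely many $v$; the implicit function theorem only hands you a congruence ball $\underline{G}(A_v;\mathfrak{p}_v^m)$, and getting from that ball to the whole group requires filling the finite quotient $\underline{G}(A_v)/\underline{G}(A_v;\mathfrak{p}_v^m)$ by products of conjugates of $g$. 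Second, and more seriously, when $g$ is congruent to a central element modulo $\mathfrak{p}_v^k$ for some $k\ge 1$ (which happens for at most finitely many $v$, but must be handled to get a single uniform exponent), the image of $I-\Ad(g)$ on $\mathfrak{g}(A_v)$ lies deep in $\mathfrak{p}_v^k\mathfrak{g}(A_v)$; the Newton/Hensel step then only lands you in $\underline{G}(A_v;\mathfrak{p}_v^{k+c})$, leaving an unbridged gap of index roughly $q_v^{c\cdot\dim}$ up to $\langle\gcl_{\Gamma_v}(g)\rangle=\underline{G}(A_v;\mathfrak{p}_v^k)$, and naively traversing that gap costs an exponent growing with $q_v$. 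The paper's Lemma \ref{lem:irreduciblilty} is exactly the device that kills this $q_v$-dependence: after taking $\exp$/$\log$, one is reduced to showing that $4\dim\underline{H}$ additive copies of the $\Ad(\underline{H}(\mathbb{F}_q))$-orbit of a nonzero vector exhaust $\mathfrak{h}(\mathbb{F}_q)$, and this is proved by an exponential-sum estimate (Weil bounds, under the large-characteristic hypothesis of Condition \eqref{cond:good.reduction.char}). Your appeal to Conditions \eqref{cond:good.reduction.irr} and \eqref{cond:good.reduction.char} as making ``all $\Ad$-based estimates uniform in $D$'' gestures in this direction but does not identify the mechanism, and without it the claimed bound $N$ would depend on the residue characteristic.

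Two smaller inaccuracies: $T_\Gamma$ consists of places outside $S$, hence is entirely non-archimedean, so no ``archimedean or $p$-adic as appropriate'' choice arises there; and $\epsilon$-separation does \emph{not} force $g$ to be non-central (it is vacuous when $\underline{G}(K_v)$ is non-compact at every real $v$), so you should not lean on that implication. Finally, the uniformity of the exponent over the places $v\in S$ in the paper rests on \cite[Theorem 6.16]{PR} (finitely many semisimple subgroups of $\GL_D(K_v)$ up to isomorphism), which makes $N_1$ depend only on $K,S,D,\epsilon$ and not on $\underline{G}$; your sketch elides this.
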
 

\begin{remark} Under the more restrictive Setting \ref{nota:Gamma}, for every non-archimedean $v\in S$, the group $\underline{G}(K_v)$ is non-compact. In that case, we can prove Proposition \ref{cor:bounded_prod} without using Lemma \ref{lem:conj.compact.p-adic.open} below and conclude that (under Setting \ref{nota:Gamma}) the constant $N$ depends only on $D$ and $\epsilon$.
\end{remark} 

\begin{proposition}\label{cor:bound_width_cong} In Setting \ref{ass:gcc}, for every $\epsilon>0$  there exists a constant $N=N(\Gamma,\epsilon)$ such that, for every $\epsilon$-separated element $g \in \Gamma$, $\gcl_\Gamma(g)^N$ is dense in $\langle\gcl_\Gamma(\alpha)\rangle$ with respect to the topology induced by $G \left( \mathbb{A}_K ^{\left\{ w \right\}} \right)$, where $\mathbb{A}_K ^{\left\{ w \right\}}=\prod_{v\neq w}' K_v$ is the ring of $w$-adeles. In particular, it is dense in the congruence topology.
\end{proposition}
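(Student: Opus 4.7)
The plan is to bootstrap from Proposition~\ref{cor:bounded_prod} using multiplicativity, the $\Gamma$-conjugation invariance of $\gcl_\Gamma(g)^N$, and strong approximation for $\Gamma$.  Set $M := \gcl_\Gamma(g)$, $H := \langle M \rangle$, and $\Omega := \prod_{v \in T_\Gamma} \underline{G}(\mathcal{O}_v)$, a compact profinite group.  Proposition~\ref{cor:bounded_prod} furnishes a constant $N_0 = N_0(K, S, D, \epsilon)$ and an open symmetric neighborhood $W$ of the identity in $\Omega$ (we may take $W \subset \Omega$ since $\Gamma$ lies in $\Omega$ at the $T_\Gamma$-coordinates) such that $\overline{M^{N_0}} \supset W \times P$ in $G(\mathbb{A}_K^{\{w\}})$, where $P := \prod_{v \notin T_\Gamma \cup \{w\}} \langle \gcl_{\Gamma_v}(g) \rangle$.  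Because the closure of $A \cdot B$ contains $\overline{A} \cdot \overline{B}$ and $P$ is a subgroup, iterating gives $\overline{M^{k N_0}} \supset W^k \times P$ for every $k \ge 1$.

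To turn this into density in $H$, I need to cover the projection $\pi_\Omega(H)$ by $W^{k_0}$ for some $k_0$ uniform in $g$.  Since $\overline{M^{N_0}}$ is $\Gamma$-conjugation invariant and $\Gamma$ is dense in $\Omega$ (by the definition of $T_\Gamma$), continuity lets me enlarge $W$ to its $\Omega$-conjugation saturation, which still lies inside the closure of $\pi_\Omega(M^{N_0})$.  The ascending union $V := \bigcup_k W^k$ is then an open normal subgroup of the profinite group $\Omega$, hence of finite index, and by compactness $V = W^{k_0}$ for some finite $k_0$.  The key inclusion $\pi_\Omega(H) \subset V$ follows from the normal-subgroup structure of $\prod \underline{G}(\mathcal{O}_v)$: because $\underline{G}$ is simply connected and almost simple and $g$ is $\epsilon$-separated (and hence non-central at every factor), the closed normal subgroup of $\Omega$ topologically generated by the image of $g$ coincides with $V$ up to a bounded-index piece that depends only on $\Gamma$.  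Combining with $\pi_{\mathrm{rest}}(H) \subset P$, I conclude $H \subset V \times P \subset \overline{M^{k_0 N_0}}$, establishing the density with $N := k_0 N_0$.

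The main obstacle is the uniform dependence of $k_0$ on $g$, since a priori $W$ and hence the index $[\Omega : V]$ depend on $g$.  This is handled by arranging, already in the proof of Proposition~\ref{cor:bounded_prod}, that $W$ contains a fixed open subgroup of $\Omega$ coming from the closure of a principal congruence subgroup of $\Gamma$; this bounds $[\Omega : V]$ and thereby $k_0$ in terms of $\Gamma$ and $\epsilon$ alone.  The ``in particular'' assertion about the congruence topology is immediate, as the congruence topology on $\Gamma$ is coarser than the topology induced from $\mathbb{A}_K^{\{w\}}$.
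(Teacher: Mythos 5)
Your bootstrapping strategy is plausible in outline, but it has a genuine gap at exactly the point you flag as the ``main obstacle,'' and the fix you propose does not work. The claim that one can arrange, in the proof of Proposition~\ref{cor:bounded_prod}, for $W$ to contain a \emph{fixed} open subgroup of $\Omega$ coming from a principal congruence subgroup is false. If $g\in\Gamma$ lies in $\Gamma_v[\mathfrak{p}_v^n]$ for some $v\in T_\Gamma$ and large $n$, then $\gcl_{\Gamma_v}(g)^{N_0}$ is contained in $\Gamma_v[\mathfrak{p}_v^n]$, so \emph{any} identity neighborhood $W_v$ contained in $\overline{\gcl_{\Gamma_v}(g)^{N_0}}$ must also lie in $\Gamma_v[\mathfrak{p}_v^n]$. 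Since $n$ can be arbitrarily large for $\epsilon$-separated $g$ (the $\epsilon$-separation condition constrains $g$ only at the real places where $\underline{G}(K_v)$ is compact, and says nothing about the $T_\Gamma$-places), $W$ shrinks with $g$ and the index $[\Omega:V]$, hence $k_0$, is unbounded. Your parenthetical ``and hence non-central at every factor'' also leans on $\epsilon$-separation for information it does not supply at $T_\Gamma$.

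What rescues the argument is not to generate $V$ by iterating a fixed $W$ but to control the ratio between $\gcl_{\Gamma_v}(g)^N$ and $\langle\gcl_{\Gamma_v}(g)\rangle$ directly, \emph{relative to how deep $g$ sits in the congruence filtration}. This is exactly what Lemma~\ref{lem:conj.compact.p-adic.open}(\ref{item:conj.compact.2}) provides: for each of the finitely many $v\in T_\Gamma$ there is a constant $N_2(\Gamma_v)$, independent of $g$, with $\gcl_{\Gamma_v}(g)^{N_2}=\langle\gcl_{\Gamma_v}(g)\rangle$ for all $g\in\Gamma_v$. The paper's proof simply substitutes this for the open-neighborhood statement \eqref{eq:bounded_prod.T_Gamma} in the proof of Proposition~\ref{cor:bounded_prod}, and then strong approximation identifies $\overline{\gcl_\Gamma(g)^N}=\prod_{v\neq w}\gcl_{\Gamma_v}(g)^N$ with $\prod_{v\neq w}\langle\gcl_{\Gamma_v}(g)\rangle=\overline{\langle\gcl_\Gamma(g)\rangle}$. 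Your ``in particular'' sentence about the congruence topology is correct and matches the paper.
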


Note that the constant $N$ in Proposition \ref{cor:bounded_prod} depends only on $K,S,D,\epsilon$ and not on $\underline{G}$ or $\Gamma$. In contrast, the constant $N$ in Proposition \ref{cor:bound_width_cong} does depend on $\Gamma$.

\begin{proposition}\label{cor:bound_prod} In Setting \ref{ass:gcc}, there exists a constant $N=N(\Gamma)$ such that, for every principal congruence subgroup $\Delta$ contained in $\Gamma$, there are $\alpha_1,\ldots,\alpha_N \in \Delta$ such that $\prod_{1 \leq i \leq N}\gcl_{\Gamma}(\alpha_i)$ is a dense subset of $\Delta$, with respect to the congruence topology. 
\end{proposition}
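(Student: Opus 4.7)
The strategy is to bootstrap Proposition~\ref{cor:bound_width_cong} via a bounded number of well-chosen elements of $\Delta$. If $\alpha_1,\ldots,\alpha_k \in \Delta$, then each $\langle \gcl_\Gamma(\alpha_i)\rangle$ is a normal subgroup of $\Gamma$, so the set-theoretic product $\langle \gcl_\Gamma(\alpha_1)\rangle \cdots \langle\gcl_\Gamma(\alpha_k)\rangle$ equals the normal subgroup of $\Gamma$ generated by $\{\alpha_i\}$. Provided each $\alpha_i$ is $\epsilon$-separated, Proposition~\ref{cor:bound_width_cong} yields $N_1 = N_1(\Gamma,\epsilon)$ with $\gcl_\Gamma(\alpha_i)^{N_1}$ dense in $\langle \gcl_\Gamma(\alpha_i)\rangle$, and continuity of multiplication then gives that $\prod_i \gcl_\Gamma(\alpha_i)^{N_1}$ is dense in the normal closure of $\{\alpha_i\}$ in $\Gamma$. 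It therefore suffices to find constants $k=k(\Gamma)$ and $\epsilon=\epsilon(\Gamma)>0$, together with $\epsilon$-separated elements $\alpha_1,\ldots,\alpha_k\in\Delta$, whose normal closure in $\Gamma$ is dense in $\Delta$ in the congruence topology; then $N=k N_1$ will do.

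The $\epsilon$-separation itself poses no obstruction. Fix $\epsilon(\Gamma)>0$ small enough that, at each compact archimedean place $v$, the $\epsilon$-neighborhood of $Z(\underline{G}(K_v))$ is a proper subset of $\underline{G}(K_v)$. By Borel density (combined with the irreducibility assumption), $\Gamma$, and hence also the finite-index subgroup $\Delta$, projects densely onto the product of the compact archimedean factors; in particular, $\Delta$ meets the $\epsilon$-separated locus in every coset in which we wish to impose other conditions.

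The plan is to take $k=2$. Write $\Delta = \Gamma[I]$. Choose $\alpha_1 \in \Delta$ that is $\epsilon$-separated and of minimal $\mathfrak{p}$-adic depth at each prime $\mathfrak{p}\mid I$, meaning $\alpha_1 \notin \Gamma[I\mathfrak{p}]$ for every such $\mathfrak{p}$; this is possible by strong approximation for $\Gamma$. At each $\mathfrak{p}\mid I$, the successive quotients $\Gamma[I\mathfrak{p}^j]/\Gamma[I\mathfrak{p}^{j+1}]$ identify with the Lie algebra $\mathfrak{g}(\mathbb{F}_\mathfrak{p})$, on which $\underline{G}(\mathbb{F}_\mathfrak{p})$ acts irreducibly by Condition~\ref{cond:good.reduction.irr}. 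Combined with the observation that $\langle\gcl_\Gamma(\alpha_1)\rangle$ automatically contains powers of $\alpha_1$ (which descend in the pro-$p$ filtration), an inductive argument on depth shows that the normal closure of $\alpha_1$ is dense in $\overline{\Delta}$ at each $\mathfrak{p}\mid I$. At primes $\mathfrak{p}\nmid I$, density of the normal closure in $\underline{G}(R_\mathfrak{p})$ requires $\alpha_1$ to be non-central modulo $\mathfrak{p}$; this fails for only finitely many primes, which we denote $\mathcal{B}_1$, because $\alpha_1$ is a fixed matrix in $\GL_D(A)$ and centrality modulo $\mathfrak{p}$ forces $\mathfrak{p}$ to divide every entry of $\alpha_1 - zI$ for some central scalar $z$. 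Now choose $\alpha_2\in\Delta$ that is also $\epsilon$-separated, of minimal depth at each $\mathfrak{p}\mid I$, and non-central modulo every $\mathfrak{p}\in\mathcal{B}_1$; this is possible by strong approximation since only finitely many places are constrained. Then at every place the joint normal closure of $\{\alpha_1,\alpha_2\}$ in $\Gamma$ is dense in $\overline{\Delta}$, hence dense in $\Delta$ in the congruence topology.

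The main obstacle is this last bounded-normal-generation step: uniformly in $\Delta$, one must produce a fixed number of elements whose normal closure is congruence-dense. The key inputs are strong approximation for $\Gamma$ (valid in Setting~\ref{ass:gcc} by Matthews--Vaserstein--Weisfeiler) and the irreducibility built into the notion of good reduction; the finiteness of $\mathcal{B}_1$ is the delicate point that lets a single auxiliary element $\alpha_2$ handle the remaining bad primes.
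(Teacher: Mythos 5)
Your overall architecture is sound and matches the paper's: pick finitely many well-separated elements of $\Delta$ whose joint normal closure is congruence-dense in $\Delta$, then apply Proposition~\ref{cor:bound_width_cong} to bound the number of conjugacy-class factors. The $\epsilon$-separation claim via strong approximation is also fine. But the bound $k=2$ is not correct, and the gap is exactly the set of ``bad'' places $T_\Gamma$.

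Your argument that the normal closure of $\alpha_1$ (resp.\ $\alpha_1,\alpha_2$) exhausts $\Delta_v$ locally relies in two places on the ``good'' hypothesis from Definition~\ref{defn:good.reduction.big.char}: at $\mathfrak{p}\mid I$ you invoke Condition~\ref{cond:good.reduction.irr} to climb the congruence filtration, and at $\mathfrak{p}\nmid I$ you implicitly use that a non-central element modulo $\mathfrak{p}$ normally generates $\Gamma_\mathfrak{p}$ (this is Lemma~\ref{lem:dense.p-adic}). Neither step is available at $v\in T_\Gamma$. At a bad place the closure $\Gamma_v$ is merely some compact open subgroup, the adjoint representation need not be irreducible modulo $\mathfrak{p}_v$, and the normal closure of a single non-central element of $\Delta_v$ can be a \emph{proper} open normal subgroup of $\Delta_v$; in fact the number of elements needed to normally generate an arbitrary normal subgroup of $\Gamma_v$, even up to the bounded-width $N_1$ you use, is only known to be bounded by the constant $N_{2,v}$ of Lemma~\ref{lem:conj.compact.p-adic.open}(\ref{item:conj.compact.3}). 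Your auxiliary element $\alpha_2$ cannot repair this: it must lie in $\Delta=\Gamma[I]$, so if $v\in T_\Gamma$ and $v\mid I$ you cannot impose non-centrality mod $\mathfrak{p}_v$ at all, and even when $v\nmid I$ non-centrality mod $\mathfrak{p}_v$ does not imply $\langle\gcl_{\Gamma_v}(\alpha_2)\rangle=\Gamma_v$ at a bad place.

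This is precisely why the paper's proof uses three blocks of elements: one generic $\alpha\in\Delta$, one $\beta\in\Delta$ handling the finitely many places outside $T_\Gamma\cup S$ where $\alpha$ is too central, and then, for each $v\in T_\Gamma$, a list $\gamma_{v,1},\dots,\gamma_{v,N_{2}}$ obtained from Lemma~\ref{lem:conj.compact.p-adic.open}(\ref{item:conj.compact.3}), glued into global elements by strong approximation. The final bound is $N=2N_1+N_2$ rather than $2N_1$. Since $T_\Gamma$ depends only on $\Gamma$ (not on $\Delta$), this is still a $\Gamma$-uniform constant, so your strategy is salvageable once you incorporate a Lemma~\ref{lem:conj.compact.p-adic.open}-type input at the bad places; but as written, with $k=2$ and irreducibility assumed at every $\mathfrak{p}\mid I$, the proof does not close.
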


\subsection{Finite}

 



\begin{lemma} \label{lem:irreduciblilty} Let $\underline{H} \subseteq \GL_D$ be a connected and simple algebraic group defined over a finite field $\mathbb{F}_q$ of characteristic $p>(4D\dim \underline{H})^4$. Denote the Lie algebra of $\underline{H}$ by ${\mathfrak{h}}$. Denoting the simply connected cover of $\underline{H}$ by $\underline{H}^{sc}$, assume that $\underline{H}^{sc}(\mathbb{F}_q)$ acts irreducibly on $\mathfrak{h}(\mathbb{F}_q)$. Then, for every non-zero $X\in \mathfrak{h}(\mathbb{F}_q)$, we have
\[
\underbrace{\Ad(\underline{H}(\mathbb{F}_q))X+\cdots+\Ad(\underline{H}(\mathbb{F}_q))X}_{\text{$4\dim \underline{H}$ times}}=\mathfrak{h}(\mathbb{F}_q).
\]
\end{lemma}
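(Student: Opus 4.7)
Write $V:=\mathfrak{h}(\mathbb{F}_q)$, $d:=\dim \underline{H}$, and $S:=\Ad(\underline{H}(\mathbb{F}_q))\cdot X$. The goal is to show that $V$ equals the $(4d)$-fold sumset of $S$. The plan is a three-step reduction.

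First, I would show that the $\mathbb{F}_q$-linear span of $S$ equals $V$. The adjoint representation of $\underline{H}^{sc}$ factors through $\underline{H}$, so the hypothesis that $\underline{H}^{sc}(\mathbb{F}_q)$ acts irreducibly on $V$ transfers to the $\Ad(\underline{H}(\mathbb{F}_q))$-action. Since $X\neq 0$, the $\Ad$-invariant subspace $\Span_{\mathbb{F}_q} S$ is nonzero and hence equals $V$. Pick an $\mathbb{F}_q$-basis $X_1,\dots,X_d\in S$ of $V$, say $X_i=\Ad(g_i)X$.

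Next, I would reduce the theorem to a scalar statement. Any $v\in V$ has the form $v=\sum_{i=1}^d c_iX_i$ with $c_i\in\mathbb{F}_q$. Because $S$ is $\Ad(\underline{H}(\mathbb{F}_q))$-invariant, any decomposition $cX=\sum_{j=1}^k s_j$ with $s_j\in S$ gives, upon applying $\Ad(g_i)$, a decomposition of $c_iX_i=\Ad(g_i)(c_iX)$ as a sum of $k$ elements of $S$. Summing over $i$ expresses $v$ as a sum of at most $kd$ elements of $S$, so it suffices to establish the \emph{scalar claim}: for every $c\in\mathbb{F}_q$, the element $cX$ is a sum of at most $4$ elements of $S$.

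The scalar claim is the main substantive step. When $X$ is nilpotent, Jacobson--Morozov (valid since $p>(4D\dim\underline{H})^4$ dominates all bad primes of $\underline{H}$) supplies an $\mathfrak{sl}_2$-triple $(X,h,Y)$ and a cocharacter $h:\mathbb{G}_m\to\underline{H}$ with $\Ad(h(t))X=t^2X$; hence every nonzero square multiple of $X$ lies in $S$, and since every element of $\mathbb{F}_q$ is a sum of two squares, $cX\in S+S$. For general $X$ with a nonzero semisimple part, this scaling trick fails, and I would instead invoke a Lang--Weil count on the $\mathbb{F}_q$-morphism $\sigma_4:\underline{H}^4\to\mathfrak{h}$, $(g_1,\dots,g_4)\mapsto \sum_{i=1}^4\Ad(g_i)X$. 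Over $\bar{\mathbb{F}}_q$, the first step (together with the availability of square roots) shows that $\sigma_4$ is surjective onto $\mathfrak{h}$ and that each fiber $\sigma_4^{-1}(cX)$ is a nonempty variety of dimension $3d$; viewed as a subvariety of $\underline{H}^4\subset (\GL_D)^4$, its degree is bounded polynomially in $D\dim\underline{H}$. Since $p>(4D\dim\underline{H})^4$ exceeds the square of this degree, Lang--Weil produces an $\mathbb{F}_q$-point in $\sigma_4^{-1}(cX)$---that is, an explicit decomposition $cX=\sum_{i=1}^4\Ad(g_i)X$ with $g_i\in\underline{H}(\mathbb{F}_q)$.

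The main obstacle is the scalar claim in the non-nilpotent case. The Lang--Weil route hinges on exhibiting an absolutely irreducible component of $\sigma_4^{-1}(cX)$ of the expected dimension $3d$ with a polynomial-in-$(D\dim\underline{H})$ degree bound---a geometric fact that is plausible in very good characteristic but requires the structure theory of simple algebraic groups ensured by the hypothesis $p>(4D\dim\underline{H})^4$, and the precise exponent $4$ appearing in the characteristic bound is exactly what lets the Lang--Weil error term beat the main term uniformly in $q$.
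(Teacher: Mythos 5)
Your reduction to the scalar claim is a genuinely clever idea and the first two steps are sound: the $\mathbb{F}_q$-span of $S = \Ad(\underline{H}(\mathbb{F}_q))X$ is all of $V$ because $\Ad$ factors through the image of $\underline{H}^{sc}(\mathbb{F}_q)$, and $\Ad$-invariance of $S$ then lets you absorb the basis-coefficient decomposition into a factor of $d$. If you could actually prove that every $cX$ is a sum of at most four elements of $S$, the statement would follow. However, the scalar claim is exactly where the proposal breaks down, and I think the gap is substantive rather than technical.

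For nilpotent $X$ your Jacobson--Morozov scaling is fine, but for general $X$ the Lang--Weil route has several unverified prerequisites that do not come for free. First, you need $cX$ to lie in the image of the morphism $\sigma_4:\underline{H}^4\to\mathfrak{h}$ over $\bar{\mathbb{F}}_q$; you cite "the first step together with the availability of square roots," but the first step is a statement about $\mathbb{F}_q$-linear span, not about the algebraic sumset $\Ad(\underline{H})X+\cdots+\Ad(\underline{H})X$ being Zariski dense, let alone containing the specific point $cX$. When $X$ has a nonzero semisimple part, the orbit $\Ad(\underline{H})X$ is a closed subvariety of dimension $<\dim\underline{H}$, and it is not established that four copies of it sum to all (or even almost all) of $\mathfrak{h}$. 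Second, Lang--Weil requires an absolutely irreducible component of $\sigma_4^{-1}(cX)$ defined over $\mathbb{F}_q$ of the expected dimension $3d$; a dominant map can still have special fibers that are reducible, non-reduced, or of the wrong dimension, and $cX$ is a specific point rather than a generic one. Third, the degree bound "polynomial in $D\dim\underline{H}$" is asserted but not derived. You flag this obstacle yourself, but it is not a loose end to be tightened: it is the heart of the lemma. The paper avoids the nilpotent/non-nilpotent dichotomy entirely. Instead of Lang--Weil on a fiber, it picks $p$-elements $u_1,\dots,u_d$ with $\{ad(\log u_i)X\}$ a basis (possible because $\underline{H}^{sc}(\mathbb{F}_q)$ is generated by $p$-elements and acts irreducibly, and $D<p$ so the logarithm is defined), builds an explicit low-degree polynomial map $F(t_1,\dots,t_d)=\Ad(u_1^{t_1})\cdots\Ad(u_d^{t_d})X$ with surjective differential at the origin, and estimates the $4d$-fold additive convolution of the pushforward measure by Weil bounds on exponential sums. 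The characteristic bound $p>(4D\dim\underline{H})^4$ there has a concrete role: it guarantees $\deg(\varphi\circ F)=2dD<p$ so the Weil bound applies, and it makes $(2dD)^{4d}<q^d$ so the nontrivial Fourier modes are dominated by the trivial one. This is quite different from, and more robust than, controlling a Lang--Weil error term for a fiber of $\sigma_4$.
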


\begin{proof} Denote $d=\dim \underline{H}$. It is well-known that $\underline{H}^{sc}(\mathbb{F}_q)$ is generated by its $p$-elements. Since $\underline{H}^{sc}(\mathbb{F}_q)$ acts irreducibly on $\mathfrak{h}(\mathbb{F}_q)$ and the action factors through $\underline{H}(\mathbb{F}_q)$, it follows that there is no subspace of $\mathfrak{h}(\mathbb{F}_q)$ which is invariant under all $p$-elements of $\underline{H}(\mathbb{F}_q)$.

Since $D<p$, the logarithm map is defined on the set of $p$ elements of $\underline{H}(\mathbb{F}_q)$, and $\log(u)\in \mathfrak{h}(\mathbb{F}_q)$, for every such $u$. Since $\Ad(u)=\exp(ad(\log(u)))$, there is no non-trivial subspace invariant under all elements of the set
\[
\left\{ ad(\log(u))\mid \text{$u\in \underline{H}(\mathbb{F}_q)$ is a $p$-element} \right\}.
\]
It follows that there are $u_1,\ldots,u_d\in \underline{H}(\mathbb{F}_q)$ such that $\left\{ ad(\log(u_i))X \right\}$ is a basis for $\mathfrak{h}(\mathbb{F}_q)$.

Denote $u^t=\exp(t\log(u))$ and define a map $F:\mathbb{A} ^d \rightarrow \mathfrak{h}$ by
\[
F(t_1,\ldots,t_d)=\left( \Ad(u_1^{t_1}) \circ \Ad(u_2^{t_2}) \circ \cdots \circ \Ad(u_d^{t_d})\right) (X).
\]
$F$ is a polynomial map of degree $2dD$ and its derivative at $(0,\ldots,0)$ is the map
\[
dF(0,\ldots,0)(t_1,\ldots,t_d)=\sum t_i ad(\log(u_i))X.
\]
Since $\left\{ ad(\log(u_i))X \right\}$ is a basis, $dF(0,\ldots,0)$ is onto. In particular, $F$ is a dominant map. Let $\mu$ be the measure on $\mathfrak{h}(\mathbb{F}_q)$ given by $\mu=\sum_{a\in \mathbb{F}_q^d} \delta_{F(a)}$, where $\delta_a$ is the delta measure at $a$. For every $t\in \mathbb{F}_q$, $u_i^t \in \underline{H}(\mathbb{F}_q)$, so $\supp(\mu) \subset \Ad(\underline{H}(\mathbb{F}_q))X$.

Fix a non-trivial additive character $\psi$ of $\mathbb{F}_q$. Let $\chi:\mathfrak{h}(\mathbb{F}_q) \rightarrow \mathbb{C} ^ \times $ be an additive character. Then $\chi = \psi \circ \varphi$, where $\varphi: \mathfrak{h} (\mathbb{F}_q) \rightarrow \mathbb{F}_q$ is a $\mathbb{F}_q$-linear map. Denoting the Fourier transform of $\mu$ by $\widehat{\mu}$, we have
\[
\widehat{\mu}(\chi)=\sum_{a\in \mathbb{F}_q^d} \psi(-(\varphi \circ F)(a)).
\]
The polynomial $\varphi \circ F$ has degree $2dD < p$. The Weil bounds (see \cite[Proposition 3.8]{SGA}) give
\[
\left| \widehat{\mu}(\chi)\right| < (2dD)q^{d-\frac{1}{2}},
\]
for all non-trivial characters $\chi$. We have
\[
\left| \sum_{\text{$\chi$ non-trivial}} \widehat{\mu^{*4d}}(\chi) \right| \leq  \sum_{\text{$\chi$ non-trivial}} \left| \widehat{\mu(\chi)} \right|^{4d} < q^d(2dD)^{4d}q^{4d^2-2d} < q^{4d^2} = \widehat{\mu^{*4d}}(1).
\]
By Placherel inversion theorem,
\[
\left| \mu ^{*4d}(g) \right|=\left| \sum_\chi \widehat{\mu^{*4d}}(\chi) \chi(g) \right| \geq \widehat{\mu^{*4d}}(1)- \left| \sum_{\text{$\chi$ non-trivial}} \widehat{\mu^{*4d}}(\chi) \right| >0,
\]
so
\[
\mathfrak{h}(\mathbb{F}_q)=\supp(\mu ^{*4d}) \subseteq \underbrace{\Ad({\underline{H}})X+\cdots+\Ad({\underline{H}})X}_{\text{$4d$ times}}.
\]
\end{proof}

\subsection{Local}

In this section, we prove several local versions of Propositions \ref{cor:bounded_prod}, \ref{cor:bound_width_cong}, and \ref{cor:bound_prod}. The versions we prove are for compact Lie groups (Lemma \ref{lem:conj.open.compact.Lie}), Non-compact groups (Lemma \ref{lem:conj.open.noncompact.Lie}), compact p-adic groups (Lemma \ref{lem:conj.compact.p-adic.open}), and another version for compact p-adic groups (Lemma \ref{lem:dense.p-adic}) which works only for good compact p-adic groups, but gives a uniform bound on the exponent $N$.

We will use the following quantitative version of the open mapping theorem:

\begin{lemma} \label{lem:open.mapping.quantitative} Let $R$ be the ring of integers of a non-archimedean local field, let $\mathfrak{m} \subseteq R$ be the maximal ideal, and let $X,Y \subset R^d$ be $p$-adic manifolds. There is a constant $C$ such that, for every function $f:X \rightarrow Y$ which is given by a convergent power series with coefficients in $R$, every $x_0\in X$, and every natural number $n$ such that $df_{x_0}(T_{x_0}X\cap R^d) \supseteq \mathfrak{m} ^n (T_{f(x_0)}Y \cap R^d)$, we have $f(X) \supseteq Y \cap \left( f(x_0)+\mathfrak{m} ^{n+C}R^d \right)$. Moreover, for every $k \geq 0$, we have $f\left( X \cap \left( x_0+\mathfrak{m} ^kR^d \right) \right) \supseteq Y \cap \left( f(x_0)+\mathfrak{m} ^{n+k+C}R^d \right)$.
\end{lemma}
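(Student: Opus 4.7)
The plan is to prove the lemma by a $p$-adic Newton iteration. After working in local coordinates around $x_0$ and $f(x_0)$, we may assume $x_0 = 0$ and $f(0) = 0$, with $X$ and $Y$ identified with neighborhoods in $R^{d'}$ and $R^{d''}$ respectively, where $d' = \dim X$ and $d'' = \dim Y$. Write the Taylor expansion $f(h) = L(h) + E(h)$ with $L = df_0$ and $E$ collecting the terms of degree $\geq 2$. Because the power-series coefficients lie in $R$, the ultrametric inequality yields the bounds $|E(h)| \leq |h|^2$ and $|E(h_1) - E(h_2)| \leq \max(|h_1|,|h_2|)\cdot|h_1 - h_2|$ whenever $|h|, |h_1|, |h_2| \leq 1$.

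Next, I would produce a controlled right inverse of $L$. Applying Smith normal form to the $R$-linear map $L:T_0X \cap R^d \to T_{f(0)}Y \cap R^d$, the hypothesis that its image contains $\mathfrak{m}^n(T_{f(0)}Y\cap R^d)$ translates to the statement that every elementary divisor of $L$ has valuation at most $n$. Hence one can choose a $K$-linear right inverse $L^{\dagger}$ satisfying $L^{\dagger}\bigl(\mathfrak{m}^{n+j}(T_{f(0)}Y\cap R^d)\bigr) \subseteq \mathfrak{m}^{j}(T_0X\cap R^d)$ for every $j \geq 0$; in words, $L^{\dagger}$ loses at most $n$ in valuation.

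Given a target $y \in Y$ with $|y - f(0)| \leq |\mathfrak{m}^{n+k+C}|$, set $h_0 = 0$ and iterate $h_{j+1} = h_j + L^{\dagger}(y - f(h_j))$. The identity $L \circ L^{\dagger} = \mathrm{id}$, combined with the Taylor estimate on $E$, yields by induction that each $|h_j - h_{j-1}|$ is bounded by $|\mathfrak{m}^{k+C}|$ and that the valuation of the residual $y - f(h_j)$ grows: linearly at first and then quadratically (Hensel-type doubling) once it surpasses the threshold $2n$. Consequently $\{h_j\}$ is Cauchy in the complete space $R^{d'}$, its limit $h_\infty$ lies in $X$ (which is closed in $R^d$), and $f(h_\infty) = y$ by continuity. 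The first assertion is the $k=0$ case; the \emph{moreover} clause follows because the initial step already has norm $\leq |\mathfrak{m}^{k+C}|$, so the entire iteration stays inside $x_0 + \mathfrak{m}^k R^d$.

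The main obstacle is choosing $C$ large enough so that the iteration starts in the regime where the ultrametric Taylor estimate forces contraction, i.e., so that the residual overshoots the threshold $2n$ before quadratic convergence takes over. The needed $C$ can be extracted from the Taylor data of $f$ together with the parameter $n$, and crucially does not depend on $x_0$ or on the specific target $y$.
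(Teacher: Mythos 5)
The paper states Lemma~\ref{lem:open.mapping.quantitative} without proof, so there is no proof to compare against; I can only assess your argument on its own terms. The Newton--Hensel scheme is the right framework, and the ingredients (Smith normal form for a controlled right inverse $L^{\dagger}$ of $L=df_{x_0}$, the ultrametric estimate on the quadratic tail $E$ from integrality of the coefficients, completeness of $R$) are all correctly deployed. But there is a genuine gap, which you yourself point to in your last paragraph: the lemma asks for a single $C$ depending only on $X$ and $Y$, valid for every $f$, $x_0$, and $n$, while your iteration requires $C>n-k$. Indeed, with $y - f(x_0) \in \mathfrak{m}^{n+k+C}R^d$ one has $\delta_0 = L^{\dagger}(y-f(x_0))\in\mathfrak{m}^{k+C}R^d$, the next residual $E(\delta_0)$ lies in $\mathfrak{m}^{2(k+C)}R^d$, and applying $L^{\dagger}$ again only lands $\delta_1$ in $\mathfrak{m}^{2(k+C)-n}R^d$; contraction $2(k+C)-n>k+C$ amounts to $C>n-k$, so at $k=0$ you cannot avoid $C>n$. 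Worse, the lemma as literally stated appears to be false: for $d=1$, $X=Y=R$, $x_0=0$, and $f(x)=\pi^n x+x^2$ (odd residue characteristic, $n\geq 2$), the hypothesis holds with exactly this $n$, but $f(R)$ contains no element of odd valuation below $2n$, so $f(R)\supseteq\mathfrak{m}^{n+C}$ already forces $C\geq n$.

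What closes the gap, and what the paper's application silently relies on, is an extra hypothesis: that the non-linear part $E(h)=f(x_0+h)-f(x_0)-df_{x_0}(h)$ has all power-series coefficients in $\mathfrak{m}^n$ (equivalently, $f(x_0+\mathfrak{m}^j R^d)\subseteq f(x_0)+\mathfrak{m}^{n+j}R^d$ for all $j\geq 0$). Then the Lipschitz bound on $E$ improves by a factor of $\mathfrak{m}^n$, exactly cancelling the loss incurred by $L^{\dagger}$, and each Newton step multiplies the correction $\delta_j$ by $\mathfrak{m}^{k+C}$; the iteration therefore contracts for every $C\geq 1$, uniformly in $n$. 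The map $\Phi$ in the proof of Lemma~\ref{lem:conj.compact.p-adic.open} does satisfy this extra bound (each monomial of degree $\geq 2$ in its expansion picks up at least one factor from $\log g^{|Z(H)|}$ and at least two factors from $\mathfrak{m}^{c_1}\mathfrak{g}$, giving total depth $\geq n$), so the application is sound; but you should add this hypothesis to the statement and invoke it at the contraction step. As written, your proof establishes a strictly weaker statement than the one the paper needs.
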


The following is an immediate consequence of Lemma \ref{lem:open.mapping.quantitative}:

\begin{lemma} \label{lem:implicit.function.theorem} Suppose that either $X,Y,Z$ are real manifolds or that they are $p$-adic manifolds. Let $f:X \times Y \rightarrow Z$ be a continuously differentiable function. For each $y\in Y$, let $f_y:X \rightarrow Z$ be the function $f_{y}(x)=f(x,y)$. Assume that there is a point $(x_0,y_0)$ for which $d f_{y_0}(x_0):T_{x_0}X \rightarrow T_{f(x_0,y_0)}Z$ is onto. Then there are open sets $U \subseteq Y$ and $V \subseteq Z$ such that $y_0\in U$, $f(x_0,y_0)\in V$, and $f_{y}(X) \supseteq V$, for every $y\in U$.
\end{lemma}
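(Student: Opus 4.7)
The plan is to reduce the parametric statement to the standard (non-parametric) inverse function theorem by absorbing the parameter $y$ into the domain of an enlarged map. First I would choose a local submanifold $X' \subseteq X$ through $x_0$ of dimension $\dim Z$ such that $df_{y_0}(x_0)$ restricts to a linear isomorphism $T_{x_0}X' \to T_{f(x_0,y_0)}Z$; this is possible because $df_{y_0}(x_0)$ is surjective. Then I would pass to the ``parametric'' map
\[
G: X' \times Y \longrightarrow Z \times Y, \qquad G(x,y) := (f(x,y),\, y).
\]
Its differential at $(x_0,y_0)$ is block upper-triangular with diagonal blocks $df_{y_0}(x_0)|_{T_{x_0}X'}$ and $\mathrm{id}_{T_{y_0}Y}$, so it is a linear isomorphism.

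Next, I would apply the inverse function theorem to $G$. In the real case this is the classical statement: $G$ is a local diffeomorphism at $(x_0,y_0)$, so there are open neighborhoods $\widetilde U$ of $(x_0,y_0)$ and $\widetilde W$ of $(f(x_0,y_0),y_0)$ with $G(\widetilde U) = \widetilde W$. In the $p$-adic case I would invoke Lemma \ref{lem:open.mapping.quantitative} applied to $G$ in local coordinates (choosing coordinates so that $G$ is given by a convergent power series with integral coefficients, which is possible after rescaling the coordinate neighborhoods); the hypothesis that $dG_{(x_0,y_0)}$ is an isomorphism provides the required integer $n$, and the conclusion of the lemma gives an analogous open set $\widetilde W$ contained in the image.

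After shrinking, I may assume $\widetilde W$ has product form $V \times U$ with $U \ni y_0$ an open neighborhood in $Y$ and $V \ni f(x_0,y_0)$ an open neighborhood in $Z$. Then for every $y \in U$ and every $z \in V$, surjectivity of $G$ onto $\widetilde W$ supplies $x \in X'$ with $G(x,y) = (z,y)$, i.e., $f(x,y) = z$. Hence $f_y(X) \supseteq f_y(X') \supseteq V$ for all $y \in U$, which is exactly the desired conclusion.

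The argument is essentially routine; the only mildly delicate step is the $p$-adic one, where one must verify that $G$ is covered by Lemma \ref{lem:open.mapping.quantitative}. Since $G$ is obtained from $f$ by tensoring with the identity on $Y$, its Taylor expansion around $(x_0,y_0)$ converges on the same polydisc as that of $f$, and its derivative at $(x_0,y_0)$ is invertible over $R$ after the coordinate rescaling. This is the only real obstacle, and it is purely bookkeeping.
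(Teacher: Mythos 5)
Your proof is correct and takes a genuinely different route from the paper. The paper gives no explicit argument — it merely labels the lemma an ``immediate consequence of Lemma~\ref{lem:open.mapping.quantitative}'' — and the intended proof is direct: apply Lemma~\ref{lem:open.mapping.quantitative} to $f_{y_0}$ to obtain a ball of radius $\mathfrak{m}^{n+C}$ about $f(x_0,y_0)$ contained in $f_{y_0}(X)$, then use continuity of $y\mapsto(df_y(x_0),f_y(x_0))$ together with the fact that the constant $C$ depends only on the ambient manifolds (not on the map) to conclude that, for $y$ in a small neighbourhood of $y_0$, the same ball shrunk by a power or two of $\mathfrak{m}$ is still contained in $f_y(X)$. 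You instead absorb the parameter into the domain, pass to a slice $X'$ of complementary dimension so that the thickened map $G(x,y)=(f(x,y),y)$ has invertible differential, and read off the product neighbourhood $V\times U$ from the inverse function theorem (classically in the real case, and via Lemma~\ref{lem:open.mapping.quantitative} in the $p$-adic one). The paper's route keeps explicit track of the radius — which it needs for the quantitative corollaries that follow — while yours is the cleaner conceptual reduction to the ordinary inverse function theorem. One shared caveat: Lemma~\ref{lem:open.mapping.quantitative} assumes the map is given by a convergent power series over $R$, whereas Lemma~\ref{lem:implicit.function.theorem} only asks for continuous differentiability; this mismatch is already present in the paper's formulation, and since every map to which the lemma is actually applied is analytic, it is harmless in practice.
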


A compactness argument together with Lemma \ref{lem:implicit.function.theorem} yields:

\begin{corollary} \label{cor:open.mapping} Suppose that either $X,Y,Z$ are real manifolds or that they are $p$-adic manifolds, let $z_0\in Z$, and let $C \subseteq Y$ be a compact set. Let $f:X \times Y \rightarrow Z$ be a continuously differentiable function. Assume that, for each $y\in C$ there is $x\in X$ such that $f(x,y)=z_0$ and $df_{y}(x)$ is onto. Then there is an open set $z_0\in V\subseteq Z$ such that, for each $y\in C$, $V \subseteq f_y(X)$.
\end{corollary}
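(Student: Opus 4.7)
The plan is a straightforward compactness argument on top of the pointwise result already established in Lemma \ref{lem:implicit.function.theorem}.

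First, I would apply Lemma \ref{lem:implicit.function.theorem} pointwise along $C$. For each $y_0 \in C$, the hypothesis yields some $x_{y_0} \in X$ with $f(x_{y_0},y_0)=z_0$ and with $df_{y_0}(x_{y_0})$ surjective onto $T_{z_0}Z$. Applying Lemma \ref{lem:implicit.function.theorem} to the pair $(x_{y_0},y_0)$ produces open sets $U_{y_0} \subseteq Y$ and $V_{y_0} \subseteq Z$ such that $y_0 \in U_{y_0}$, $z_0 \in V_{y_0}$, and $f_y(X) \supseteq V_{y_0}$ for every $y \in U_{y_0}$.

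Next, I would use compactness of $C$. The family $\{U_{y_0}\}_{y_0 \in C}$ is an open cover of $C$, so there is a finite subcover $U_{y_1},\dots,U_{y_k}$. Set
\[
V := V_{y_1} \cap \cdots \cap V_{y_k},
\]
which is open in $Z$ and contains $z_0$. Given any $y \in C$, choose an index $i$ with $y \in U_{y_i}$; then $f_y(X) \supseteq V_{y_i} \supseteq V$, establishing the claim. The argument works identically in the real and $p$-adic settings because Lemma \ref{lem:implicit.function.theorem} is stated uniformly for both, and compactness is used only at the level of topological spaces.

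There is no real obstacle here; the only mild subtlety is making sure that the open set $V$ is chosen independently of $y$, which is exactly what the finite subcover buys us (a pointwise application of the implicit function theorem alone would give a $y$-dependent neighborhood, while the finite intersection removes this dependence).
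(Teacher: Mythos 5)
Your proof is correct and is exactly the compactness argument the paper alludes to (the paper states Corollary \ref{cor:open.mapping} with only the remark ``A compactness argument together with Lemma \ref{lem:implicit.function.theorem} yields:'' and omits the details). You have simply written out that argument in full, pointwise application of Lemma \ref{lem:implicit.function.theorem} followed by a finite subcover and finite intersection.
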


\begin{lemma} \label{lem:conj.open.general} Let $F$ be a local field, let $\underline{H}$ be a connected and almost simple algebraic group defined over $F$ and let $H=\underline{H}(F)$. Let $U \subset H$ be a neighborhood of 1 and let $C \subseteq H$ be a compact set disjoint from $Z(H)$. Then there is an identity neighborhood $V \subseteq H$ such that, for every $g\in C$, $V \subseteq \ccl_U(g) ^{\dim_F \underline{H}}$. 
\end{lemma}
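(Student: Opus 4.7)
My plan is to apply the quantitative implicit function theorem (Corollary \ref{cor:open.mapping}) to the product map $\Phi_g: U^d \to H$ defined by
\[
\Phi_g(u_1,\ldots,u_d) = (u_1 g u_1^{-1})(u_2 g^{-1} u_2^{-1}) \cdots (u_d g^{(-1)^{d-1}} u_d^{-1}),
\]
where $d = \dim_F \underline{H}$. By construction, the image of $\Phi_g$ lies inside $\ccl_U(g)^d$, so it is enough to produce, for each $g \in C$, a base point $\vec u^{\,0} \in U^d$ at which $\Phi_g(\vec u^{\,0}) = 1$ and the differential $d\Phi_g(\vec u^{\,0})$ is surjective onto $\mathfrak{h} = \Lie(H)$. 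Compactness of $C$ and continuity in $g$ will then upgrade the resulting identity neighborhood to a uniform one.

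The infinitesimal input is pure Lie theory. Since $g \notin Z(H)$ and $\underline{H}$ is almost simple, the subspace $W_g := (1-\Ad(g))\mathfrak{h} \subset \mathfrak{h}$ is nonzero, and the $\Ad(\underline{H})$-span of $W_g$ is a nonzero $\Ad$-invariant subspace of $\mathfrak{h}$; by irreducibility of the adjoint representation of an almost simple group it equals all of $\mathfrak{h}$. Because $U$ is Zariski dense in $\underline{H}$, we can already achieve the full span using $\Ad(u)$ with $u$ ranging over $U$, and in fact (by a dimension count) we may choose elements $u_1^0,\ldots,u_d^0 \in U$ such that $\sum_i \Ad(u_i^0) W_g = \mathfrak{h}$.

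A direct first-order computation shows that, at any base point $\vec{w}\in U^d$, the image of $d\Phi_g(\vec w)$ equals $\sum_{i=1}^d \Ad(h_i(\vec w))\, W_g$, where each $h_i(\vec w)$ is a product of $w_i$ together with powers of $g$ and tail factors $w_j, g^{\pm 1}$ ($j>i$). I would pair the factors symmetrically (for example, using base points satisfying $w_i = w_{d+1-i}$, so that the alternating signs force $\Phi_g(\vec w) = 1$ identically on the pairing locus) so that the constraint $\Phi_g(\vec w) = 1$ is automatic and the remaining freedom in $\vec w \in U^d$ still lets $\{\Ad(h_i(\vec w))\}$ realize any Zariski-generic configuration of conjugating elements. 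The span property from the previous paragraph, combined with Zariski density of $U$ in the constraint variety, then lets us choose $\vec u^{\,0}$ in $U^d$ at which both $\Phi_g(\vec u^{\,0}) = 1$ and $d\Phi_g(\vec u^{\,0})$ is surjective. Corollary \ref{cor:open.mapping}, applied uniformly as $g$ ranges over the compact set $C$, produces the required identity neighborhood $V$.

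The main obstacle I anticipate is combinatorial: one must force $\Phi_g(\vec u^{\,0}) = 1$ \emph{and} make $d\Phi_g(\vec u^{\,0})$ surjective using only $d$ factors rather than the $2d$ factors one would naively need (coming from writing $d$ commutators $[u_i, g]=(u_i g u_i^{-1})\cdot g^{-1}$). Balancing the codimension-$d$ constraint $\Phi_g(\vec u^{\,0})=1$ against the open span condition inside $U^d$ is the heart of the argument; this is where the bound $\dim_F \underline{H}$ is actually used, and where careful choice of the pairing and sign pattern (or a passage through the Zariski closure in $\underline{H}^d$ followed by density of the $F$-points) becomes essential.
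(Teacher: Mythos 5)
Your overall strategy coincides with the paper's: apply a quantitative open mapping theorem to a product-of-conjugates map $\Phi_g$, observe that the image of $d\Phi_g$ at a suitable base point has the form $\sum_i \Ad(h_i)W_g$ with $W_g=(1-\Ad(g))\mathfrak{h}$, and use irreducibility of the adjoint representation to choose the $h_i$ so that this sum is all of $\mathfrak{h}$. The divergence is in the shape of the map, and that divergence hides a real gap.

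You work with $\Phi_g(u_1,\ldots,u_d)=\prod_{i}u_ig^{(-1)^{i-1}}u_i^{-1}$ and propose to satisfy the constraint $\Phi_g(\vec u^{\,0})=1$ by pairing the slots. But the pairing destroys exactly the freedom you need. Take the consecutive pairing $u_{2k-1}=u_{2k}=a_k$. A short computation gives that the effective conjugating elements are $h_{2k-1}=a_k$ and $h_{2k}=a_kg$, so, because $W_g$ is $\Ad(g)$-invariant, $\Ad(h_{2k-1})W_g=\Ad(h_{2k})W_g$; the image of $d\Phi_g$ at a paired base point therefore collapses to $\sum_{k=1}^{d/2}\Ad(a_k)W_g$, only $d/2$ summands. (The palindromic pairing $u_i=u_{d+1-i}$ gives the same collapse.) Your dimension count, which correctly shows that \emph{$d$ independently chosen} translates $\Ad(h_i)W_g$ suffice to span $\mathfrak{h}$, does not say that $d/2$ of them do, and in fact, on the paired locus, the chain relation $h_{i+1}=h_ig^{\pm 1}(u_i^{-1}u_{i+1})$ prevents the $h_i$ from being in general position. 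So the assertion that "the remaining freedom still lets $\{\Ad(h_i)\}$ realize any Zariski-generic configuration" is where the argument breaks. For odd $d$ (e.g.\ $\SL_2$, $d=3$) there is no pairing to begin with.

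The paper resolves precisely this tension by decoupling the two roles that your single variable $u_i$ is forced to play. It uses the map
\[
\Phi(h_1,\ldots,h_d,x_1,\ldots,x_d,g)=\prod_{i=1}^d h_i^{-1}\bigl(g^{-1}x_igx_i^{-1}\bigr)h_i
\]
and takes the base point $x_1=\cdots=x_d=1$. On that locus $\Phi\equiv 1$ \emph{no matter what $h_1,\ldots,h_d$ are}, so the $h_i$ act as free "spectator" parameters: you first choose them so that $\sum_i\Ad(h_i)W_g=\mathfrak{h}$ (possible by irreducibility and a greedy dimension argument), and then the perturbation in the $x_i$ realizes the full differential. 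The price is that each factor is a commutator (two conjugates of $g^{\pm1}$), so the map lands in a slightly longer product of $\ccl$'s, but the exponent is still $O(\dim\underline{H})$, which is all that matters downstream. If you want to keep your $d$-factor shape, you would need a separate argument that $d/2$ translates of $W_g$ always span, together with a workaround for $d$ odd; the cleaner route is to introduce the extra $d$ spectator variables.
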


\begin{proof} Let $d=\dim_F \underline{H}$
and define $\Phi: U^{2d} \times H \rightarrow H$ as the map
\[
\Phi(h_1,\ldots,h_d,x_1,\ldots,x_d,g)= \prod_{i=1}^d \left( h_i ^{-1} g ^{-1} x_i g x_i ^{-1} h_i \right).
\]
We claim that the conditions of Corollary \ref{cor:open.mapping} hold for $X=U^{2d}$, $Y=Z=H$, $z_0=1$, $C=C$, and $f=\Phi$. It then follows that there is an identity neighborhood $V \subseteq H$ such that, for all $g\in C$, $V \subseteq \Phi_g(U^{2d}) \subseteq \ccl_U(g)^d$.

Denote the Lie algebra of $H$ by $\mathfrak{g}$ and let $g\in C$. Since $g$ is not central, the subspace $W:=(\Ad(g)-\Id)(\mathfrak{g})$ is non-trivial. By assumption, the adjoint action of any open subgroup of $H$ on $\mathfrak{g}$ is irreducible, so there are $h_1,\ldots,h_d \in U$ such that $\Ad(h_1)W+\cdots+\Ad(h_d)W=\mathfrak{g}$. The linear map $d\Phi_g(h_1,\ldots,h_d,1,\ldots,1):\mathfrak{g}^{2d} \rightarrow \mathfrak{g}$ takes the vector $(0,\ldots,0,X_1,\ldots,X_d)$ to $\sum \Ad(h_i)(\Ad(g)X_i-X_i)$. Therefore, the conditions of Corollary \ref{cor:open.mapping} hold.
\end{proof}

\begin{lemma} \label{lem:conj.open.compact.Lie} Let $H$ be a compact connected almost simple real Lie group, and let $C \subseteq H$ be a compact set disjoint from $Z(H)$. There is an $N=N(H,C)$ such that, for every $g\in C$, $\ccl_H(g)^N=H$.
\end{lemma}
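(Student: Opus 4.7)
The plan is to bootstrap Lemma \ref{lem:conj.open.general} (which gives a neighborhood of $1$ inside a bounded conjugacy product) into covering all of $H$, using the compactness and connectedness of $H$.

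First I would apply Lemma \ref{lem:conj.open.general} with $F=\mathbb{R}$, $\underline{H}=H$ itself (viewed as a real algebraic group), $U=H$, and the compact set $C$ as given. Since $C$ is disjoint from $Z(H)$, the lemma yields an identity neighborhood $V \subseteq H$ such that, for every $g\in C$,
\[
V \subseteq \gcl_H(g)^{d}, \qquad d=\dim_{\mathbb{R}} H.
\]
Because $\gcl_H(g)$ is symmetric and contains the identity, so is $\gcl_H(g)^d$, and I may shrink $V$ to be symmetric (replacing $V$ by $V\cap V^{-1}$).

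Next, I would use the standard fact that in any connected topological group, a symmetric identity neighborhood $V$ generates the whole group, so $H=\bigcup_{m\ge 1} V^m$. Since each $V^m$ is open and $H$ is compact, some finite $m_0$ already satisfies $V^{m_0}=H$; this $m_0$ depends only on $H$ and $V$ (hence ultimately only on $H$ and $C$, via the $V$ produced above, which itself depends only on $H$ and $C$). Setting $N=m_0 d$ then gives
\[
H = V^{m_0} \subseteq \gcl_H(g)^{N} \quad \text{for every } g\in C,
\]
and the opposite inclusion $\gcl_H(g)^N\subseteq H$ is trivial.

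I do not foresee a real obstacle here, because the work has already been done in Lemma \ref{lem:conj.open.general}: the whole point of that lemma is to produce a uniform identity neighborhood $V$ that works for every $g\in C$. The only minor subtlety is to ensure that both $V$ and $m_0$ can be chosen uniformly in $g\in C$; uniformity of $V$ is built into Lemma \ref{lem:conj.open.general}, and uniformity of $m_0$ then follows since $V$ itself no longer depends on $g$. The assumption that $H$ is almost simple is used only implicitly, through Lemma \ref{lem:conj.open.general}, to get irreducibility of the adjoint representation; connectedness of $H$ is what lets $V$ generate $H$; and compactness of $H$ is what turns "$V$ generates $H$" into "$V^{m_0}=H$ for some finite $m_0$".
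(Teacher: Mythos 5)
Your proof is correct and takes essentially the same approach as the paper: both invoke Lemma \ref{lem:conj.open.general} with $U=H$ to get a uniform identity neighborhood $V\subseteq \gcl_H(g)^{\dim H}$, and then cover $H$ by finitely many products of $V$ using compactness; the paper carries out the covering step with a bi-invariant Riemannian metric (so products of balls are balls), while you use the slightly more abstract compactness/connectedness argument, but the content is the same.
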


\begin{proof} Let $\rho$ be a bi-invariant Riemannian metric on $H$ with diameter 1. By Lemma \ref{lem:conj.open.general} (applied with $U=H$), there is an $\epsilon>0$ such that $\ccl(g)^{\dim H}$ contains the ball of radius $\epsilon$ around the identity, for all $g\in C$. Since the metric is bi-invariant and geodesic, the product of a ball of radius $a_1$ and a ball of radius $a_2$ is a ball of radius $\min \left\{ a_1+a_2,1 \right\}$. It follows that $\ccl_H(g)^{\lceil 1/ \epsilon \rceil \dim H}=H$.
\end{proof}

\begin{lemma} \label{lem:conj.open.noncompact.Lie}  Let $F$ be a local field, let $\underline{H}$ be connected semisimple algebraic group over $F$ such that $\underline{H}(F)$ is non compact, and let $H \subseteq \underline{H}(F)$ be an open subgroup. Then there is $N=N(F,\underline{H},H)$ such that, for every $g\in H$, we have $\ccl_H(g)^N=\langle \ccl_H(g) \rangle$
\end{lemma}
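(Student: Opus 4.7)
The plan is to split the argument into the cases $g \in Z(H)$ and $g \notin Z(H)$. For the central case, the observation is that $H$ being open in $\underline{H}(F)$ makes it Zariski dense, so $Z(H) = Z(\underline{H})(F) \cap H$; since $\underline{H}$ is semisimple, $Z(\underline{H})$ is a finite group scheme and so $Z(H)$ has order at most some $m = m(F,\underline{H})$. For $g \in Z(H)$, we have $\langle \ccl_H(g)\rangle = \langle g \rangle$, cyclic of order at most $m$, and so $\ccl_H(g)^m = \langle g \rangle$.

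For the non-central case, the plan has two steps: (i) show that $\ccl_H(g)^{N_1}$ contains an open identity neighborhood for some $N_1$ uniform in $g$; and (ii) propagate from this identity neighborhood to the full normal closure in a bounded number of further products. For (i), Lemma \ref{lem:conj.open.general} takes care of $g$ lying in any fixed compact set disjoint from the center, but the neighborhood it produces shrinks as $g$ approaches $Z(H)$. To handle $g$ close to $Z(H)$ uniformly, I would write $g = z g_0$ with $z$ a closest central element, so that $g_0 = gz^{-1}$ is small but nontrivial; passing to the Lie algebra via $g_0 = \exp(X)$, the conjugates of $g_0$ correspond to exponentials of the $\Ad(H)$-orbit of $X$. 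Irreducibility of $\Ad$ on $\mathfrak{g}/\mathfrak{z}$ (cf.\ Lemma \ref{lem:irreduciblilty}) forces this orbit to $F$-span a complement to $\mathfrak{z}$, and a polynomial-map argument of the type used in Lemma \ref{lem:conj.open.general} then produces an exponential image of an identity neighborhood modulo $Z$ inside $\ccl_H(g_0)^{O(\dim\underline{H})}$; a further factor of $m$ in the exponent kills the central ambiguity and yields an identity neighborhood inside $\ccl_H(g)^{O(m\dim\underline{H})}$.

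For (ii), the open neighborhood $V \subseteq \ccl_H(g)^{N_1}$ generates an open subgroup of $H$ which, by Tits' simplicity theorem (non-archimedean case) or by direct connected-component analysis (archimedean case), contains a subgroup of uniformly bounded index in $\langle \ccl_H(g)\rangle$; combined with symmetry and normality of $\ccl_H(g)$ and a bounded-generation argument that uses non-compactness of $\underline{H}(F)$, this yields $\ccl_H(g)^N = \langle \ccl_H(g)\rangle$ for uniform $N = N(F, \underline{H}, H)$. The main obstacle is the uniformity of $N_1$ in step (i) as $g \to Z(H)$: one cannot apply Lemma \ref{lem:conj.open.general} directly since its neighborhood shrinks, and the Lie-algebra reduction above is essential. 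A secondary obstacle is step (ii), identifying the open subgroup generated by $V$ with (a bounded-index piece of) the full normal closure, which rests on the structure theory of normal subgroups of isotropic simple groups over local fields; a preliminary reduction decomposing the semisimple $\underline{H}$ into its almost-simple factors may also be needed.
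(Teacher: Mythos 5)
Your overall plan (split into central/non-central; for non-central first get an open identity neighborhood, then propagate by bounded generation) matches the shape of the paper's proof, but step~(ii) is a genuine gap, and step~(i) is solving a non-problem. On step~(i): applying Lemma~\ref{lem:conj.open.general} with $C=\{g\}$ already gives $\ccl_H(g)^{\dim_F\underline{H}}\supseteq V$ for some identity neighborhood $V$, with the exponent $\dim_F\underline{H}$ uniform in $g$; only $V$ shrinks as $g$ approaches $Z(H)$, and that is harmless. The paper's device is to observe that $\ccl_H(g)^{\dim_F\underline{H}}$ is $H$-conjugation-invariant, and a conjugation-invariant set containing \emph{some} open identity neighborhood must contain \emph{every} unipotent (conjugate by suitable elements of $H\cap A$, which contract a fixed unipotent into $V$); in particular it contains a fixed unipotent $u_0$, chosen once and for all. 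After this, all $g$-dependence is gone, and your Lie-algebra reduction modeled on the compact $p$-adic case (Lemma~\ref{lem:conj.compact.p-adic.open}) is not needed here.

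The serious problem is step~(ii). Knowing that $V$ generates an open subgroup of bounded index in $\langle\ccl_H(g)\rangle$ gives generation, not bounded width: there is no a priori bound on the number of products of $V$ needed to exhaust that subgroup, and such a bound is exactly what the lemma asserts. Invoking Tits simplicity, normality of $\ccl_H(g)$, and ``a bounded-generation argument that uses non-compactness'' names the missing ingredient without supplying it. The paper fills this in concretely once it has $u_0$: from $u_0\in\ccl_H(g)^{\dim_F\underline{H}}$ one gets that $\ccl_H(g)^{(\dim_F\underline{H})^2}$ contains a subgroup of bounded index in a maximal compact $K$; a unipotent has conjugation width $5$ in $\SL_2(F)$ (Vaserstein--Wheland), which pulls in each root subgroup, hence the split torus $A$; and the Cartan decomposition $KAK$ then closes the argument with an explicit total exponent. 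Without this (or an equivalent explicit bounded-generation input), the proposal does not establish the claimed uniform $N$.
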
 

\begin{proof} 

The claim is clear for $g\in Z(H)$ (taking $N=|Z(H)|$), so we may assume $g\notin Z(H)$. Fix a maximal split torus $A \subseteq \underline{H}(F)$, a maximal compact subgroup $K \subseteq \underline{H}(F)$, and a non-trivial unipotent $u_0$.

First assume that $F$ is non-archimedean. By Lemma \ref{lem:conj.open.general}, applied with $C=\left\{ u_0 \right\}$ and $U=H$, there is an open neighborhood of 1 that is contained in $\ccl_H(u_0)^{\dim_F \underline{H}}$. In particular, there is a natural number $M$ such that the set $\ccl_H(u_0)^{\dim_F \underline{H}}\cap K$ contains a subgroup of index at most $M$ in $K$. We will show that the claim of the lemma holds with $N=2(\dim_F\underline{H})^2+5 (\dim_F \underline{H}) (\dim_F A)+M^2$.

By Lemma \ref{lem:conj.open.general}, applied with $C=\left\{ g \right\}$ and $U=H$, $\ccl_H(g)^{\dim_F \underline{H}}$ contains an open neighborhood of 1. Since it is conjugation invariant, $\ccl_H(g)^{\dim_F \underline{H}}$ contains all unipotents. In particular, it contains $u_0$. It follows that $\ccl_H(g)^{(\dim_F \underline{H})^2}$ contains a subgroup of index at most $M$ in $K$.

For any root $\alpha$ of $A$, there is a homomorphism $\phi_\alpha :\SL_2(F) \rightarrow \underline{H}(F)$ whose image contains the root group. We claim that $\phi_\alpha ^{-1} (H)=\SL_2(F)$. Indeed, let $H_+=\left\{ x \in F \mid \phi_\alpha \left(\begin{matrix} 1 & x \\ & 1\end{matrix}\right) \in H \right\}$, $H_0=\left\{ x \in F^ \times  \mid \phi_\alpha \left(\begin{matrix} x &  \\  & x ^{-1}\end{matrix}\right) \in H \right\}$, and $H_-=\left\{ x \in F \mid \phi_\alpha \left(\begin{matrix} 1 &  \\ x &  1\end{matrix}\right) \in H \right\}$. Then $H_+,H_-$ are finite index subgroups of $F$ that are invariant under $H_0$, which is a finite index subgroup of $F^ \times$. Hence, $H_+=H_-=F$ and $\phi_\alpha ^{-1} (H)=\SL_2(F)$.

For every unipotent element $u\in \SL_2(F)$, we have $\ccl_{\SL_2(F)}(u)^{5}=\SL_2(F)$ (see, for example, \cite[Theorem 2.5]{VW}). Hence, $\ccl_H(g)^{5 \dim_F \underline{H}}$ contains the entire root subgroup of $\alpha$. Since the root subgroups of $A$ generate $A$, we get that $A \subseteq \ccl_H(g)^{5 (\dim_F \underline{H}) (\dim_F A)}$. By Cartan decomposition, we get that $\ccl_H(g)^{2(\dim_F\underline{H})^2+5 (\dim_F \underline{H}) (\dim_F A)}$ contains a subset of index at most $M^2$ in $\underline{H}(F)$, and the claim follows.

In the case $F$ is archimedean, the proof is similar, replacing the condition that $\ccl_H(u_0)^{\dim_F \underline{H}}\cap K$ contains a subgroup of index at most $M$ in $K$ by the condition that $\left( \ccl_H(u_0)^{\dim_F \underline{H}}\cap K \right) ^M=K$.
\end{proof} 

For the rest of this subsection, we will use the following setting:

\begin{setting} \label{set:n.a.local} $F$ is a non-archimedean local field with ring of integers $R$ and maximal ideal $\mathfrak{m}$. $D$ is a natural number, $\underline{H} \subseteq \GL_D$ is a simple algebraic group over $R$, and $H \subset \underline{H}(R)$ is a compact open subgroup. We denote the Lie ring of $\underline{H}$ by $\mathfrak{h}$, the $\mathfrak{m} ^k$-th congruence subgroup of $\GL_D(R)$ by $\GL_D(R;\mathfrak{m} ^k)$, and denote $H[\mathfrak{m} ^k]:=H \cap \GL_D(R;\mathfrak{m} ^k)$.
\end{setting} 

\begin{lemma} \label{lem:conj.compact.p-adic.open} In Setting \ref{set:n.a.local}, there are constants $c=c(F,D,\underline{H},H),N=N(F,D,\underline{H},H)$ such that \begin{enumerate} 
\item \label{item:conj.compact.1} For every $n$, if $g\in H \smallsetminus \left( Z(H) \cdot H[\mathfrak{m} ^n] \right)$, then $\gcl_H(g)^{|Z(H)| \cdot \dim \underline{H}} \supseteq H[\mathfrak{m} ^{n+c}]$.
\item \label{item:conj.compact.2} For any $g\in H$, $\gcl_H(g)^N = \langle \gcl_H(g) \rangle$.
\item \label{item:conj.compact.3} For any normal subgroup $L$ of $H$, there are $h_1,\ldots,h_N\in L$ such that $L=\gcl_H(h_1) \cdots \gcl_H(h_N)$.
\end{enumerate} 
\end{lemma}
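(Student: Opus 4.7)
The core of the proof is part (1), which we establish via logarithmic coordinates combined with the quantitative open mapping theorem (Lemma \ref{lem:open.mapping.quantitative}). Given $g \in H\setminus(Z(H)\cdot H[\mathfrak{m}^n])$, use finiteness of $Z(H)$ (which follows from simplicity of $\underline{H}$) to pick $z\in Z(H)$ minimizing the congruence depth of $gz$, writing $gz\in H[\mathfrak{m}^{n'}]\setminus H[\mathfrak{m}^{n'+1}]$ with $n'<n$. For elements $h_1,\ldots,h_d\in H$ (where $d=\dim\underline{H}$) to be chosen, consider the word map
\[
\Phi(\beta_1,\ldots,\beta_d) = \prod_{i=1}^d h_i\,[\beta_i,g]\,h_i^{-1}.
\]
Each factor is a conjugate of $[\beta_i,g]=(\beta_i g\beta_i^{-1})g^{-1}$, a product of two elements of $\gcl_H(g)$, so $\Phi(H^d)\subseteq\gcl_H(g)^{2d}$. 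A standard first-order Baker--Campbell--Hausdorff computation shows that the differential of $\Phi$ at $(1,\ldots,1)$, viewed through $\log$, is the $R$-linear map
\[
(Y_1,\ldots,Y_d)\longmapsto \sum_{i=1}^d \Ad(h_i)(\Id-\Ad(g))(Y_i).
\]
Because $\Ad(g)=\Ad(gz)$ and $gz\in H[\mathfrak{m}^{n'}]\setminus H[\mathfrak{m}^{n'+1}]$, the image of $\Id-\Ad(g)$ is an $R$-submodule of $\mathfrak{m}^{n'}\mathfrak{h}$ with non-zero reduction in $\mathfrak{m}^{n'}\mathfrak{h}/\mathfrak{m}^{n'+1}\mathfrak{h}$. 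Irreducibility of the $\Ad$-action of $\underline{H}(F)$ on $\mathfrak{h}(F)$ (from simplicity of $\underline{H}_F$), combined with Nakayama's lemma, lets me choose $h_1,\ldots,h_d\in H$ for which the image of the differential equals $\mathfrak{m}^{n'}\mathfrak{h}$. Lemma \ref{lem:open.mapping.quantitative} then gives $\Phi(H^d)\supseteq H[\mathfrak{m}^{n'+c_0}]\supseteq H[\mathfrak{m}^{n+c_0}]$ for a uniform constant $c_0=c_0(F,D,\underline{H},H)$. The exponent $|Z(H)|\cdot\dim\underline{H}$ stated in the lemma (rather than our $2d$) is obtained by applying the argument to $g^{|Z(H)|}$, using $\gcl_H(g^{|Z(H)|})\subseteq\gcl_H(g)^{|Z(H)|}$ and the fact that $g^{|Z(H)|}$ preserves the depth $n'$ while eliminating the central twist.

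Part (2) follows from (1) by a case split. If $g\in Z(H)$, then $\gcl_H(g)=\{g,g^{-1},1\}$ and $\langle\gcl_H(g)\rangle=\langle g\rangle$ has order at most $|Z(H)|$, giving $\gcl_H(g)^{|Z(H)|}=\langle\gcl_H(g)\rangle$ trivially. If $g\notin Z(H)$, define $v(g):=\max\{k:g\in Z(H)\cdot H[\mathfrak{m}^k]\}<\infty$ (finite because $\bigcap_k Z(H)\cdot H[\mathfrak{m}^k]=Z(H)$). Apply (1) with $n=v(g)+1$ to obtain $\gcl_H(g)^{|Z(H)|d}\supseteq H[\mathfrak{m}^{v(g)+1+c}]$. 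The subgroup $Z(H)\cdot H[\mathfrak{m}^{v(g)}]$ is normal in $H$ and contains $g$, so $\langle\gcl_H(g)\rangle\subseteq Z(H)\cdot H[\mathfrak{m}^{v(g)}]$, and the finite quotient
\[
\langle\gcl_H(g)\rangle/H[\mathfrak{m}^{v(g)+1+c}]\hookrightarrow Z(H)\cdot H[\mathfrak{m}^{v(g)}]/H[\mathfrak{m}^{v(g)+1+c}]
\]
has order bounded uniformly in $v(g)$---by a quantity of the form $|Z(H)|\cdot q^{(1+c)\dim\mathfrak{h}}$---because for $v(g)$ past the log-exp radius the successive congruence quotients are all isomorphic (as $H$-modules) to $\mathfrak{h}/\mathfrak{m}\mathfrak{h}$. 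A uniformly bounded number $N_1$ of additional products of $\gcl_H(g)$ then fills this finite quotient, giving $N=|Z(H)|d+N_1$. Part (3) follows similarly: for a normal subgroup $L\triangleleft H$ with $L\not\subseteq Z(H)$, pick $h_1\in L$ of maximal congruence depth, apply (1) to obtain $H[\mathfrak{m}^{v(h_1)+1+c}]\subseteq\gcl_H(h_1)^{|Z(H)|d}$, and add further elements $h_2,\ldots,h_N\in L$ to climb through the (uniformly bounded) finite quotient $L/H[\mathfrak{m}^{v(h_1)+1+c}]$.

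The principal obstacle is uniformity of $N$ in parts (2) and (3): the depth $v(g)$ can be arbitrarily large, so the congruence subgroup produced by (1) sits arbitrarily deep inside $H$. The crucial saving is the $p$-adic analytic structure of $H$: for $k$ past the log-exp radius, the successive congruence quotients $H[\mathfrak{m}^k]/H[\mathfrak{m}^{k+1}]$ are all isomorphic to $\mathfrak{h}/\mathfrak{m}\mathfrak{h}$, so the number of layers---and hence the index---between the two congruence subgroups sandwiching $\langle\gcl_H(g)\rangle$ is uniformly bounded independent of $v(g)$. Tracking this quantitatively and combining it with the Nakayama-style argument in (1) is the main technical work.
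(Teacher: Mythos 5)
The overall strategy is the same as the paper's: a word map built from conjugates of $g$, the quantitative open mapping theorem (Lemma \ref{lem:open.mapping.quantitative}), passage to $g^{|Z(H)|}$ to remove the central twist, and, for parts (2) and (3), bounding the number of congruence layers between $\langle\gcl_H(g)\rangle$ and the congruence subgroup produced in part (1). Parts (2) and (3) and the reduction to part (1) match the paper essentially verbatim.

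There is a genuine gap in part (1), in the two sentences beginning ``Because $\Ad(g)=\Ad(gz)$\ldots'' and ``Irreducibility of the $\Ad$-action\ldots''. Writing $gz=\Id+\varpi^{n'}M$ with $\bar M\ne 0$, one has $(\Ad(gz)-\Id)X=\varpi^{n'}[M,X]+O(\varpi^{2n'})$, so the reduction of the image of $\Id-\Ad(g)$ modulo $\mathfrak{m}^{n'+1}\mathfrak{h}$ is the image of $\mathrm{ad}(\bar M)$ acting on $\mathfrak{h}(R/\mathfrak{m})$ --- and this can vanish when $\bar M$ centralizes $\mathfrak{h}(R/\mathfrak{m})$. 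That actually occurs: e.g.\ for $\underline{H}=\SL_p$ over $\Q_p$, the Lie algebra $\mathfrak{sl}_p(\mathbb{F}_p)$ has a non-trivial center (the scalar matrices), so taking a primitive $Y_0\in\mathfrak{sl}_p(\Z_p)$ with scalar reduction and $g=\exp(p^{n'}Y_0)$ gives a non-central $g\in H[\mathfrak{m}^{n'}]\smallsetminus H[\mathfrak{m}^{n'+1}]$ with $(\Id-\Ad(g))\mathfrak{h}(R)\subseteq\mathfrak{m}^{n'+1}\mathfrak{h}(R)$. Likewise, the appeal to Nakayama requires that the $\Ad(\bar H)$-submodule generated by the (assumed non-zero) reduction be all of $\mathfrak{h}(R/\mathfrak{m})$; but irreducibility of $\mathfrak{h}(F)$ over $\Ad(\underline{H}(F))$ does not descend to the special fiber, and Setting \ref{set:n.a.local} deliberately does not assume good reduction (that assumption is reserved for Lemma \ref{lem:dense.p-adic}, whose whole point is to produce constants independent of $H$). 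In short, the image of the differential can fall strictly inside $\mathfrak{m}^{n'+1}\mathfrak{h}$, and the amount lost a priori depends on $g$.

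The paper sidesteps this by a case split. For ``shallow'' $g$ (i.e.\ $g\notin Z(H)\cdot H[\mathfrak{m}^{c_1}]$ for a suitable $c_1$), it applies the compactness argument of Lemma \ref{lem:conj.open.general} to the compact set $H\smallsetminus Z(H)\cdot H[\mathfrak{m}^{c_1}]$, which uses only irreducibility over $F$ and produces a uniform congruence level with no mod-$\mathfrak{m}$ hypotheses. For ``deep'' $g$, the $\log/\exp$ calculation is carried out after first conceding a bounded (depending on $H$) number of extra congruence levels, precisely to absorb the possible degeneration of $\mathrm{ad}(\bar Y_0)$ mod $\mathfrak{m}$. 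Your argument can be repaired along these lines by replacing the Nakayama/irreducibility step with a compactness argument over the compact set of primitive $Y_0\in\mathfrak{h}(R)\smallsetminus\mathfrak{m}\mathfrak{h}(R)$, which bounds the loss uniformly (this is where the dependence on $H$ enters); but the step as written does not go through.
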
 

\begin{proof} Denote $d=\dim_F \underline{H}$, $q=|R/\mathfrak{m} |$, and $b=\val_\mathfrak{m} |Z(H)|$. 

\begin{enumerate} 
\item[\ref{item:conj.compact.1}.] Let $c_1$ be the constant from Lemma \ref{lem:open.mapping.quantitative} applied to $X=R^d$ and $Y=H$. By enlarging $c_1$, we can assume that \begin{enumerate} 
\item The series $\exp$ converges on $\mathfrak{m} ^{c_1} \mathfrak{g}$ and $\exp(\mathfrak{m} ^{c_1} \mathfrak{g})=H[\mathfrak{m} ^{c_1}]$ (and, hence, $\exp(\mathfrak{m} ^k \mathfrak{g})=H[\mathfrak{m} ^k]$, for every $k \geq c_1$.
\item $[\mathfrak{g},\mathfrak{g}] \supseteq \mathfrak{m} ^{c_1} \mathfrak{g}$.
\end{enumerate} 
By Lemma \ref{lem:conj.open.general}, there is $c_2$ such that, for every $g\in H \smallsetminus Z(H) \cdot H[\mathfrak{m} ^{c_1}]$, $\gcl_H(g)^d \supseteq H[\mathfrak{m} ^{c_2}]$. We will prove that the claim holds with $c=\max \left\{ 2c_1+b,c_2 \right\}$.

Suppose $g\in H \smallsetminus Z(H) \cdot H[\mathfrak{m} ^n]$, and let $a$ be the minimal number such that $g\in H \smallsetminus Z(H) \cdot H[\mathfrak{m} ^a]$. Then $1 \leq a \leq n$. There are two cases:

{\bf Case 1: $a \leq c_1$.} In this case, $g\in H\smallsetminus Z(H) \cdot H[\mathfrak{m} ^{c_1}]$ and, by the definition of $c_2$, we have $\gcl_H(g)^{d} \supseteq H[\mathfrak{m} ^{c_2}] \supseteq H[\mathfrak{m} ^{a+c}]$.

{\bf Case 2: $a \geq c_1+1$.} In this case, $g\in Z(H) \cdot H[\mathfrak{m} ^{a-1}] \subseteq Z(H) \cdot H[\mathfrak{m} ^{c_1}]$, so $g=\zeta \exp(X)$, where $\zeta \in Z(H)$ and $X\in \mathfrak{m} ^{a-1}\mathfrak{g} \smallsetminus \mathfrak{m} ^a \mathfrak{g}$. Denoting $Y=|Z(H)| X$, we get that $g^{|Z(H)|}=\exp(Y)$ and $Y\in \mathfrak{m} ^{a-1+b}\mathfrak{g} \smallsetminus \mathfrak{m} ^{a+b}\mathfrak{m}$. By the definition of $c_1$, there are $X_1,\ldots,X_d\in \mathfrak{m} ^{c_1} \mathfrak{g}$ such that the elements $[X_i,Y]$ are in $\mathfrak{m} ^{a-1+b+2c_1} \mathfrak{g}$ and their reduction modulo $\mathfrak{m} ^{a+b+2c_1}$ is a basis. Let $\Phi:R^d \rightarrow H$ be the function 
\[
\Phi(t_1,\ldots,t_d)=\left[\exp(-t_1X_1), g^{|Z(H)|}\right] \cdots \left[\exp(-t_dX_d), g^{|Z(H)|}\right]
\]
Then $\Phi(0,\ldots,0)=1$ and $d \Phi_{(0,\ldots,0)}(R^d) \supseteq \mathfrak{m} ^{a+b+2c_1-1}\mathfrak{g}$. By Lemma \ref{lem:open.mapping.quantitative}, $\gcl_H(g)^{|Z(H)|d} \supseteq \Phi(R^d) \supseteq H[\mathfrak{m} ^{a+b+2c_1+c}]$

\item[\ref{item:conj.compact.2}.] Let $c$ be the constant from Claim \ref{item:conj.compact.1}. We will show that Claim \ref{item:conj.compact.2} holds with $N=|Z(H)|\dim_F \underline{H}+q^{cD^2}$. If $g\in Z(H)$, then $\gcl_H(g)^{|Z(H)|}=\langle \gcl_H(g) \rangle$. Assume now that $g\notin Z(H)$ and let $n$ be the minimal natural number such that $g\in H \smallsetminus Z(H) \cdot H[\mathfrak{m} ^n]$. We have
\[
H[\mathfrak{m} ^{c+n}] \subseteq \gcl_H(g)^{|Z(H)|\dim_F \underline{H}} \subseteq \langle \gcl_H(g)\rangle \subseteq Z(H) \cdot H[\mathfrak{m} ^{n-1}].
\]
Since $|H[\mathfrak{m} ^{n-1}] /  H[\mathfrak{m} ^{c+n}]| < q^{(c+1)D^2}$, we get the result.

\item[\ref{item:conj.compact.3}.] Let $c$ be the constant from Claim \ref{item:conj.compact.1}. We show that Claim \ref{item:conj.compact.3} holds with $N=|Z(H)|q^{(c+1)D^2}$. If $L \subset Z(H)$ then the claim holds. Otherwise, let $n$ be the minimal natural number such that $L \smallsetminus Z(H) \cdot H[\mathfrak{m} ^n] \neq \emptyset$, and choose $h_1\in L \smallsetminus Z(H) \cdot H[\mathfrak{m} ^n]$. By definition of $c$, 
\[
H[\mathfrak{m} ^{n+c}] \subseteq \gcl_H(h_1)^{|Z(H)| \dim_F \underline{G}} \subseteq L \subseteq Z(H) \cdot H[\mathfrak{m} ^{n-1}].
\]
Since $|Z(H) \cdot H[\mathfrak{m} ^{n-1}] / H[\mathfrak{m} ^{n+c}]|<|Z(H)|q^{(c+1)D^2}$, the result follows.
\end{enumerate} 
\end{proof}

\begin{lemma} \label{lem:Ad-1.g} In Setting \ref{set:n.a.local}, assume that $\underline{H}$ is good. Let $k \geq 1$. Suppose $g\in \underline{H}(R)$ and $(\Ad(g)-\Id)\mathfrak{h} (R) \subseteq \mathfrak{m}^k \mathfrak{h}(R)$. Then $g\in Z(\underline{H}(R)) \cdot \underline{H}(R;\mathfrak{m} ^k)$.
\end{lemma}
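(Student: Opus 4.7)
I would induct on $k$.

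For the base case $k=1$, reduce everything modulo $\mathfrak{m}$. The hypothesis becomes $\Ad(\bar g)=\Id$ on $\mathfrak{h}(R/\mathfrak{m})$, where $\bar g$ is the reduction of $g$. Condition \ref{cond:good.reduction.Ad} (the kernel of $\Ad$ on the residue field is the center) gives $\bar g\in Z(\underline{H}(R/\mathfrak{m}))$, and condition \ref{cond:good.reduction.Z.smooth} (surjectivity of $Z(\underline{H}(R))\to Z(\underline{H}(R/\mathfrak{m}))$) lets me lift $\bar g$ to some $z\in Z(\underline{H}(R))$, so that $gz^{-1}\in \underline{H}(R;\mathfrak{m})$.

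For the inductive step from $k$ to $k+1$, apply the (already known) case $k$ to the element $g$ satisfying the stronger hypothesis, writing $g=zh$ with $z\in Z(\underline{H}(R))$ central and $h\in \underline{H}(R;\mathfrak{m}^k)$. Centrality of $z$ gives $\Ad(h)=\Ad(g)$, so the stronger hypothesis passes to $h$, and it suffices to show $h\in \underline{H}(R;\mathfrak{m}^{k+1})$. Using smoothness of $\underline{H}$ (condition \ref{cond:good.reduction.smooth}), the quotient $\underline{H}(R;\mathfrak{m}^k)/\underline{H}(R;\mathfrak{m}^{k+1})$ is canonically identified with $\mathfrak{h}(R/\mathfrak{m})$, so I can write $h\equiv 1+\pi^k Y\pmod{\pi^{k+1}}$ for some $Y\in \mathfrak{h}(R)$, where $\pi$ is a uniformizer. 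A direct matrix computation inside $\Mat_D(R)$, using $\Ad(h)Z=hZh^{-1}$, yields $(\Ad(h)-\Id)Z\equiv \pi^k[Y,Z]\pmod{\pi^{k+1}\Mat_D(R)}$ for every $Z\in \mathfrak{h}(R)$. Combining this with the hypothesis $(\Ad(h)-\Id)Z\in \mathfrak{m}^{k+1}\mathfrak{h}(R)$ and with the fact (again from smoothness) that $\mathfrak{h}(R)$ is a pure $R$-submodule of $\Mat_D(R)$, I conclude $[Y,Z]\in \mathfrak{m}\,\mathfrak{h}(R)$ for all $Z\in \mathfrak{h}(R)$; equivalently, $\bar Y$ is central in the Lie algebra $\mathfrak{h}(R/\mathfrak{m})$.

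The final step is to show $\bar Y=0$. The center of $\mathfrak{h}(R/\mathfrak{m})$ (as a Lie algebra) is $\Ad(\underline{H}(R/\mathfrak{m}))$-invariant, since $\Ad$ preserves the bracket. By the irreducibility condition \ref{cond:good.reduction.irr}, this center is either $0$ or all of $\mathfrak{h}(R/\mathfrak{m})$. If it were all of it, then $\mathfrak{h}(R/\mathfrak{m})$ would be abelian, hence $\mathrm{ad}$ would vanish identically; together with connectedness of $\underline{H}$ this forces $\Ad$ itself to be trivial, contradicting irreducibility since $\dim \mathfrak{h}\ge 3$ for a simple algebraic group. Therefore $\bar Y=0$, i.e., $Y\in \mathfrak{m}\,\mathfrak{h}(R)$, which gives $h\in \underline{H}(R;\mathfrak{m}^{k+1})$ and closes the induction.

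\textbf{Main obstacle.} The only real subtlety is the parametrization step: I need that $\underline{H}(R;\mathfrak{m}^k)/\underline{H}(R;\mathfrak{m}^{k+1})\cong \mathfrak{h}(R/\mathfrak{m})$ in a way compatible with the ambient embedding into $\GL_D$, and that $\mathfrak{h}(R)\subseteq \Mat_D(R)$ is a direct $R$-summand so that the computation $(\Ad(h)-\Id)Z\equiv \pi^k[Y,Z]$ modulo $\pi^{k+1}\Mat_D(R)$ can be upgraded to a statement modulo $\pi^{k+1}\mathfrak{h}(R)$. Both facts follow from smoothness (condition \ref{cond:good.reduction.smooth}), but they are the point where the hypothesis on $\underline{H}$ being good actively enters the inductive step, beyond just the base case.
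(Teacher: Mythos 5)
Your proof is correct and follows essentially the same route as the paper: the base case $k=1$ uses conditions (\ref{cond:good.reduction.Ad}) and (\ref{cond:good.reduction.Z.smooth}) exactly as you describe, and for $k>1$ the paper also reduces (via the $\exp$ parametrization of $\underline{H}(R;\mathfrak{m})$ rather than your graded quotient $\underline{H}(R;\mathfrak{m}^k)/\underline{H}(R;\mathfrak{m}^{k+1})\cong\mathfrak{h}(R/\mathfrak{m})$ --- the same computation in different clothing) to the centerlessness of $\mathfrak{h}(R/\mathfrak{m})$. One small remark: you derive centerlessness from the irreducibility condition (\ref{cond:good.reduction.irr}) where the paper merely asserts it, which is a worthwhile addition, though your intermediate step ``$\mathrm{ad}=0$ plus connectedness forces $\Ad$ trivial'' is shaky in positive characteristic (think Frobenius); the cleaner reason $\mathfrak{h}(R/\mathfrak{m})$ cannot be abelian is simply that the Lie algebra of a simple algebraic group in the very good characteristic guaranteed by condition (\ref{cond:good.reduction.char}) is never abelian.
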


\begin{proof} For $k=1$, this follows from the assumption that the action of $\underline{H}(R/\mathfrak{m})$ on $\mathfrak{h}(R/\mathfrak{m})$ is faithful.

Assume now that $k > 1$. By the case $k=1$, we know that $g\in \underline{H}(R;\mathfrak{m})$. By assumption, $g=\exp(Y)$, for some $Y\in \mathfrak{m}\mathfrak{h}(R)$. Since $\Ad(g)=\exp(ad(Y))$, we get that $[Y,\mathfrak{h}(R)] \subseteq \mathfrak{m}^k \mathfrak{h}(R)$. Since $\mathfrak{h}(R/\mathfrak{m})$ has no center, we get by induction on $k$ that $Y\in \mathfrak{m}^k \mathfrak{h}(R)$, so $g=\exp(Y)\in \underline{H}(R;\mathfrak{m} ^k)$.
\end{proof}

\begin{lemma} \label{lem:dense.p-adic} In Setting \ref{set:n.a.local}, assume that $\underline{H}$ is good. For every $g\in \underline{H}(R)$, $\gcl_{\underline{H}(R)}(g)^{5|Z(\underline{H}(R))|\dim \underline{H}}=\langle \gcl_{\underline{H}(R)}(g) \rangle$. If $g\in \underline{H}(R) \smallsetminus Z(\underline{H}(R)) \cdot \underline{H}(R)[\mathfrak{m}]$, then $\langle \ccl_{\underline{H}(R)}(g) \rangle =\underline{H}(R)$.
\end{lemma}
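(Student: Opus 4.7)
The plan is to prove the second assertion first, obtaining along the way a bounded product of $\gcl_{\underline H(R)}(g)$ that covers an open congruence neighborhood of the identity; the first assertion will then follow by a short case analysis handling the central part of $g$. The key inputs are the quantitative open-mapping principle (Lemma \ref{lem:open.mapping.quantitative}), the uniform Lie-algebra generation bound Lemma \ref{lem:irreduciblilty}, and the faithfulness statement Lemma \ref{lem:Ad-1.g}.

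For the second assertion, assume $g\notin Z(\underline H(R))\cdot\underline H(R)[\mathfrak m]$. Lemma \ref{lem:Ad-1.g} with $k=1$ supplies an $X\in\mathfrak h(R)$ such that $\bar v:=\overline{(\Ad(g)-\Id)X}$ is non-zero in $\mathfrak h(\mathbb F_q)$, and Lemma \ref{lem:irreduciblilty} then produces $\bar h_1,\dots,\bar h_{4\dim\underline H}\in\underline H(\mathbb F_q)$ with $\sum_i\Ad(\bar h_i^{-1})\bar v=\mathfrak h(\mathbb F_q)$. I lift these to $h_i\in\underline H(R)$ by smoothness (Definition \ref{defn:good.reduction.big.char}(1)) and consider
\[
\Phi(X_1,\dots,X_{4\dim\underline H})=\prod_i h_i^{-1}\,g\,\exp(X_i)\,g^{-1}\,\exp(-X_i)\,h_i.
\]
Each factor is a product of two conjugates of $g^{\pm1}$, so $\Phi$ takes values in $\gcl_{\underline H(R)}(g)^{8\dim\underline H}$. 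The derivative computation from the proof of Lemma \ref{lem:conj.open.general} identifies $d\Phi|_{0}(Y_1,\dots,Y_{4\dim\underline H})=\sum_i\Ad(h_i^{-1})(\Id-\Ad(g^{-1}))Y_i$, which is surjective modulo $\mathfrak m$ by the choice of the $h_i$. Lemma \ref{lem:open.mapping.quantitative} therefore yields a $c>0$ with
\[
\underline H(R)[\mathfrak m^c]\subseteq \Phi(\mathfrak h(R)^{4\dim\underline H})\subseteq\gcl_{\underline H(R)}(g)^{8\dim\underline H}.
\]
The hypothesis, combined with condition \ref{cond:good.reduction.Z.smooth}, forces the reduction $\bar g$ to be non-central in $\underline H(\mathbb F_q)$; since the residue characteristic is large compared to $|Z|$ and $\dim\underline H$, the finite group $\underline H(\mathbb F_q)/Z(\underline H(\mathbb F_q))$ is simple, so the normal closure of $\bar g$ surjects onto $\underline H(\mathbb F_q)$ modulo $Z$. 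Combining with the open congruence neighborhood above and condition \ref{cond:good.reduction.Z.smooth}, I conclude $\langle\gcl_{\underline H(R)}(g)\rangle=\underline H(R)$.

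For the first assertion, if $g\notin Z\cdot \underline H(R)[\mathfrak m]$ the preceding argument already places an open congruence neighborhood inside a bounded product of $\gcl(g)$, and the exponent $8\dim\underline H$ can be tightened to $5\dim\underline H$ by a more economical choice of $\Phi$ along the lines of the SL$_2$-style bound in Lemma \ref{lem:conj.open.noncompact.Lie}. If $g=zu$ with $z\in Z(\underline H(R))$ and $u\in \underline H(R)[\mathfrak m]$, then $g^{|Z|}=u^{|Z|}\in\langle\gcl(g)\rangle$; letting $k$ be the congruence depth at which $u^{|Z|}$ first escapes $\underline H(R)[\mathfrak m^{k+1}]$, Lemma \ref{lem:Ad-1.g} at that depth combined with the previous Lie-algebra argument (applied to the reduction of the Lie-algebra logarithm of $u^{|Z|}$) absorbs the central factor into the exponent with a multiplicative $|Z|$, yielding $\gcl_{\underline H(R)}(g)^{5|Z|\dim\underline H}=\langle\gcl_{\underline H(R)}(g)\rangle$. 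The main obstacle is matching the precise constant $5|Z(\underline H(R))|\dim\underline H$ rather than a larger multiple: the overall strategy is robust, but obtaining exactly this exponent requires careful design of $\Phi$ so that each $\gcl(g)$-factor is spent efficiently, and careful accounting of how the $|Z|$-th power kills the central ambiguity of $g$.
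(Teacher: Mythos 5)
Your plan matches the paper's strategy in broad outline --- open-mapping plus a Lie-algebra spanning fact in the ``large'' case, and a separate analysis of the first non-trivial congruence layer in the ``topologically central'' case --- but you've put the two Lie-algebra inputs in the wrong branches, and this is exactly what prevents you from seeing where $5|Z|\dim\underline H$ comes from. In the branch $g\notin Z\cdot\underline H(R)[\mathfrak m]$, you do not want Lemma~\ref{lem:irreduciblilty}: what the surjectivity of $d\Phi$ needs is only that the translates $\Ad(h_i)V$ of the submodule $V=(\Ad(g)-\Id)\mathfrak h(R)$ span $\mathfrak h(R)$, and since $V\not\subseteq\mathfrak m\mathfrak h(R)$ (Lemma~\ref{lem:Ad-1.g}) and the mod-$\mathfrak m$ action is irreducible, Nakayama gives you $d=\dim\underline H$ such translates. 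This is what lets the paper get by with $\Psi:\underline H(R)^d\to\underline H(R)$ --- each factor $h_i^{-1}g^{-1}x_igx_i^{-1}h_i$ costs two elements of $\gcl(g)$, so the congruence ball lands in $\gcl(g)^{2d}$, comfortably inside the claimed bound. (Your reading ``$\sum_i\Ad(\bar h_i^{-1})\bar v=\mathfrak h(\mathbb F_q)$'' of Lemma~\ref{lem:irreduciblilty} also doesn't typecheck: that lemma is a $4d$-fold \emph{sumset} statement about the single orbit $\Ad(\underline H(\mathbb F_q))X$, not a statement producing a spanning set of $4d$ vectors.)

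The sumset form of Lemma~\ref{lem:irreduciblilty} is what you need --- and is genuinely indispensable --- in the other branch, $g\in Z\cdot\underline H(R)[\mathfrak m]\smallsetminus Z$. There, with $g^{|Z|}\in\underline H(R;\mathfrak m^k)\smallsetminus\underline H(R;\mathfrak m^{k+1})$, the open-mapping argument applied to $g^{|Z|}$ only reaches $\underline H(R;\mathfrak m^{k+1})\subseteq\gcl(g^{|Z|})^d$; to climb the last layer you pass to the abelian quotient $\underline H(R;\mathfrak m^k)/\underline H(R;\mathfrak m^{k+1})\cong\mathfrak h(\mathbb F_q)$, where a product of conjugates of $g^{|Z|}$ becomes a \emph{sum} of $\Ad$-translates of its image $X$, and the $4d$-fold sumset covers the whole quotient. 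That gives $\gcl(g^{|Z|})^{4d}\cdot\underline H(R;\mathfrak m^{k+1})=\underline H(R;\mathfrak m^k)$, hence $\gcl(g^{|Z|})^{5d}\supseteq\underline H(R;\mathfrak m^k)$, and since $\gcl(g^{|Z|})\subseteq\gcl(g)^{|Z|}$ you get exactly the exponent $5|Z|d$. Your sketch of this branch gestures at the right ingredients (take the $|Z|$-th power, examine $\log$ at depth $k$) but does not identify the abelianized-layer/sumset mechanism, which is the actual source of the constant; without it, ``careful design of $\Phi$'' alone will not tighten your bound.

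One further point that affects the second assertion in both your write-up and the paper's: after producing a congruence subgroup $\underline H(R;\mathfrak m)$ inside a bounded power of $\gcl(g)$, concluding $\langle\gcl(g)\rangle=\underline H(R)$ requires that the normal closure of the reduction $\bar g$ be \emph{all} of $\underline H(\mathbb F_q)$, not merely that it surject onto $\underline H(\mathbb F_q)/Z$. Your phrasing (``surjects onto $\underline H(\mathbb F_q)$ modulo $Z$'') leaves that last step unresolved; you should record that for the good primes in play, a non-central element of $\underline H(\mathbb F_q)$ normally generates the whole group, and cite or argue that separately.
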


\begin{proof} Denote $d=\dim_F \underline{H}$. If $g\in Z(\underline{H}(R))$, the claim is clear. Assume now that $g\in \underline{H}(R) \smallsetminus Z(\underline{H}(R)) \cdot \underline{H}(R)[\mathfrak{m}]$. By the smoothness assumption, $\mathfrak{h}(R)$ is a free $R$-module of rank $d$. By Lemma \ref{lem:Ad-1.g}, the submodule $V:=(\Ad(g)-\Id)(\mathfrak{h}(R)) \subseteq \mathfrak{h}(R)$ is not contained in $\mathfrak{m} \mathfrak{h}(R)$. By the irreducibility of the action of $\underline{H}(R/\mathfrak{m})$ and by Nakayama's lemma, there are $h_1,\ldots,h_d \in \underline{H}(R)$ such that $\Ad(h_1)V+\ldots+\Ad(h_d)V=\mathfrak{h}(R)$. Define $\Psi:\underline{H}(R)^{d} \rightarrow \underline{H}(R)$ by
\[
\Psi(x_1,\ldots,x_d)=\prod_{i=1}^d \left( h_i ^{-1} g ^{-1} x_i g x_i ^{-1} h_i \right).
\]
We get that $\mathfrak{h}(R) = d\Psi|_{(1,\ldots,1)} \left( \mathfrak{h}(R)^d\right)$, and, by Lemma \ref{lem:open.mapping.quantitative}, we get that $\underline{H}(R;\mathfrak{m}) \subseteq \Psi \left( \underline{H}(R) ^d \right) \subseteq \ccl_{\underline{H}(R)}(g)^{d}$. Since $H \subseteq \ccl_{\underline{H}(R)}(g)^d \cdot \underline{H}(R;\mathfrak{m})$, the result follows.

Finally, assume $g\in Z(\underline{H}(R)) \cdot \underline{H}(R)[\mathfrak{m}] \smallsetminus Z(\underline{H}(R))$. Let $k \geq 1$ be the number such that $g\in Z(\underline{H}(R)) \cdot \underline{H}(R;\mathfrak{m} ^k)\smallsetminus Z(\underline{H}(R)) \cdot \underline{H}(R;\mathfrak{m} ^{k+1})$. Since $\underline{H}$ is good, $g^{|Z(\underline{H}(R)|} \in \underline{H}(R;\mathfrak{m} ^k) \smallsetminus \underline{H}(R;\mathfrak{m} ^{k+1})$. The same arguments as above imply that $\underline{H}(R;\mathfrak{m} ^{k+1}) \subseteq \ccl_{\underline{H}(R)}(g^{|Z(\underline{H}(R)|})^d$.

Denote the image of $g^{|Z(\underline{H}(R)|}$ in $\underline{H}(R;\mathfrak{m} ^k)/\underline{H}(R;\mathfrak{m} ^{k+1})=\mathfrak{h}(R/\mathfrak{m})$ by $X$. By Lemma \ref{lem:irreduciblilty} 
\[
\underbrace{\Ad(\underline{H}(R))X+\cdots+\Ad(\underline{H}(R))X}_{\text{$4d$ times}}=\mathfrak{h}(R/\mathfrak{m}).
\]
Taking exponents, we get that
\[
\left( \gcl_{\underline{H}(R)}(g^{|Z(\underline{H}(R)|}) \right) ^{4d} \cdot \underline{H}(R;\mathfrak{m} ^{k+1})=\underline{H}(R;\mathfrak{m} ^{k}),
\]
so $\gcl_{\underline{H}(R)}(g^{|Z(\underline{H}(R)|})^{5d} \supseteq \gcl_{\underline{H}(R)}(g^{|Z(\underline{H}(R)|})^{4d} \gcl_{\underline{H}(R)}(g^{|Z(\underline{H}(R)|})^{d} \supseteq \underline{H}(R; \mathfrak{m} ^{k})$, from which the result follows.
\end{proof}

\subsection{Proofs of Propositions \ref{cor:bounded_prod}, \ref{cor:bound_width_cong}, and \ref{cor:bound_prod}}

\begin{proof}[{Proof of Proposition \ref{cor:bounded_prod}}] By \cite[Theorem 6.16]{PR}, for any local field $F$ and integer $D$, there are only finitely many connected semisimple algebraic subgroups of $\GL_D$ up to isomorphism. Therefore, given $K,S,D$, there are finitely many locally compact groups $L_1,\ldots,L_M$ such that, if $\underline{G} \subseteq \GL_D$ is a connected, simply connected semisimple algebraic group defined over $K$ and $v\in S$, then $\underline{G}(K_v)$ is isomorphic to one of the $L_j$s. Applying Lemmas \ref{lem:conj.open.compact.Lie} (for compact Lie groups), \ref{lem:conj.open.noncompact.Lie} (for non-compact groups), and \ref{lem:conj.compact.p-adic.open} (for compact totally disconnected groups) to the groups $L_i$, there is $N_1$, depending only on $K,S,D,\epsilon$ such that 
\begin{equation} \label{eq:bounded_prod.S}\left( \forall v\in S \right) \quad \gcl_{\underline{G}(K_v)}(g)^{N_1}=\langle \gcl_{\underline{G}(K_v)}(g) \rangle.
\end{equation} 
Let $N=\max \left\{ N_1,5D^3 \right\}$. We will show that the claim holds for $N$. Indeed, given $g$, applying Lemma \ref{lem:conj.open.general} for every $v\in T_\Gamma$, we get a neighborhood $W_v$ of 1 in $\underline{G}(K_v)$ such that
\begin{equation} \label{eq:bounded_prod.T_Gamma}\left( \forall v\in T_\Gamma\right) \quad \gcl_{\Gamma_v}(g)^{N} \supseteq W_v.
\end{equation} 
Finally, by Lemma \ref{lem:dense.p-adic}, we get
\begin{equation} \label{eq:bounded_prod.not.T_Gamma}\left( \forall v\notin T_\Gamma \cup S\right) \quad \gcl_{\Gamma_v}(g)^{N}=\langle \gcl_{\Gamma_v}(g) \rangle.
\end{equation} 
By the assumptions, $\underline{G}$ satisfies strong approximation, so $\Gamma$ is dense in $\prod_{v \neq w} \Gamma_v$.  It follows that the closure of $\gcl_\Gamma(g)$ in $\prod_{v \neq w} \Gamma_v$ is $\prod_{v \neq w} \gcl_{\Gamma_v}(g)$. Since $\Gamma_v=\underline{G}(K_v)$, for every $v\in S$, we get from \eqref{eq:bounded_prod.S}, \eqref{eq:bounded_prod.T_Gamma}, and \eqref{eq:bounded_prod.not.T_Gamma} that
\[
\prod_{v \neq w} \gcl_{\Gamma_v}(g) \supseteq \prod_{v\in T_\Gamma} W_v \times \prod_{v\notin T_{\Gamma}\cup \left\{ w \right\}} \langle \gcl_{\Gamma_v}(g) \rangle.
\]
\end{proof} 

\begin{proof}[Proof of Proposition \ref{cor:bound_width_cong}] The proof is almost identical to the proof of Proposition \ref{cor:bounded_prod}, except we apply Lemma \ref{lem:conj.compact.p-adic.open} for the groups $\Gamma_v$, for $v\in T_\Gamma$, and get that there is $N_2$ (this time, depending on $\Gamma$) such that
\begin{equation} \label{eq:bounded_prod.T_Gamma.instead} \left( \forall v \in T_\Gamma \right) \quad \gcl_{\Gamma_v}(g)^{N_2}=\langle \gcl_{\Gamma_v}(g) \rangle.
\end{equation} 
Taking $N=\max \left\{ N_1,N_2,5D^2\right\}$, the result follows when we use \eqref{eq:bounded_prod.T_Gamma.instead} instead of \eqref{eq:bounded_prod.T_Gamma}.
\end{proof} 

\begin{proof}[Proof of Proposition \ref{cor:bound_prod}] By the assumption that such $\Delta$ exists, we get that $\Gamma$ is a congruence subgroup. By replacing $\Gamma$ by a finite-index subgroup, we can assume that $\Gamma$ is a principal congruence subgroup. Let $N_1$ the constant from Proposition \ref{cor:bound_width_cong} applied with $\Gamma$ and $\epsilon=\frac{1}{2}$. For every $v\in T_\Gamma$, apply Lemma \ref{lem:conj.compact.p-adic.open} to $\Gamma_v$ to get a number $N_{2,v}$, and let $N_2=\max \left\{ N_{2,v} \mid v\in T_\Gamma \right\}$. We will show that the claim holds with $N=2N_1+N_2$.

Let $\Delta$ be a principal congruence subgroup. For a place $v$, denote the closure of $\Delta$ in $\Gamma_v$ by $\Delta_v$. 

By Strong Approximation, there is a non-central $\frac{1}{2}$-separated element $\alpha \in \Delta$. By the definition of $N_1$, $\gcl_\Gamma(\alpha)^{N_1}$ is dense in $\prod_{v\notin S} \langle \gcl_{\Gamma_v}(\alpha_v)\rangle$. For every $v\in T_\Gamma$, choose a natural number $k_v$ such that $\Gamma_v[\mathfrak{p}_v^{k_v}] \subseteq \langle \gcl_{\Gamma_v}(\alpha) \rangle$. Let $T$ be the finite set of places $v\notin T_\Gamma \cup S$ such that $\alpha \in Z(\Gamma_v) \cdot \Gamma_v[\mathfrak{p}_v]$. 

For $v\notin S\cup T \cup T_\Gamma$, $\gcl_{\Gamma_v}(\alpha)^{N_1}=\langle \gcl_{\Gamma_v}(\alpha) \rangle = \Gamma_v$. For every $v\in T$, there is a natural number $k_v$ such that $\Delta_v=\Gamma_v[\mathfrak{p}_v^{k_v}]$. By Strong Approximation, there is an element $\beta \in \Delta$ such that $\beta \in \Gamma_v[\mathfrak{p}_v^{k_v}] \smallsetminus Z(\Gamma_v) \cdot \Gamma_v[\mathfrak{p}_v^{k_v+1}]$. We have that $\gcl_{\Gamma_v}(\beta)^{N_1}=\langle \gcl_{\Gamma_v}(\beta) \rangle = \Delta_v$. For every $v\in T_\Gamma$, there are elements $\gamma_{v,1},\ldots,\gamma_{v,N}\in \Delta_v$ such that $\Delta_v=\gcl_{\Gamma_v}(\gamma_{v,1})\cdots \gcl_{\Gamma_v}(\gamma_{v,N})$. By Strong Approximation, choose elements $\gamma_1,\ldots,\gamma_{N_2}\in \Gamma$ such that $\gamma_i \cong \gamma_{v,i}\text{ (mod $\Gamma_v[\mathfrak{p}_v^{k_v}]$)}$, for all $i=1,\ldots,N_2$. For every $v\notin S$,
\[
\gcl_{\Gamma_v}(\alpha)^{N_1} \cdot \gcl_{\Gamma_v}(\beta)^{N_1} \cdot \gcl_{\Gamma_v}(\gamma_1) \cdots \gcl_{\Gamma_v}(\gamma_{N_2})=\Delta_v
\]
and the claim holds.
\end{proof}

\section{Definability of congruence subgroups}\label{sec:cong}

\begin{definition}
Under Setting \ref{nota:Gamma}, for every $q \lhd A$, let $G(A;\mathfrak{q})$ be the $\mathfrak{q}$th congruence subgroup of $G(A)$ and let  $G^*(A;\mathfrak{q})$ consists of the elements whose images in $G(A)/G(A;\mathfrak{q})$ are central. Then $\{\alpha G(A;\mathfrak{q}) \mid \mathfrak{q}\ne 0\}$ is a basis to the  congruence topology of $G(A)$ and  $\{\alpha G^*(A;\mathfrak{q}) \mid \mathfrak{q}\ne 0\}$ is a basis to a topology of $G(A)$ which we call the projective congruence topology. Finally,  denote $\Gamma[q]:=\Gamma \cap G(A;\mathfrak{q})$ and $\Gamma^*[q]:=\Gamma \cap G^*(A;\mathfrak{q})$. 
\end{definition}

\begin{theorem}\label{thm:principal.definable}
Let $\Gamma$ be as in Setting \ref{nota:Gamma}. If $G=\Spin_q$, assume further that $n \geq 9$. The exists a definable collection $\mathcal{F}$ of normal congruence subgroups of $\Gamma$ which contains $\{\Gamma^*[\mathfrak{q}] \mid  A \ne \mathfrak{q} \lhd A\}$.

\end{theorem}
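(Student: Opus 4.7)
The plan uses Proposition \ref{cor:bound_prod} as the key ingredient. Let $N = N(\Gamma)$ be the constant provided by that proposition. For each tuple $\vec{\alpha} = (\alpha_1, \ldots, \alpha_N) \in \Gamma^N$, define the uniformly definable set
\[
P(\vec{\alpha}) := \prod_{i=1}^{N} \gcl_\Gamma(\alpha_i),
\]
which is a symmetric, $\Gamma$-normal subset of $\Gamma$ containing the identity. Proposition \ref{cor:bound_prod} asserts that every principal congruence subgroup $\Delta \subseteq \Gamma$ contains a tuple $\vec{\alpha}$ for which $P(\vec{\alpha})$ is dense in $\Delta$ with respect to the congruence topology.

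The next step is to pass from the set $P(\vec{\alpha})$ (which need not be a subgroup) to a definable normal subgroup $H(\vec{\alpha}) \subseteq \Gamma$. A natural candidate is
\[
H(\vec{\alpha}) := \bigl\{\, g \in \Gamma \,\big|\, [g, h] \in P(\vec{\alpha}) \cdot P(\vec{\alpha}) \text{ for all } h \in \Gamma \,\bigr\},
\]
which is automatically normal in $\Gamma$ because of the universal quantifier over $h$. The collection $\mathcal{F} := \{H(\vec{\alpha}) : \vec{\alpha} \in \Gamma^N\}$ is then uniformly definable with parameter space $\Gamma^N$. To ensure that every element of $\mathcal{F}$ is a congruence subgroup, I would invoke Margulis's normal subgroup theorem together with the congruence subgroup property, both of which hold in our higher rank setting: any non-trivial normal subgroup of $\Gamma$ has finite index and is consequently a congruence subgroup, so whenever $H(\vec{\alpha})$ is a genuine non-trivial subgroup, it is automatically congruence.

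To verify that each $\Gamma^*[\mathfrak{q}]$ lies in $\mathcal{F}$, I would exploit the characterization $\Gamma^*[\mathfrak{q}] = \{g \in \Gamma : [g, \Gamma] \subseteq \Gamma[\mathfrak{q}]\}$, which reduces the problem to producing a tuple $\vec{\alpha}$ for which $P(\vec{\alpha}) \cdot P(\vec{\alpha})$ plays the role of $\Gamma[\mathfrak{q}]$ in the commutator formula. Choose, via Proposition \ref{cor:bound_prod} applied to $\Gamma[\mathfrak{q}]$, a tuple $\vec{\alpha} \in \Gamma[\mathfrak{q}]^N$ such that $P(\vec{\alpha})$ is congruence-dense in $\Gamma[\mathfrak{q}]$; one then argues that, for this choice, the containment $[g,h] \in P(\vec{\alpha}) \cdot P(\vec{\alpha})$ captures exactly membership of $g$ in $\Gamma^*[\mathfrak{q}]$.

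The main obstacle will be bridging the gap between the density output of Proposition \ref{cor:bound_prod} and the exact first-order definability required here. Density in the congruence topology yields surjectivity onto every finite congruence quotient but not pointwise equality in $\Gamma$, so some care is needed to pin down the subgroup on the nose. To close this gap, I expect to exploit Margulis NST and CSP simultaneously: together they force the normal set defined by the formula to coincide with the uniquely determined congruence subgroup whose congruence completion matches the closure of $P(\vec{\alpha})$, rather than any intermediate dense subset. The first-order formula defining $H(\vec{\alpha})$ may need to be refined beyond the commutator expression above---for instance, by intersecting over shifted copies of $P(\vec{\alpha})$ or by imposing additional commutator conditions---in order for this identification to go through uniformly in $\vec{\alpha}$.
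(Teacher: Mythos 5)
Your proposal has a genuine gap: it tries to get by with Proposition~\ref{cor:bound_prod} alone, but the paper's proof also relies essentially on Proposition~\ref{prop:unifrom_definable}, which produces a uniformly definable collection $\mathcal{D}$ that is a basis of neighborhoods of the identity for the projective congruence topology. The paper forms $\mathcal{F}$ as the collection of sets $\prod_{i=1}^N \gcl_\Gamma(\alpha_i) \cdot \Lambda$, $\Lambda \in \mathcal{D}$, that happen to be normal subgroups. Here the factor $\Lambda$ is what ``fills in'' the difference between the congruence-\emph{dense} set $\prod \gcl_\Gamma(\alpha_i)$ and the exact subgroup $\Gamma^*[\mathfrak{q}]$: density of $P$ in $\Gamma[\mathfrak{q}]$ plus $\Lambda$ containing an open congruence neighborhood gives $P\cdot\Lambda \supseteq \Gamma[\mathfrak{q}]$, and with $\Lambda \subseteq \Gamma^*[\mathfrak{q}]$ chosen appropriately one gets equality with $\Gamma^*[\mathfrak{q}]$. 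Moreover, because each $\Lambda \in \mathcal{D}$ contains a congruence subgroup, every subgroup in $\mathcal{F}$ is automatically a congruence subgroup---no appeal to CSP is needed. Without any analogue of $\mathcal{D}$ you cannot carry out this step, and Proposition~\ref{prop:unifrom_definable} is itself a substantial result with separate proofs in the isotropic and $\Spin_q$ cases.

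Your formula
\[
H(\vec{\alpha}) := \{g \in \Gamma \mid [g,h] \in P(\vec{\alpha})^2 \text{ for all } h \in \Gamma\}
\]
does not fix the problem. First, it need not define a subgroup: if $g_1, g_2 \in H(\vec{\alpha})$ then $[g_1 g_2, h] = g_1[g_2,h]g_1^{-1}\cdot[g_1,h]$ lies in $P(\vec{\alpha})^4$, not in $P(\vec{\alpha})^2$, since $P$ is a normal set but not a group. Second, and more importantly, even if one corrects for this, the set one gets will typically be strictly smaller than $\Gamma^*[\mathfrak{q}]$. The equality $\Gamma^*[\mathfrak{q}] = \{g : [g,\Gamma] \subseteq \Gamma[\mathfrak{q}]\}$ is correct, but $P(\vec{\alpha})^2$ is only congruence-dense in $\Gamma[\mathfrak{q}]$, not equal to it, so some commutator $[g,h]$ with $g \in \Gamma^*[\mathfrak{q}]$ will land in $\Gamma[\mathfrak{q}] \setminus P(\vec{\alpha})^2$ and $g$ will be excluded. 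You acknowledge this ``density vs.\ pointwise equality'' obstacle and suggest resolving it by further refinements or by invoking NST and CSP, but NST/CSP at best constrain what kind of subgroup $H(\vec{\alpha})$ could be---they do not help $H(\vec{\alpha})$ be the right subgroup. The mechanism the paper uses to close exactly this gap is the product with the definable congruence-open set $\Lambda \in \mathcal{D}$, and there does not appear to be a shortcut that bypasses Proposition~\ref{prop:unifrom_definable}.
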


\begin{proof}
	Proposition \ref{prop:unifrom_definable} below implies that there exists a definable collection $\mathcal{D}$ which is a basis of neighborhoods of identity under the projective congruence topology. Let $N$ be the constant given by Proposition \ref{cor:bound_prod}. Let $\mathcal{F}$ be the  collection of normal subgroups of $\Gamma$ which are of the form $\prod_{1 \le i \le N}\gcl_{\Gamma}(\alpha_i) \Lambda$ for some $\alpha_1,\ldots,\alpha_N \in \Gamma$  and some $\Lambda \in \mathcal{D}$. \end{proof}

\begin{proposition}\label{prop:unifrom_definable}
Let $\Gamma$ be as in Setting \ref{nota:Gamma}. If $G=\Spin_q$, assume further that $n \geq 9$. There exists a uniformly definable collection of subsets of $\Gamma$ which is a base of neighborhoods of identity under the projective congruence topology.  
\end{proposition}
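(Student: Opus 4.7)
The plan is to take $\mathcal{D}$ to consist of sets of the form
\[
D(\vec\alpha) := \prod_{i=1}^{k}\gcl_\Gamma(\alpha_i) \cdot \prod_{i=1}^{k}\gcl_\Gamma(\alpha_i),
\]
where $\vec\alpha \in \Gamma^k$, $k$ is the constant from Proposition~\ref{cor:bound_prod}, and the parameters range over the definable subset $Y \subseteq \Gamma^k$ consisting of those $\vec\alpha$ for which $D(\vec\alpha)$ is a subgroup of $\Gamma$ containing a non-central element. The conditions ``$D$ is closed under multiplication'' and ``$D$ contains a non-central element'' are expressible by first-order formulas, so $Y$ is definable, and each $D(\vec\alpha)$ is uniformly definable in the parameters $\vec\alpha$.

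Every member of $\mathcal{D}$ is automatically a neighborhood of the identity in the projective congruence topology. Indeed, by Margulis's normal subgroup theorem combined with the congruence subgroup property, both of which apply to $\Gamma$ in Setting~\ref{nota:Gamma}, every non-central finite-index normal subgroup of $\Gamma$ equals $\Gamma^*[\mathfrak{q}']$ for some non-zero ideal $\mathfrak{q}' \lhd A$. Since $D(\vec\alpha)$ is a normal subgroup (being a product of conjugation-invariant sets) containing a non-central element, it must contain the normal closure of that element, and hence contain some $\Gamma^*[\mathfrak{q}']$.

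For the base property, fix a non-zero ideal $\mathfrak{q} \lhd A$. I would apply Proposition~\ref{cor:bound_prod} to the principal congruence subgroup $\Gamma[\mathfrak{q}]$ and absorb the finite quotient $\Gamma^*[\mathfrak{q}]/\Gamma[\mathfrak{q}]$ into additional conjugacy-class factors, producing $\alpha_1,\ldots,\alpha_k \in \Gamma^*[\mathfrak{q}]$ such that $\prod \gcl_\Gamma(\alpha_i)$ is a dense subset of $\Gamma^*[\mathfrak{q}]$ in the congruence topology. The resulting $D(\vec\alpha)$ is then contained in $\Gamma^*[\mathfrak{q}]$, and I would argue that by this density---which asserts that the product meets every coset of every deeper congruence subgroup---the square $D(\vec\alpha)$ is in fact already closed under multiplication, and hence a subgroup, equal to some $\Gamma^*[\mathfrak{q}''] \subseteq \Gamma^*[\mathfrak{q}]$ by MNS+CSP.

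The main obstacle is precisely this last claim: showing that for appropriate choices of parameters, the definable set $D(\vec\alpha)$---obtained as a product of two dense symmetric subsets of a congruence subgroup---is in fact a subgroup, not merely a dense subset. This requires combining the effective density statement of Proposition~\ref{cor:bound_prod} with a coset-counting argument controlling how ``fast'' the density fills out $\Gamma^*[\mathfrak{q}]$ modulo deeper congruence subgroups. The hypothesis $n \ge 9$ in the $\Spin_q$ case enters through the Witt-index requirements built into Proposition~\ref{cor:bound_prod} and the underlying density results of~\S\ref{sec:local}.
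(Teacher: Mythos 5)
Your proposal is a genuinely different strategy from the paper's, and it has a real gap that you correctly flag yourself but that I don't think is fixable as stated. The heart of the issue is the transition from density to subgroup. Proposition~\ref{cor:bound_prod} produces $\alpha_1,\ldots,\alpha_k\in\Gamma[\mathfrak{q}]$ with $S:=\prod_i\gcl_\Gamma(\alpha_i)$ \emph{dense} in $\Gamma[\mathfrak{q}]$ for the congruence topology; this says nothing about $S^2$ being closed under multiplication. A dense symmetric subset of a residually finite discrete group has a square that is again dense, but this gives no structural control whatsoever. (Consider $\Gamma=\SL_3(\mathbb Z)$: $\mathbb Z$ is dense in $\widehat{\mathbb Z}$, yet $\mathbb Z+\mathbb Z=\mathbb Z$ remains countable; density in a profinite topology is far weaker than equality.) What you would actually need is that $\gcl_\Gamma(\alpha)^N$ is already a subgroup for some bounded $N$, i.e.\ finite conjugacy-class width in $\Gamma$. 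For non-uniform $\Gamma$ this is not known in general either; for uniform higher-rank lattices the paper explicitly records in \S\ref{sec:intro} that not a single non-silly word is known to have finite width, and poses it as a conjecture. So your ``main obstacle'' is not a technical wrinkle but the crux, and a ``coset-counting'' refinement of the density bound cannot resolve it: density modulo every congruence subgroup is exactly what Proposition~\ref{cor:bound_prod} already gives, and this does not determine $S^2$ inside the discrete group.

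The paper avoids the obstacle entirely by not asking the base members to be subgroups. For non-uniform $\Gamma$, the definable family is $X_\delta=\{\gamma:\gcl_\Gamma(\gamma)^N\cap\Cent_G(U)\subseteq\gcl_\Gamma(\delta)^N\}$ for a fixed maximal $K$-parabolic unipotent radical $U$; each $X_\delta$ is merely \emph{sandwiched} between congruence subgroups, using \cite[Theorem 5.1]{ALM} (which gives $\gcl_\Gamma(\gamma)^{N_1}\supseteq U(I(\gamma))$, a genuine containment not just density) for the lower bound and Corollary~\ref{cor:quick.escape.U} (escape along $Z(U)$) for the upper bound. For $G=\Spin_q$, the paper multiplies by a definable auxiliary subgroup $\Lambda=\Cent_\Gamma(\Delta)$ (Setting~\ref{nota:subspace}) and then uses the effective Kneser theorem via Corollaries~\ref{cor:uff_1}--\ref{cor:uff_2} to show that $\gcl_\Gamma(\alpha)^N\Lambda$ stabilizes at a genuine congruence subgroup for $\epsilon$-separated $\alpha$; the smallness is then extracted by intersecting conjugates $\gamma_i(\gcl_\Gamma(\alpha)^N\Lambda)\gamma_i^{-1}$. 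This is also where $n\ge9$ actually enters: through the Witt-index requirements on the $3$-dimensional subspace $C$ and its complement in Setting~\ref{nota:subspace} and through Theorem~\ref{thm:effective_kneser}, not through Proposition~\ref{cor:bound_prod} (which is stated in the weaker Setting~\ref{ass:gcc}).

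One smaller point: in your verification that each $D(\vec\alpha)$ is a neighborhood of the identity, the assertion that every non-central finite-index normal subgroup of $\Gamma$ \emph{equals} some $\Gamma^*[\mathfrak q']$ overreaches. MNS plus CSP give that such a subgroup \emph{contains} a principal congruence subgroup, but normal subgroups of $\Gamma$ that sit strictly between $\Gamma[\mathfrak q']$ and $\Gamma$ are abundant. What you need is only containment of some $\Gamma^*[\mathfrak q'']$, but even that requires a small argument (that deep enough $\Gamma^*[\mathfrak q'']$ fit inside $\Gamma[\mathfrak q']$) rather than an identification.
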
	

\begin{remark} By a base of neighborhoods of identity in Proposition \ref{prop:unifrom_definable} we mean a collection $\mathcal{A}$ of (not necessarily open) sets, that satisfies the following conditions: \begin{enumerate} 
\item For every $A\in \mathcal{A}$, the identity is in the interior of $A$.
\item For every open set $B$ containing the identity, there is an element $A\in \mathcal{A}$ such that $A \subseteq B$.
\end{enumerate} 
\end{remark} 	
		
\subsection{Proof of Proposition \ref{prop:unifrom_definable} for non-uniform $\Gamma$ }

\begin{lemma} \label{lem:root.system} Let $\Phi$ be a reduced and irreducible root system. Fix a lexicographic order on $\Phi$ and let $\Delta$ be the set of simple roots. For any $\alpha\in \Delta$, $\Span_\mathbb{Q} \left\{ r \in \Phi^+ \mid r= \sum_{\gamma \in \Delta} c_\gamma \gamma \text{ with } c_\alpha >0 \right\}=\Span_\mathbb{Q} \Phi$.
\end{lemma}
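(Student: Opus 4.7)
The plan is to show that every simple root lies in $V := \Span_\mathbb{Q}\{r \in \Phi^+ : c_\alpha(r) > 0\}$; this suffices since $\Span_\mathbb{Q} \Delta = \Span_\mathbb{Q} \Phi$. Clearly $\alpha \in V$, so I would fix an arbitrary $\gamma \in \Delta \setminus \{\alpha\}$ and try to construct a short telescoping family of positive roots in $V$ whose successive differences recover $\gamma$.

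First I would use irreducibility: the Dynkin diagram of $\Phi$ is connected, and in fact a tree, so there is a unique simple (non-repeating) path $\alpha = \gamma_0, \gamma_1, \ldots, \gamma_k = \gamma$ with consecutive vertices adjacent in the diagram. I would then set $r_i := \gamma_0 + \gamma_1 + \cdots + \gamma_i$ and prove by induction on $i$ that $r_i \in \Phi^+$. The base case $r_0 = \alpha$ is trivial. For the inductive step, assuming $r_{i-1} \in \Phi^+$, I would compute
\[
\langle r_{i-1}, \gamma_i^\vee \rangle = \sum_{j=0}^{i-1} \langle \gamma_j, \gamma_i^\vee \rangle.
\]
Since the path is simple, each $\gamma_j$ ($j \leq i-1$) is a simple root distinct from $\gamma_i$, so every term is $\leq 0$; moreover the $j = i-1$ term is strictly negative because $\gamma_{i-1}$ and $\gamma_i$ are adjacent in the Dynkin diagram. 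Hence $\langle r_{i-1}, \gamma_i^\vee \rangle < 0$, and the standard root-string fact (that when a Cartan integer is negative, the string extends upward) gives $r_i = r_{i-1} + \gamma_i \in \Phi^+$.

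Each $r_i$ has $c_\alpha(r_i) = 1 > 0$, hence $r_i \in V$. Therefore $\gamma_i = r_i - r_{i-1} \in V$ for all $i$, and in particular $\gamma = \gamma_k \in V$. Since $\gamma \in \Delta$ was arbitrary, $\Delta \subseteq V$, which yields $V = \Span_\mathbb{Q} \Phi$.

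The whole argument is elementary once the path has been chosen; the main point to be careful about is that the vertices $\gamma_0, \ldots, \gamma_k$ are pairwise distinct, which is automatic because the Dynkin diagram of an irreducible root system is a tree, together with the standard root-string step ensuring $r_{i-1} + \gamma_i$ is again a root. No case analysis on the type of $\Phi$ is needed.
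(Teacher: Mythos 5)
Your proof is correct, and it is a genuine (if modest) variation on the paper's argument. Both proofs exploit the same structural fact — the Dynkin diagram is a tree, so there is a unique simple path $\alpha = \gamma_0, \ldots, \gamma_k = \gamma$ — but the key step differs. The paper proceeds by induction on the distance $k$: it assumes $\gamma_0, \ldots, \gamma_{k-1}$ already lie in $V$, produces the single root $r = s_{\gamma_k} \cdots s_{\gamma_1}(\gamma_0)$ via successive Weyl reflections, observes that $r$ has strictly positive coefficients at $\gamma_0$ and $\gamma_k$ (so $r \in V$), and then solves for $\gamma_k$. You instead verify directly that each partial sum $r_i = \gamma_0 + \cdots + \gamma_i$ is a positive root, using the root-string criterion $\langle r_{i-1}, \gamma_i^\vee \rangle < 0 \Rightarrow r_{i-1} + \gamma_i \in \Phi$, and then recover every $\gamma_i$ at once as a telescoping difference $r_i - r_{i-1}$. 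This avoids the outer induction on distance and the appeal to Weyl reflections, replacing them with a single inner induction and a slightly more explicit computation of the relevant root. Your verification that $\langle r_{i-1}, \gamma_i^\vee \rangle < 0$ (all terms $\le 0$ since the $\gamma_j$ are distinct simple roots, and the $j = i-1$ term is $< 0$ by adjacency) is exactly what is needed; the rest of the argument is sound.
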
 

\begin{proof} By the assumptions, the Dynkin diagram of $\Phi$ is connected. We claim that, for every $\beta \in \Delta$, $\beta$ is in the $\mathbb{Q}$-span of $\left\{ r \in \Phi^+ \mid r= \sum_{\gamma \in \Delta} c_\gamma \gamma \text{ with } c_\alpha >0 \right\}$. We show this by induction on the distance between $\alpha$ and $\beta$ on the Dynkin diagram. The basis case $\alpha=\beta$ is clear. Assume that $\beta \neq \alpha$, and let $\alpha=\alpha_0,\ldots,\alpha_n=\beta$ be a sequence of elements of $\Delta$ such that $\langle \alpha_i,\alpha_j \rangle\ne 0$ iff $i=j\pm 1$. By induction, we have that $\alpha_0,\ldots,\alpha_{n-1} \in \mathbb{Q}\left\{ r \in \Phi^+ \mid r= \sum_{\gamma \in \Delta} c_\gamma \gamma \text{ with } c_\alpha >0 \right\}$. Denoting the reflection in the root $\alpha_i$ by $s_{\alpha_i}$, the element $r=s_{\alpha_n} \circ \cdots \circ s_{\alpha_1}(\alpha_0)$ is of the form $\sum_{i=0}^n c_i \alpha_i$, with $c_0,c_n >0$, and the claim is proved.
\end{proof} 

\begin{lemma} \label{lem:L.acts.faithfully} Let $\underline{G}$ be a connected and simple algebraic group over $\mathbb{C}$, let $\underline{P} \subseteq \underline{G}$ be a maximal parabolic, and let $\underline{P}=\underline{L} \cdot \underline{U}$ be a Levi decomposition. Then, the kernel of the conjugation action map $\rho:\underline{L} \rightarrow \Aut(\underline{U})$ is $Z(\underline{G})$.
\end{lemma}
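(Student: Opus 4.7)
My plan is to prove both inclusions separately. The containment $Z(\underline{G})\subseteq\ker\rho$ is immediate: $Z(\underline{G})$ lies in every maximal torus and hence in $\underline{L}$, and it centralizes all of $\underline{G}$, in particular $\underline{U}$. For the reverse inclusion, fix $\ell\in\ker\rho$. Because $\underline{G}$ is simple, $\ker\Ad=Z(\underline{G})$, so it suffices to show that $\Ad(\ell)$ is the identity on $\mathfrak{g}:=\Lie(\underline{G})$. In characteristic zero, the hypothesis that $\ell$ centralizes $\underline{U}$ is equivalent to $\Ad(\ell)|_{\mathfrak{u}}=\Id$ for $\mathfrak{u}:=\Lie(\underline{U})$ (via $\ell\exp(X)\ell^{-1}=\exp(\Ad(\ell)X)$).

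To extend this to the opposite unipotent radical $\mathfrak{u}^-:=\Lie(\underline{U}^-)$, I would use the Killing form $B$ of $\mathfrak{g}$, which is $\Ad$-invariant. Since $B(\mathfrak{g}_r,\mathfrak{g}_s)=0$ unless $s=-r$, and $\mathfrak{u}^-$ is exactly the sum of the negatives of the root spaces composing $\mathfrak{u}$, the restriction $B|_{\mathfrak{u}\times\mathfrak{u}^-}$ is a non-degenerate pairing. The element $\ell$ normalizes $\underline{U}^-$ (the Levi $\underline{L}$ being common to $\underline{P}$ and its opposite $\underline{P}^-$), so $\Ad(\ell)$ preserves $\mathfrak{u}^-$; combined with the identity $B(X,\Ad(\ell)Y)=B(\Ad(\ell^{-1})X,Y)=B(X,Y)$ for $X\in\mathfrak{u}$ and $Y\in\mathfrak{u}^-$, non-degeneracy forces $\Ad(\ell)=\Id$ on $\mathfrak{u}^-$.

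It remains to propagate the identity from $\mathfrak{u}\cup\mathfrak{u}^-$ to all of $\mathfrak{g}$. Let $\mathfrak{h}$ be the Lie subalgebra of $\mathfrak{g}$ generated by $\mathfrak{u}\cup\mathfrak{u}^-$; I would show $\mathfrak{h}=\mathfrak{g}$ by identifying $\mathfrak{h}$ as a non-zero ideal of $\mathfrak{g}$. As a subalgebra containing $\mathfrak{u}$ and $\mathfrak{u}^-$, one has $[\mathfrak{u},\mathfrak{h}]+[\mathfrak{u}^-,\mathfrak{h}]\subseteq\mathfrak{h}$; and because $\mathfrak{l}$ normalizes both $\mathfrak{u}$ and $\mathfrak{u}^-$ while $\mathrm{ad}(X)$ acts as a derivation of $\mathfrak{g}$ for each $X\in\mathfrak{l}$, the Jacobi identity yields $[\mathfrak{l},\mathfrak{h}]\subseteq\mathfrak{h}$. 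Hence $[\mathfrak{g},\mathfrak{h}]\subseteq\mathfrak{h}$, and since $\mathfrak{u}\neq 0$ (as $\underline{P}$ is a proper parabolic), simplicity of $\mathfrak{g}$ forces $\mathfrak{h}=\mathfrak{g}$. Finally, $\Ad(\ell)$ is a Lie algebra automorphism trivial on the generating set $\mathfrak{u}\cup\mathfrak{u}^-$ of $\mathfrak{h}$, hence trivial on all of $\mathfrak{h}=\mathfrak{g}$, as required. The most delicate step is the Killing form duality, whose correct application depends on $\Ad(\ell)$ genuinely preserving $\mathfrak{u}^-$; a variant of the proof could alternatively invoke Lemma~\ref{lem:root.system} to show directly that $\mathfrak{t}\subseteq[\mathfrak{u},\mathfrak{u}^-]$, reduce to $\ell\in\underline{T}$, and then use a root-string analysis for the roots of $\mathfrak{l}$, but the ideal-theoretic route above sidesteps that casework.
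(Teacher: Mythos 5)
Your proof is correct, and it takes a genuinely different route from the paper. The paper fixes a maximal torus $\underline{T}\subseteq\underline{L}$, identifies the roots appearing in $\mathfrak{u}=\Lie\underline{U}$ via the simple root defining the maximal parabolic, and invokes Lemma~\ref{lem:root.system} to conclude that these roots span the full root space over $\mathbb{Q}$; this gives $\ker\rho\cap\underline{T}=\bigcap_{\chi}\ker\chi=Z(\underline{G})$, and a normality argument (any nontrivial normal subgroup of a reductive group meets a maximal torus beyond the center) then upgrades this to $\ker\rho=Z(\underline{G})$. You instead avoid the root combinatorics entirely: after reducing to showing $\Ad(\ell)=\Id$ via $\ker\Ad=Z(\underline{G})$, you use Killing form duality to transport the triviality of $\Ad(\ell)$ from $\mathfrak{u}$ to $\mathfrak{u}^-$, and then observe that the subalgebra generated by $\mathfrak{u}\cup\mathfrak{u}^-$ is a nonzero ideal, hence all of $\mathfrak{g}$, and that the fixed set of the automorphism $\Ad(\ell)$ is a subalgebra containing a generating set. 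One notable advantage of your argument is that it never uses maximality of $\underline{P}$; it establishes $\ker\rho=Z(\underline{G})$ for an \emph{arbitrary} proper parabolic, which is the generality actually needed in Lemma~\ref{lem:centralizer.unipotent.alg.closed} (where the paper reduces to the maximal case instead). The paper's route is shorter once Lemma~\ref{lem:root.system} is in hand, but yours is more self-contained and structurally cleaner.
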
 

\begin{proof} Let $\underline{T} \subseteq \underline{L}$ be a maximal torus. Let $\Phi$ be the root system corresponding to the action of $\underline{T}$ on the Lie algebra of $\underline{G}$, and, for $\chi \in \Phi$, denote the root space by $\mathfrak{g} ^ \chi$. There is a lexicographic order on $\Phi$, with corresponding set $\Phi ^+$ of positive roots and set $\Delta$ of simple roots, and a simple root $\alpha \in \Delta$ such that $\underline{P}$ is the parabolic attached to $\left\{ \alpha \right\}$. In particular, it follows that the Lie algebra of $\underline{U}$ is 
\[
\Lie \underline{U} = \bigoplus \left\{ \mathfrak{g} ^ \chi \mid \text{$\chi=\sum_{\gamma \in \Delta} c_\gamma \gamma$ with $c_\alpha >0$} \right\}. 
\]
By Lemma \ref{lem:root.system}, $\ker \rho \cap \underline{T}=\cap_{\beta \in \Delta} \ker \beta = Z(\underline{G})$. Since $\underline{L}$ is semisimple and $\ker \rho$ is normal in $\underline{L}$, if $\ker \rho \neq Z(\underline{G})$, then $\ker \rho \cap \underline{T} \neq Z(\underline{G})$, a contradiction.
\end{proof} 

\begin{lemma} \label{lem:centralizer.unipotent.alg.closed} Let $\underline{G}$ be a connected simple algebraic group defined over $\mathbb{C}$, let $\underline{P} \subseteq \underline{G}$ be a parabolic, and let $\underline{U}$ be the unipotent radical of $\underline{P}$. Then \begin{enumerate} 
\item $\Cent_{\underline{G}}(\underline{U}) \subset \underline{U} \cdot Z(\underline{G})$.
\item $\Cent_{\underline{G}}(\underline{U}) = Z(\underline{U}) \cdot Z(\underline{G})$.
\end{enumerate} 
\end{lemma}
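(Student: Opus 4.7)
My plan is to reduce both parts to the single claim that the kernel $K$ of the conjugation action $\underline{L}\to\Aut(\underline{U}/[\underline{U},\underline{U}])$ equals $Z(\underline{G})$, where $\underline{L}$ is a Levi factor and $\pi:\underline{P}\to\underline{L}$ is the Levi quotient. The starting point is the standard fact $\Cent_{\underline{G}}(\underline{U})\subseteq N_{\underline{G}}(\underline{U})=\underline{P}=\underline{L}\cdot\underline{U}$. For any $c=\ell u\in\Cent_{\underline{G}}(\underline{U})$ with $\ell\in\underline{L}$ and $u\in\underline{U}$, the centralizing condition is equivalent to $\mathrm{conj}_\ell|_{\underline{U}}=\mathrm{conj}_{u^{-1}}|_{\underline{U}}$, so $\ell$ acts on $\underline{U}$ by an inner automorphism, which acts trivially on $\underline{U}/[\underline{U},\underline{U}]$. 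Hence $\pi(\Cent_{\underline{G}}(\underline{U}))\subseteq K$; granting $K=Z(\underline{G})$ yields $\Cent_{\underline{G}}(\underline{U})\subseteq\pi^{-1}(Z(\underline{G}))=Z(\underline{G})\cdot\underline{U}$, which is (1). For (2), the inclusion $\supseteq$ is obvious; conversely, writing $c=zu$ with $z\in Z(\underline{G})$ via (1), the centralizing condition forces $u\in Z(\underline{U})$ since $z$ is central.

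To prove $K=Z(\underline{G})$, fix a maximal torus $\underline{T}\subseteq\underline{L}$, let $\Theta\subseteq\Delta$ correspond to $\underline{L}$, and put $J:=\Delta\setminus\Theta$. The $\underline{T}$-weights on $\mathfrak{u}$ are $\Phi^+_{\underline{U}}=\{\gamma\in\Phi^+:\gamma\text{ has a positive coefficient on some element of }J\}$, and a weight $\gamma\in\Phi^+_{\underline{U}}$ survives in $\mathfrak{u}/[\mathfrak{u},\mathfrak{u}]$ iff it cannot be written as $\alpha+\beta$ with $\alpha,\beta\in\Phi^+_{\underline{U}}$. In particular, each simple $j\in J$ is a surviving weight, since a simple root is not a nontrivial sum of positive roots.

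I then study $\mathfrak{k}:=\Lie(K)=\ker(\mathfrak{l}\to\mathrm{End}(\mathfrak{u}/[\mathfrak{u},\mathfrak{u}]))$, an ideal of the reductive $\mathfrak{l}=\mathfrak{z}(\mathfrak{l})\oplus\mathfrak{l}^{\mathrm{ss}}$, which decomposes along the simple factors of $\mathfrak{l}^{\mathrm{ss}}$ and $\mathfrak{z}(\mathfrak{l})$. Each simple factor $\mathfrak{l}^{\mathrm{ss}}_i$ corresponds to a connected component $\Theta_i$ of $\Theta$ inside the Dynkin diagram of $\underline{G}$. Since $\underline{G}$ is simple (so $\Delta$ is connected) and $\underline{P}$ is proper, $\Theta_i$ has some neighbor outside itself in $\Delta$, and by maximality of $\Theta_i$ as a connected subset of $\Theta$ this neighbor lies in $J$. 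For such an adjacent pair $j\in J$, $\alpha\in\Theta_i$ one has $\langle j,\alpha^\vee\rangle\ne 0$, so the weight $j$ of $\mathfrak{u}/[\mathfrak{u},\mathfrak{u}]$ is nonzero on the Cartan of $\mathfrak{l}^{\mathrm{ss}}_i$; thus $\mathfrak{l}^{\mathrm{ss}}_i\not\subseteq\mathfrak{k}$ and so $\mathfrak{l}^{\mathrm{ss}}_i\cap\mathfrak{k}=0$ by simplicity. For the central piece, any $t\in\mathfrak{z}(\mathfrak{l})\cap\mathfrak{k}$ is killed by every $\alpha\in\Phi_{\underline{L}}$ (by centrality in $\mathfrak{l}$) and by every weight of $\mathfrak{u}/[\mathfrak{u},\mathfrak{u}]$ (by being in $\mathfrak{k}$); since $\Phi_{\underline{L}}$ spans $\mathbb{Q}[\Theta]$ and the $j\in J$ are among the weights, their union spans $\mathbb{Q}\Delta$, forcing $t=0$ as $\mathfrak{z}(\mathfrak{g})=0$ in the simple case. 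Thus $\mathfrak{k}=0$, so $K$ is finite; being a finite closed normal subgroup of the connected $\underline{L}$, it is central in $\underline{L}$ and hence $K\subseteq\underline{T}$. The group-level version of the same spanning argument then gives $\alpha(k)=1$ for every $k\in K$ and every $\alpha\in\Phi$, so $K\subseteq Z(\underline{G})$; the reverse inclusion is trivial.

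The main obstacle I anticipate is packaging the combinatorial spanning step cleanly: one needs the $\underline{L}$-roots $\Phi_{\underline{L}}$ together with the surviving weights $j\in J$ to span $\mathbb{Q}\Delta$ at both the Lie-algebra and group levels, and one needs the fact that each connected component of $\Theta$ is adjacent to $J$ in the Dynkin diagram. Both ingredients rely only on the connectedness of the Dynkin diagram of the simple $\underline{G}$ and on Lemma \ref{lem:root.system}, but the bookkeeping—especially the separation of the semisimple and central parts of the ideal $\mathfrak{k}$—takes some care.
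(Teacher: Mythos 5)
Your proof is correct, and it does take a genuinely different path from the paper's. The paper first reduces to the case of a \emph{maximal} parabolic, observing that if $\underline{Q}\supseteq\underline{P}$ has unipotent radical $\underline{V}$, then $\underline{V}\subseteq\underline{U}$ and hence $\Cent_{\underline{G}}(\underline{U})\subseteq\Cent_{\underline{G}}(\underline{V})$; for a maximal parabolic $\underline{P}=\underline{L}\cdot\underline{U}$, it then invokes Lemma~\ref{lem:L.acts.faithfully} (that $\ker(\underline{L}\to\Aut(\underline{U}))=Z(\underline{G})$), whose proof in turn uses the root-system Lemma~\ref{lem:root.system} — the positive roots having nonzero coefficient on the one missing simple root span $\mathbb{Q}\Phi$ — and concludes via normality of the kernel. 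You instead avoid the reduction to maximal parabolics entirely. You pass to the abelianization $\underline{U}/[\underline{U},\underline{U}]$, which is a small but useful simplification: it makes the ``inner automorphisms act trivially'' step painless and the surviving weights easy to identify (simple roots in $J$ always survive). You then run a Lie-algebra argument for a general parabolic, decomposing the ideal $\mathfrak{k}\subseteq\mathfrak{l}=\mathfrak{z}(\mathfrak{l})\oplus\mathfrak{l}^{\mathrm{ss}}$ along simple factors (the decomposition of an ideal of a reductive Lie algebra into a central piece plus full simple summands is the key structural fact here), and the combinatorial input is the observation that each connected component $\Theta_i$ of $\Theta$ is adjacent to $J$ in the Dynkin diagram — a different packaging of essentially the same connectedness information that drives Lemma~\ref{lem:root.system}. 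The paper's route is shorter once the maximal-parabolic reduction is made (and incidentally smooths over the point that the Levi of a maximal parabolic is reductive with one-dimensional radical, not semisimple as written); yours is more self-contained and applies uniformly to arbitrary proper parabolics, at the cost of tracking the component structure of $\Theta$ and separating the semisimple and central parts of $\mathfrak{k}$. Both approaches are sound.
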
 

\begin{proof} \begin{enumerate} 
\item If $\underline{Q}$ is a parabolic containing $\underline{P}$ and $\underline{V}$ is the unipotent radical of $\underline{Q}$, then $\underline{V} \subseteq \underline{U}$ and $\Cent_{\underline{G}}(\underline{U}) \subseteq \Cent_{\underline{G}}(\underline{V})$. Hence, it is enough to prove the claim assuming $\underline{P}$ is maximal. Let $\underline{P}=\underline{L} \cdot \underline{U}$ be a Levi decomposition of $\underline{P}$. Since $\Cent_{\underline{G}}(\underline{U}) \subseteq N_{\underline{G}}(\underline{U})=\underline{P}$, the claim now follows by Lemma \ref{lem:L.acts.faithfully}.
\item This is an immediate consequence of the first claim.
\end{enumerate} 
\end{proof}

\begin{lemma} In Setting \ref{nota:Gamma}, assume that $P \subsetneq G$ is a maximal $K$-parabolic defined over $K$, and let $U \subseteq P$ be the unipotent radical of $P$. Then \begin{enumerate} 
\item $\Cent_{\Gamma}(U \cap \Gamma)=(Z(U) \cdot Z(G)) \cap \Gamma$.
\item $(Z(U) \cdot Z(G)) \cap \Gamma$ is definable.
\end{enumerate} 
\end{lemma}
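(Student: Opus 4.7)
The plan is to reduce both parts of the lemma to Lemma \ref{lem:centralizer.unipotent.alg.closed} by combining a Zariski density argument with finite generation of $U\cap\Gamma$. The key preliminary observation is that the existence of a proper $K$-parabolic $P$ forces $G_K$ to be $K$-isotropic, placing us inside the non-uniform case of Setting \ref{nota:Gamma}. By reduction theory for arithmetic groups (see for instance \cite[Ch.~4]{PR}), $U\cap\Gamma$ is a cocompact lattice in $\prod_{v\text{ archimedean}} U(K_v)$; since $U$ is simply connected unipotent, such a lattice is automatically finitely generated (nilpotent) and Zariski dense in $U$.

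For part (1), Zariski density of $U\cap\Gamma$ in $U$ yields $\Cent_{\underline{G}}(U\cap\Gamma)=\Cent_{\underline{G}}(\underline{U})$ as algebraic subgroups of $\underline{G}$ over $\overline{K}$. Lemma \ref{lem:centralizer.unipotent.alg.closed} identifies this with $Z(\underline{U})\cdot Z(\underline{G})$. Taking $K$-points and intersecting with $\Gamma$ gives the claimed equality
\[
\Cent_\Gamma(U\cap\Gamma)=(Z(U)\cdot Z(G))\cap\Gamma.
\]

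For part (2), pick a finite generating set $u_1,\dots,u_k$ for $U\cap\Gamma$ produced in the preliminary step. Then
\[
\Cent_\Gamma(U\cap\Gamma)=\bigcap_{i=1}^{k}\Cent_\Gamma(u_i),
\]
which is visibly definable in $\Gamma$ using $u_1,\dots,u_k$ as parameters. Combining with part (1) shows that $(Z(U)\cdot Z(G))\cap\Gamma$ is definable.

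The only real obstacle is bookkeeping: one must justify, using reduction theory, that $U\cap\Gamma$ is both Zariski dense in $U$ and finitely generated. Once that is in place, the algebraic content is entirely supplied by Lemma \ref{lem:centralizer.unipotent.alg.closed}, and the definability statement is a formal consequence of finite generation of $U\cap\Gamma$ together with the fact that the centralizer of finitely many elements is always definable.
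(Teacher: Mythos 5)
Your proof is essentially the paper's argument: both parts rest on the Zariski density of $U\cap\Gamma$ in $U$ together with Lemma \ref{lem:centralizer.unipotent.alg.closed}, and part (2) is obtained in both cases by realizing the centralizer as a finite intersection $\bigcap_i\Cent_\Gamma(u_i)$. The one place where you take a heavier route is for part (2): you invoke reduction theory to conclude that $U\cap\Gamma$ is a cocompact lattice in $\prod_{v\in S}U(K_v)$ and hence finitely generated, whereas the paper needs only the weaker and more elementary observation that \emph{some} finitely many elements of $U\cap\Gamma$ already generate a Zariski-dense subgroup of $U$; this is automatic for any Zariski-dense countable subgroup and requires no arithmetic input. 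Combined with part (1), that weaker fact already gives $\Cent_\Gamma(U\cap\Gamma)=\bigcap_i\Cent_\Gamma(u_i)$ without knowing that the $u_i$ actually generate $U\cap\Gamma$. Also, a minor imprecision: the lattice statement should be with respect to $\prod_{v\in S}U(K_v)$, not only the archimedean places, though this does not affect the argument.
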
 

\begin{proof} \begin{enumerate} 
\item Since $P$ is defined over $K$, so is $U$. It follows that $U \cap \Gamma$ is Zariski-dense in $U$. By Lemma \ref{lem:centralizer.unipotent.alg.closed},
\[
\Cent_{\Gamma}(U \cap \Gamma)=\Cent_{G}(U \cap \Gamma) \cap \Gamma = \Cent_G(U)\cap \Gamma =( Z(U) \cdot Z(G)) \cap \Gamma.
\]
\item There are finitely many elements in $U \cap \Gamma$ that generate a Zariski dense subgroup of $U$. The result now follows from the first claim.
\end{enumerate} 
\end{proof} 

The following follows from \cite[Claim 2.11]{Rag} 

\begin{lemma} \label{lem:Rag.2.11} In Setting \ref{nota:Gamma}, assume that $P \subsetneq G$ is a maximal $K$-parabolic, and let $P=L \cdot U$ be a Levi decomposition defined over $K$. Denote the connected component of the Zariski closure of $L \cap \Gamma$ by $Z$. Then $Z$ acts non-trivially on $Z(U)$.
\end{lemma}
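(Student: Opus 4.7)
The plan is to combine the description of the Zariski closure $Z$ of $L\cap\Gamma$ coming from Raghunathan's Claim 2.11 with a root-theoretic analysis of the $L$-action on $Z(U)$.

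First I would apply \cite[Claim 2.11]{Rag} to conclude that $Z$ contains the non-compact semisimple part $L^{\mathrm{nc}}$ of $L$, namely the product of the $K$-isotropic almost-simple factors of the derived group $[L,L]$. In the non-uniform case of Setting \ref{nota:Gamma}, $G_K$ is $K$-isotropic with $\rank_K G\geq 2$, and since $P$ is a proper maximal $K$-parabolic, $\rank_K L=\rank_K G-1\geq 1$, so $L^{\mathrm{nc}}$ is non-trivial. Hence $Z\supseteq L^{\mathrm{nc}}$, and it suffices to show that $L^{\mathrm{nc}}$ acts non-trivially on $Z(U)$.

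Next, I would fix a maximal $\overline{K}$-torus $T\subseteq L$ containing a maximal $K$-split torus of $L^{\mathrm{nc}}$, and a base of simple roots $\Delta$ of $G$ with respect to $T$ compatible with $P$, so that $P$ corresponds to removing a single simple root $\alpha\in\Delta$ and the Lie algebra decomposes as $\mathfrak{u}=\bigoplus_{\chi\in\Phi^+,\,c_\alpha(\chi)>0}\mathfrak{g}^\chi$. The top layer $\mathfrak{u}_{\max}=\bigoplus_{c_\alpha(\chi)=k_{\max}}\mathfrak{g}^\chi$ (where $k_{\max}$ denotes the maximal value of $c_\alpha$ on $\Phi^+$) is central in $\mathfrak{u}$, hence contained in $\Lie Z(U)$. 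If $L^{\mathrm{nc}}$ acted trivially on $Z(U)$, its Lie algebra would annihilate $\mathfrak{u}_{\max}$; combined with Lemma \ref{lem:L.acts.faithfully}, which asserts that $L$ acts faithfully on $\mathfrak{u}$ modulo $Z(G)$, a root-system analysis would then force $L^{\mathrm{nc}}\subseteq Z(G)$, contradicting $\rank_K L^{\mathrm{nc}}\geq 1$.

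The main obstacle is justifying the implication ``$L^{\mathrm{nc}}$ annihilates $\mathfrak{u}_{\max}$ implies $L^{\mathrm{nc}}\subseteq Z(G)$''. This requires tracking how the roots of $L^{\mathrm{nc}}$, a subsystem of $\Phi$ determined by the $K$-structure of $L$, interact with those appearing in $\mathfrak{u}_{\max}$, and may require an appeal to the Tits classification of $K$-forms of simple groups. Presumably Raghunathan's Claim 2.11 is designed precisely to sidestep this case-analysis by yielding the conclusion directly, which is why the authors invoke it in a single line.
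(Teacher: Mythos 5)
The paper gives no proof here: it simply asserts that the lemma ``follows from \cite[Claim 2.11]{Rag},'' so your argument is a reconstruction rather than a parallel proof, and it has a concrete gap at the very first step. You assert that in the non-uniform case of Setting \ref{nota:Gamma} one has $\rank_K G\geq 2$; but the hypothesis is only that $G_K$ is $K$-isotropic (so $\rank_K G\geq 1$) together with $\rank_S G\geq 2$, and the $S$-rank can be $\geq 2$ with $\rank_K G=1$ whenever $|S|>1$ (e.g.\ a $K$-rank-one group over a real quadratic field). In that case $P$ is the unique proper maximal $K$-parabolic, $[L,L]$ is $K$-anisotropic, and $L^{\mathrm{nc}}$ is \emph{trivial}, so the entire reduction from $Z$ to $L^{\mathrm{nc}}$ collapses; nothing in your argument addresses this case. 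This is precisely what the paper's remark following the lemma flags: when $|S|>1$ one argues instead with the maximal $K$-split torus $T\subseteq L$, whose intersection with $\Gamma$ is Zariski dense because the $S$-units are infinite, and one then uses the known non-triviality of the $T$-action on $Z(U)$ — an approach through the split central torus, not the semisimple part, which your reduction never reaches.

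Two further points. First, you have guessed, rather than verified, that Raghunathan's Claim 2.11 asserts $Z\supseteq L^{\mathrm{nc}}$; in the case $\rank_K G=1$ with $|S|=1$ (possible only for $K=\Q$ with $\rank_{\mathbb R}G\geq 2$), neither $L^{\mathrm{nc}}$ nor the split-torus argument applies, and the citation must be doing genuinely different work, so the precise statement of Claim 2.11 matters. Second, even granting $\rank_K G\geq 2$, you acknowledge that the key implication ``$L^{\mathrm{nc}}$ annihilates $\mathfrak{u}_{\max}$ implies $L^{\mathrm{nc}}\subseteq Z(G)$'' is not carried out and might require a Tits-classification case analysis. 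As written, then, the proposal misreads the rank hypothesis, fails outright in the $K$-rank-one case, and is incomplete even in the favourable case.
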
 


\begin{remark} The assumption that the $S$-rank of $G$ is at least two is used in a crucial way in Lemma \ref{lem:Rag.2.11}. However, if $|S|>1$, then the claim is easier. Indeed, in this case, if $T \subseteq L$ is a maximal $K$-split torus, then $T\cap \Gamma$ is commensurable with $T(A)$, so it is Zariski dense in $T$. It is known that $T$ acts non-trivially on $Z(U)$, so the claim follows.
\end{remark} 

\begin{lemma} \label{lem:rag} In Setting \ref{nota:Gamma}, assume that $P \subsetneq G$ is a maximal $K$-parabolic, and denote the unipotent radical of $P$ by $U$. For every $v\notin S$ and every natural number $n$ there is a natural number $m$ such that, for any $g\in \Gamma \smallsetminus G^*(A;\mathfrak{p}_v^n)$, the set $\gcl_\Gamma(g)^{4\dim G}$ contains an element in $Z(U) \smallsetminus G(A;\mathfrak{p}_v^m)$.
\end{lemma}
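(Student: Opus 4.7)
The plan is to translate the non-centrality of $g$ modulo $\mathfrak{p}_v^n$ into a non-zero vector in the Lie algebra $\mathfrak{g}(k_v)$ and then use Lemma \ref{lem:irreduciblilty}, in combination with Lemma \ref{lem:Rag.2.11}, to land exactly inside $Z(U)$.

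First I would use the hypothesis to decompose $g$ near the identity. Since $g \notin G^*(A;\mathfrak{p}_v^n)$, let $k<n$ be the largest integer with $g \in Z(G) \cdot G(A;\mathfrak{p}_v^k)$ and write $g = z\exp(X)$ with $z \in Z(G)$ and $X \in \mathfrak{p}_v^k\mathfrak{g}(A_v) \smallsetminus \mathfrak{p}_v^{k+1}\mathfrak{g}(A_v)$; the reduction $\bar X \in \mathfrak{g}(k_v)$ is non-zero.

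Next I would form products $\alpha := \prod_{i=1}^{4\dim G} h_i g^{\epsilon_i} h_i^{-1} \in \gcl_\Gamma(g)^{4\dim G}$ with $\sum_i \epsilon_i \equiv 0 \pmod{|Z(G)|}$, so that the contribution of $z^{\sum \epsilon_i}$ is trivial. By Baker--Campbell--Hausdorff, the image of $\alpha$ in the quotient $G(A;\mathfrak{p}_v^k)/G(A;\mathfrak{p}_v^{k+1}) \cong \mathfrak{g}(k_v)$ equals $\sum_i \epsilon_i \Ad(\bar h_i)\bar X$. Strong approximation lets me realise any $\bar h \in G(k_v)$ as the reduction of an element of $\Gamma$, and Lemma \ref{lem:irreduciblilty}, applied to the adjoint representation of the simply connected cover of $G(k_v)$ on $\mathfrak{g}(k_v)$ (irreducible by the good-reduction assumption at $v$), shows that $4\dim G$ signed adjoint-conjugates of $\bar X$ fill $\mathfrak{g}(k_v)$. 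Hence I can arrange the image of $\alpha$ to be any prescribed element of $\mathfrak{g}(k_v)$, in particular a non-zero element of $Z(\mathfrak{u})(k_v) \subset \mathfrak{g}(k_v)$.

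The hard part will be ensuring $\alpha$ lies in $Z(U)$ exactly, not merely modulo $\mathfrak{p}_v^{k+1}$. I plan to resolve this using Lemma \ref{lem:Rag.2.11}: since the Zariski closure of $L \cap \Gamma$ acts non-trivially on $Z(U)$, there exist $\ell \in L \cap \Gamma$ and $u \in Z(U) \cap \Gamma$ with $[\ell, u]$ a non-trivial element of $Z(U)$ (the commutator lies in $Z(U)$ because $L$ normalises the abelian group $Z(U)$). Calibrating the target Lie-algebra element $\bar Y$ to match the reduction of such a $[\ell, u]$ and then iteratively refining the approximation through successively higher powers of $\mathfrak{p}_v$ -- reusing the same $h_i$'s at each refinement so as to stay within the budget of $4\dim G$ conjugates of $g^{\pm 1}$ -- produces an element of $\gcl_\Gamma(g)^{4\dim G}$ that equals the chosen $[\ell, u]$ and therefore lies in $Z(U) \smallsetminus G(A;\mathfrak{p}_v^m)$ with $m = n+c$, where $c$ depends only on $v$ and on the BCH and iteration bounds.
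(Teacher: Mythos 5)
Your approach is substantially different from the paper's, and it has a genuine gap at the very step you yourself flag as "the hard part." The iterative refinement you describe can only produce a sequence of elements of $\gcl_\Gamma(g)^{4\dim G}$ that converges $v$-adically to the target $[\ell,u]\in Z(U)$; but $\gcl_\Gamma(g)^{4\dim G}$ is a countable subset of the discrete group $\Gamma$, and a $v$-adic limit of elements of $\Gamma$ lies in $G(A_v)$, not in $\Gamma$, unless the sequence is eventually constant. Reusing the same $h_i$'s ``at each refinement'' doesn't help: to correct the output modulo $\mathfrak{p}_v^{k+2}$ while keeping it fixed modulo $\mathfrak{p}_v^{k+1}$ you must perturb the $h_i$'s by smaller and smaller congruence elements, and the perturbations never terminate. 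So the argument proves $[\ell,u]$ is in the $v$-adic closure of $\gcl_\Gamma(g)^{4\dim G}$, not that it is in $\gcl_\Gamma(g)^{4\dim G}$ itself, which is what the lemma asserts.

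The paper avoids this by never approximating an element of $Z(U)$ at all. It constructs an algebraic map $h(u,p,x)=[f(u,p,x),u_0]$, where $f$ lands in the parabolic $P$ and $u_0\in Z(U)\cap\Gamma$; since $P$ normalizes the abelian group $Z(U)$, the commutator lies in $Z(U)$ by an exact algebraic identity, with no approximation involved. The only thing that needs to be controlled $v$-adically is that the output is non-trivial modulo a fixed power $\mathfrak{p}_v^m$, which is an open condition; that is arranged using Lemma~\ref{lem:conj.open.general} (to put a conjugate of $g$ in the appropriate Bruhat cell $\beta(\mathcal V)$), Raghunathan's Lemma~2.8 (to make $f$ hit a Zariski-dense subgroup of a Levi factor), and Lemma~\ref{lem:Rag.2.11} (to find $u_0$ on which that Levi acts non-trivially). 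The budget of $4\dim G$ then comes from writing $h(u,p,x)$ as a product of four conjugates of elements already known to be in $\gcl_\Gamma(g)^{\dim G}$, not from the combinatorics of Lemma~\ref{lem:irreduciblilty}.

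Two smaller issues with your plan: (i) your decomposition $g=z\exp(X)$ requires $g$ to be close to $Z(G)$ modulo $\mathfrak{p}_v$, i.e.\ $k\ge 1$; if the reduction of $g$ mod $\mathfrak{p}_v$ is already non-central the exponential picture doesn't apply and you'd need a separate argument. (ii) The constraint $\sum_i\epsilon_i\equiv 0\pmod{|Z(G)|}$ forces some of the $\epsilon_i$ to be $-1$, so the sum you produce is a signed sum $\sum\epsilon_i\Ad(\bar h_i)\bar X$, whereas Lemma~\ref{lem:irreduciblilty} only surjects onto $\mathfrak h(k_v)$ for unsigned sums of $4\dim G$ orbits of $\bar X$; you would need to check that the lemma still applies with this sign bookkeeping.
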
 

\begin{proof} Choose a Levi decomposition $P=L \cdot U$ and a maximal $K$-split torus $T \subseteq L$. There is a lexicographic ordering of the roots of $T$ acting on the Lie algebra of $G$ and a simple root $\alpha$ of $T$ such that $P$ is the parabolic corresponding to $\alpha$. By \cite[\S5]{BT65}, there is an element $w\in N_G(T)(K)$ that switches the positive and negative roots; the image of $w$ in the Weyl group has order 2. This implies that $P^{w^2}=P$ and, in particular, that $P\cap P^w$ is $w$-invariant. 

Let $\alpha'=-w(\alpha)$ (so $\alpha'$ is positive), let $P'$ be the maximal parabolic corresponding to $\alpha'$, and let $U'$ be its unipotent radical. By Bruhat decomposition, the map $\beta:U' \times P \rightarrow G$ given by $\beta(u,p)=uwp$ is a $K$-isomorphism between $U' \times P$ and a Zariski open set in $G$.

Denote the connected component of the identity in the Zariski closure of $P\cap P^w \cap \Gamma$ by $M$. Let $f:U' \times P \times M \rightarrow P$ be the function $f(u,p,x)=p ^{-1} w^{-1} x ^{-1} w pu x  u ^{-1}$. By \cite[Lemma 2.8]{Rag}, the subgroup generated by $f(U' \times P \times M)$ contains $\left( \overline{\Gamma \cap L}^Z \right) ^0$. By Lemma \ref{lem:Rag.2.11}, there is an element $u_0\in Z(U) \cap \Gamma$ such that $[f(U' \times P\times M),u_0]\neq 1$. Since $U'\times P \times M$ is connected and $[f(1,1,1),u_0]=1$, we get that the morphism $h:U' \times P \times M \rightarrow Z(U)$ given by $h(u,p,x) = [f(u,p,x),u_0]$ is not constant.

Given $v$ and $n$, by Lemma \ref{lem:conj.open.general} there is a constant $a$ such that, for every $g\in \Gamma \smallsetminus G^*(A_v;\mathfrak{p}_v^n)$, the set $\gcl_\Gamma(g)^{\dim G}$ is dense in $G(A_v;\mathfrak{p}_v^a)$. Since $h$ is non-constant but $h(u,p,1)=1$, for every $u,p$, there is a point $(u_1,p_1)\in \beta ^{-1} (G(A_v;\mathfrak{p}_v^a))$ such that the function $x\in M \mapsto h(u_1,p_1,x)$ is non-constant. Since $u_1 \in U(K)$, there is a natural number $b$ such that $[u_1,\Gamma \cap M(A_v; \mathfrak{p}_v^b)] \subseteq \Gamma$. It follows that there is a natural number $c$ such that $f(u_1,p_1,M(A_v;\mathfrak{p}_v^b)) \not \subseteq U(A_v;\mathfrak{p}_v^c)$. By continuity, there is a neighborhood $\mathcal{V} \subseteq \beta ^{-1} (G(A_v;\mathfrak{p}_v^a))$ of $(u_1,p_1)$ such that, for every $(u,p)\in \mathcal{V}$, $f(u,p,M(A_v;\mathfrak{p}_v^b)) \not \subseteq U(A_v;\mathfrak{p}_v^c)$ and $[u_1, \Gamma \cap M(A_v;\mathfrak{p}_v^b)] \subseteq \Gamma$. We will show that the claim of the lemma holds with $m=c$.

Indeed, suppose that $g\in \Gamma \smallsetminus G(A_v;\mathfrak{p}_v^n)$. Then, there is an element $g_1\in \gcl_\Gamma(g)^{\dim G} \cap \beta(\mathcal{V})$. Writing $g_1=uwp$, there is an element $x\in \Gamma \cap M(A_v;\mathfrak{p}_v^b)$ such that $h(u,p,x)\notin U(A_v;\mathfrak{p}_v^c)$. We have that
\[
xuwpx ^{-1}=xg_1x ^{-1} \in \gcl_\Gamma(g)^{\dim G}
\]
and, since $[x,u]\in \Gamma$, we get that
\[
xux ^{-1} wp u x u ^{-1} x ^{-1}=[x,u]g_1[x,u] ^{-1} \in \gcl_\Gamma(g)^{\dim G}.
\]
We get that
\[
x f(u,p,x) x ^{-1} =x p ^{-1} w ^{-1} x ^{-1} wp u x u ^{-1} x ^{-1} = \left( x g_1 ^{-1} x ^{-1}\right) \left( [x,u]g_1[x,u] ^{-1}\right) \in \gcl_\Gamma(g)^{2\dim G}.
\]
Therefore, $f(u,p,x)\in \gcl_\Gamma(g)^{2\dim G}$, so $h(u,p,x)\in \gcl_\Gamma(g)^{4\dim G}$. By construction, $h(u,p,x)\notin U(A_v;\mathfrak{p}_v^c)$, and the claim is proved.

\end{proof}

\begin{corollary} \label{cor:quick.escape.U} Under Setting \ref{nota:Gamma}, assume that $P \subsetneq G$ is a maximal $K$-parabolic, and denote the unipotent radical of $P$ by $U$. For every ideal $I \triangleleft A$, there is an ideal $J \triangleleft A$ such that, if $\gamma \notin \Gamma[I]$, then $\gcl_\Gamma(\gamma)^{4\dim G}$ contains an element in $Z(U)(A) \smallsetminus Z(U)(A;J)$.
\end{corollary}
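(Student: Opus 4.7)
The plan is to decompose $I$ into prime powers and reduce to Lemma \ref{lem:rag} at each factor. Writing $I=\prod_v \mathfrak{p}_v^{n_v}$ as a finite product, for each $v\mid I$ one applies Lemma \ref{lem:rag} with $n=n_v$ to obtain an integer $m_v$; the candidate ideal $J$ is then $\prod_{v\mid I}\mathfrak{p}_v^{m_v}$, to be enlarged if necessary.

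If $\gamma\notin\Gamma[I]$, then by the Chinese Remainder Theorem there is some $v\mid I$ with $\gamma\notin\Gamma[\mathfrak{p}_v^{n_v}]$. In the generic case where also $\gamma\notin G^*(A;\mathfrak{p}_v^{n_v})$, Lemma \ref{lem:rag} directly yields an element of $\gcl_\Gamma(\gamma)^{4\dim G}\cap Z(U)(A)$ lying outside $G(A;\mathfrak{p}_v^{m_v})$, hence outside $Z(U)(A;J)=Z(U)(A)\cap G(A;J)$, as desired. The delicate case is when $\gamma\in G^*(A;I)\cap\Gamma\smallsetminus\Gamma[I]$, i.e.\ $\gamma$ is non-trivial but central modulo $\Gamma[I]$. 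Such $\gamma$ form at most $|Z(G)|$ cosets of $\Gamma[I]$, so it suffices to treat each coset $\gamma_i\Gamma[I]$ separately. For each one, centerlessness of $\Gamma$ implies that $\bigcap_{n'}G^*(A;\mathfrak{p}_{v'}^{n'})\cap\Gamma=Z(\Gamma)=\{1\}$ for any place $v'$, and from this one extracts a pair $(v_i',n_i')$ such that no element of $\gamma_i\Gamma[I]$ lies in $G^*(A;\mathfrak{p}_{v_i'}^{n_i'})$. Lemma \ref{lem:rag} applied at each such pair produces further integers $m_i'$, and one enlarges $J$ by intersecting with $\prod_i\mathfrak{p}_{v_i'}^{m_i'}$.

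The main obstacle is the uniformity step in the delicate case. The membership $\gamma_i\xi\notin G^*(A;\mathfrak{p}_{v'}^{n'})$ depends on $\xi\in\Gamma[I]$ in a non-linear way via the commutator identity $[\gamma_i\xi,\delta]=[\gamma_i,\delta]^\xi[\xi,\delta]$, so the pair $(v',n')$ cannot simply be read off $\gamma_i$ alone. Resolving this requires a compactness argument in $G(K_{v_i'})$: the decreasing family of subsets $\gamma_i\Gamma[I]\cap G^*(A;\mathfrak{p}_{v_i'}^{n'})$, although possibly non-empty at each finite stage, has empty total intersection — any point therein would, after passing to the closure, furnish a nontrivial central element of $G(K_{v_i'})$ differing from $\gamma_i$ by an element of $\overline{\Gamma[I]}$, which for a suitable choice of $v_i'$ would force $\gamma_i\in\Gamma[I]$, contrary to hypothesis. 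Compactness then forces the family to be empty already at some finite $n_i'$, completing the uniform treatment of the coset.
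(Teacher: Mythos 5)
Your generic case is correct, and in fact it is the whole argument once the hypothesis is read as $\gamma\notin\Gamma^*[I]$ (equivalently $\gamma\notin G^*(A;I)$) rather than $\gamma\notin\Gamma[I]$: writing $I=\prod_{v\mid I}\mathfrak{p}_v^{n_v}$ and using $G^*(A;I)=\bigcap_{v\mid I}G^*(A;\mathfrak{p}_v^{n_v})$, the condition $\gamma\notin\Gamma^*[I]$ produces some $v\mid I$ to which Lemma \ref{lem:rag} applies directly, and $J=\prod_{v\mid I}\mathfrak{p}_v^{m_v}$ works. That is the hypothesis actually used when the corollary is invoked in the proof of Proposition \ref{prop:unifrom_definable} (``if $\gamma\notin\Gamma^*[I]$\dots''), and it is what Lemma \ref{lem:rag} supports. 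The delicate case you isolate is real, but it cannot be closed, because the statement with the literal hypothesis $\gamma\notin\Gamma[I]$ is false. Take $G=\SL_3$, $\Gamma=\SL_3(\mathbb{Z})$, $I=(7)$. Given any candidate $J=7^a\prod p_i^{b_i}$, let $c\in\mu_3(\mathbb{Z}/7^{a+1})$ be the Hensel lift of the primitive cube root $2\in\mu_3(\mathbb{F}_7)$, and choose $\gamma\in\SL_3(\mathbb{Z})$ with $\gamma\equiv c I_3\pmod{7^{a+1}}$ and $\gamma\equiv I_3\pmod{p_i^{b_i+1}}$. Then $\gamma\notin\Gamma[7]$, yet every element of $\gcl_\Gamma(\gamma)^{4\dim G}$ reduces to a scalar $c^jI_3$ modulo $7^{a+1}$ and to $I_3$ modulo each $p_i^{b_i+1}$; a unipotent such element must therefore reduce to $I_3$ modulo $7^{a+1}\prod p_i^{b_i+1}$, so $\gcl_\Gamma(\gamma)^{4\dim G}\cap Z(U)(\mathbb{Z})\subseteq Z(U)(\mathbb{Z};J)$, contradicting the corollary.

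Concretely, the step in your delicate case that fails is the extraction of a pair $(v_i',n_i')$ with $\gamma_i\Gamma[I]\cap G^*(A;\mathfrak{p}_{v_i'}^{n_i'})=\emptyset$: when $\gamma_i\in\Gamma^*[I]\smallsetminus\Gamma[I]$ no such pair exists. If $v'\nmid I$, strong approximation produces $\gamma\in\gamma_i\Gamma[I]$ with $\gamma\equiv 1\pmod{\mathfrak{p}_{v'}^{n'}}$; if $v'\mid I$, one Hensel-lifts the nontrivial central residue of $\gamma_i$ to a central class modulo $\mathfrak{p}_{v'}^{n'}$ and picks $\gamma$ reducing to it. Your compactness argument does not detect this because the limit point lies in $\overline{\gamma_i\Gamma[I]}\cap Z(G(A_{v'}))$ inside the compact group $G(A_{v'})$, not in $\Gamma$. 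Centerlessness of $\Gamma$ only forces $\Gamma\cap Z(G(A_{v'}))=\{1\}$; it says nothing about $\overline{\gamma_i\Gamma[I]}\cap Z(G(A_{v'}))$, which is in general a nonempty coset of $\overline{\Gamma[I]}\cap Z(G(A_{v'}))$, so no contradiction arises. The correct fix is to replace $\Gamma[I]$ by $\Gamma^*[I]$ in the hypothesis (which is all the application needs) and delete the delicate case.
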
 

\begin{lemma} \label{lem:separate.ZG} There is an ideal $J_0$ such that $\Gamma[J_0] \cap \Cent_G(U) \subseteq Z(U)$.
\end{lemma}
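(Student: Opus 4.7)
The plan is to combine Lemma~\ref{lem:centralizer.unipotent.alg.closed}(2) with a continuity argument at a single non-archimedean place. That lemma gives $\Cent_G(U)=Z(G)\cdot Z(U)$ as algebraic groups over $K$. Since $G$ is simply connected and simple, $Z(G)$ is a finite central group scheme, while $Z(U)$ is a connected unipotent algebraic group over $K$ (the center of a connected nilpotent group in characteristic zero is connected, via the exponential). In characteristic zero, any finite subgroup scheme of a connected unipotent group is trivial, so $Z(G)\cap Z(U)=1$. Consequently $Z(U)$ is precisely the identity component of the smooth $K$-group $\Cent_G(U)$, and $\Cent_G(U)/Z(U)$ is (canonically) the finite group scheme $Z(G)$.

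Next I fix any finite place $v_0$ of $K$ with $v_0\notin S$, so that $\mathfrak p_{v_0}$ is a nonzero prime of $A$. Because $Z(U)$ is the identity component of the smooth $K$-group $\Cent_G(U)$, the subgroup $Z(U)(K_{v_0})$ is open (of finite index at most $|Z(G)(\overline K)|$) in $\Cent_G(U)(K_{v_0})$ for the $v_0$-adic topology. The congruence subgroups $G(A_{v_0};\mathfrak p_{v_0}^n)$ form a basis of neighborhoods of the identity in $G(K_{v_0})$, so there exists an integer $n\ge 1$ with
\[
\Cent_G(U)(K_{v_0})\cap G(A_{v_0};\mathfrak p_{v_0}^n)\subseteq Z(U)(K_{v_0}).
\]

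I then take $J_0:=\mathfrak p_{v_0}^n$. Any $\gamma\in\Gamma[J_0]\cap\Cent_G(U)$ lies in $G(A;J_0)\subseteq G(A_{v_0};\mathfrak p_{v_0}^n)$, so by the displayed inclusion $\gamma\in Z(U)(K_{v_0})\cap G(K)=Z(U)(K)$, i.e.\ $\gamma\in Z(U)$, as required. The only mildly subtle point is the identification of $Z(U)$ with the full identity component of $\Cent_G(U)$, which reduces to $Z(G)\cap Z(U)=1$ over $K$; this is automatic in characteristic zero, so there is no real obstacle, and one may equally well work with any convenient place $v_0\notin S$.
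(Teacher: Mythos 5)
Your proof is correct, but it takes a genuinely different route from the paper's. The paper's argument is ``hard'' and algebraic: starting from $U=\Stab_G(a)$ in some representation, it produces an explicit regular function $f$ on $G$, defined over $A$, which is right $Z(U)$-invariant and separates the finitely many points of $Z(G)$ from the identity; it then chooses $J_0$ so that $f(z)\not\equiv f(1)\ (\mathrm{mod}\ J_0)$ for all $z\in Z(G)\smallsetminus\{1\}$, and uses the decomposition $\Cent_G(U)=Z(G)\cdot Z(U)$ to conclude that the $Z(G)$-factor of any $x\in\Gamma[J_0]\cap\Cent_G(U)$ must be trivial. Your argument is ``soft'' and topological: you use the same decomposition but observe that in characteristic zero $Z(G)\cap Z(U)=1$, hence $Z(U)=\Cent_G(U)^{\circ}$ with finite \'etale quotient $Z(G)$; then at a single auxiliary place $v_0\notin S$ the group $Z(U)(K_{v_0})$ is open in $\Cent_G(U)(K_{v_0})$, and since the $\mathfrak p_{v_0}$-congruence subgroups form a neighborhood basis of $1$ you can take $J_0=\mathfrak p_{v_0}^n$. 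Both routes rest on Lemma~\ref{lem:centralizer.unipotent.alg.closed}(2); yours avoids constructing an explicit separating function at the cost of invoking the $v_0$-adic topology, while the paper's is more self-contained and, in principle, lets one read off a specific $J_0$ from the function $f$. One point worth making explicit in your write-up: the identification $Z(U)=\Cent_G(U)^{\circ}$ uses that $Z(U)$ is connected, which in characteristic zero follows from $Z(U)=\exp(\mathfrak z(\mathfrak u))$; you gesture at this but should state it, since in positive characteristic the center of a unipotent group need not be connected.
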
 

\begin{proof} It is known that there is an algebraic representation of $G$ and a vector $a$ such that $U=\Stab_G(a)$. It follows that there is a regular function $f$ on $G$ such that $f(xu)=f(x)$ for every $x\in G$ and $u\in Z(U)$, and such that $f(z)\neq f(1)$, for every $z\in Z(G)\smallsetminus \left\{ 1 \right\}$. We can also assume that $f$ is defined over $A$. Let $J_0$ be an ideal such that $f(z)\not\equiv f(1)\text{ (mod $J_0$)}$, for any $z\in Z(G)\smallsetminus \left\{ 1 \right\}$. If $x\in \Gamma[J_0] \cap \Cent_G(U)$, then $x=zu$, where $z\in Z(G)$ and $u\in Z(U)$ and also $f(z)=f(x) \equiv f(1) \text{ (mod $J_0$)}$, so $z=1$.
\end{proof} 

\begin{proof}[Proof of Proposition \ref{prop:unifrom_definable} for non-uniform $\Gamma$] By assumption, there is a proper, maximal $K$-parabolic $P \subsetneq G$. Let $U$ be its unipotent radical. We need to show that there is a uniformly-definable collection $X \subseteq (\Gamma \smallsetminus Z(\Gamma)) \times \Gamma$ such that \begin{enumerate} 
\item \label{item:uniform.definable.open} For each $\delta \in \Gamma \smallsetminus Z(\Gamma)$, $X_\delta$ is a symmetric, conjugation-invariant subset that contains some congruence subgroup.
\item \label{item:uniform.definable.small} For each ideal $I$, there is a $\delta \in \Gamma$ such that $X_\delta \subseteq \Gamma^*[I]$.
\end{enumerate} 
By \cite[Theorem 5.1]{ALM}, there is a constant $N_1$ such that, for any non-central element $\gamma \in \Gamma$, there is an ideal $I(\gamma)$ such that $\gcl_\Gamma(\gamma)^{N_1} \supseteq U(I(\gamma))$. Let $N=\max \left\{ N_1,4\dim G \right\}$ and let $X \subset \Gamma \times \Gamma$ be the definable set consisting of all pairs $(x,y)$ such that $\gcl_\Gamma(y)^N \cap \Cent_G(U) \subseteq \gcl_\Gamma(x)^N$. If $\delta \in \Gamma$ is non-central, then, by Lemma \ref{lem:separate.ZG}, $\Gamma[J_0I(\delta)] \subseteq X_{\delta}$, proving \eqref{item:uniform.definable.open}. On the other hand, given an ideal $I$ of $A$, let $J$ be the ideal obtained by applying Corollary \ref{cor:quick.escape.U} to $I$, and let $\delta \in \Gamma[J]\smallsetminus \left\{ 1\right\}$. By the definition of $J$, if $\gamma \notin \Gamma^*[I]$, then $\gcl_\Gamma(\gamma)^{N} \cap U \not\subseteq \Gamma[J]$, and, in particular, $\gamma \notin X_\delta$. This proves \eqref{item:uniform.definable.small}.

\end{proof} 

\subsection{Proof of Proposition \ref{prop:unifrom_definable} for $G=\Spin$ }

\begin{setting}\label{nota:subspace} Under Setting \ref{nota:Gamma}, assume that $G=\Spin_q$ and $n \ge 9$. Let $c_1,c_2,c_3 \in A^n$ be non-isotropic orthogonal vectors such that $i_q \left( \left( K_wc_0 + K_wc_1 \right) ^\perp \right) \geq 2$ and $i_q(C_w)=1$ where $C:=Kc_0+Kc_1+Kc_2$. Let $\Lambda$ be the subgroup of $\Gamma$ consisting of the elements which act on $C$ as $\pm 1$. 
\end{setting}

\begin{lemma}\label{lemma:def_stab} Under Setting \ref{nota:subspace}, $\Lambda$ is definable.
\end{lemma}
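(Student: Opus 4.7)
The plan is to exhibit $\Lambda$ as the centralizer in $\Gamma$ of a carefully chosen finite tuple of elements of $\Gamma$; any such centralizer is automatically definable with parameters. Write $\pi\colon G\to \SO_q$ for the natural projection, with $\ker\pi = Z(G)$ of order at most $2$. Because $\Gamma$ is centerless, $\ker\pi\cap\Gamma = 1$, so $\pi$ is injective on $\Gamma$. Regarding $\Spin_{q|_C}$ as the $K$-subgroup of $G$ whose image in $\SO_q$ acts trivially on $C^\perp$, set $T := \Spin_{q|_C}(K)\cap\Gamma$.

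First I would check that $T$ is Zariski-dense in $\Spin_{q|_C}$. Here the hypothesis $i_q(C_w)=1$ is crucial: it guarantees that $q|_C$ is isotropic over $K_w$, hence that $\Spin_{q|_C}(K_w)$ is non-compact, so $T$ is an infinite $S$-arithmetic lattice in the semisimple group $\Spin_{q|_C}$ and Borel density applies. Since any Zariski-dense subgroup of an algebraic group contains a finitely generated Zariski-dense subset, one can fix $\tau_1,\ldots,\tau_k\in T$ generating a Zariski-dense subgroup of $\Spin_{q|_C}$.

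The central claim is then $\Lambda = \Cent_\Gamma(\tau_1,\ldots,\tau_k)$. For $\Lambda\subseteq\Cent_\Gamma(\tau_1,\ldots,\tau_k)$: given $\gamma\in\Lambda$, compute $\pi(\gamma\tau_i\gamma^{-1})$ on the decomposition $K^n = C\oplus C^\perp$. On $C$, $\pi(\gamma)|_C = \pm\Id$ is a scalar and so commutes with $\pi(\tau_i)|_C$; on $C^\perp$, $\pi(\tau_i)|_{C^\perp} = \Id$ is fixed by conjugation. Hence $\pi(\gamma\tau_i\gamma^{-1}) = \pi(\tau_i)$, and injectivity of $\pi|_\Gamma$ upgrades this to $\gamma\tau_i\gamma^{-1}=\tau_i$. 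For the reverse inclusion, if $\gamma$ commutes with every $\tau_i$, then $\pi(\gamma)$ commutes with their Zariski closure $\SO_{q|_C}$, which acts irreducibly on $C$ (a standard fact for the vector representation of any $3$-dimensional orthogonal group, since no $1$-dimensional invariant line can exist by simplicity of $\SO_{q|_C}$). Schur's lemma then forces $\pi(\gamma)|_C$ to be a scalar, and preservation of $q|_C$ pins that scalar to $\pm 1$; thus $\gamma\in\Lambda$.

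The only step that is not purely formal is the Zariski-density of $T$ in $\Spin_{q|_C}$, and this is where the hypothesis $i_q(C_w)=1$ from Setting \ref{nota:subspace} enters in an essential way through Borel density; once density is in hand, the Schur-lemma identification of the centralizer is immediate and the definability of $\Lambda$ via the formula $x\tau_1 = \tau_1 x \wedge\cdots\wedge x\tau_k = \tau_k x$ follows at once.
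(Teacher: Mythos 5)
Your proof is correct and takes essentially the same approach as the paper: both identify $\Lambda$ as the centralizer in $\Gamma$ of a finite tuple of elements in $\Spin_{q|_C}(K)\cap\Gamma$, and both justify this via Zariski-density of that subgroup in $\Spin_{q|_C}$ (from Borel density, using $i_q(C_w)\geq 1$) together with absolute irreducibility of its action on $C$ and Schur's lemma. The paper phrases it as $\Cent_\Gamma(\Delta)$ for the finitely generated congruence subgroup $\Delta$ rather than explicitly selecting $\tau_1,\ldots,\tau_k$, but the content is identical.
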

\begin{proof} Let $\Delta$ be the subgroup of $\Gamma$ consisting of the elements which act as the identity on $C^\perp$. Then $\Delta$ is a congruence subgroup of $\Spin_{q\restriction C}(K)$ so  $\Delta$ is finitely generated and the action of $\Delta$ on $C$ is absolutely irreducible. Thus, $\Cent_\Gamma(\Delta)=\Lambda$ is definable. 
\end{proof}

We will need the following lemma which follows from Theorem \ref{thm:effective_kneser}. The proof of will be given \S\ref{subsection:uff} below,

\begin{lemma}\label{lemma:uff}
Under Setting \ref{nota:subspace}, for every  $\epsilon >0$, there exists $N=N(\Gamma,\epsilon)$ such that the following holds:

If $\alpha \in \Gamma$ is $\epsilon$-separated, then $\gcl_\Gamma(\alpha)^{N}(c_0,c_1,c_2)$ contains an open neighborhood of of $\bar{c}$ in $\Gamma (c_0,c_1,c_2)$ with respect to the $S$-adelic topology. 

\end{lemma}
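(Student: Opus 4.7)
The plan is to prove the lemma by induction on the number of vectors, iteratively applying Theorem \ref{thm:effective_kneser} inside nested stabilizer subgroups. Writing $\bar{c}_j := (c_0, \ldots, c_j)$, I will establish by induction on $j \in \{0,1,2\}$ that there is $N_j = N_j(\Gamma, \epsilon)$ such that, for every $\epsilon$-separated $\alpha \in \Gamma$, the set $\gcl_\Gamma(\alpha)^{N_j} \bar{c}_j$ contains an $S$-adelic open neighborhood of $\bar{c}_j$ in $\Gamma \bar{c}_j$. The base case $j = 0$ is Theorem \ref{thm:effective_kneser} applied to the non-isotropic vector $c_0$, and the case $j = 2$ is the statement of the lemma.

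For the inductive step, let $H := \Stab_\Gamma(\bar{c}_{j-1})$, which is a congruence subgroup of $\Spin_{q|_{\bar{c}_{j-1}^\perp}}(A)$. The assumption $i_q((K_w c_0 + K_w c_1)^\perp) \ge 2$ together with $n \ge 9$ forces the restricted form to have $K_w$-Witt index at least $2$ and dimension at least $7$, so Theorem \ref{thm:effective_kneser} applies to $H$ acting on $\bar{c}_{j-1}^\perp$. Given an $\epsilon'$-separated element $\alpha_0 \in H \cap \gcl_\Gamma(\alpha)^M$ (with $M, \epsilon' > 0$ depending only on $\Gamma$ and $\epsilon$), Theorem \ref{thm:effective_kneser} inside $H$ produces $N'$ and an $S$-adelic open neighborhood of $c_j$ in $H c_j$ contained in $\gcl_H(\alpha_0)^{N'} c_j \subseteq \gcl_\Gamma(\alpha)^{MN'} c_j$, where the last inclusion uses the $\Gamma$-conjugation invariance of $\gcl_\Gamma(\alpha)^M$. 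Combining this with the inductive hypothesis then assembles the desired element: given a target $(c_0', \ldots, c_j')$ near $\bar{c}_j$, first pick $\beta_1 \in \gcl_\Gamma(\alpha)^{N_{j-1}}$ with $\beta_1 \bar{c}_{j-1} = (c_0', \ldots, c_{j-1}')$, then pick $\beta_2 \in H \cap \gcl_\Gamma(\alpha)^{MN'}$ with $\beta_2 c_j = \beta_1^{-1} c_j'$. The product $\beta_1 \beta_2 \in \gcl_\Gamma(\alpha)^{N_{j-1} + MN'}$ realizes the target, so $N_j := N_{j-1} + MN'$ suffices.

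The main technical obstacle is producing the separated element $\alpha_0 \in H \cap \gcl_\Gamma(\alpha)^M$. The plan combines three ingredients. First, by Margulis's Normal Subgroup Theorem, $\langle \gcl_\Gamma(\alpha) \rangle$ is a finite-index subgroup of $\Gamma$, so $H \cap \langle \gcl_\Gamma(\alpha) \rangle$ has finite index in $H$ and in particular contains a fixed element $\sigma$ that is $\epsilon'$-separated in $\Spin_{q|_{\bar{c}_{j-1}^\perp}}$ for some $\epsilon' = \epsilon'(\Gamma) > 0$. Second, by Proposition \ref{cor:bound_width_cong}, $\gcl_\Gamma(\alpha)^{N^*}$ is dense in $\langle \gcl_\Gamma(\alpha) \rangle$ in the topology of $G(\mathbb{A}_K^{\{w\}})$, so one may approximate $\sigma$ by some $\tau \in \gcl_\Gamma(\alpha)^{N^*}$ arbitrarily closely at every place $v \neq w$ --- in particular, at the compact real places, which are the places that determine separation. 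Third, since $\tau \bar{c}_{j-1}$ is then close to $\bar{c}_{j-1}$, the inductive hypothesis yields $\rho \in \gcl_\Gamma(\alpha)^{N_{j-1}}$ with $\rho \bar{c}_{j-1} = \tau \bar{c}_{j-1}$, and setting $\alpha_0 := \rho^{-1}\tau$ puts us in $H \cap \gcl_\Gamma(\alpha)^{N^* + N_{j-1}}$. The hardest point, which I expect to be the genuinely delicate quantitative step, is to ensure that $\alpha_0$ inherits a fixed fraction of the separation of $\sigma$: this requires extracting from the proof of Theorem \ref{thm:effective_kneser} in \S\ref{sec:orbit} a refinement guaranteeing that $\rho$ itself (not merely its action on $\bar{c}_{j-1}$) can be taken close to the identity at the compact real places, so that the correction $\rho^{-1}$ does not destroy the separation property carried by $\tau$.
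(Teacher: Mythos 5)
Your high-level strategy (induct on the number of fixed vectors, working in nested stabilizers and applying Theorem \ref{thm:effective_kneser} at each level) is the right idea and matches the structure of the paper's argument, which also chains three applications of Theorem \ref{thm:effective_kneser} inside $\Gamma = \Gamma_0 \supset \Gamma_1 \supset \Gamma_2$. However, there are two genuine gaps, one of which you flag and one of which you do not.

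The gap you do not flag is the more serious. In your assembly step you choose $\beta_1 \in \gcl_\Gamma(\alpha)^{N_{j-1}}$ with $\beta_1 \bar{c}_{j-1}=(c_0',\ldots,c_{j-1}')$ and then need $\beta_1^{-1}c_j'$ to lie in the \emph{specific} $S$-adelic neighborhood of $c_j$ in $Hc_j$ covered by $\gcl_H(\alpha_0)^{N'}c_j$. But nothing constrains $\beta_1$ other than its action on $\bar{c}_{j-1}$: even if $\beta_1\bar{c}_{j-1}$ is arbitrarily close to $\bar{c}_{j-1}$, the element $\beta_1$ can be close to an arbitrary element of $H$, and $H$ moves $c_j$ throughout $Hc_j$. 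So $\beta_1^{-1}c_j'$ need not be anywhere near $c_j$, and the choice of $\beta_2$ can fail. The paper closes this gap by proceeding ``from outside in'' on the $\alpha_i$'s: it first produces $\alpha_2 \in \Gamma_2$ together with its ideal $\mathfrak{q}_2$, then produces $\alpha_1 \in \Gamma_1 \cap \Theta_q(A;\mathfrak{q}_2)$ (so $\alpha_1 \equiv 1 \pmod{\mathfrak{q}_2}$, hence any bounded product of conjugates of $\alpha_1$ moves vectors by $\mathfrak{q}_2$-small amounts), then $\alpha_0 \in \Gamma_0 \cap \Theta_q(A;\mathfrak{q}_1)$. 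The ability to impose the extra congruence constraint is exactly the content of Corollary \ref{cor:simplified}, and it is what makes the ``$\beta_0 c_0=b_0$, then $\beta_0^{-1}b_1 \in J_1$, \ldots'' chain work. Your induction, as set up, cannot impose this constraint, because the ideal you would need is only determined \emph{after} you have chosen the inner element.

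The gap you do flag, namely producing an $\epsilon'$-separated element of $H \cap \gcl_\Gamma(\alpha)^M$ with $M$ controlled by $(\Gamma,\epsilon)$, is also real, but the paper does not take your route. Your $\sigma,\tau,\rho$ scheme requires $\rho$ itself to be close to the identity at the compact real places, which is a strictly stronger conclusion than Theorem \ref{thm:effective_kneser} provides, and you correctly note you would have to reprove a refined version of it. The paper instead has a direct effective construction, Lemma \ref{lem:kne_4.10} (with Corollary \ref{cor:simplified} as the packaged form): starting from an $\epsilon$-separated $\alpha$, one uses Proposition \ref{cor:bounded_prod} and Lemma \ref{lem:kne_4.7} to pick $\beta \in \gcl_\Gamma(\alpha)^N$ and $\gamma$ in a suitable sub-spin group so that the commutator $\beta^{-1}\gamma^{-1}\beta\gamma$ fixes the prescribed vector and is strongly $1$-separated. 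This is both shorter and strictly quantitative, avoiding the approximation-and-correction loop entirely.
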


\begin{corollary}\label{cor:uff_1} Under Setting \ref{nota:subspace},  let $\alpha \in \Gamma$ be a non-identity element. If
$	\gcl_{\Gamma}(\alpha)^{M}\Lambda=	\gcl_{\Gamma}(\alpha)^{M+1}\Lambda$, then 
	 $\gcl_{\Gamma}(\alpha)^{M}\Lambda$  is  a congruence subgroup of $\Gamma$. 
\end{corollary}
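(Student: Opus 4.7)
My plan is to show that $H := \gcl_\Gamma(\alpha)^M \Lambda$ is a subgroup, and then apply Lemma \ref{lemma:uff} to produce a congruence subgroup of $\Gamma$ inside $H$. For the first step, note that $\gcl_\Gamma(\alpha)$ is symmetric, conjugation-invariant in $\Gamma$, and contains $\id$, so the sets $\gcl_\Gamma(\alpha)^k$ form an increasing chain of $\Gamma$-normal subsets. A routine induction from the stabilization hypothesis $\gcl_\Gamma(\alpha)^M \Lambda = \gcl_\Gamma(\alpha)^{M+1}\Lambda$ yields $\gcl_\Gamma(\alpha)^{M+k}\Lambda = H$ for every $k \ge 0$. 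Since $\gcl_\Gamma(\alpha)^M$ is normal, it commutes setwise with $\Lambda$, so
\[
H \cdot H = \gcl_\Gamma(\alpha)^M \Lambda\, \gcl_\Gamma(\alpha)^M \Lambda = \gcl_\Gamma(\alpha)^{2M}\Lambda = H,
\]
and as $H$ is symmetric and contains $\id$, it is a subgroup of $\Gamma$.

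For the second step, I would verify that $\alpha$ is $\epsilon$-separated for some $\epsilon>0$. Because $\Gamma$ is centerless and Zariski-dense in $\Spin_q$, the non-identity element $\alpha$ lies outside $Z(\Spin_q(K_v))$ for every place $v$; since the preimage of $Z(\SO_q(K_v))$ in $\Spin_q(K_v)$ equals $Z(\Spin_q(K_v))$, the image of $\alpha$ in $\SO_q(K_v)$ is non-central at each $v \in S_{def}$. Taking $\epsilon$ to be half the minimum of $\dist_v(\alpha, Z(\SO_q(K_v)))$ over the finite set $S_{def}$ makes $\alpha$ an $\epsilon$-separated element. Applying Lemma \ref{lemma:uff}, there exist $N = N(\Gamma,\epsilon)$ and an $S$-adelic open neighborhood $U$ of $\bar c := (c_0,c_1,c_2)$ inside $\Gamma\bar c$ with $U \subseteq \gcl_\Gamma(\alpha)^N \bar c$. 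Since $H$ is a subgroup containing $\gcl_\Gamma(\alpha)$, it contains $\gcl_\Gamma(\alpha)^N$, and so $U \subseteq H\bar c$.

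Finally, I would transfer this openness back to $\Gamma$. The stabilizer $\Stab_\Gamma(\bar c)$ acts trivially on $C$, so it is contained in $\Lambda \subseteq H$; hence the preimage of $H\bar c$ under the orbit map $\pi : \Gamma \to \Gamma\bar c$ is exactly $H$. The orbit map is continuous from the congruence topology on $\Gamma$ to the $S$-adelic topology on $\Gamma\bar c$, both of which are generated by congruences modulo ideals of $A$; consequently, openness of $U$ around $\bar c$ yields an ideal $\mathfrak{q}\triangleleft A$ with $\Gamma[\mathfrak{q}] \cdot \Stab_\Gamma(\bar c) \subseteq \pi^{-1}(U) \subseteq H$, so $\Gamma[\mathfrak{q}] \subseteq H$ and $H$ is a congruence subgroup of $\Gamma$. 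I expect the only delicate point is Step 1, namely tracking how the normality of $\gcl_\Gamma(\alpha)^M$ interacts with the stabilization hypothesis to yield $HH=H$; the topological matching in Step 3 is routine once one identifies the two topologies.
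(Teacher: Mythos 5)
Your proof is correct and takes essentially the same route as the paper's: use the stabilization hypothesis to see that $\gcl_\Gamma(\alpha)^M\Lambda$ equals the subgroup $\langle\gcl_\Gamma(\alpha)\rangle\Lambda$, then invoke Lemma~\ref{lemma:uff} and the transfer from a congruence neighborhood of $\bar c$ in the orbit back to a congruence subgroup of $\Gamma$ via $\Stab_\Gamma(\bar c)\subseteq\Lambda$. The only noticeable stylistic difference is that you apply Lemma~\ref{lemma:uff} directly to $\alpha$ after arguing (correctly, using that $\Gamma$ is centerless and that $\Spin_q(K_v)\to\SO_q(K_v)$ has central kernel with $\Spin_q(K_v)$ connected for $v\in S_{def}$) that $\alpha$ is $\epsilon$-separated, whereas the paper sidesteps this by picking an auxiliary non-central $\beta\in\langle\gcl_\Gamma(\alpha)\rangle$; the two choices are interchangeable here, and you make the orbit-to-subgroup transfer explicit where the paper leaves it implicit.
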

\begin {proof}  The assumption implies that $	\gcl_{\Gamma}(\alpha)^{M}\Lambda=\langle	\gcl_{\Gamma}(\alpha)\rangle\Lambda$.  Fix non-central $\beta \in \langle	\gcl_{\Gamma}(\alpha)\rangle$.   Lemma \ref{lemma:uff} implies that there exists $N$ and $0 \ne \mathfrak{q} \lhd A$ such that $$\gcl_{\Gamma}(\alpha)^{M}\Lambda=\langle	\gcl_{\Gamma}(\alpha)\rangle\Lambda \supseteq \gcl_{\Gamma}(\beta)^{N}\Lambda \supseteq \Gamma[\mathfrak{q}].$$
\end{proof}

\begin{corollary}\label{cor:uff_2} Under Setting \ref{nota:subspace}, for every $\epsilon>0$ there exists $N=N(\epsilon,\Gamma)$ such every for every $\epsilon$-separated  $\alpha \in \Gamma$, $$\gcl_{\Gamma}(\alpha)^{N}\Lambda =\gcl_{\Gamma}(\alpha)^{N+1}\Lambda =\langle \gcl_{\Gamma}(\alpha)\rangle \Lambda.$$

\end{corollary}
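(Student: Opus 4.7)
The plan is to combine Lemma \ref{lemma:uff} (orbit density on $\Gamma\bar c$) with Proposition \ref{cor:bound_width_cong} (congruence density in $\langle\gcl_\Gamma(\alpha)\rangle$) and to show that $\gcl_\Gamma(\alpha)^N\Lambda = \langle\gcl_\Gamma(\alpha)\rangle\Lambda$ for some $N = N(\Gamma,\epsilon)$. Once this is achieved, the pair of equalities in the statement follows by sandwiching: since $\id\in\gcl_\Gamma(\alpha)$, the chain $\{\gcl_\Gamma(\alpha)^k\Lambda\}_k$ is increasing and sits inside $\langle\gcl_\Gamma(\alpha)\rangle\Lambda$, so equality at index $N$ forces equality at $N+1$.

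First I would apply Lemma \ref{lemma:uff} to $\alpha$ to obtain $N_1 = N_1(\Gamma,\epsilon)$ such that $\gcl_\Gamma(\alpha)^{N_1}\bar c$ contains an $S$-adelic open neighborhood of $\bar c=(c_0,c_1,c_2)$ in $\Gamma\bar c$. As in the proof of Corollary \ref{cor:uff_1}, such a neighborhood contains an orbit of the form $\Gamma[\mathfrak{q}_\alpha]\bar c$ for some nonzero ideal $\mathfrak{q}_\alpha\lhd A$ (which is permitted to depend on $\alpha$). Since the orbit map $\gamma\mapsto\gamma\bar c$ has fiber $\Stab_\Gamma(\bar c)$ over $\bar c$, and this stabilizer is contained in $\Lambda$ (fixing each $c_i$ pointwise is a special case of acting as $\pm1$ on $C$), pulling back yields
\[
\Gamma[\mathfrak{q}_\alpha] \;\subseteq\; \gcl_\Gamma(\alpha)^{N_1}\Lambda.
\]
Next I would apply Proposition \ref{cor:bound_width_cong} to produce a constant $N_2 = N_2(\Gamma,\epsilon)$ such that $\gcl_\Gamma(\alpha)^{N_2}$ is congruence-dense in $\langle\gcl_\Gamma(\alpha)\rangle$. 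This density unpacks to $\gcl_\Gamma(\alpha)^{N_2}\cdot\Gamma[\mathfrak{q}_\alpha] \supseteq \langle\gcl_\Gamma(\alpha)\rangle$ (every coset of $\Gamma[\mathfrak{q}_\alpha]$ inside $\langle\gcl_\Gamma(\alpha)\rangle$ meets $\gcl_\Gamma(\alpha)^{N_2}$).

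Setting $N = N_1 + N_2$ and using that each $\gcl_\Gamma(\alpha)^k$ is $\Gamma$-conjugation-invariant, so in particular $\Lambda\cdot\gcl_\Gamma(\alpha)^k = \gcl_\Gamma(\alpha)^k\cdot\Lambda$, I would then chain:
\[
\gcl_\Gamma(\alpha)^N\Lambda \;\supseteq\; \gcl_\Gamma(\alpha)^{N_2}\bigl(\gcl_\Gamma(\alpha)^{N_1}\Lambda\bigr) \;\supseteq\; \gcl_\Gamma(\alpha)^{N_2}\,\Gamma[\mathfrak{q}_\alpha] \;\supseteq\; \langle\gcl_\Gamma(\alpha)\rangle.
\]
Multiplying on the right by $\Lambda$ (and noting that the left-hand side is already right-$\Lambda$-stable) upgrades this to $\gcl_\Gamma(\alpha)^N\Lambda \supseteq \langle\gcl_\Gamma(\alpha)\rangle\Lambda$, while the reverse inclusion is trivial. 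The crux of the argument, and the reason for combining the two propositions in this order, is that the ideal $\mathfrak{q}_\alpha$ delivered by Lemma \ref{lemma:uff} is allowed to depend on $\alpha$, whereas Proposition \ref{cor:bound_width_cong} absorbs this dependence uniformly by bounding, in terms of $\Gamma$ and $\epsilon$ only, the number of conjugacy-class products needed to fill every coset of any congruence subgroup. No essentially new obstacle is anticipated; the task is primarily one of assembly.
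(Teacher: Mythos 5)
Your proof is correct and follows essentially the same route as the paper: apply Lemma \ref{lemma:uff} to get $N_1$ and an ideal $\mathfrak{q}$ with $\gcl_\Gamma(\alpha)^{N_1}\Lambda \supseteq \Gamma[\mathfrak{q}]$ (via the stabilizer observation), then apply Proposition \ref{cor:bound_width_cong} to get $N_2$ with $\gcl_\Gamma(\alpha)^{N_2}\Gamma[\mathfrak{q}] \supseteq \langle\gcl_\Gamma(\alpha)\rangle$, and take $N=N_1+N_2$. The paper's proof is terser; your write-up merely fills in the stabilizer and sandwiching steps that the paper leaves implicit.
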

\begin{proof} 
	Lemma \ref{lemma:uff} implies that there are $N_1$ and $\mathfrak{q}\ne 0$ such that $\ccl_{\Gamma}(\alpha)^{N_1}\bar{c}$ contains the $\mathfrak{q}$-congruence neighborhood of of $\bar{c}$ in $\Gamma \bar{c}$.
Let $N_2$ be the constant given by Proposition  \ref{cor:bound_width_cong}. For $N=N_1+N_2$, we have $\gcl_{\Gamma}(\alpha)^{N}\Lambda = \langle \gcl_{\Gamma}(\alpha)\rangle\Lambda.$

\end{proof}

\begin{proof}[Proof of Propostion \ref{prop:unifrom_definable}]
We use Setting \ref{nota:subspace}. Let $N$ be the natural number obtained by applying Corollary  \ref{cor:uff_2} with $\epsilon =\frac{1}{2}$.
Then for every $\gamma \in \Gamma$,  $\gamma\Lambda\gamma^{-1}$ is the subgroup of $\Gamma$ which acts on $\gamma C$ as $\pm 1$. 
Moreover, for every $\alpha,\gamma \in \Gamma$ and every $M$, $\gcl_\Gamma(\alpha)^M(\gamma\Lambda\gamma^{-1})=\gamma(\gcl_\Gamma(\alpha)^M\Lambda)\gamma^{-1}$. 


Choose $\gamma_1,\ldots,\gamma_{n+1}\in \Spin_q(A)$ such that $\gamma_1c_1,\ldots,\gamma_{n+1}c_1$ are in general position in $K^{n+1}$ (this means that every $n$ of them are linearly independent).   Lemma \ref{lemma:def_stab} implies that $Y:=\{\alpha \in \Gamma \mid \gcl_{\Gamma}(\alpha)^{N}\Lambda=	\gcl_{\Gamma}(\alpha)^{N+1}\Lambda\}$ is a definable subset of $\Gamma$. Let $X \subseteq \Gamma \times \Gamma$ be the definable subset 
$$X:=\{(\alpha,\beta)\in \Gamma\times \Gamma \mid \alpha \in Y\text{ and } \beta \in \cap_{1 \le i \le n+1}\gamma_i(\gcl_\Gamma(\alpha)^N\Lambda)\gamma_i^{-1}\}. $$

We will show that $\{X_\alpha \mid \alpha \in Y\}$ is a uniformly definable collection of subgroups of $\Gamma$ which is a basis of neighborhoods of identity under the projective congruence topology. 

Corollary \ref{cor:uff_1}  implies that, for every $\alpha \in Y$, $X_\alpha$ is a congruence subgroup. Let $0 \ne \mathfrak{q} \lhd A$. We want to show that there exists $\alpha \in Y$ such that $X_\alpha\subseteq \Gamma^*[\mathfrak{q}]$. 
Let $\mathfrak{p}$ be a prime ideal of odd residue characteristic such that the reductions of $a_i$ modulo $\mathfrak{p}$ are in general position in $(A/\mathfrak{p})^n$. Let $m \ge 1$ be such that $mA^n \subseteq \Span_A\{\gamma_i c_1 \mid 1 \le i \le n\}$. Choose a $\frac{1}{2}$-separated $\alpha \in \Gamma[m\mathfrak{p}\mathfrak{q}]$. Corollary \ref{cor:uff_2} implies that  $\alpha \in Y$. We will show that $X_\alpha \subseteq \Gamma^*[\mathfrak{q}]$. 

Let $\beta \in X_\alpha$. For every $1 \le i \le n+1$ there exists $\epsilon_i \in \{\pm 1\}$ such that ${\beta} \gamma_ic_1 =\epsilon_i \gamma_ic_1(\textrm{mod } m\mathfrak{p}\mathfrak{q})$. By the choice of $\mathfrak{p}$, $\epsilon_1=\epsilon_2=\cdots=\epsilon_{m+1}=\pm1$. Since $mA^n \subseteq \Span_A\{\gamma_ic_1 \mid 1 \le i \le n\}$, ${\beta}\in \Gamma^*[\mathfrak{p}\mathfrak{q}]$. \end{proof}

\section{Interpretation of $\mathbb{Z}$}\label{sec:inter}

\begin{theorem}\label{thm:interpretation} Let $\Gamma$ be as in Setting \ref{nota:Gamma}.   Then, there is an element $\alpha\in \Gamma$ of infinite order such that \begin{enumerate}
\item\label{item:def_cyc} $\langle \alpha \rangle$ is definable.
\item\label{item:def_prod} The map $(\alpha^r,\alpha^s) \mapsto \alpha^{rs}$ is definable.
\end{enumerate}
In particular, $\Gamma$ interprets $\mathbb{Z}$.
\end{theorem}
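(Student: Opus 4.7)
The plan is to exhibit a Heisenberg triple in $\Gamma$ and use the commutator identity $[\beta^r, \gamma^s] = \alpha^{rs}$ to encode the multiplication. Concretely, I seek elements $\alpha, \beta, \gamma \in \Gamma$ of infinite order with $\alpha = [\beta, \gamma]$ central in $\langle \beta, \gamma \rangle$, i.e., $[\alpha, \beta] = [\alpha, \gamma] = 1$. Under Setting \ref{nota:Gamma}, such a triple exists in a finite-index subgroup of $\Gamma$: the relative root system of $\Gr$ (or, in the $\Spin_q$ case with $n \ge 9$, of a suitable $\SL_3$-subgroup sitting inside a split subspace of $K_w^n$) has rank at least $2$, and in any irreducible rank-$2$ root system one can find positive roots $r_1, r_2, r_3$ with $r_3 = r_1 + r_2$ and with neither $r_1 + r_3$ nor $r_2 + r_3$ a root. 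Elements of the corresponding root subgroups then form a Heisenberg triple by the Chevalley commutator formula. After replacing $\Gamma$ by a principal congruence subgroup, we arrange all three elements to lie in $\Gamma$.

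The key technical step is showing that the cyclic subgroups $\langle \alpha \rangle, \langle \beta \rangle, \langle \gamma \rangle$ are definable. I would start with $\langle \alpha \rangle$: since $\alpha$ is central in the Heisenberg subgroup $\langle \beta, \gamma \rangle$ and lies in a root subgroup $U \cong (A, +)$, a first candidate is $Z(C_\Gamma(\alpha))$, but this is generally too large (it contains all of $U \cap \Gamma$, an $A$-module of rank $[K:\mathbb{Q}]$). To trim it to $\langle \alpha \rangle$, I would combine the uniformly definable family $\{\Gamma^*[\mathfrak{q}]\}$ from Theorem \ref{thm:principal.definable} with centralizer conditions tied to $\beta$ and $\gamma$, exploiting the fact that the non-abelian cross-structure breaks the $A$-module down to its $\mathbb{Z}$-cyclic piece. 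For $\langle \beta \rangle$ and $\langle \gamma \rangle$, once $\langle \alpha \rangle$ is definable, the commutator maps $x \mapsto [x, \gamma]$ and $y \mapsto [\beta, y]$ give definable surjections onto $\langle \alpha \rangle$, providing further first-order constraints that bootstrap their definability.

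With the three cyclic subgroups definable, the multiplication formula on $\langle \alpha \rangle$ reads: $z = xy$ iff there exist $u \in \langle \beta \rangle$ and $v \in \langle \gamma \rangle$ with $[u, \gamma] = x$, $[\beta, v] = y$, and $[u, v] = z$. Each clause is first-order; the group law on $\langle \alpha \rangle$ supplies addition; and $n \mapsto \alpha^n$ interprets the ring $\mathbb{Z}$ in $\Gamma$. The main obstacle is the middle paragraph — definability of the cyclic subgroups: no purely group-theoretic condition distinguishes $\mathbb{Z}$ inside an $A$-module of rank greater than $1$, so the resolution must weave together the definable congruence-subgroup family of Theorem \ref{thm:principal.definable} with the non-commutative constraints arising from the Heisenberg cross-structure to break the ambiguity that any single abelian subgroup cannot resolve on its own.
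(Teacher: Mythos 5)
Your proposal has two genuine gaps, one of which is fatal in part of the range of Setting \ref{nota:Gamma}.

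First, and most seriously, the Heisenberg/root-subgroup strategy cannot even get started in the uniform case. If $G=\Spin_q$ with $q$ anisotropic over $K$ (which is exactly when $\Gamma$ is uniform), then $G(K)$, and hence $\Gamma$, contains no nontrivial unipotent elements at all. The ``split subspace of $K_w^n$'' you invoke is split only over the completion $K_w$, not over $K$; there is no $K$-rational $\SL_3$-subgroup, no $K$-rational root subgroups, and therefore no Heisenberg triple $(\alpha,\beta,\gamma)$ in $\Gamma$. The paper sidesteps this by working instead with a $K$-rational $\Spin_f$-subgroup for a ternary form $f$ (Proposition \ref{prop:good_subgroup}), and then taking $\alpha$ to be an infinite-order \emph{semisimple} element; the set $\Theta=Z(\Cent_\Lambda(\alpha))$ is then essentially the $T$-units of a rank-one torus, a finitely generated abelian group, and Noskov's theorem on divisibility of $\lambda_\beta-\lambda_{\alpha^i}$ (Proposition \ref{prop:newnew}) does the work of isolating cyclic subgroups. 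That approach works identically whether $\Gamma$ is uniform or not.

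Second, even in the non-uniform case where your Heisenberg triple exists, the step you yourself flag as the ``main obstacle'' is not resolved but only postulated. The claim that ``the non-abelian cross-structure breaks the $A$-module down to its $\mathbb{Z}$-cyclic piece'' is what actually needs proof, and it is not true on its face: the Chevalley commutator identity $[\beta^a,\gamma^b]=\alpha^{ab}$ holds for all $a,b\in A$, not just $a,b\in\mathbb{Z}$, so the Heisenberg structure is $A$-bilinear and does not distinguish $\langle\alpha\rangle_\mathbb{Z}$ from the full rank-$[K:\mathbb{Q}]$ module $\alpha^A\cap\Gamma$. Any purely commutator-based first-order condition is invariant under the $A$-scaling automorphisms of the unipotent radical, so the ambiguity you identify cannot be broken that way; some genuinely arithmetic input is required. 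The paper supplies it twice: once in the form of Noskov's divisibility theorem for picking out finite sets of $\Theta$ (Lemma \ref{lemma:main_SL_2}), and once via Robinson's theorem that $(\mathbb{Z},+,\times)$ is definable in $(\mathbb{Z},+,\mid)$, converting the problem of defining multiplication into the problem of uniformly defining $\{\langle\beta\rangle : \beta\in\Theta \text{ of infinite order}\}$. Your last paragraph, which would define multiplication from the commutator, would be fine \emph{if} the three cyclic subgroups were definable, but that hypothesis is exactly what is at stake, and your bootstrap (``once $\langle\alpha\rangle$ is definable the surjections $x\mapsto[x,\gamma]$ and $y\mapsto[\beta,y]$ then define $\langle\beta\rangle$ and $\langle\gamma\rangle$'') has the same $A$-ambiguity: those maps are $A$-linear, so the preimage of $\langle\alpha\rangle$ in a root subgroup is again the full $A$-module and an extra ideal, not the $\mathbb{Z}$-cyclic group.
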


The proof of Theorem \ref{thm:interpretation} is based on the following two propositions: 

\begin{proposition}\label{prop:good_subgroup} Under Setting \ref{nota:Gamma}, there are a definable subgroup $\Lambda$ of $\Gamma$,  a regular quadratic form $f$ on $K^3$ such that $i_f(K^3_w)=1$ and a homomorphism $\rho :\Spin_f \rightarrow \Gr$ which is an isogeny over its image such that $\rho(\Spin_q) \cap \Gamma$ has finite index in $\Lambda$. 
\end{proposition}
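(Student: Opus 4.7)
My plan is, in each case of Setting \ref{nota:Gamma}, to produce a connected $K$-subgroup $L\subseteq G$ that is $K$-isogenous to $\Spin_f$ for a regular ternary form $f$ with $i_f(K_w^3)=1$, and then to define $\Lambda$ via iterated centralizers in $\Gamma$. In case (2) ($G=\Spin_q$, $n\geq 9$), I use Setting \ref{nota:subspace}: set $C:=Kc_0+Kc_1+Kc_2$ and $f:=q|_C$ (so $i_f(K_w^3)=i_q(C_w)=1$), and let $\rho:\Spin_f\to\Spin_q$ be the embedding that extends by the identity on $C^\perp$; then $L:=\rho(\Spin_f)$ is as desired. Let $\Delta$ be a finite generating set of the congruence subgroup $L(A)\cap\Gamma$. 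The subgroup $\Lambda_1:=\Cent_\Gamma(\Delta)$ is definable, and Schur's lemma applied to the absolutely irreducible vector representation of $\Spin_f$ on $C$ identifies $\Lambda_1$ with the subgroup of $\Gamma$ acting as $\pm 1$ on $C$, i.e.\ with the $\Lambda$ of Setting \ref{nota:subspace} (Lemma \ref{lemma:def_stab}). Since $\dim C^\perp=n-3\geq 6$, $\Lambda_1$ contains a congruence subgroup of $\Spin_{q|_{C^\perp}}$ that acts absolutely irreducibly on $C^\perp$; a second Schur argument then forces $\Lambda:=\Cent_\Gamma(\Lambda_1)$ to act as $\pm 1$ on $C^\perp$, so $\Lambda$ is commensurable with $\rho(\Spin_f)(A)\cap\Gamma$.

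In case (1) ($G_K$ isotropic), $G(K)$ contains a non-trivial unipotent element $u$. Applying Jacobson--Morozov over $K$ (valid in characteristic zero) I obtain a $K$-morphism $\rho:\SL_2\to G$ with $u$ in its image, and set $L:=\rho(\SL_2)$. Taking $f(x_1,x_2,x_3):=x_1x_2-x_3^2$, a split (hence $w$-isotropic) ternary form, we have $\Spin_f\cong\SL_2$ and $i_f(K_w^3)=1$. To produce a finitely generated Zariski-dense $\Delta\subseteq L\cap\Gamma$, observe that the unipotent root subgroups $U_{\pm\alpha}^L\cong\mathbb{G}_a$ of $L$ satisfy $U_{\pm\alpha}^L(A)\cap\Gamma$ is of finite index in the integer points and hence Zariski-dense in $U_{\pm\alpha}^L$; a finite generating set of the group they generate will do. Then $\Lambda_1:=\Cent_\Gamma(\Delta)=\Cent_G(L)(K)\cap\Gamma$ by Zariski density, and $\Lambda:=\Cent_\Gamma(\Lambda_1)$ is the candidate subgroup.

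The main obstacle in both cases is the double-centralizer identity $\Cent_G(\Cent_G(L))=L\cdot Z(G)$, together with the Borel-density step used to pass between centralizers in $\Gamma$ and centralizers in $G$. In the spin case this identity is transparent: it amounts to two applications of Schur's lemma on the orthogonal decomposition $K^n=C\oplus C^\perp$, as outlined above. In the isotropic case the identity is more delicate: $\Cent_G(L)$ is a reductive $K$-subgroup, and its double centralizer may strictly exceed $L\cdot Z(G)$ for a poorly chosen $L$. To handle this one must either make a careful choice of $u$ (equivalently, of the $\SL_2$-triple), perform case analysis on the root datum of $G_{\bar{K}}$, or iterate the centralizer further and intersect with additional definable subgroups coming from centralizers of conjugate $\SL_2$-subgroups inside $G$ until only $L\cdot Z(G)$ survives.
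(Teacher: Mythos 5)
Your treatment of the uniform case ($G=\Spin_q$) is essentially correct and matches the paper's: the paper defines $\Lambda$ directly as the elements acting by $\pm 1$ on $U^\perp$ and proves definability by centralizing a finitely generated congruence subgroup supported on $U^\perp$ (this is the content of Lemma \ref{lemma:def_stab}); your double-centralizer $\Cent_\Gamma(\Cent_\Gamma(\Delta))$, with $\Delta$ supported on $C$, produces the same subgroup with Schur's lemma applied twice rather than once, and is fine.

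The isotropic case is where the proposal has a genuine gap, and you have correctly located it yourself: for the $\SL_2$-subgroup $L$ produced by Jacobson--Morozov, the double centralizer $\Cent_G(\Cent_G(L))$ need not equal $L\cdot Z(G)$ (already in $\SL_n$, for $L$ a principal block $\SL_2$ the double centralizer is a block $\GL_2$). However, none of the three remedies you sketch (careful choice of the $\mathfrak{sl}_2$-triple, case analysis on the root datum of $G_{\bar K}$, iterating centralizers and intersecting) is carried out, and the paper does not pursue any of them. Instead, the paper's proof avoids centralizers entirely in this case and defines $S\cap\Gamma$ (with $S$ the Zariski closure of $\langle u^A, v^A\rangle$) by a local-to-global argument through congruence quotients. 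The ingredients are: the uniform definability of the principal congruence subgroups $\Gamma^*[\mathfrak{q}]$ (Theorem \ref{thm:principal.definable}, refined in Corollary \ref{cor:uniformly.definable.congruence}); the uniform bi-interpretation of the finite quotients $\Gamma/\Gamma^*[\mathfrak{q}]$ with the residue fields $A/\mathfrak{q}$ (Propositions \ref{prop:PSL_2.biinterpretation}, \ref{prop:uniform.biint.finite}), which yields uniform definability of the one-parameter unipotent subgroups $u_\mathfrak{q}^{A/\mathfrak{q}}$ (Corollary \ref{cor:uniformly.definable.unipotent.subgroup}); and the observation that for almost all $\mathfrak{q}$ the reduction $S_\mathfrak{q}$ equals $u_\mathfrak{q}^{A/\mathfrak{q}}\, v_\mathfrak{q}^{A/\mathfrak{q}}\, u_\mathfrak{q}^{A/\mathfrak{q}}\, v_\mathfrak{q}^{A/\mathfrak{q}}$. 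Combining these, membership of $g$ in $S\cap\Gamma$ is equivalent to the first-order condition that $g$ reduces into $S_\mathfrak{q}$ for every $\mathfrak{q}$ in the uniformly definable family, which is then what makes $\Lambda:=S\cap\Gamma$ definable. This congruence-quotient mechanism is the missing ingredient in your plan; without it, the assertion that the iterated centralizer is commensurable with $\rho(\SL_2)\cap\Gamma$ is not established and is false for a generic choice of unipotent $u$.
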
 

\begin{proposition}\label{prop:newnew} Under Setting \ref{nota:Gamma}, denote $\mathcal{P}=\{\mathfrak{p}^k \mid \mathfrak{p}\lhd A \text{ is prime and } k \ge 1\}$. Let $f$, $\rho$ and $\Lambda$ be as in Proposition \ref{prop:good_subgroup}. For every infinite order semisimple $\alpha \in \Lambda$, there exist $d,e \ge 1$ such that, for every cofinite $\mathcal{R} \subseteq \mathcal{P}$, the set
	 $$
	\{\gamma \in Z(\Cent_{\Lambda}(\alpha)) \mid (\forall 1 \le i \le d\ \forall \mathfrak{r} \in \mathcal{R})\ (\gamma\alpha^{-i})^{e}\notin \Gamma^*[\mathfrak{r}]\}
	 $$
	 is finite. 
\end{proposition}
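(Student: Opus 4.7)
The plan is to reduce the finiteness of the bad set to an $S$-unit equation over a splitting field of the torus containing $\alpha$.

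First, since $\alpha \in \Lambda$ has infinite order and $\Spin_f$ has rank one, any lift of $\alpha$ to $\Spin_f$ is regular semisimple, so its centralizer in $\Spin_f$ is the unique maximal $K$-torus $T$ containing it. Consequently $\Cent_\Lambda(\alpha)$ is commensurable with the arithmetic group $\rho(T)(A) \cap \Lambda$, hence virtually abelian, and $C := Z(\Cent_\Lambda(\alpha))$ contains a finite-index subgroup of $\rho(T)(A)\cap\Lambda$. By Dirichlet's unit theorem applied to $T$, $C$ is a finitely generated abelian group containing $\alpha$ as an element of infinite order.

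Second, I reformulate the projective-congruence condition as an ordinary congruence inside the torus. Let $n := |Z(G) \cap \rho(T)|$. Since $Z(G) \cap \rho(T)$ is a finite subgroup scheme of the one-dimensional torus $\rho(T)$, it equals the $n$-torsion $\rho(T)[n]$ as a group scheme. Hence, for any $t \in \rho(T)(A)$ and any prime power $\mathfrak{r}$ of residue characteristic coprime to $n$, one has $t \in \Gamma^*[\mathfrak{r}]$ if and only if $t^n \equiv 1 \pmod{\mathfrak{r}}$ in $\rho(T)(A/\mathfrak{r})$. I then choose $d=1$, take $e$ to be a multiple of the exponent of $C_{\mathrm{tors}}$, and set $E := en$; absorbing the finitely many exceptional prime powers (those of small residue characteristic, or where strong approximation fails) into the finite set $F := \mathcal{P} \setminus \mathcal{R}$, the bad-set condition becomes: for every $\mathfrak{r} \in \mathcal{R}$, $\mathfrak{r}$ does not divide $(\gamma\alpha^{-1})^E - 1$ in $\rho(T)$.

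Third, apply the $S$-unit theorem. Fix a splitting field $L$ of $T$, so that $T(K) \hookrightarrow L^\times$. A direct check using the norm-one structure shows that the divisibility $\mathfrak{r} \mid t^E - 1$ in $\rho(T)(A/\mathfrak{r})$ is equivalent to $\mathfrak{R} \mid t^E - 1$ in $\mathcal{O}_L$ for any (equivalently, every) prime $\mathfrak{R}$ of $L$ lying above $\mathfrak{r}$. Setting $t := \gamma\alpha^{-1}$, for a bad $\gamma$ either $t^E = 1$ (in which case $\gamma$ lies in the finite set $\alpha \cdot C[E]$), or $t^E - 1 \neq 0$ and all primes of $\mathcal{O}_L$ dividing $t^E-1$ lie above the finite set $F \cup S$; thus $t^E - 1$ is a unit in $\mathcal{O}_{L,\Pi_L}$ for a fixed finite set $\Pi_L$. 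Both $t^E$ and $t^E - 1$ therefore lie in a fixed finitely generated multiplicative subgroup of $L^\times$. Evertse's $S$-unit theorem (finiteness of solutions to $x + y = 1$ in a finitely generated subgroup of $L^\times$) then gives only finitely many possibilities for $t^E$, hence only finitely many $\gamma$, since $\gamma \mapsto \gamma^E$ has finite fibers on the finitely generated abelian group $C$.

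The main obstacle is the bookkeeping in the reduction step --- establishing the identification of the projective center inside $\rho(T)$ with the $n$-torsion of $\rho(T)$, and verifying that the finitely many exceptional prime powers (those dividing $n$, or in $S$, or where the strong approximation and smoothness fail) can be absorbed uniformly into $F$ without affecting the conclusion. Once this bookkeeping is handled cleanly, the $S$-unit theorem closes the argument immediately.
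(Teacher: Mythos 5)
Your proposal takes a genuinely different route from the paper, but both ultimately rest on the same Diophantine input: a unit-equation-type finiteness theorem. The paper works inside $\SO_f \subseteq \SL_3$, explicitly upper-triangularizes over $B_{\mathfrak{p}}$ (Lemma \ref{lemma:not_diag}), reduces the condition to a simultaneous-divisibility statement $\lambda_\beta - \lambda_{\alpha^i} \mid ac$ for $i = 1, \ldots, d$, and then invokes Noskov's theorem (Theorem \ref{thm:Noskov}), which supplies the unspecified $d$. You instead work intrinsically in the rank-one torus $\rho(T)$, observe that $\lambda_t$ is automatically a $T$-unit in the splitting field $L$, and apply the two-variable $S$-unit equation ($x + y = 1$ with $x, y$ in a finitely generated subgroup of $L^\times$) to the pair $(\lambda_t^E, 1-\lambda_t^E)$. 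This lets you take $d = 1$, which is genuinely sharper than the paper's $d$: the paper does not exploit the fact that $\lambda_\beta$ is itself constrained to lie in the unit group, so it needs the second divisibility $\lambda_\beta - \lambda_{\alpha^2} \mid ac$ (or more) to close the argument, whereas you get the second constraint for free from unit-ness. Conceptually your argument is cleaner.

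However, the ``direct check'' in your third step is precisely the content of the paper's Lemma \ref{lemma:not_diag} and is not actually trivial: going from a congruence between eigenvalues in $B_{\mathfrak{P}}$ to a matrix congruence in $\SO_f(A_\mathfrak{p})$ requires that $\alpha$ be diagonalizable over $A_\mathfrak{p}$ (rather than only over $B_\mathfrak{p}$), which introduces a loss that the paper quantifies as $2n_\mathfrak{p}$ and shows vanishes for all but finitely many $\mathfrak{p}$. Your argument survives this because you can simply enlarge $\Pi_L$ to include those finitely many exceptional primes (whose number and levels are bounded in terms of $\alpha$, not of $\mathcal{R}$), but a careful write-up would need to make the good-reduction/unramified hypotheses explicit, prove that $Z(G) \cap \rho(T) = \rho(T)[n]$ as group schemes over $A[1/n]$, and verify that the isogeny $\rho$ only shifts congruence levels by a bounded amount (cf.\ Lemma \ref{lemma:isog}\eqref{item:isog_3}, which the paper proves and uses for exactly this purpose). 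These are bookkeeping issues rather than conceptual gaps, but they are real and need to be done to make the proof complete.
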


\subsection{Proof of Proposition \ref{prop:good_subgroup} for non-uniform $\Gamma$}

We will need the following straightforward extension of the notions of definable sets, imaginaries, and interpretations from a single structure to a sequence of structures. 

\begin{definition} Let $L$ be a first order language, let $(M_n)_{n\in \mathbb{N}}$ be a sequence of $L$-structures, and let $k\in \mathbb{N}$. We say that a sequence of subsets $A_n \subset M_n^k$ is definable if there is an $L$-formula $F(x_1,\ldots,x_k,y_1,\ldots,y_m)$ and, for each $n$, an $m$-tuple $(c_1^n,\ldots,c_m^n)\in M_n^m$ such that $A_n=\left\{ (x_1,\ldots,x_k)\in M_n^k \mid F(x_1,\ldots,x_k,c_1^n,\ldots,c_m^n) \right\}$, for every $n$. In this case, we also say that the sequence $(A_n)$ is a definable sequence of sets in $(M_n)$. In a similar manner, define the notions of definable sequence of functions between definable sequences of sets in $(M_n)$, and the notion of a sequence of imaginary sets in $(M_n)$.
\end{definition} 

\begin{definition} Let $L,L'$ be first order languages, let $(M_n)_{n\in \mathbb{N}}$ be a sequence of $L$-structures, and let $(M_n')_{n\in \mathbb{N}}$ be a sequence of $L'$-structures. \begin{enumerate} 
\item An interpretation of $(M_n')$ in $(M_n)$ is a pair $(F,f)$, where $F=(F_n)$ is a sequence of imaginaries in $(M_n)$ and $f=(f_n)$ is a sequence of bijections $f_n:F_n \rightarrow M_n'$ such that \begin{enumerate}
\item For each $k$-ary relation symbol $r$ of $L_2$, the sequence of imaginaries $(f_n ^{-1} (r^{M'_n}))$ is a definable sequence of subsets of $F_n^k$.
\item For every function symbol $g$ of $L_2$, say of arity $(r,s)$, the sequence of functions $(f_n ^{-1} \circ g^{M'_n} \circ f_n)$ is definable.
\end{enumerate}
\item An interpretation $(F,(f_n))$ of $(M'_n)$ in $(M_n)$ is called trivial if the sequence of functions $(f_n)$ is definable.
\item A pair $(\mathscr{F}_{1,2},\mathscr{F}_{2,1})$ consisting of an interpretation $\mathscr{F}_{1,2}$ of $(M_n)$ in $(M'_n)$ and an interpretation $\mathscr{F}_{2,1}$ of $(M'_n)$ in $(M_n)$ is called a bi-interpretation if the compositions $\mathscr{F}_{1,2} \circ \mathscr{F}_{2,1}$ and $\mathscr{F}_{2,1} \circ \mathscr{F}_{1,2}$ are trivial.
\end{enumerate} 
\end{definition} 

\begin{proposition} \label{prop:PSL_2.biinterpretation} Let $Q$ be the set of prime powers. The sequence $(\PSL_2(\mathbb{F}_q))_{q\in Q}$ is in bi-interpretation with the sequence $(\mathbb{F}_q)_{q\in Q}$.
\end{proposition}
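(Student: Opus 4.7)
The plan is to construct interpretations in both directions and then verify that their compositions are trivial. One direction is immediate: I would realize $\PSL_2(\mathbb{F}_q)$ as the quotient of the definable set $\SL_2(\mathbb{F}_q) = \{(a,b,c,d) \in \mathbb{F}_q^4 : ad - bc = 1\}$ by the equivalence $(a,b,c,d) \sim (-a,-b,-c,-d)$, equipped with matrix multiplication. All formulas are polynomial and uniform in $q$, so this yields an interpretation of $(\PSL_2(\mathbb{F}_q))_{q \in Q}$ in $(\mathbb{F}_q)_{q \in Q}$ requiring no parameters.

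The opposite direction is the main content. My plan is to recover $\mathbb{F}_q$ uniformly from the Bruhat structure of $\PSL_2(\mathbb{F}_q)$. For each $q$ (a finite set of small exceptions can be absorbed by disjunction in the defining formula), I would choose parameters $u_0, s_0 \in \PSL_2(\mathbb{F}_q)$ with $u_0$ a nontrivial unipotent and $s_0$ a ``Weyl-type'' element swapping two opposite Borel subgroups, and from these define uniformly
\[
U = \Cent_{\PSL_2(\mathbb{F}_q)}(u_0), \quad U^- = s_0 U s_0^{-1}, \quad T = N(U) \cap N(U^-), \quad B = N(U),
\]
which for all sufficiently large $q$ are the Sylow $p$-subgroup containing $u_0$ (isomorphic to $(\mathbb{F}_q, +)$), the opposite unipotent radical, a maximal torus, and the Borel, respectively. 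I would take $U$ as the underlying set of the reconstructed field, with addition as its group operation and $u_0$ playing the role of $1$.

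To obtain multiplication I would exploit the Bruhat identity
\[
u(x) \cdot s_0 u(y) s_0^{-1} = v\!\left(\tfrac{-y}{1-xy}\right) \cdot t(1-xy) \cdot u\!\left(\tfrac{x}{1-xy}\right) \qquad (xy \neq 1),
\]
which exhibits $t(1-xy) \in T$ as a uniformly definable function of $(x,y)$; combined with the group operation on $T$ (which encodes the multiplication on $\mathbb{F}_q^\times$), this yields the full field structure on $U$. The main technical difficulty, which I expect to be the real obstacle, is eliminating the $\{\pm I\}$-ambiguity intrinsic to $\PSL_2 = \SL_2/\{\pm I\}$: the torus factor $t(1-xy)$ is only determined up to sign, and this must be resolved by combining several such Bruhat products (e.g.\ with $s_0$ and with $u_0 s_0$ in its place) and intersecting the resulting data, together with a finite case analysis for the handful of small $q$ where $\PSL_2(\mathbb{F}_q)$ is not simple. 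Once this is done, triviality of both compositions follows by direct inspection: the bijection $\mathbb{F}_q \to U$ is given by the parameter choice together with the Bruhat formulas (polynomial in any $\SL_2$-lift, hence definable in the combined structure), while the bijection $\SL_2(\mathbb{F}_q)/\{\pm I\} \to \PSL_2(\mathbb{F}_q)$ is literally the defining one.
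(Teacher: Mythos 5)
Your first direction (realizing $\PSL_2(\mathbb{F}_q)$ as matrices modulo $\pm I$) and the choice $U=\Cent(u_0)$ with addition given by the group law match the paper's $\mathscr{G}$ and $\mathscr{F}$. But your plan for defining multiplication on $U$ has a gap that goes deeper than the $\pm I$ ambiguity you flag. In $\PSL_2$ the torus $T\cong\mathbb{F}_q^\times/\{\pm 1\}$ acts on $U$ by $u(z)\mapsto u(\lambda^2 z)$, so the only data about the Bruhat factor $t(1-xy)$ that is visible through its action on $U$ is $(1-xy)^2$, not $1-xy$. Recovering $xy$ from $(1-xy)^2$ leaves an essential two-valued branch ($xy$ versus $2-xy$), and since ``the group operation on $T$'' only encodes $\mathbb{F}_q^\times/\{\pm 1\}$ rather than $\mathbb{F}_q^\times$, there is no definable bijection $T\to\mathbb{F}_q^\times$ you can use to transport multiplication. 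Your proposed remedy (``combining several Bruhat products and intersecting'') is not concrete enough to show that it resolves this branch, and I don't see how it would without a new idea.

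The paper sidesteps all of this with a different trick, which is the genuine content here and is worth internalizing: fix $u_0$ and a definable torus $T_q$, and note that conjugating $u_0$ by $s\in T_q$ produces $u(\mu)$ where $\mu$ ranges over nonzero squares. For $a=u(x)$ one can (for $q$ large) write $a=(s_1^{-1}u_0 s_1)(s_2^{-1}u_0 s_2)$, i.e.\ express $x$ as a sum of two nonzero squares; then for any $b=u(y)$, the same conjugators give $(s_1^{-1}b s_1)(s_2^{-1}b s_2)=u(xy)$. This makes ``multiply by $x$'' a first-order definable operation on $U$ with no Bruhat factorization, no extraction of a torus element, and no sign or square-root ambiguity. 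You should also note that the triviality of $\mathscr{G}\circ\mathscr{F}$ is not ``direct inspection'': the paper has to show that the map $\PSL_2(\mathbb{F}_q)\to U_q^4$ recording the four matrix entries as unipotents is definable, which it does by first defining it on $U_q$ and on the opposite unipotent $V_q$, and then extending multiplicatively using $U_q V_q U_q V_q=\PSL_2(\mathbb{F}_q)$.
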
 

\begin{proof} It is clearly enough to restrict the sequence to $q>3$, which we will do in the rest of the proof. We first construct an interpretation $\mathscr{F}$ of $(\mathbb{F}_q)_q$ in $(\PSL_2(\mathbb{F}_q))_q$.  For every $q>3$, let $u_q=\left(\begin{matrix} 1 & 1 \\ 0 & 1 \end{matrix}\right) \in \PSL_2(\mathbb{F}_q)$ and choose $t_q=\left(\begin{matrix} \epsilon & 0 \\ 0 & \epsilon ^{-1}\end{matrix}\right)\in \PSL_2(\mathbb{F}_q)$, for some $\epsilon \in \mathbb{F}_q \smallsetminus \left\{ 0,1,-1 \right\}$. The sequences $U_q=\Cent_{\PSL_2(\mathbb{F}_q)}(u_q)$ and $T_q=\Cent_{\PSL_2(\mathbb{F}_q)}(t_q)$ are definable, as well as the sequence of functions $U_q \times U_q \rightarrow U_q$ taking $\left( \left(\begin{matrix} 1 & x \\ & 1\end{matrix}\right), \left(\begin{matrix} 1 & y \\ & 1\end{matrix}\right)\right)$ to $\left(\begin{matrix} 1 & x+y \\ & 1\end{matrix}\right)$. For every $q$ and every $a:=\left(\begin{matrix} 1 & x \\ & 1 \end{matrix}\right), b:=\left(\begin{matrix} 1 & y \\ & 1 \end{matrix}\right) \in U_q \smallsetminus \left\{ 1 \right\}$, there are $s_1,s_2\in T_q$ such that $(s_1 ^{-1} u_q s_1) (s_2 ^{-1} u_q s_2) =a$; for every such $s_1,s_2$, we have $(s_1 ^{-1}  b s_1) (s_2 ^{-1} b s_2) =\left(\begin{matrix} 1 & xy \\ & 1 \end{matrix}\right)$. This shows that the bijection $U_q \rightarrow \mathbb{F}_q$ given by $\left(\begin{matrix} 1 & x \\ & 1 \end{matrix}\right) \mapsto x$ is an interpretation.

In the other direction, let $\mathscr{G}$ be the interpretation of $\PSL_2(\mathbb{F}_q)$ in $\mathbb{F}_q$ whose imaginary is the set of 4-tuples $(x,y,z,w)\in \mathbb{F}_q$ satisfying the equation $xw-yz=1$ (modulo $\pm1$), and whose bijection is $(x,y,z,w) \mapsto \left(\begin{matrix} x & y \\ z & w \end{matrix}\right)$.

The inverse of composition $\mathscr{F} \circ \mathscr{G}$ is the function $x \mapsto (1,x,0,1)$ from $\mathbb{F}_q$ to $\mathbb{F}_q^4$, which is clearly definable.

Finally, the inverse of the composition $\mathscr{G}\circ \mathscr{F}$ is the sequence of functions $h_q: \PSL_2(\mathbb{F}_q) \rightarrow \PSL_2(\mathbb{F}_q)^4$ given by 
\[
h_q\left(\begin{matrix} a & b \\ c & d\end{matrix}\right) = \left( \left(\begin{matrix} 1 & a \\  & 1\end{matrix}\right) , \left(\begin{matrix} 1 & b \\  & 1\end{matrix}\right) , \left(\begin{matrix} 1 & c \\  & 1\end{matrix}\right) , \left(\begin{matrix} 1 & d \\  & 1\end{matrix}\right) \right) .
\]
We need to show that $h_q$ is definable. Let $v_q=\left(\begin{matrix} 1 & 0 \\ 1 & 1 \end{matrix}\right)\in \PSL_2(\mathbb{F}_q)$, and let $V_q=\Cent_{\PSL_2(\mathbb{F}_q)}(v_q)$. The restriction of $h_q$ to $U_q$ is definable, as well as its restriction to $V_q$. Using the definability of addition and multiplication operations in $U_q$, we get that the restriction of $h_q$ to $U_qV_qU_qV_q$ is definable, but $U_qV_qU_qV_q=\PSL_2(\mathbb{F}_q)$.
\end{proof} 

\begin{proposition} \label{prop:uniform.biint.finite} Let $Q$ be the set of prime powers. Let $G$ be a connected, simply connected and split simple group scheme over $\mathbb{Z}$. Then the sequence $(G(\mathbb{F}_q)/Z(G(\mathbb{F}_q)))_{q\in Q}$ is bi-interpretable with the sequence $(\mathbb{F}_q)_{q\in Q}$.
\end{proposition}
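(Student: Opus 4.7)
The plan is to reduce to Proposition \ref{prop:PSL_2.biinterpretation}. Fix a root $\alpha$ of $G$ and let $\phi_\alpha:\SL_2\to G$ be the associated homomorphism, defined over $\mathbb{Z}$. Since $G$ is simply connected and simple, $\phi_\alpha$ is injective, and we denote by $H_q$ the image of $\phi_\alpha(\SL_2(\mathbb{F}_q))$ in $G(\mathbb{F}_q)/Z(G(\mathbb{F}_q))$; this is either $\SL_2(\mathbb{F}_q)$ or $\PSL_2(\mathbb{F}_q)$, depending on whether $\phi_\alpha(-I)$ lies in $Z(G)$. The preimage of $H_q$ in $G(\mathbb{F}_q)$ can be described uniformly in $q$ as the derived subgroup of the centralizer of the kernel of $\alpha$ inside a fixed split maximal torus, so the sequence $(H_q)_q$ is uniformly definable. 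The argument of Proposition \ref{prop:PSL_2.biinterpretation}, which applies equally to $\SL_2(\mathbb{F}_q)$ with trivial modifications, then supplies a uniform interpretation $\mathscr{F}$ of $(\mathbb{F}_q)_q$ in $(G(\mathbb{F}_q)/Z(G(\mathbb{F}_q)))_q$.

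For the converse, fix a closed immersion $G\hookrightarrow \GL_N$ defined over $\mathbb{Z}$. Then $G(\mathbb{F}_q)$ is cut out of $\mathbb{F}_q^{N^2}$ by a fixed system of polynomial equations, $Z(G(\mathbb{F}_q))$ is definable by commutation with all elements of $G(\mathbb{F}_q)$, and the group operations are polynomial. This yields a uniform interpretation $\mathscr{G}$ of $(G(\mathbb{F}_q)/Z(G(\mathbb{F}_q)))_q$ in $(\mathbb{F}_q)_q$.

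The composition $\mathscr{F}\circ\mathscr{G}$ sends $x\in \mathbb{F}_q$ to the unipotent root element $u_\alpha(x)\in G(\mathbb{F}_q)/Z(G(\mathbb{F}_q))$, whose matrix entries are fixed polynomials in $x$, so this composition is trivially definable. The harder direction is $\mathscr{G}\circ\mathscr{F}$, where, given $g\in G(\mathbb{F}_q)/Z(G(\mathbb{F}_q))$, one must definably recover the matrix entries of a lift of $g$ as elements of the interpreted field. The approach mirrors the end of the proof of Proposition \ref{prop:PSL_2.biinterpretation}: use Bruhat decomposition $G(\mathbb{F}_q) = \bigcup_{w\in W} BwB(\mathbb{F}_q)$ to express any $g$ as a polynomially parametrized product of root-subgroup elements and torus elements. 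Each root subgroup $U_\beta$ of $G$ is a one-parameter additive subgroup that can be uniformly definably identified with the interpreted field via the torus-conjugation trick of Proposition \ref{prop:PSL_2.biinterpretation}. The matrix entries of $g$ are then fixed polynomial functions of the Bruhat coordinates, so the inverse of $\mathscr{G}\circ\mathscr{F}$ is definable on each cell, and piecing the $|W|$ cells together (with $|W|$ constant in $q$) yields a global definable description.

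The main technical point is the uniform-in-$q$ bookkeeping of the Bruhat parametrization: one must verify that the parametrization of each cell by a product of root subgroups and a torus is a polynomial isomorphism of schemes over $\mathbb{Z}$, so that the formulas expressing matrix entries and torus characters in terms of the root parameters have bounded complexity independent of $q$. This is a standard consequence of the Chevalley--Demazure theory of split reductive group schemes, and granted it, the reduction to the $\SL_2$ case handled in Proposition \ref{prop:PSL_2.biinterpretation} is routine.
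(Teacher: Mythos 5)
Your broad strategy — identify a uniformly definable torus and root subgroup, use torus-conjugation to recover the field structure, and use Bruhat decomposition to show the composition $\mathscr{G}\circ\mathscr{F}$ is trivial — is the same one the paper follows. However, your specific reduction to the root $\SL_2$ introduces a gap that the paper avoids.

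You assert that "the preimage of $H_q$ in $G(\mathbb{F}_q)$ can be described uniformly in $q$ as the derived subgroup of the centralizer of the kernel of $\alpha$ inside a fixed split maximal torus, so the sequence $(H_q)_q$ is uniformly definable." This is a geometric description, not a first-order one. A split maximal torus is a group \emph{scheme} over $\mathbb{Z}$; as a sequence of subsets of the abstract groups $G(\mathbb{F}_q)$ it is not uniformly definable without parameters, and neither is the subgroup $(\ker\alpha)(\mathbb{F}_q)\cap T(\mathbb{F}_q)$. On top of that, "derived subgroup" is not a first-order operation in general; one needs a bound on commutator width that is independent of $q$. Both points are exactly what the paper's proof supplies: restricting to $q>r+1$, it picks a regular semisimple parameter $t_q\in T(\mathbb{F}_q)$ and defines $T(\mathbb{F}_q)=\Cent_{G(\mathbb{F}_q)}(t_q)$, then, picking a non-trivial $u_{q,\alpha}\in U_\alpha(\mathbb{F}_q)$ as a parameter, shows that because $\alpha(T(\mathbb{F}_q))$ additively generates $\mathbb{F}_q$ in at most $C$ steps (with $C$ independent of $q$), every element of $U_\alpha(\mathbb{F}_q)$ is a product of $C$ $T(\mathbb{F}_q)$-conjugates of $u_{q,\alpha}$, giving uniform definability of $U_\alpha(\mathbb{F}_q)$. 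With $T$ and $U_\alpha$ in hand the paper then runs the $\SL_2$-style argument directly in $G$, with no need for the intermediate root-$\SL_2$ subgroup. Your plan could be repaired by replacing the derived-subgroup characterization with a parameter-based one (e.g.\ a suitably subregular parameter whose centralizer is the Levi, plus a finite-commutator-width bound), but as written the key uniform-definability claim is unjustified.

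Your discussion of the triviality of $\mathscr{F}\circ\mathscr{G}$ and the Bruhat-cell bookkeeping for $\mathscr{G}\circ\mathscr{F}$ is consistent with the paper's (terser) "the proof now continues in the same way as in the $\PSL_2$ case," and adds reasonable detail there.
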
 

\begin{proof} Let $r$ be the rank of $G$. It is enough to restrict the claim to the subsequence $q>r+1$. Choose a maximal split torus $T$, and, for every $q>r+1$, choose a regular element $t_q\in T(\mathbb{F}_q)$. The sequence $T(\mathbb{F}_q)=\Cent_{G(\mathbb{F}_q)}(t_q)$ is definable. Let $\alpha$ be a root of $(G,T)$, let $U_\alpha \cong \mathbb{G}_a$ be the root subgroup, and choose $u_{q,\alpha}\in U_\alpha (\mathbb{F}_q)$. Since $\alpha$ is a non-trivial character, there is a constant $k$ such that $\alpha(T(\mathbb{F}_q))$ contains the collection of all $k$th powers in $\mathbb{F}_q ^ \times$. It follows that there is a constant $C$ (independent of $q$) such that $\underbrace{\alpha(T(\mathbb{F}_q))+\cdots+\alpha(T(\mathbb{F}_q))}_{\text{$C$ times}}=\mathbb{F}_q$. It follows that every element in $U_\alpha(\mathbb{F}_q)$ is a product of $C$ conjugates of $u_{q,\alpha}$ by elements of $T(\mathbb{F}_q)$. This implies that the sequence $U_\alpha(\mathbb{F}_q)$ is definable. The proof now continues in the same way as in Proposition \ref{prop:PSL_2.biinterpretation}.
\end{proof} 

\begin{definition} Let $d\in \mathbb{N}$, let $R$ be a domain whose characteristic is bigger than $d$, and let $u\in \GL_d(R)$ be a unipotent element. Denote the fraction field of $R$ by $Frac(R)$. We define $u^{R}$ to be the set $\exp \left( R \log(u) \right) \subseteq \GL_d(Frac(R))$. Note that $u^{R}$ is a group.
\end{definition} 

\begin{corollary} \label{cor:uniformly.definable.unipotent.subgroup} Let $G$ be a simply connected Chevalley group scheme over $\mathbb{Z}$. There is an integer $d$ and a first order formula $F(x,y)$ in the language of groups such that, for every finite field $\mathbb{F}_q$ of characteristic larger than $d$, every unipotent element $u\in G(\mathbb{F}_q)/Z(G(\mathbb{F}_q))$, and every $g\in G(\mathbb{F}_q)/Z(\mathbb{F}_q)$, we have $G(\mathbb{F}_q)/Z(\mathbb{F}_q)$ satisfies $F(g,u)$ if and only if $g\in u^{\mathbb{F}_q}$.
\end{corollary}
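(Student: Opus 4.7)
The plan is to transfer a parametric description of the one-parameter subgroup $u^{\mathbb{F}_q}$ from the field side to the group side, using the uniform bi-interpretation of Proposition \ref{prop:uniform.biint.finite}. Fix once and for all a faithful rational representation $\iota\colon G\hookrightarrow \GL_d$ defined over $\mathbb{Z}$ (for concreteness, the adjoint representation, enlarging $d$ if necessary so that $d$ bounds the nilpotency index on every unipotent element). Whenever the characteristic $p$ of $\mathbb{F}_q$ exceeds $d$, the truncated logarithm $\log(u)=\sum_{k=1}^{d-1}(-1)^{k+1}(u-I)^{k}/k$ and truncated exponential $\exp(X)=\sum_{k=0}^{d-1}X^{k}/k!$ are polynomials with rational coefficients in the matrix entries, and $u^{\mathbb{F}_q}=\{\exp(t\log u)\mid t\in \mathbb{F}_q\}$ by definition.

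Next, Proposition \ref{prop:uniform.biint.finite} supplies, uniformly in $q$, an interpretation $\mathscr{F}$ of $(\mathbb{F}_q)$ inside $(G(\mathbb{F}_q)/Z(G(\mathbb{F}_q)))$, together with a reverse interpretation $\mathscr{G}$, whose compositions are trivial. Concretely, triviality of $\mathscr{G}\circ\mathscr{F}$ unpacks to uniformly definable coordinate maps $\pi_{ij}\colon G(\mathbb{F}_q)/Z(G(\mathbb{F}_q))\to \mathscr{F}$ returning the matrix entries of $\iota$ (after choosing a basis adapted to the presentation of $\mathscr{G}$ as matrices). Likewise, addition, multiplication, and the constants $1/k$ for $k<d$ in the interpreted field $\mathscr{F}$ are uniformly definable in the group language.

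With these ingredients, the formula $F(x,y)$ is essentially formal: it asserts that $y$ is unipotent, i.e.\ the $d\times d$ matrix $M(y):=(\pi_{ij}(y)-\delta_{ij})_{i,j}$ satisfies $M(y)^{d}=0$, and that there exists $t$ in the interpreted copy $\mathscr{F}$ of $\mathbb{F}_q$ for which
\[
\pi_{ij}(x)\;=\;\bigl[\,\exp\!\bigl(t\,\log(I+M(y))\bigr)\bigr]_{ij}\qquad (1\le i,j\le d),
\]
where the right-hand side is a fixed universal polynomial (of degree $<d^2$) in $t$ and the entries of $M(y)$ with coefficients in $\mathbb{Q}$, evaluated using $+,\cdot$ in $\mathscr{F}$. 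All data here is uniform in $q$, so $F$ is a single first-order formula in the language of groups, and by the polynomial identities for $\exp$ and $\log$ on unipotent matrices in characteristic $>d$, we have $G(\mathbb{F}_q)/Z(G(\mathbb{F}_q))\models F(g,u)$ iff $g\in u^{\mathbb{F}_q}$.

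The only non-routine point is bookkeeping: one must verify that the coordinate maps $\pi_{ij}$ produced by the bi-interpretation really do return the matrix entries of $\iota$ (rather than of some other definable copy of $G$), but this follows immediately from the construction of $\mathscr{G}$ in Proposition \ref{prop:uniform.biint.finite}, which embeds $G(\mathbb{F}_q)/Z(G(\mathbb{F}_q))$ into $\mathbb{F}_q^{d^2}$ via its standard matrix model. Everything else reduces to the polynomial identities $\exp\circ\log=\mathrm{id}$ on unipotents, valid in any ring of characteristic $>d$.
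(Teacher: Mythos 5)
Your argument is correct and follows the same route as the paper's (which is stated in one line): exploit the uniform bi-interpretation of Proposition~\ref{prop:uniform.biint.finite} to make the matrix/Lie-algebra coordinates definable, then express $u^{\mathbb{F}_q}$ via the truncated polynomial $\exp$ and $\log$, which are uniform first-order expressions once characteristic exceeds the nilpotency bound. You have simply unpacked the bookkeeping that the paper leaves implicit.
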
 

\begin{proof} Using the bi-interpretation of $G(\mathbb{F}_q)/Z(\mathbb{F}_q)$ and $\mathbb{F}_q$, the sequence of Lie algebras $\mathfrak{g}(\mathbb{F}_q)$, as well as the exponential and logarithm maps, are definable.
\end{proof} 

The following is well known (the first claim follows from generic flatness and \cite[Expose XIX, Proposition 3.8]{SGA3}; the second claim follows from Strong Approximation and the construction of the finite simple groups of Lie type): 
\begin{lemma} \label{lem:simple.quotients} Under Setting \ref{nota:Gamma}, \begin{enumerate} 
\item For all but finitely many prime ideals $\mathfrak{p} \lhd A$, $G_{A/\mathfrak{p}}$ is a simple and connected algebraic group.
\item For all but finitely many prime ideals $\mathfrak{p} \lhd A$, we have $\Gamma / \Gamma ^*[\mathfrak{p}]=G(A/\mathfrak{p})/Z(G(A/\mathfrak{p}))$ is a simple group of the Lie type of $G$.
\end{enumerate} 
In particular, the sets 
\[
\left\{ \Delta \subseteq \Gamma \mid \text{$\Delta$ is a maximal normal congruence subgroup} \right\} 
\]
and
\[
\left\{ \Gamma ^*[\mathfrak{p}] \mid \text{$\mathfrak{p} \lhd A$ is a prime ideal} \right\} 
\]
are commensurable.
\end{lemma}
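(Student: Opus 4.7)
The plan has two parts. For assertion (1), I would invoke generic flatness together with the rigidity of reductive group schemes from SGA3. Since $G$ is of finite type over $A$ and the generic fiber $G_K$ is connected, simply connected, and absolutely simple, the locus in $\Spec A$ over which $G$ is smooth with geometrically connected reductive fibers is open and nonempty, hence cofinite in $\Spec A$ (as $A$ is a one-dimensional Noetherian domain). Over this cofinite locus, \cite[Expose XIX, Proposition 3.8]{SGA3} ensures that the Dynkin type of the reductive fibers is locally constant, so the special fibers $G_{A/\mathfrak{p}}$ are connected, simply connected, and absolutely simple of the same type as $G_K$.

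For assertion (2), I would use Strong Approximation. Under Setting \ref{nota:Gamma}, $G_K$ is simply connected and absolutely simple, and there is a place $w\in S$ at which $G$ is isotropic; hence Strong Approximation with respect to $S$ holds and gives that $\Gamma$ is dense in $\prod_{v\notin S}\! ' G(A_v)$. Combined with (1), for cofinitely many $\mathfrak{p}$ the reduction map $\Gamma \to G(A/\mathfrak{p})$ is surjective with kernel $\Gamma[\mathfrak{p}]$, so that $\Gamma/\Gamma[\mathfrak{p}] \cong G(A/\mathfrak{p})$. Modding out the center on both sides gives $\Gamma/\Gamma^*[\mathfrak{p}] \cong G(A/\mathfrak{p})/Z(G(A/\mathfrak{p}))$. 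Now by the classical construction (Chevalley--Steinberg) of the finite simple groups of Lie type, $G(\mathbb{F}_q)/Z(G(\mathbb{F}_q))$ is a non-abelian simple group whenever $q$ exceeds a bound depending only on the Dynkin type of $G$; since $|A/\mathfrak{p}|\to\infty$ as $\mathfrak{p}$ varies over primes, this holds for all but finitely many $\mathfrak{p}$.

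For the final "in particular" clause, I would argue in both directions. First, (2) directly shows that $\Gamma^*[\mathfrak{p}]$ is a maximal normal congruence subgroup for all $\mathfrak{p}$ outside a finite exceptional set $T$, since its quotient in $\Gamma$ is simple. Conversely, every proper normal congruence subgroup $\Delta$ contains some $\Gamma^*[\mathfrak{q}]$ with $0\neq \mathfrak{q}=\prod_i \mathfrak{p}_i^{k_i}$. For each $\mathfrak{p}_i\notin T$, the kernel of $G(A/\mathfrak{p}_i^{k+1})\to G(A/\mathfrak{p}_i^{k})$ is central (as $G$ has good reduction), and the Chinese Remainder Theorem factors $\Gamma/\Gamma^*[\mathfrak{q}]$ as a product of the simple groups $\Gamma/\Gamma^*[\mathfrak{p}_i]$ up to the contribution of the finitely many primes in $T$. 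The maximal normal subgroups of such a product are the preimages of one of the projections, so $\Delta=\Gamma^*[\mathfrak{p}_i]$ for some $i\notin T$, unless $\mathfrak{q}$ is supported on $T$; the latter case produces only finitely many possibilities for $\Delta$.

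The only delicate point is keeping the centers straight: Strong Approximation produces surjectivity onto $G(A/\mathfrak{p})$, not onto its adjoint quotient, and simplicity holds only after dividing by $Z(G(A/\mathfrak{p}))$, which is precisely what the definition of $\Gamma^*[\mathfrak{p}]$ as the preimage of $Z(\Gamma/\Gamma[\mathfrak{p}])$ accomplishes. With that bookkeeping in place, both parts reduce to quoting the classical structural results.
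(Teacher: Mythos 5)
Your proofs of parts (1) and (2) invoke exactly the references the paper uses (generic flatness plus SGA3, Exp.~XIX, Prop.~3.8 for the first claim; Strong Approximation plus the Chevalley--Steinberg construction for the second), so on those two points you are in agreement with the paper, which in fact offers nothing beyond these citations.

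For the ``in particular'' clause, the paper gives no argument at all, so the content you supply is extra. One step of it is, however, incorrect as written: the kernel of $G(A/\mathfrak{p}^{k+1})\to G(A/\mathfrak{p}^{k})$ is \emph{not} central in $G(A/\mathfrak{p}^{k+1})$ for $k\geq 1$; good reduction makes it an abelian normal subgroup isomorphic to the additive group $\mathfrak{g}(A/\mathfrak{p})$, on which $G(A/\mathfrak{p}^{k+1})$ acts through $\Ad$, and this action is nontrivial. Consequently, $\Gamma/\Gamma^*[\mathfrak{q}]$ for $\mathfrak{q}=\prod \mathfrak{p}_i^{k_i}$ with some $k_i>1$ is \emph{not} a product of the simple groups $\Gamma/\Gamma^*[\mathfrak{p}_i]$; the factor $G(A/\mathfrak{p}_i^{k_i})/Z$ is genuinely larger than $G(A/\mathfrak{p}_i)/Z$ when $k_i>1$. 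The repaired argument should instead run through $\Gamma/\Gamma[\mathfrak{q}]\cong\prod_i G(A/\mathfrak{p}_i^{k_i})$ (Chinese Remainder Theorem and Strong Approximation) together with the fact that, for $\mathfrak{p}_i$ of large enough residue characteristic, $G(A/\mathfrak{p}_i^{k_i})$ is perfect with unique non-abelian simple quotient $G(A/\mathfrak{p}_i)/Z(G(A/\mathfrak{p}_i))$; since distinct factors (for almost all choices) have no common composition factors, the maximal normal subgroups of the product are pullbacks along a single projection, and for $\mathfrak{p}_i\notin T$ the resulting subgroup of $\Gamma$ is precisely $\Gamma^*[\mathfrak{p}_i]$. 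The finitely many primes in $T$ (together with a couple of other finitely-many exceptions) account for the discrepancy, yielding the desired commensurability.
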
 

\begin{lemma} \label{lem:silly} Let $n,C$ be natural numbers greater than 1. If $F$ is a field and $x,y\in \GL_n(F)$ satisfy $x ^{-1} y x = y^C$, then $y^{C^n!}$ is a unipotent.
\end{lemma}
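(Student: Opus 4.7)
The plan is to work over the algebraic closure $\bar F$ and analyze how the equation $x^{-1}yx = y^C$ constrains the eigenvalues of $y$. Since $y$ and $y^C$ are conjugate in $\GL_n(\bar F)$, they have the same characteristic polynomial; equivalently, raising to the $C$th power permutes the multiset $\{\lambda_1,\ldots,\lambda_n\}$ of eigenvalues of $y$ (which are all nonzero because $y \in \GL_n(F)$).

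Fix any eigenvalue $\lambda$. The first key observation is that the sequence $\lambda, \lambda^C, \lambda^{C^2}, \ldots, \lambda^{C^n}$ consists of $n+1$ values, all lying in the set of eigenvalues of $y$, which has at most $n$ distinct members. By the pigeonhole principle, there exist $0 \le i < j \le n$ with $\lambda^{C^i} = \lambda^{C^j}$. Hence $\lambda^M = 1$ where $M := C^j - C^i$ is a positive integer satisfying $M < C^n$.

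It follows that the multiplicative order $\ell$ of $\lambda$ divides $M$, so in particular $\ell \le M < C^n$. Any positive integer strictly less than $C^n$ divides $(C^n)!$, and therefore $\lambda^{(C^n)!} = 1$. Applying this to every eigenvalue shows that all eigenvalues of $y^{(C^n)!}$ equal $1$, which is the definition (over $\bar F$, hence over $F$) of being unipotent.

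The argument is essentially a pigeonhole on a Frobenius-like orbit, so there is no real obstacle; the only subtlety worth flagging is that we do not need to separately verify $\gcd(\ell,C)=1$: the bound $\ell \le C^n - 1$ follows directly from $\ell \mid C^j - C^i$ with $0 < C^j - C^i < C^n$, and that bound alone is enough to conclude $\ell \mid (C^n)!$.
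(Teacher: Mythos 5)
Your proof is correct and is essentially the same argument as the paper's, which only records the punchline ("all $\lambda_i$ are roots of unity of order at most $C^n$"); you have simply filled in the pigeonhole step that justifies that bound. The closing remark about not needing $\gcd(\ell,C)=1$ is a reasonable sanity check but not a new idea.
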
 

\begin{proof} Let $\lambda_1,\ldots,\lambda_n$ be the eigenvalues of $y$. Since $\left\{ \lambda_i \right\} =\left\{ \lambda_i^C\right\}$, all $\lambda_i$ are roots of unity of order at most $C^n$, and the claim follows.
\end{proof} 

\begin{corollary} \label{cor:uniformly.definable.congruence} Under Setting \ref{nota:Gamma}, there is an infinite set $\mathcal{Q}$ of primes of $A$ such that
\begin{enumerate} 
\item For every $\mathfrak{q} \in \mathcal{Q}$, $G_{A/\mathfrak{q}}$ is split.
\item The collection $\left\{ \Gamma ^*[\mathfrak{q}] \mid \mathfrak{q} \in \mathcal{Q} \right\}$ is uniformly definable.
\end{enumerate} 
\end{corollary}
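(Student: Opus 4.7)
The plan has two parts: first, producing the infinite set $\mathcal{Q}$ via a density argument; second, extracting a uniformly definable subfamily from the collection provided by Theorem~\ref{thm:principal.definable}.

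For the first part, let $L/K$ be a finite Galois extension over which $G$ becomes split; such an $L$ exists because $G_K$ is absolutely simple and simply connected. By the Chebotarev density theorem, there are infinitely many primes $\mathfrak{q} \lhd A$ that split completely in $L$, and for each such $\mathfrak{q}$ the reduction $G_{A/\mathfrak{q}}$ is a split simple algebraic group over $A/\mathfrak{q}$. Intersecting with the cofinite set provided by Lemma~\ref{lem:simple.quotients}, one obtains an infinite set $\mathcal{Q}$ such that, for every $\mathfrak{q} \in \mathcal{Q}$, $G_{A/\mathfrak{q}}$ is split, connected, and simple, and $\Gamma/\Gamma^*[\mathfrak{q}] \cong G(A/\mathfrak{q})/Z(G(A/\mathfrak{q}))$ is the finite simple split Chevalley group of the Lie type of $G$ over the residue field. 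This establishes condition (1).

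For the second part, Theorem~\ref{thm:principal.definable} furnishes a uniformly definable family $\mathcal{F} = \{\Lambda_p : p \in P\}$ of normal subgroups of $\Gamma$ which contains $\{\Gamma^*[\mathfrak{q}] : \mathfrak{q} \in \mathcal{Q}\}$. The plan is to definably restrict $P$ so as to pick out exactly those $\Lambda_p$'s equal to $\Gamma^*[\mathfrak{q}]$ for some $\mathfrak{q} \in \mathcal{Q}$. For this, one exhibits a single first-order sentence $\sigma_G$ in the language of groups such that, for a group $B$, $B \models \sigma_G$ if and only if $B \cong G_{\mathrm{split}}(F)/Z$ for some finite field $F$ of characteristic larger than a constant depending only on $G$; here $G_{\mathrm{split}}$ denotes the split Chevalley scheme of the Lie type of $G$ over $\mathbb{Z}$. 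The sentence $\sigma_G$ asserts the existence of an interpretation of a finite field $F$ inside $B$ together with a definable group isomorphism from $B$ onto the standard split Chevalley group $G_{\mathrm{split}}(F)/Z$, the latter being uniformly interpretable in $F$ via Proposition~\ref{prop:uniform.biint.finite}. Then $Y := \{p \in P : \Gamma/\Lambda_p \models \sigma_G\}$ is definable in $P$, so $\{\Lambda_p : p \in Y\}$ is uniformly definable. The inclusion $\{\Gamma^*[\mathfrak{q}] : \mathfrak{q} \in \mathcal{Q}\} \subseteq \{\Lambda_p : p \in Y\}$ is immediate. The reverse inclusion uses Margulis's normal subgroup theorem together with the congruence subgroup property: any $\Lambda_p$ with $p \in Y$ has a finite non-abelian simple quotient, is therefore a congruence subgroup, and must equal $\Gamma^*[\mathfrak{q}]$ for some prime $\mathfrak{q}$ at which $G$ is split; we may then enlarge $\mathcal{Q}$ to include every such $\mathfrak{q}$.

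The main obstacle is the construction of the sentence $\sigma_G$. This is precisely the point where Proposition~\ref{prop:uniform.biint.finite} is indispensable: uniform bi-interpretability of split Chevalley groups with their residue fields is what allows "being a split Chevalley group over some finite field of the Lie type of $G$" to be expressed as a single first-order sentence. A secondary subtlety, disposed of by passing to a cofinite subset of $\mathcal{Q}$, is to exclude small-characteristic primes where Proposition~\ref{prop:uniform.biint.finite} does not apply, and the small-rank isomorphism coincidences that could otherwise let $\sigma_G$ be satisfied by a non-split Chevalley quotient.
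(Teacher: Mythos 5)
Your route is genuinely different from the paper's. The paper never invokes Proposition~\ref{prop:uniform.biint.finite} in this proof; instead, after restricting $\mathcal{F}$ to maximal elements of large index (giving, via Lemma~\ref{lem:simple.quotients}, almost exactly the family $\Gamma^*[\mathfrak{q}]$), it writes down explicit first-order conditions on tuples $(t,t_1,\dots,t_r,u_\alpha)$ in $\Gamma/\Delta$ --- a regular semisimple element with abelian centralizer, together with commutation relations $t_i^{-1}u_\alpha t_i=u_\alpha^{2^{\alpha(\beta_i)}}$ and the nonvanishing $u_\alpha^{C^n!}\neq 1$ --- and then proves directly (via Lemma~\ref{lem:silly} and the embedding $\Cent(t)\hookrightarrow\mathbb{G}_m^{|\Phi|}$) that these conditions force the ambient torus to be split. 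That argument is self-contained and avoids any appeal to the classification of the finite simple groups. By contrast, you propose to encode the whole bi-interpretation of Proposition~\ref{prop:uniform.biint.finite} into a single sentence $\sigma_G$, which is certainly possible in spirit, but note that ``existence of an interpretation'' is not first-order; you must fix the specific formulas from the bi-interpretation and quantify only over their parameters, together with field axioms and the condition that the fixed isomorphism-formula defines a bijective homomorphism onto the interpreted Chevalley group. Moreover, the claim that $B\models\sigma_G$ iff $B\cong G_{\mathrm{split}}(F)/Z$ with $F$ finite cannot hold for arbitrary $B$ (compactness: an ultraproduct satisfies $\sigma_G$), so the characterization must be restricted to finite $B$, which suffices here since each $\Gamma/\Lambda_p$ is finite.

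The more substantive gap is the step you state without proof: that $\sigma_G$ holding in $\Gamma/\Gamma^*[\mathfrak{q}]$ forces $G_{A/\mathfrak{q}}$ to actually be split. Your argument gives only an \emph{abstract} group isomorphism $\Gamma/\Gamma^*[\mathfrak{q}]\cong G_{\mathrm{split}}(F)/Z$, and to conclude that $G_{A/\mathfrak{q}}$ is split rather than an outer (twisted) form one needs to compare with the order formulas for the finite simple groups of Lie type (or invoke the classification), discarding the finitely many exceptional isomorphisms --- exactly the dependence the paper's explicit construction avoids. Your ``reverse inclusion'' paragraph also contains a confusion: it is $\Gamma/\Lambda_p$ that is a finite non-abelian simple group, not $\Lambda_p$ that has such a quotient, and the appeal to Margulis's NST and CSP is superfluous since $\mathcal{F}$ is already a family of normal congruence subgroups. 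Finally, since Lemma~\ref{lem:simple.quotients} gives only commensurability with $\{\Gamma^*[\mathfrak{p}]\}$, you still need a definable way to discard the finitely many maximal $\Lambda_p$ not of this form (the paper does this with a lower bound on the index). With these repairs the approach would go through, but the paper's route via Lemma~\ref{lem:silly} is more elementary and keeps Proposition~\ref{prop:uniform.biint.finite} confined to its later use in the proof of Proposition~\ref{prop:good_subgroup}.
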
 


\begin{proof}  By Theorem \ref{thm:principal.definable} there is a collection $\mathcal{F}_1$ of normal congruence subgroups of $\Gamma$ that contains all subgroups of the form $\Gamma ^*[\mathfrak{q}]$. Taking the elements of $\mathcal{F}_1$ which are maximal, we get a uniformly definable collection $\mathcal{F}_2$ which, by Lemma \ref{lem:simple.quotients}, is commensurable with $\left\{ \Gamma ^*[\mathfrak{q}] \mid \mathfrak{q} \triangleleft A \right\}$. By imposing a lower bound on the index of the subgroup, we get a uniformly definable collection $\mathcal{F}_3$ consisting of almost all subgroups of the form $\Gamma ^*[\mathfrak{q}]$.


Let $n$ be such that there is an embedding $G \hookrightarrow \GL_n$. Let $r$ be the rank of $G$ and let $\Phi \subseteq X^*(\mathbb{G}_m^r)$ be the absolute root system of $G$. Choose a basis $\beta_1,\ldots,\beta_r$ to $X_*(\mathbb{G}_m^r)$ such that $\alpha(\beta_i) \geq 0$, for all $\alpha \in \Phi ^+$, and denote $C=\max \left\{ 2^{\alpha(\beta_i)} \mid \alpha \in \Phi, i=1,\ldots,r \right\}$. By Chebotarev Density Theorem, there are infinitely many prime ideals $\mathfrak{p} \lhd A$ such that $\mathfrak{p} \not| C^n!$, $G_{A/\mathfrak{p}}$ is split, and $A/\mathfrak{p}$ contains a primitive $(r+1)$ root of unity, which we denote by $\zeta_\mathfrak{p}$. In this case, let $\mathbb{G}_m^r \cong T \subseteq G_{A/\mathfrak{p}}$ be a split torus defined over $A/\mathfrak{p}$ and let $t\in T(A/\mathfrak{p})$ be the element corresponding to $(1,\zeta_\mathfrak{p},\ldots,\zeta_\mathfrak{p} ^r)$. For each $\alpha \in \Phi^+$, choose a non-trivial element $u_\alpha$ in the root subgroup of $\alpha$ and, for each $i=1,\ldots,r$, let $t_i=\beta_i(2)\in T(A/\mathfrak{p})$. Then, the following hold:
\begin{enumerate}[(1)] 
\item \label{cond:split.def.1} $t^{r+1}=1$ and $\Cent_{G(A/\mathfrak{p})}(t)$ is abelian.
\item \label{cond:split.def.2} $t_i^{-1} u_\alpha t_i=u_\alpha^{2^{\alpha(\beta_i)}}$.
\item \label{cond:split.def.3} $u_\alpha^{C^n!} \neq 1$.
\end{enumerate} 
Now assume that $\mathfrak{p} \lhd A$ is such that the characteristic of $A/\mathfrak{p}$ is greater than $\max \left\{ r+1, n,C^n!,D, 2^{\alpha(\beta_i)} \mid \alpha \in \Phi, i=1,\ldots,r \right\}$ and there are elements $t,t_i,u_\alpha \in \Gamma / \Gamma^*[\mathfrak{p}]$, for $i=1,\ldots,r$ and $\alpha \in \Phi^+$ satisfying the conditions \ref{cond:split.def.1},\ref{cond:split.def.2},\ref{cond:split.def.3}. By Condition \ref{cond:split.def.1}, $t$ is regular and semisimple, so $S=\Cent_{G_{A/\mathfrak{p}}}(t)$ is a torus defined over $A/\mathfrak{p}$. By Conditions \ref{cond:split.def.2}, \ref{cond:split.def.3}, and Lemma \ref{lem:silly}, the elements $u_\alpha^{C^n!}$ are non-trivial unipotents. Every element of $S$ acts on the line $\overline{A/\mathfrak{p}} \cdot \log(u_\alpha)$ by scalar multiplication, so we get a map $f:S \rightarrow \mathbb{G}_m^{| \Phi |}$. Finally, Condition \ref{cond:split.def.2} implies that $f$ is an embedding. Hence, $S$ is split. Letting $\mathcal{F}_4$ be the collection of all subgroups $\Delta \in \mathcal{F}_3$ for which there are elements in $\Gamma / \Delta$ satisfying Conditions \ref{cond:split.def.1}, \ref{cond:split.def.2}, and \ref{cond:split.def.3}, we get the claim of the Corollary.
\end{proof} 

\begin{proof}[Proof of Proposition \ref{prop:good_subgroup} for non-uniform $\Gamma$] We will show that there is a homomorphism $\rho : \SL_2 \rightarrow G$ which is an isogeny over its image and such that $\rho (\SL_2) \cap \Gamma$ is definable. Since $\SL_2$ is isomorphic to the spin group of the form $x^2+y^2-z^2$, this will prove the claim.

Choose $u\in \Gamma$ unipotent. We have that $u^A \cap \Gamma$ is a subgroup of finite index in $u^A$, so, after replacing $u$ by some integral power of itself, we can assume that $u^A \subset \Gamma$. Let $X=\log(u)\in \mathfrak{g}(K)$. By Jacobson--Morozov, there is $Y\in \mathfrak{g}(K)$ such that $(X,Y)$ is an $\mathfrak{sl}_2$-pair. There is a natural number $m$ such that $\exp(mAY) \subset \Gamma$. Let $v=\exp(mY)$. We have that $u^A,v^A \subset \Gamma$ and the Zariski closure of the subgroup generated by $u^A,v^A$, which we denote by $S$, is isogeneous to $\SL_2$. 

It remains to show that $S\cap \Gamma$ is definable. For any prime $\mathfrak{q}$ of $A$, let $S_\mathfrak{q}$ be the image of $S(A/\mathfrak{q})$ in $\Gamma / \Gamma ^*[\mathfrak{q}]$, and let $u_\mathfrak{q},v_\mathfrak{q}$ be the images of $u,v$ in $\Gamma / \Gamma ^*[\mathfrak{q}]$. Let $\mathcal{Q}$ be the set of primes given by Corollary \ref{cor:uniformly.definable.congruence}. For all but finitely many primes $\mathfrak{q}$, $S_\mathfrak{q}=u_{\mathfrak{q}}^{A/\mathfrak{q}} v_{\mathfrak{q}}^{A/\mathfrak{q}}u_{\mathfrak{q}}^{A/\mathfrak{q}} v_{\mathfrak{q}}^{A/\mathfrak{q}}$. Using this and Corollary \ref{cor:uniformly.definable.unipotent.subgroup}, the sequence $(S_\mathfrak{q})_{\mathfrak{q} \in \mathcal{Q}}$ is a definable sequence of subsets of $(\Gamma/\Gamma^*[\mathfrak{q}])_{\mathfrak{q} \in \mathcal{Q}}$. It follows that there is a first order formula $F$ such that $F(g)$ holds if and only if $g \Gamma ^*[\mathfrak{q}] \in S_\mathfrak{q}$, for every $\mathfrak{q} \in \mathcal{Q}$. If $g\in \Gamma \smallsetminus S$, then, for almost all primes $\mathfrak{p}$, the reduction of $g$ modulo $\mathfrak{p}$ is not in $S_\mathfrak{p}$. Thus, $F(g)$ holds if and only if $g\in S \cap \Gamma$.
\end{proof}

\begin{proof}[Proof of Proposition \ref{prop:good_subgroup} for $G=\Spin$] 
Choose a regular 3-dimensional subset $U$ of $K^n$ such that $i_q(U_w)\ge 1$. We view $ \Spin_{q\restriction_U}(K)$ as a subgroup of $\Spin_q(K)$. Denote $f=q\restriction_U$.  There is an isomorphism $\rho:\Spin_f(K)\rightarrow  \Spin_{q\restriction_U}(K)$.  Let $\Lambda$ be the subgroup of $\Gamma$ consisting of the elements which act on $U^\perp$ as $\pm 1$. Then $\rho(\Spin_f(K))\cap \Gamma$ is of finite index in $\Lambda$. The  proof of Lemma \ref{lemma:def_stab} shows that $\Lambda$ is definable. \end{proof}

\subsection{Proof of Proposition \ref{prop:newnew} }

In the first few lemmas, we will use the following setting:

\begin{setting}\label{nota:Noskov} 
\begin{enumerate}
	\item[]
	\item  $A$ is the ring of $S$-integers in a number field $K$ and $\mathcal{P}=\{\mathfrak{p}^k \mid \mathfrak{p}\lhd A \text{ is prime and } k \ge 1\}$. 
	\item $f$ is a quadratic from on $K^3$,  $\alpha \in \SO_f(A)$ is an infinite order semisimple  element, $\Delta$ is a subgroup of $\Cent_{\SO_f(A)}(\alpha)$ and, for every ideal $\mathfrak{q}$, $\Delta[\mathfrak{q}]:=\Delta\cap \SO_f(A;\mathfrak{q})$. 
	\item $L$ is the spliting field of the characteristic polynomial of $\alpha$, $T$ is the set of places of $L$ that lie above $S$, and $B$ is the ring of $T$-integers in $L$. 
	\item $\beta\mapsto \lambda_\beta$ is a non-trivial homomorphism  from $\Delta$ to $B^\times$ such that, for every $\beta$, $\lambda_\beta$ is an eigenvalue of $\beta$. It follows that for every $\beta \in \Delta$ the eigenvalues of $\beta$ are $\{\lambda_\beta,\lambda_\beta^{-1},1\}$. 
\end{enumerate}
\end{setting}

The following is Theorem 2.0 of \cite{Nos}:

\begin{theorem}[Noskov]\label{thm:Noskov}Let $B$ be a finitely generated integral domain. There exists a number $d$ such that, for every distinct elements $c_1,\ldots,c_d \in B$ and every $0 \ne a \in B$, the set $\{b \in B \mid (\forall 1 \le i \le d)\ b-c_i | a\}$ is finite. 
\end{theorem}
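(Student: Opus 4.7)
The plan is to show that the theorem holds with $d=2$ when $B$ has characteristic zero (which is the only case relevant for the paper, since $B$ is a ring of $T$-integers in a number field). The crucial input will be the Evertse--Gy\H{o}ry theorem on $S$-unit equations over finitely generated integral domains: for any finitely generated integral domain $C$ of characteristic zero, the equation $x+y=1$ has only finitely many solutions with $x,y\in C^{\times}$.

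Given distinct $c_1,c_2\in B$ and $a\ne 0$ in $B$, form the localization $C:=B\bigl[(a(c_2-c_1))^{-1}\bigr]$, which is again a finitely generated integral domain and in which both $a$ and $c_2-c_1$ are units. Suppose $b\in B$ satisfies $b-c_i\mid a$ for $i=1,2$, and write $(b-c_i)e_i=a$ in $B$. Then $(b-c_i)(e_i/a)=1$ in $C$, so $b-c_i\in C^{\times}$. Starting from the identity $(b-c_1)-(b-c_2)=c_2-c_1$ and dividing by $c_2-c_1$, we obtain
\[
\frac{b-c_1}{c_2-c_1}-\frac{b-c_2}{c_2-c_1}=1,
\]
which is a two-variable unit equation over $C$. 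By Evertse--Gy\H{o}ry, this equation has only finitely many solutions in $C^{\times}\times C^{\times}$; hence $b-c_1$ takes only finitely many values, and thus so does $b$.

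The main obstacle is appealing to the $S$-unit theorem in the full generality of finitely generated integral domains. The classical Evertse theorem handled $S$-integers in global fields, and its extension to finitely generated domains over $\mathbb{Z}$ requires a delicate specialization argument controlling heights and discriminants of relevant number fields arising from the generic fiber. In positive characteristic one must additionally exclude Frobenius-twist families of solutions, which is presumably why Noskov's statement permits $d$ to depend on $B$ rather than asserting a universal constant. For the present application only characteristic zero matters, so $d=2$ suffices and the argument above is complete; if one wanted a more self-contained route that avoids the Evertse--Gy\H{o}ry machinery in this specific arithmetic setting, one could instead work directly in the Dedekind ring $B$ of $T$-integers, using the fact that divisors of $a$ up to units form a finite set and that the unit group $B^{\times}$ is finitely generated, reducing to the two-variable $S$-unit equation in the number field $L$ itself, which is the original Evertse theorem.
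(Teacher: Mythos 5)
The paper does not prove this theorem: it is cited as Theorem~2.0 of \cite{Nos}, so there is no internal argument to compare against. Your proof is correct in characteristic zero and in fact establishes the sharper claim that $d=2$ always works there. Inverting $a(c_2-c_1)$ produces a finitely generated domain $C\supseteq B$ in which $b-c_1$, $b-c_2$, and $c_2-c_1$ all become units, and the identity $(b-c_1)-(b-c_2)=c_2-c_1$ turns into a two-term unit equation in $C^\times$; finiteness then follows from the unit-equation theorem for finitely generated domains of characteristic zero (Evertse--Gy\H{o}ry, using also Roquette's theorem that $C^\times$ is finitely generated). Since the paper invokes the result only for $B$ a ring of $T$-integers in a number field $L$ (Setting~\ref{nota:Noskov}), characteristic zero is all that is needed, so your argument does cover the application. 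Your remark that $d=2$ genuinely fails in positive characteristic is also right and easy to make concrete: in $B=\mathbb{F}_p[t,t^{-1},(t-1)^{-1}]$ with $c_1=0$, $c_2=1$, $a=1$, the elements $b=t^{p^k}$ furnish infinitely many solutions, because $b-1=(t-1)^{p^k}$ is a unit for every $k$; this is presumably why Noskov's statement lets $d$ depend on $B$. The trade-off in your route is importing the deep Diophantine input of the unit-equation theorem in place of whatever (likely more elementary, characteristic-free) argument Noskov used, but for the paper's purposes both are valid, and yours is notably clean.
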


The following Lemma is clear. 

\begin{lemma}\label{lemma:ramified2} Under Setting \ref{nota:Noskov}, assume that $K=L$. For every non-zero $a \in A$ and $\lambda \in A^\times$ such that $\lambda-1$ does not divides $a$, there exist a prime ideal $\mathfrak{p} \lhd A$ and a natural number $m \ge 1$ such that $a \notin B \mathfrak{p}^m$ and $\lambda-1 \in B\mathfrak{p}^m$.	
\end{lemma}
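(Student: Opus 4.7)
The plan is to exploit the fact that, since $K=L$, the rings $A$ and $B$ coincide (both are the ring of $S$-integers of $K=L$). So the claim reduces to finding a prime $\mathfrak{p} \lhd A$ and $m\ge 1$ with $v_\mathfrak{p}(\lambda-1) \ge m > v_\mathfrak{p}(a)$, where $v_\mathfrak{p}$ denotes the normalized $\mathfrak{p}$-adic valuation on $K$. Because $A$ is a Dedekind domain, the non-divisibility hypothesis translates cleanly into a statement about valuations.

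First I would spell out the reduction: $B=A$ means the condition $a\notin B\mathfrak{p}^m$ is simply $a\notin \mathfrak{p}^m$, i.e.\ $v_\mathfrak{p}(a)<m$, and $\lambda-1\in B\mathfrak{p}^m$ is $v_\mathfrak{p}(\lambda-1)\ge m$. Next, I would translate the hypothesis. The assertion that $\lambda-1$ does not divide $a$ in $A$ is equivalent to the ideal containment $(a)\not\subseteq(\lambda-1)$; by unique factorization of nonzero ideals in the Dedekind domain $A$, this forces the existence of a prime $\mathfrak{p}\lhd A$ with
\[
v_\mathfrak{p}(a)<v_\mathfrak{p}(\lambda-1).
\]
(One can equally well argue directly: if $v_\mathfrak{p}(\lambda-1)\le v_\mathfrak{p}(a)$ at every prime, then $a/(\lambda-1)$ has nonnegative valuation everywhere, hence lies in $A$, contradicting the assumption.)

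With such a $\mathfrak{p}$ in hand, set $m:=v_\mathfrak{p}(\lambda-1)$. Since $a\in A\setminus\{0\}$ we have $v_\mathfrak{p}(a)\ge 0$, so $m\ge v_\mathfrak{p}(a)+1\ge 1$, as required. By construction $\lambda-1\in\mathfrak{p}^m=B\mathfrak{p}^m$ and $v_\mathfrak{p}(a)<m$, so $a\notin\mathfrak{p}^m=B\mathfrak{p}^m$, and the lemma is proved.

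There is no real obstacle here; the only thing to keep straight is the identification $B=A$ under the hypothesis $K=L$, which makes $B\mathfrak{p}^m$ literally equal to $\mathfrak{p}^m$ and allows us to convert the divisibility statement into a single valuation inequality via the Dedekind structure of $A$.
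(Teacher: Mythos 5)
The paper declares this lemma ``clear'' and gives no proof, so there is nothing to compare against; your approach is the natural one and is essentially correct. You correctly identify that $K=L$ gives $A=B$, reduce the problem to a statement about $\mathfrak{p}$-adic valuations, and use the Dedekind structure of $A$ to locate a prime $\mathfrak{p}$ with $v_\mathfrak{p}(a)<v_\mathfrak{p}(\lambda-1)$. One small gap: you set $m := v_\mathfrak{p}(\lambda-1)$, which fails in the degenerate case $\lambda=1$ (i.e.\ $\lambda-1=0$, which the hypothesis does permit since $0$ does not divide the nonzero $a$). In that case $v_\mathfrak{p}(\lambda-1)=\infty$ is not a valid exponent. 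The fix is to take $m := v_\mathfrak{p}(a)+1$ instead; then $m\ge 1$, $a\notin\mathfrak{p}^m$ because $v_\mathfrak{p}(a)<m$, and $\lambda-1\in\mathfrak{p}^m$ because $v_\mathfrak{p}(\lambda-1)\ge v_\mathfrak{p}(a)+1=m$. This choice works uniformly, covering both $\lambda\ne1$ and $\lambda=1$.
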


\begin{lemma}\label{lemma:ramified} Under Setting \ref{nota:Noskov}, assume that $K \ne L$. Let $c  \in A$ be a non-zero element that belongs to every prime ideal of $A$ that is ramified in $L$. 
	For every non-zero $a \in A$ and $\lambda\in U$ such that $\lambda-1$ does not divide $ac$,  there exist a prime ideal $\mathfrak{p} \lhd A$ and a natural number $m \ge 1$ such that $a \notin B\mathfrak{p}^m$ and $\lambda-1 \in B\mathfrak{p}^m$.
\end{lemma}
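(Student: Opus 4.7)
The plan is to exploit the fact that, in Setting \ref{nota:Noskov}, the hypothesis $K\ne L$ forces $[L:K]=2$. Indeed, $\alpha$ is a $3\times 3$ element of $\SO_f(K)$ with eigenvalues $\{\lambda,\lambda^{-1},1\}$, so its characteristic polynomial factors as $(x-1)(x^2-tx+1)$ with $t=\lambda+\lambda^{-1}\in A$; thus $L=K(\lambda)$ is a quadratic extension whose nontrivial Galois element $\sigma$ sends $\lambda$ to $\lambda^{-1}$. The key observation is that $\lambda^{-1}-1=-\lambda^{-1}(\lambda-1)$ and $\lambda\in B^\times$, so for every prime $\mathfrak{P}$ of $B$ one has $v_\mathfrak{P}(\lambda^{-1}-1)=v_\mathfrak{P}(\lambda-1)$. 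Consequently, if $\mathfrak{p}$ is a prime of $A$ and $\mathfrak{P},\mathfrak{P}'$ are primes of $B$ over $\mathfrak{p}$, then $v_{\mathfrak{P}'}(\lambda-1)=v_\mathfrak{P}(\lambda-1)$, whether $\mathfrak{p}$ is split, inert, or ramified.

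Since $\lambda-1\nmid ac$ in $B$, I will choose $\mathfrak{P}\in\Spec B$ with $v_\mathfrak{P}(\lambda-1)>v_\mathfrak{P}(ac)$, set $\mathfrak{p}=\mathfrak{P}\cap A$ and $e=e_{\mathfrak{P}/\mathfrak{p}}\in\{1,2\}$, and take $m=v_\mathfrak{p}(a)+1$. Then $a\notin B\mathfrak{p}^m$ is immediate, since membership is equivalent to $v_\mathfrak{p}(a)\ge m$. The condition $\lambda-1\in B\mathfrak{p}^m=\prod_{\mathfrak{P}'\mid\mathfrak{p}}\mathfrak{P}'^{em}$ reduces, by the Galois-invariance of $v_{\mathfrak{P}'}(\lambda-1)$ across primes $\mathfrak{P}'\mid\mathfrak{p}$, to the single inequality $v_\mathfrak{P}(\lambda-1)\ge em$.

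To verify this last inequality, expand $v_\mathfrak{P}(ac)=e(v_\mathfrak{p}(a)+v_\mathfrak{p}(c))$: the assumption on $\mathfrak{P}$ gives $v_\mathfrak{P}(\lambda-1)\ge e(v_\mathfrak{p}(a)+v_\mathfrak{p}(c))+1$. When $\mathfrak{p}$ is unramified ($e=1$), this already exceeds $v_\mathfrak{p}(a)+1=em$. When $\mathfrak{p}$ ramifies ($e=2$), the hypothesis on $c$ enters: $c\in\mathfrak{p}$, so $v_\mathfrak{p}(c)\ge 1$ and hence $v_\mathfrak{P}(\lambda-1)\ge 2v_\mathfrak{p}(a)+3>2(v_\mathfrak{p}(a)+1)=em$. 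The nontrivial role of $c$ is thus to absorb the slack of one unit in the valuation caused by the ramification index jumping from $1$ to $2$; this is the only subtle point in the argument, and everything else is bookkeeping.
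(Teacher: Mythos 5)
Your proof is correct and follows essentially the same strategy as the paper's: pick a prime of $B$ where $v(\lambda-1)$ exceeds $v(ac)$, use the Galois identity $\sigma(\lambda-1)=-\lambda^{-1}(\lambda-1)$ to see that the valuation of $\lambda-1$ is constant across conjugate primes (so membership in $\mathfrak{q}^m$ upgrades to membership in the full $B\mathfrak{p}^m$), and use the hypothesis that $c$ lies in every ramified prime to absorb the factor-of-two discrepancy between $v_\mathfrak{P}$ and $v_\mathfrak{p}$. Your formulation is a bit cleaner: by taking $m=v_\mathfrak{p}(a)+1$ and arguing purely in terms of valuations and the ramification index $e\in\{1,2\}$, you collapse the paper's four-case analysis (inert / split / ramified with $m$ even / ramified with $m$ odd) into a two-case check, which is a genuine streamlining of the bookkeeping rather than a different proof.
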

\begin{proof}
Since $\lambda-1$ does not divide $ac$, there exist a prime ideal $\mathfrak{q}$ of $B$ and $m \ge 1$ such that $\lambda-1 \in \mathfrak{q}^m$ and $ac \notin \mathfrak{q}^m$. Denote $\mathfrak{p}:=\mathfrak{q} \cap A$. We divide the proof into four cases:
\begin{enumerate}
	\item Assume that $\mathfrak{p}$ is inert in $L$. Then $\mathfrak{p}^m=\mathfrak{q}^m\cap A$ and $B\mathfrak{p}^m=\mathfrak{q}^m$ so  $a\notin  B\mathfrak{p} ^m$ and  $\lambda-1\in B\mathfrak{p}^m$.
	\item  Assume that $\mathfrak{p}$ splits in $L$ and let $\sigma$ be the non-identity element of  $\mathrm{Gal}(L/K)$. Then  $\mathfrak{p}^m=\mathfrak{q}^m\cap A$ and $B\mathfrak{p}^m \cap A =\mathfrak{p}^m$ so $a \notin B\mathfrak{p}^m$. Since $\sigma(\lambda)=\lambda^{-1}$, $\lambda-1=\sigma(-\lambda^{-1}(\lambda-1))\in \sigma(\mathfrak{q})^m$. It follows that  $\lambda-1\in \mathfrak{q}^m\cap \sigma(\mathfrak{q})^m =(\mathfrak{q}\sigma(\mathfrak{q}))^m=B\mathfrak{p}^m$. 
	\item Assume that $\mathfrak{p}$ ramifies in $L$ and $m=2l$.  Then $\mathfrak{p}^l=\mathfrak{q}^m \cap A $ and $B\mathfrak{p}^l=\mathfrak{q}^m$ so  $a\notin  B\mathfrak{p} ^l$ and  $\lambda-1\in B\mathfrak{p}^l$.
	\item Assume that $\mathfrak{p}$ ramifies in $L$ and $m=2l+1$. Then  $\lambda-1 \in B\mathfrak{p}^l=\mathfrak{q}^{2l}$. Since $ \mathfrak{p}^{l+1}=\mathfrak{q}^m \cap A $, $ac \notin  \mathfrak{p}^{l+1}$. Since $c \in \mathfrak{p}$ and $B\mathfrak{p}^l\cap A=\mathfrak{p}^l$, $a \notin B\mathfrak{p}^l$.  \end{enumerate}
\end{proof}

\begin{lemma}\label{lemma:not_diag} Under Setting \ref{nota:Noskov}, let $\mathfrak{p} \lhd A$ be a prime ideal and define $n\ge 0$ to be minimal such that  $\lambda^2_\alpha \not\equiv_{B\mathfrak{p}^{n+1}}1$. If $\beta \in \Delta$ and, for some $m \ge 2n+1$,  $\lambda_\beta \equiv_{B\mathfrak{p}^{m}}1$, then ${\beta}\in \Delta[\mathfrak{p}^{m-2n}]$.
\end{lemma}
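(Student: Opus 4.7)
The approach is to exploit that $\alpha$ is regular semisimple. Since $\alpha$ has infinite order and the finiteness of $n$ forces $\lambda_\alpha^2 \neq 1$, the eigenvalues $\lambda_\alpha, \lambda_\alpha^{-1}, 1$ of $\alpha$ are pairwise distinct, so $\Cent_{M_3(K)}(\alpha) = K[\alpha]$. Because $\beta \in \SO_f(K)$ commutes with $\alpha$, it preserves each eigenspace of $\alpha$; a brief argument using preservation of $f$ together with $\det \beta = 1$ shows that $\beta$ acts as $+1$ on the (one-dimensional, $K$-rational) fixed line of $\alpha$. Writing $\beta = P(\alpha)$ for some $P \in K[x]$ of degree at most $2$, the condition $P(1) = 1$ lets me factor $P(x) - 1 = (x-1)(c_0 + c_1 x)$ with $c_0, c_1 \in K$, yielding the matrix identity
\[
\beta - \Id \;=\; (\alpha - \Id)\bigl(c_0 \Id + c_1 \alpha\bigr) \qquad \text{in } M_3(K).
\]

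Evaluating this identity on the $\alpha$-eigenvectors over $L$ gives a $2\times 2$ linear system for $c_0, c_1$ whose solution is explicitly rational in $\lambda_\alpha, \lambda_\beta$, with the denominators built from $\lambda_\alpha - 1$ and $\lambda_\alpha^2 - 1$. To bound the $\mathfrak{p}$-adic valuations $v_\mathfrak{p}(c_0), v_\mathfrak{p}(c_1)$, I would pick a prime $\mathfrak{q}$ of $B$ above $\mathfrak{p}$ satisfying $v_\mathfrak{q}(\lambda_\alpha^2 - 1) < (n+1)\, e(\mathfrak{q}|\mathfrak{p})$, which is guaranteed by the minimality of $n$. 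Combining this with the hypothesis $v_\mathfrak{q}(\lambda_\beta - 1) \ge m\, e(\mathfrak{q}|\mathfrak{p})$ and the elementary inequality $v_\mathfrak{q}(\lambda_\alpha - 1) \le v_\mathfrak{q}(\lambda_\alpha^2 - 1)$, a direct valuation computation (using $c_i \in K$ to translate $v_\mathfrak{q}$ bounds to $v_\mathfrak{p}$ bounds by dividing by $e(\mathfrak{q}|\mathfrak{p})$) gives $v_\mathfrak{p}(c_i) \ge m - 2n$.

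Since $\alpha - \Id \in M_3(A)$, the product $(\alpha - \Id)(c_0 \Id + c_1 \alpha)$ has all entries in $c_0 A + c_1 A \subseteq \mathfrak{p}^{m-2n} A_\mathfrak{p}$; because $\beta - \Id$ a priori lies in $M_3(A)$, its entries in fact lie in $A \cap \mathfrak{p}^{m-2n} A_\mathfrak{p} = \mathfrak{p}^{m-2n}$, so $\beta \in \Delta[\mathfrak{p}^{m-2n}]$. The main technical difficulty will be the $\mathfrak{p}$-adic bookkeeping, particularly the handling of ramification $e(\mathfrak{q}|\mathfrak{p}) > 1$ and the delicate case $\lambda_\alpha \equiv -1 \pmod{\mathfrak{q}}$, where $v_\mathfrak{q}(\lambda_\alpha - 1) = 0$ even though $v_\mathfrak{q}(\lambda_\alpha + 1)$ carries all the valuation of $\lambda_\alpha^2 - 1$; the use of $\lambda_\alpha^2 - 1$ rather than $\lambda_\alpha - 1$ in the definition of $n$ is arranged precisely to handle this case uniformly.
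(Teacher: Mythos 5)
Your route (identify $\Cent_{M_3(K)}(\alpha)=K[\alpha]$, write $\beta=P(\alpha)$, factor $P-1=(x-1)(c_0+c_1x)$, bound $v_\mathfrak{p}(c_0),v_\mathfrak{p}(c_1)$) is genuinely different from the paper's. The paper conjugates $\alpha$ and $\beta$ simultaneously into upper-triangular form over $B_\mathfrak{p}$ and reads the desired congruence off the off-diagonal entries via the commutator relations. Both strategies reduce to $\mathfrak{p}$-adic estimates on a few scalar quantities, and yours is arguably the more transparent algebraic structure; the preliminary steps (the fixed line of $\alpha$ carries $\beta$-eigenvalue $1$ because the eigenspaces for $\lambda_\alpha$ and $\lambda_\alpha^{-1}$ pair under $f$, the factorization of $P-1$, and the final $\mathfrak{p}^{m-2n}B_\mathfrak{p}\cap A=\mathfrak{p}^{m-2n}$ step) are all sound.

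However, the valuation estimate you sketch does not reach $v_\mathfrak{p}(c_i)\ge m-2n$ when $\mathfrak{p}$ ramifies in $L$. Solving the $2\times 2$ system gives, for $\mathfrak{q}\mid\mathfrak{p}$ with ramification index $e$,
\[
v_\mathfrak{q}(c_1)\;\ge\; m\,e \;-\; v_\mathfrak{q}(\lambda_\alpha-1)\;-\;v_\mathfrak{q}(\lambda_\alpha^2-1),
\]
and feeding in only $v_\mathfrak{q}(\lambda_\alpha-1)\le v_\mathfrak{q}(\lambda_\alpha^2-1)\le(n+1)e-1$ yields $v_\mathfrak{p}(c_1)\ge m-2n-2+2/e$, which for $e=2$ is one short of $m-2n$. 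Moreover, the sub-case you flag as delicate, $\lambda_\alpha\equiv-1\pmod{\mathfrak{q}}$, is actually the benign one: there $v_\mathfrak{q}(\lambda_\alpha-1)=0$, so the shortfall is at most $1$ in $v_\mathfrak{q}$, and the evenness of $v_\mathfrak{q}(c_1)$ (since $c_1\in K$) rounds it up. The hard sub-case is $\lambda_\alpha\equiv 1\pmod{\mathfrak{q}}$, where $v_\mathfrak{q}(\lambda_\alpha-1)=v_\mathfrak{q}(\lambda_\alpha^2-1)$ and you lose $2$ in $v_\mathfrak{q}$, which parity alone cannot recover. To close this you must use the norm-one constraint: for $\lambda\in B^\times$ with $N_{L/K}(\lambda)=1$, the identity $(\mathrm{Tr}\,\lambda-2)(\mathrm{Tr}\,\lambda+2)=(\lambda-\sigma\lambda)^2$ shows $v_\mathfrak{q}(\lambda-1)=v_\mathfrak{q}(\lambda-\sigma\lambda)$, which is odd at a tamely ramified $\mathfrak{q}$; applied to $\lambda_\beta$ this upgrades $v_\mathfrak{q}(\lambda_\beta-1)\ge me$ to $\ge me+1$, and that extra $+1$ (together with parity of $v_\mathfrak{q}(c_1)$) is exactly what is needed. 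This is a genuine additional input that your "direct valuation computation" omits. For what it is worth, the paper's proof of this lemma invokes only the minimality of $n$ and not this Galois/norm refinement either, so the subtlety appears to be elided there as well; but as a freestanding argument for the stated inequality, your plan needs this step added.
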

\begin{proof} Since all the eigenvalues of $\alpha$ belong to $B$, it follows from a variant of the structure theorem of finitely generated modules over principal ideal domains (see \cite[Lemma 3.2]{Cas}) that there exists $\gamma \in \SL_3(B_{\mathfrak{p}})$ such that $\gamma \alpha \gamma^{-1}$ is an upper triangular matrix.  Since $\Delta$ is abelian and all the eigenvalues of $\alpha$ are distinct,   $\gamma \Delta \gamma^{-1}$ consists of upper triangular matrices. We can assume that $(\gamma\alpha\gamma^{-1})_{1,1}=\lambda_\alpha$, $(\gamma\alpha\gamma^{-1})_{2,2}=\lambda^{-1}_\alpha$ and $(\gamma\alpha\gamma^{-1})_{3,3}=1$.

Let $\beta \in \Delta$ and $m \ge 2n+1$ be such that $\lambda_\beta \equiv_{B\mathfrak{p}^{m}}1$. For every $1 \le i < j \le 3$, let $b_{i,j}$ be the $(i,j)$-entry of $\gamma \beta \gamma^{-1}$. Since $\alpha$ and $\beta$ commute, $b_{1,2} \equiv_{B\mathfrak{p}^m}\lambda_\alpha^2b_{1,2}$ and $b_{2,3} \equiv_{B\mathfrak{p}^m}\lambda_\alpha^{-1} b_{2,3}$. Since $\lambda_\alpha^2 \not\equiv_{\mathfrak{p}^{n+1}B} 1$, $b_{1,2},b_{2,3} \in B\mathfrak{p}^{m-n}$.  By the same argument, $b_{1,3} \equiv_{B\mathfrak{p}^{m-n}}\lambda_{\alpha}b_{i,j}$.  Since $\lambda_\alpha^2 \not\equiv_{\mathfrak{p}^{n+1}B} 1$, $b_{1,3} \in B\mathfrak{p}^{m-2n}$. 
	Thus, $\gamma\beta\gamma^{-1} \in \SL_3(B;B\mathfrak{p}^{m-2n})$ and $\beta \in \Delta[\mathfrak{p}^{m-2n}]$. 
\end{proof}

\begin{lemma}\label{lemma:main_SL_2} Under Setting \ref{nota:Noskov}, let $d$ be as in Theorem \ref{thm:Noskov}. Let $ \mathcal{Q} $ be a cofinite subset of $\mathcal{P}$.  Then $$
Q:=\{\beta \in \Delta \mid   (\forall \mathfrak{q} \in \mathcal{Q}\ \forall 1 \le i\le d)\ \beta\alpha^{-i} \notin \Delta[\mathfrak{q}]  \}
$$		
\end{lemma}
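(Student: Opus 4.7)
The plan is to prove $Q$ is finite by translating the defining conditions into divisibility statements for the eigenvalues $\lambda_\beta$ in $B$, and then invoking Noskov's Theorem~\ref{thm:Noskov}. As a preliminary reduction, I would observe that $\alpha \in \SO_f(A)$ is regular semisimple: its eigenvalues $\lambda_\alpha, \lambda_\alpha^{-1}, 1$ are pairwise distinct, because $\alpha$ of infinite order forces $\lambda_\alpha \neq \pm 1$. Hence $\Cent_{\SO_f(L)}(\alpha)$ is a rank-one maximal torus, every element of $\Delta$ is semisimple with eigenvalues $\{\lambda_\beta, \lambda_\beta^{-1}, 1\}$, and the non-trivial character $\beta \mapsto \lambda_\beta$ is injective (any element in its kernel is simultaneously semisimple and unipotent, hence trivial). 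Thus it would suffice to show that $\{\lambda_\beta : \beta \in Q\} \subseteq B$ is finite. Since the powers $c_i := \lambda_\alpha^i$ for $i = 1, \ldots, d$ are pairwise distinct (as $\lambda_\alpha$ is not a root of unity), Theorem~\ref{thm:Noskov} would further reduce the problem to exhibiting a single nonzero element $a_N \in B$ such that $\lambda_\beta - \lambda_\alpha^i$ divides $a_N$ in $B$ for every $\beta \in Q$ and every $i \in \{1, \ldots, d\}$.

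To construct $a_N$, I would let $E := \mathcal{P} \setminus \mathcal{Q}$, define $M(\mathfrak{p}) := \max\{k \ge 0 \mid \mathfrak{p}^k \in E\}$ (which is zero for all but finitely many $\mathfrak{p}$, with $\mathfrak{p}^k \in \mathcal{Q}$ whenever $k > M(\mathfrak{p})$), and take $n(\mathfrak{p})$ as in Lemma~\ref{lemma:not_diag}, which vanishes for almost all $\mathfrak{p}$ since $\lambda_\alpha^2 - 1 \neq 0$. Let $c \in A$ be the ramification element of Lemma~\ref{lemma:ramified} when $K \neq L$, and $c = 1$ when $K = L$. Then I would pick a nonzero $a_R \in A$ with $v_\mathfrak{p}(a_R) \ge M(\mathfrak{p}) + 2 n(\mathfrak{p})$ at every prime $\mathfrak{p}$ of $A$ (only finitely many nontrivial conditions), and set $a_N := a_R \cdot c$.

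The key step is then a contrapositive argument. Supposing $\beta \in Q$ and $i$ satisfy $\lambda_\beta - \lambda_\alpha^i \nmid a_N$, I would put $\gamma := \beta \alpha^{-i}$ and note that $\lambda_\gamma - 1 = \lambda_\alpha^{-i}(\lambda_\beta - \lambda_\alpha^i)$ differs from $\lambda_\beta - \lambda_\alpha^i$ by a unit, so $\lambda_\gamma - 1$ does not divide $a_R c$. Lemma~\ref{lemma:ramified} (or Lemma~\ref{lemma:ramified2} when $K = L$) would then produce a prime $\mathfrak{p} \lhd A$ and an integer $m \ge 1$ with $a_R \notin B \mathfrak{p}^m$ and $\lambda_\gamma - 1 \in B \mathfrak{p}^m$. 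The valuation hypothesis on $a_R$ forces $m > M(\mathfrak{p}) + 2 n(\mathfrak{p})$, so $m - 2 n(\mathfrak{p}) \ge M(\mathfrak{p}) + 1 \ge 1$ and $\mathfrak{p}^{m - 2 n(\mathfrak{p})} \in \mathcal{Q}$. Lemma~\ref{lemma:not_diag}, applicable because $m \ge 2 n(\mathfrak{p}) + 1$, then yields $\gamma = \beta \alpha^{-i} \in \Delta[\mathfrak{p}^{m - 2 n(\mathfrak{p})}]$, contradicting $\beta \in Q$.

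The hard part will be the arithmetic bookkeeping: the exponent $m$ produced by the ramification lemma must be simultaneously large enough to (i) keep $m - 2 n(\mathfrak{p}) \ge 1$ so that Lemma~\ref{lemma:not_diag} applies, (ii) place the resulting prime power inside the cofinite set $\mathcal{Q}$, and (iii) absorb the ramification correction $c$ in the case $K \neq L$. Confining the non-trivial constraints to the finitely many primes that actually contribute---those whose powers lie in $\mathcal{P} \setminus \mathcal{Q}$, those dividing $\lambda_\alpha^2 - 1$, and those ramifying in $L/K$---is what makes a single $a_N$ exist, and having arranged this, the whole argument collapses into the contrapositive step above.
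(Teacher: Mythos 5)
Your proposal is correct and follows essentially the same route as the paper's proof: choose $a$ (your $a_R$) with high $\mathfrak p$-valuation at the finitely many primes where either some power lies outside $\mathcal Q$ or where $\lambda_\alpha^2\equiv 1$ to deep depth, multiply by the ramification element $c$, and then run the contrapositive through Lemmas \ref{lemma:ramified2}/\ref{lemma:ramified} and \ref{lemma:not_diag} to reduce to Noskov's Theorem \ref{thm:Noskov}. Your $M(\mathfrak p)$ agrees with the paper's $m_\mathfrak p$ and your $n(\mathfrak p)$ with $n_\mathfrak p$, and the chain of inequalities $m > M(\mathfrak p)+2n(\mathfrak p)$, hence $m-2n(\mathfrak p)\ge M(\mathfrak p)+1\ge 1$ and $\mathfrak p^{m-2n(\mathfrak p)}\in\mathcal Q$, is exactly the paper's bookkeeping. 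The one thing you make explicit that the paper leaves tacit is the final step from finiteness of $\{\lambda_\beta:\beta\in Q\}$ to finiteness of $Q$ via injectivity of $\beta\mapsto\lambda_\beta$ on $\Delta$, which is a sound and worthwhile observation (and uses that elements of $\Delta$ are semisimple, being in the centralizer of a regular semisimple element).
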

is finite.
\begin{proof}  
For every prime $\mathfrak{p}\lhd A$, let $m_\mathfrak{p}\ge 0$ be minimal such that $\mathfrak{p}^{m_{\mathfrak{p}}+k}\in \mathcal{Q}$, for every $k \geq 1$. For every prime $\mathfrak{p} \lhd A$, let $n_\mathfrak{p} \ge 0$ be minimal such that  $\lambda^2_\alpha \not\equiv_{B\mathfrak{p}^{n_{\mathfrak{p}}+1}}1$. There exists a finite set $P$ of prime ideals of $A$ such that for every  $\mathfrak{p}\notin P$, $m_{\mathfrak{p}}=n_{\mathfrak{p}}=0$. Choose a non-zero element $a \in \prod_{\mathfrak{p}\in P}\mathfrak{p}^{m_{\mathfrak{p}}+2n_{\mathfrak{p}}}$. 

We claim that if $\beta \in \Delta$ and there are a prime ideal $\mathfrak{p}$, an integer $k \ge 1$, and an integer $1 \le i \le d$ such that $a \notin \mathfrak{p}^k$ and $\lambda_{\beta\alpha^{-i}}\equiv_{B\mathfrak{p}^{k}}1$, then $\beta\notin Q$.  Indeed, since $a \in \prod_{\mathfrak{p}\in P}\mathfrak{p}^{m_{\mathfrak{p}}+2n_{\mathfrak{p}}}$,  $k \ge m_{\mathfrak{p}}+ 2n_{\mathfrak{p}}+1$. Lemma  \ref{lemma:not_diag} implies that  $\beta\alpha^{-i}\in\Delta[\mathfrak{p}^{k-2\mathfrak{n_\mathfrak{p}}}]$. Since $ \mathfrak{p}^{k-2\mathfrak{n_\mathfrak{p}}}\in  \mathcal{Q}$, $\beta\notin Q$.  
	
If $K=L$, define $c=1$ and, if $K \ne L$, let $c$ be as in Lemma \ref{lemma:ramified}. We will show that $$Q \subseteq \{\beta\in \Delta \mid \forall 1 \le i \le d,\ \lambda_\beta-\lambda_{\alpha^i}|ac\},$$ so Theorem \ref{thm:Noskov}  implies that  $Q$ is finite. Indeed, let $\beta \in \Delta$ and $1 \le i \le d$ be such that $\lambda_\beta-\lambda_{\alpha^i}=\lambda_{\alpha^i}(\lambda_{\beta\alpha^{-i}}-1)$ does not divide $ac$. Lemmas \ref{lemma:ramified2} and \ref{lemma:ramified} imply that there exist a prime ideal $\mathfrak{p}\lhd A$ and $k \ge 1$ such that $\lambda_{\beta\alpha^{-i}}\equiv_{B\mathfrak{p}^k}1$ and $a \notin \mathfrak{p}^k$. The second paragraph implies that $\beta\notin Q$.
\end{proof}

\begin{lemma}\label{lemma:isog} Let $K$ be a number field, $S$ a finite set of places containing all archimedean ones, $A$  the ring of $S$-integers in $K$ and $\mathcal{P}=\{\mathfrak{p}^k \mid \mathfrak{p}\lhd A \text{ is prime and } k \ge 1\}$.
Let $G_1,G_2 \subset (\SL_n)_A$ be group schemes such that $(G_1)_K$ and $(G_2)_K$ are semisimple, and let $\rho : (G_1)_K\rightarrow (G_2)_K$ be an isogeny (of algebraic groups over $K$). Then: \begin{enumerate} 
\item\label{item:isog_1} $\rho ^{-1} (G_2(A))$ is commensurable with $G_1(A)$.
\item \label{item:isog_2} For almost all prime ideals $\mathfrak{p} \lhd A$ and for every $k \geq 1$, $G_2(A_\mathfrak{p};\mathfrak{p} ^k) \subseteq \rho (G_1(A_\mathfrak{p} ; \mathfrak{p} ^k))$.
\item\label{item:isog_3} For every prime ideal $\mathfrak{p} \lhd A$ there exists $m_\mathfrak{p}\ge 0$ such that for every $k \ge 1$, $G_2(A_\mathfrak{p};\mathfrak{p}^{m_\mathfrak{p}+k}) \subseteq \rho(G_1(A_\mathfrak{p},\mathfrak{p}^k))$. 
\end{enumerate} 
\end{lemma}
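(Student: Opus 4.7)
The plan is to establish parts \ref{item:isog_2} and \ref{item:isog_3} first via local analysis using the quantitative open mapping theorem (Lemma \ref{lem:open.mapping.quantitative}) and the étale lifting property, then to deduce part \ref{item:isog_1} from them by exhibiting a common finite-index congruence subgroup. Since $\rho$ is a $K$-morphism of group schemes, it is given by polynomials over $K$, and clearing denominators produces a non-zero ideal $I_0 \triangleleft A$ such that $\rho$ extends to a morphism $\tilde\rho : (G_1)_{A[I_0^{-1}]} \to (G_2)_{A[I_0^{-1}]}$. Enlarging $I_0$ via generic flatness (to guarantee $\ker\tilde\rho$ is finite flat) and excluding the residue characteristics dividing $|\ker\rho|$, we may assume that $\tilde\rho$ is étale over $A_\mathfrak{p}$ for every prime $\mathfrak{p} \nmid I_0$.

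For part \ref{item:isog_2}, fix a prime $\mathfrak{p} \nmid I_0$ and $g \in G_2(A_\mathfrak{p}; \mathfrak{p}^k)$. Since $g \equiv 1 \pmod{\mathfrak{p}^k}$ and $\tilde\rho(1) = 1$, the infinitesimal lifting property of the étale morphism $\tilde\rho$, applied inductively to the square-zero thickenings $A_\mathfrak{p}/\mathfrak{p}^{k+j+1} \to A_\mathfrak{p}/\mathfrak{p}^{k+j}$, yields compatible lifts $g'_j \in G_1(A_\mathfrak{p}/\mathfrak{p}^{k+j})$ with $g'_0 = 1$ and $\tilde\rho(g'_j) \equiv g \pmod{\mathfrak{p}^{k+j}}$. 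Hensel completeness of $A_\mathfrak{p}$ then produces the required $g' \in G_1(A_\mathfrak{p}; \mathfrak{p}^k)$ with $\rho(g') = g$.

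For part \ref{item:isog_3}, let $\mathfrak{p}$ be arbitrary. The differential $d\rho : \mathfrak{g}_1(K_\mathfrak{p}) \to \mathfrak{g}_2(K_\mathfrak{p})$ is a $K_\mathfrak{p}$-linear isomorphism (since $\ker\rho$ is finite), so there exists $N \geq 0$ with $d\rho(\mathfrak{g}_1(A_\mathfrak{p})) \supseteq \mathfrak{p}^N \mathfrak{g}_2(A_\mathfrak{p})$. For $k$ larger than a threshold $K_0$ (determined by the convergence radius of the $\mathfrak{p}$-adic exponential and the denominators of $\rho$ relative to the embedding into $\SL_n$), $\rho$ restricted to $G_1(A_\mathfrak{p}; \mathfrak{p}^k)$ is given in exponential coordinates by a convergent power series with coefficients in $A_\mathfrak{p}$; Lemma \ref{lem:open.mapping.quantitative} then produces a constant $C$ such that $\rho(G_1(A_\mathfrak{p}; \mathfrak{p}^k)) \supseteq G_2(A_\mathfrak{p}; \mathfrak{p}^{k+N+C})$. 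The finitely many small cases $1 \leq k < K_0$ are absorbed using the monotonicity $G_1(A_\mathfrak{p}; \mathfrak{p}^k) \supseteq G_1(A_\mathfrak{p}; \mathfrak{p}^{K_0})$, so a single $m_\mathfrak{p}$ works uniformly in $k \geq 1$.

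For part \ref{item:isog_1}, choose $I \triangleleft A$ divisible by $\mathfrak{p}^{m_\mathfrak{p}+1}$ for each prime $\mathfrak{p} \mid I_0$; then $\rho(G_1(A; I)) \subseteq G_2(A)$ by part \ref{item:isog_3}, so $G_1(A; I) \subseteq G_1(A) \cap \rho^{-1}(G_2(A))$ has finite index in $G_1(A)$. In the other direction, parts \ref{item:isog_2} and \ref{item:isog_3} combined with a local-to-global argument produce a congruence subgroup $G_2(A; J) \subseteq \rho(G_1(A; I))$, giving $[G_2(A) : \rho(G_1(A; I))] < \infty$. The map $\rho^{-1}(G_2(A))/G_1(A; I) \to G_2(A)/\rho(G_1(A; I))$ then has kernel bounded by $|\ker\rho(K)|$ and finite codomain, proving $[\rho^{-1}(G_2(A)) : G_1(A; I)] < \infty$ and hence the commensurability of $\rho^{-1}(G_2(A))$ with $G_1(A)$. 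The main obstacle will be the bookkeeping in part \ref{item:isog_3}: combining the constants from the integral structure on $d\rho$, the quantitative open mapping theorem, and the $\mathfrak{p}$-adic exponential's convergence radius into a single $m_\mathfrak{p}$ uniform in $k \geq 1$.
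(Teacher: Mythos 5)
Your arguments for parts \eqref{item:isog_2} and \eqref{item:isog_3} are essentially the same as the paper's: part \eqref{item:isog_2} is proved by étale/Hensel lifting after spreading out $\rho$ over $A[I_0^{-1}]$ and excluding bad residue characteristics, and part \eqref{item:isog_3} is proved by putting $\rho$ in exponential coordinates, pre-composing with a dilation to clear denominators, and invoking Lemma~\ref{lem:open.mapping.quantitative}. The paper's proof of \eqref{item:isog_3} makes the ``threshold'' explicit via the dilation map $\delta_t(g)=\exp(\varpi^t\log g)$, but you are doing the same thing.

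There is a genuine gap in your proof of part \eqref{item:isog_1}, in the reverse inclusion. You claim that parts \eqref{item:isog_2} and \eqref{item:isog_3} ``combined with a local-to-global argument produce a congruence subgroup $G_2(A;J)\subseteq\rho(G_1(A;I))$.'' This does not follow: parts \eqref{item:isog_2} and \eqref{item:isog_3} are statements about $A_\mathfrak{p}$-points, and they show only that each $g\in G_2(A;J)$ lifts through $\rho$ \emph{locally} at every $\mathfrak{p}$. Whether $g$ lifts to a $K$-point of $G_1$ at all is controlled by a class in $H^1(K,\ker\rho)$, and an isogeny of semisimple groups is generally far from surjective on $K$-points (e.g.\ $\SL_2\to\mathrm{PGL}_2$, where $\mathrm{PGL}_2(K)/\rho(\SL_2(K))\cong K^\times/(K^\times)^2$). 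Vanishing of all local obstructions does not automatically force the global one to vanish without an extra Hasse-principle argument for $H^1(K,\ker\rho)$, which can fail (Grunwald--Wang) and is in any case nowhere in your reasoning. The paper avoids this issue entirely: having shown $G_1(A;N)\subseteq\rho^{-1}(G_2(A))$, it observes that $\rho^{-1}(G_2(A))$ is \emph{discrete} in $\prod_{v\in S}G_1(K_v)$ (as the preimage of the discrete group $G_2(A)$ under a continuous map with finite kernel), and a discrete group sandwiched above a lattice is a lattice, so $[\rho^{-1}(G_2(A)):G_1(A;N)]<\infty$ by a covolume comparison. That is the argument you should use; it does not require any surjectivity of $\rho$ on rational points.
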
 

\begin{proof} \begin{enumerate} 
\item\label{item:nnnn} Let $\tau : (\Mat_n)_A \rightarrow (\Mat_n)_A$ be the map $\tau(X)=X+I$. The map $\tau ^{-1} \circ \rho \circ \tau: \tau ^{-1} (G_1) \rightarrow \tau ^{-1} (G_2)$ is given by polynomials with coefficients in $K$. Since $\tau ^{-1} \circ \rho \circ \tau (0)=0$, all these polynomials vanish at $0$. Let $N\in A$ be the product of all denominators of all coefficients of all polynomials in $\tau ^{-1} \circ \rho \circ \tau$. For any ideal $\mathfrak{q} \triangleleft A$, we have $\tau ^{-1} \circ \rho \circ \tau (N \mathfrak{q} A^{n^2}) \subseteq q A^{n^2}$, which implies that $\rho (G_1(A;N \mathfrak{q})) \subseteq G_2(A;\mathfrak{q})$. In particular, taking $\mathfrak{q} =A$, we get that $\rho (G_1(A;N)) \subseteq G_2(A)$. Since $\rho ^{-1} (G_2(A))$ is discrete and contains a lattice, it is a lattice. Hence, $[\rho ^{-1} (G_2(A)):G_1(A;N)]< \infty$. Since $[G_1(A):G_1(A;N)]<\infty$, they are commensurable.

\item\label{item:nnnn2}  Let $\mathfrak{g}_1$ (respectively, $\mathfrak{g}_2$) be the Lie ring of $G_1$ (respectively, $G_2$). We show that the claim holds for all prime ideals $\mathfrak{p} \lhd A$ for which both $G_1$ and $G_2$ have good reductions modulo $\mathfrak{p}$ and the map $d \rho |_{1}:\mathfrak{g}_1(A_\mathfrak{p}) \rightarrow \mathfrak{g}_2(A_\mathfrak{p})$ is an isomorphism. Let $k \geq 1$ and let $g\in G_1(A_\mathfrak{p})$ be such that $\rho(g)\in G_2(A_\mathfrak{p};\mathfrak{p} ^k)$. Since, by assumption, $d \rho |_{g}$ is an isomorphism, Hensel's lemma implies that there is $h\in G_1(A_\mathfrak{p};\mathfrak{p} ^k)$ such that $\rho(gh)=1$, so $gh \in \ker \rho$ and the claim follows.

\item\label{item:nnnn3} There is a natural number $a$ such that the power series $\log(x)$ and $\exp(x)$ converge on $G_1(A_\mathfrak{p},\mathfrak{p} ^a)$ and $\mathfrak{g}_1(K_\mathfrak{p}) \cap \mathfrak{p} ^a \mathfrak{sl}_n(A_\mathfrak{p})$ and define inverse bijections between the two sets. Fix $\varpi\in \mathfrak{p} A_\mathfrak{p} \smallsetminus \mathfrak{p}^2A_\mathfrak{p}$ and, for each natural number $t$, let $\delta_t:G_1(A_\mathfrak{p};\mathfrak{p} ^a) \rightarrow G_1(A_\mathfrak{p};\mathfrak{p} ^a)$ be the dilation map $\delta_t(g)=\exp \left( \varpi ^t \log(g) \right)$. For any $k \geq a$, we have that $\delta_t(G_1(A_\mathfrak{p};\mathfrak{p} ^k))=G_1(A_\mathfrak{p};\mathfrak{p} ^{k+t})$. There is a natural number $b$ such that $\varphi:=\rho \circ \delta_b$ is equal to a convergent power series with coefficients in $A_\mathfrak{p}$. Let $c$ be the constant obtained by applying Lemma \ref{lem:open.mapping.quantitative} with $R=A_\mathfrak{p}$, $X=G_1(A_\mathfrak{p};\mathfrak{p} ^a)$, and $Y=G_2(A_\mathfrak{p})$. Finally, since $d \varphi |_1= \varpi ^b d \rho|_1$, there is a natural number $d$ such that $d \varphi |_1\left(\mathfrak{g}_1(K_\mathfrak{p}) \cap \mathfrak{sl}_n(A_\mathfrak{p}) \right)  \supseteq \mathfrak{p} ^d \left(\mathfrak{g}_2(K_\mathfrak{p}) \cap \mathfrak{sl}_n(A_\mathfrak{p}) \right)$. By Lemma \ref{lem:open.mapping.quantitative}, for every $k \geq a+b$, 
\[
\rho \left( G_1(A_\mathfrak{p};\mathfrak{p} ^{k}) \right) = \varphi \left( G_1(A_\mathfrak{p};\mathfrak{p} ^{k-b}) \right) \supseteq G_2(A_\mathfrak{p};\mathfrak{p} ^{d+k-b+c} ),
\]
and the result follows.

\end{enumerate} 
\end{proof}

\begin{proof}[Proof of Proposition \ref{prop:newnew}] Denote $H:=\rho(\Spin_f)$. Since $H \subseteq G$, we have $\rho(\Spin_f) \cap G(A) = H(A)$. The only finite and non-trivial normal subgroup of $\Spin_f$ is the center $Z(\Spin_f)$ and this center has order two. We get that $\rho:\Spin_f \rightarrow H$ is either isomorphism  or $\ker \rho=Z(\Spin_f)$. In any case, we have an isogeny $\psi:H \rightarrow \SO_f$ of algebraic groups over $K$ and $\ker \psi $ is either trvial or central of over $2$. Lemma \ref{lemma:isog} implies that there exists a finite index normal subgroup $\Lambda^* \le \Lambda$ such that $\Lambda^* \le \Lambda \cap H(A)$ and  $\psi(\Lambda^*)$ is contained in $\SO_f(A)$. By Lemma \ref{lemma:isog}, for every prime ideal $\mathfrak{p} \lhd A$, there exists $m_{\mathfrak{p}} \ge 0 $ such that for every $k \ge 1$, $\psi(H(A_\mathfrak{p},\mathfrak{p}^{k}))$ contains $\SO_f(A_\mathfrak{p},\mathfrak{p}^{k+m_\mathfrak{p}})$. We can further assume that $m_\mathfrak{p}=0$ for all but finitely many prime ideals. 

Let $\beta \in \Lambda^*$. We claim that, for every prime ideal $\mathfrak{p}$ and every $k \ge 1$, if $\beta^2 \notin \Gamma^*[\mathfrak{p}^{k}]$, then $\psi(\beta)\notin \SO_f(A;\mathfrak{p}^{k+m_\mathfrak{p}})$. Assume otherwise. Lemma \ref{lemma:isog} implies that $\beta \in H(A) \cap ((\ker \rho)\cdot H(A_\mathfrak{p},\mathfrak{p}^k))$ so $\beta^2 \in H(A) \cap H(A_\mathfrak{p},\mathfrak{p}^k)=H(A;\mathfrak{p}^k) \subseteq  \Gamma^*[\mathfrak{p}^{k}]$, a contradiction.

Let $\alpha \in \Lambda^*$ be an infinite order semisimple element. Then $\Cent_{\Lambda^*}(\alpha)$ is an abelian subgroup whose torsion subgroup is finite. Let $d$ be the constant given in  \ref{lemma:main_SL_2} with respect to $\psi(\alpha)$ and $\Delta:=\psi(\Cent_{\Lambda^*}(\alpha))$. Let $\mathcal{R} \subseteq \mathcal{P}$ be a cofinite subset and denote $ \mathcal{Q}:=\{\mathfrak{p}^{k+\mathfrak{m}_p} \mid \text{$\mathfrak{p}$ prime and } \mathfrak{p}^k \in \mathcal{R}\}$. Note that $\mathcal{Q}$ is cofinite in $\mathcal{P}$. 
We claim that 
$$
D:=	\{\gamma \in \Cent_{\Lambda^*}(\alpha) \mid (\forall 1 \le i \le d\ \forall \mathfrak{r} \in \mathcal{R})\ (\gamma\alpha^{-i})^2\notin  \Gamma^*[\mathfrak{r}]\} $$ is finite. Let $\gamma \in D$. The previous paragraph implies that for every $\mathfrak{q} \in \mathcal{Q}$ and every $1 \le i\le d$, $\psi(\gamma\alpha^{-i})\notin \SO_f(A;\mathfrak{q})$. Since $\mathcal{Q}$ is cofinite in $\mathcal{P}$ and $\ker\psi$ is finite, Lemma \ref{lemma:main_SL_2} implies that $D$ is finite. 

Denote $e=[\Lambda:\Lambda^*]$. Let $\alpha \in \Lambda$ be an infinite order semisimple element. Then $\alpha^e \in \Lambda^*$ is an infinite order semisimple element. Let $d$ be the constant given in Lemma \ref{lemma:main_SL_2} with respect to $\psi(\alpha^e)$. In order to finish the proof it suffices to show that the set
 $$
E:=	\{\gamma \in Z(\Cent_{\Lambda}(\alpha)) \mid (\forall 1 \le i \le d\ \forall \mathfrak{r} \in \mathcal{R})\ (\gamma\alpha^{-i})^{2e}\notin \Gamma^*[\mathfrak{r}]\}
	 $$
is finite. If $\gamma \in Z(\Cent_{\Lambda}(\alpha))$ then $\gamma^e \in \Cent_{\Lambda^*}(\alpha^e)$ so the previous paragraph implies that $\{\gamma^e \mid \gamma \in E\}$ is finite.  Since $Z(\Cent_{\Lambda}(\alpha))$ is an abelian group whose torsion subgroup is finite, then map $x \mapsto x^e$ has finite fibers so $E$ is finite. 
\end{proof}

\subsection{Proof of Theorem \ref{thm:interpretation}}

\begin{proof}[Proof of Theorem \ref{thm:interpretation}]
Let $f$ and $\rho$ and $\Lambda$ be as in Proposition \ref{prop:good_subgroup}.
Fix an infinite order semisimple   $\alpha \in \Lambda$ and denote $\Theta:=Z(\Cent_\Lambda(\alpha))$.   Robinson \cite{Rob1} proved that $(\Z,+,\times)$ is definable in $(\Z,+,|)$ where $|$ is  the divisibility relation. For every non-zero $r,s \in \Z$, $r|s$ if and only if $\alpha^s \in \langle \alpha^r\rangle$. Thus, in order to prove Theorem \ref{thm:interpretation},  it is enough to show that there exists a definable subset $C \subseteq \Theta \times \Theta$ such that for every $\beta \in \Theta$ of infinite order, $C_\beta=\langle\beta\rangle$.  

Theorem  \ref{thm:principal.definable} implies that there exists a uniform definable collection  $\mathcal{F}$ of normal congruence subgroups in $\Gamma$ which contains $\{\Gamma^*[\mathfrak{q}] \mid \mathfrak{q} \lhd A\}$. Let $d,e \ge 1$ be as in Proposition \ref{prop:newnew}. Denote $\Psi:=\{\beta \in \Theta \mid \exists (1 \le i \le d)\ (\beta\alpha^{-i})^e=1\}$. Since the torsion subgroup of $\Theta$ is finite, $\Psi$ is a finite. Let $D \subseteq \Gamma \times \Theta$ be the definable subset 

$$D:=\{(\gamma,\beta)\in\Gamma \times \Theta \mid (\forall \Delta \in \mathcal{F}\ \forall 1 \le i\le d)\ \gamma \notin \Delta \rightarrow \left((\beta \in \Psi) \lor (\beta\alpha^{-i})^{e} \notin \Delta\right)  \}.$$

Then for every non-identity $\gamma \in \Gamma$, $D_\gamma$ is finite. 

\begin{claim}
	 Let $\Phi  \subseteq \Theta$ be finite. There exists a non-identity $\gamma \in \Gamma$ such that $\Phi \subseteq D_\gamma$.
\end{claim}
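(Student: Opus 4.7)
Unpacking the defining formula for $D$, the condition $(\gamma,\beta)\in D$ says: for every $\Delta\in\mathcal{F}$ and every $1\le i\le d$, if $\gamma\notin\Delta$ then either $\beta\in\Psi$ or $(\beta\alpha^{-i})^{e}\notin\Delta$. Taking contrapositives and quantifying over $\beta\in\Phi$, finding a non-identity $\gamma$ with $\Phi\subseteq D_\gamma$ is equivalent to finding $\gamma\in\Gamma\setminus\{1\}$ that lies in every $\Delta\in\mathcal{F}$ containing at least one element of the finite set
\[
Y:=\bigl\{(\beta\alpha^{-i})^{e}\;:\;\beta\in\Phi\setminus\Psi,\;1\le i\le d\bigr\}.
\]
By the definition of $\Psi$, every $y\in Y$ is non-identity.

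The plan is to show that, for each fixed $y\in Y$, the subfamily $\mathcal{F}_y:=\{\Delta\in\mathcal{F}\;:\;y\in\Delta\}$ is finite. Every element of $\mathcal{F}$ is a normal subgroup of $\Gamma$, so membership $y\in\Delta$ forces $\Delta$ to contain the normal closure $\langle\langle y\rangle\rangle_\Gamma$. Because $\Gamma$ is a centerless higher-rank arithmetic lattice, Margulis's Normal Subgroup Theorem applies and shows that $\langle\langle y\rangle\rangle_\Gamma$ has finite index in $\Gamma$. Since $\Gamma$ is finitely generated, $\Gamma/\langle\langle y\rangle\rangle_\Gamma$ has only finitely many subgroups, and hence $\mathcal{F}_y$ is indeed finite.

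With this in hand, set
\[
N:=\bigcap_{y\in Y}\bigcap_{\Delta\in\mathcal{F}_y}\Delta.
\]
This is a finite intersection (over the finite set $Y$) of finite intersections of finite-index subgroups, so $N$ is itself a finite-index subgroup of $\Gamma$. Since $\Gamma$ is infinite, $N\setminus\{1\}$ is non-empty; pick any $\gamma$ in it. By construction, whenever some $y\in Y$ lies in a $\Delta\in\mathcal{F}$, we have $\gamma\in\Delta$, which is precisely the contrapositive of what is required for $(\gamma,\beta)\in D$ to hold for all $\beta\in\Phi$. The only substantive ingredient is Margulis's Normal Subgroup Theorem, used to guarantee that the normal closure of a single non-identity element is already of finite index; everything else is a finite-intersection bookkeeping argument.
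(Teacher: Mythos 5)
Your proof is correct and follows the same strategy as the paper's: identify the finitely many $\Delta\in\mathcal{F}$ that contain some non-identity element $(\beta\alpha^{-i})^e$ with $\beta\in\Phi$, intersect them, and pick a non-identity $\gamma$ in that finite-index intersection. You additionally spell out why only finitely many $\Delta$ can contain a fixed non-identity $y$ (via Margulis's NST and the normal closure), a finiteness assertion that the paper states without justification; the phrase "since $\Gamma$ is finitely generated" is superfluous there, as a finite quotient automatically has finitely many subgroups.
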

\begin{proof}
	There exists a finite set $
	\mathcal{C} \subseteq \mathcal{F}$ such there for every $\Delta \in \mathcal{F}\setminus \mathcal{C}$, $\Delta \cap \{(\phi\alpha^{-i})^{e} \mid \phi \in \Phi \text { and }1 \le i \le d\}=1$. For every non-trivial $\gamma \in \cap_{\Delta \in \mathcal{C}}\Delta$, $\Phi \subseteq D_\gamma$. 
\end{proof}

  Let $E \subseteq \Gamma^3 \times \Theta$ be the definable subset  $$\{((\gamma,\delta_1,\delta_2),\beta)\in \Gamma^3 \times \Theta \mid \left(\beta \in D_\gamma\right) \land \left((\forall \Delta\in \mathcal{F})\ \delta_2 \notin \Delta\rightarrow \ \beta\notin \delta_1\Delta\right)\}.$$

\begin{claim}\label{lemma:finite_def} Let $\Phi \subseteq \Theta$ be finite. There exist non-identity  $\gamma,\delta_1,\delta_2 \in \Gamma$ such that $\Phi=E_{(\gamma,\delta_1,\delta_2)}$.	
	\end{claim}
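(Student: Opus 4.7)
First, apply the preceding claim to obtain a non-identity $\gamma \in \Gamma$ with $\Phi \subseteq D_\gamma$. Since Proposition \ref{prop:newnew} ensures every fiber $D_\gamma$ is finite, the difference $D_\gamma \setminus \Phi =: \{\eta_1, \ldots, \eta_k\}$ is finite; the problem then reduces to finding non-identity $\delta_1, \delta_2 \in \Gamma$ for which the second conjunct in the definition of $E$ excludes exactly $\{\eta_1, \ldots, \eta_k\}$ from $D_\gamma$ while keeping every $\phi \in \Phi$.

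Unwinding this definition, $\beta \in D_\gamma$ is excluded from $E_{(\gamma,\delta_1,\delta_2)}$ precisely when some $\Delta \in \mathcal{F}$ contains $\delta_1^{-1}\beta$ but not $\delta_2$. The plan is to pick distinct prime ideals $\mathfrak{p}_1, \ldots, \mathfrak{p}_k \lhd A$ outside $S$, of sufficiently large norm that the images of $\Phi \cup \{\eta_1, \ldots, \eta_k\}$ in each $\Gamma/\Gamma^*[\mathfrak{p}_j]$ are pairwise distinct and non-central; by strong approximation in $G$, to choose $\delta_1 \in \Gamma$ with $\delta_1 \equiv \eta_j \pmod{\Gamma^*[\mathfrak{p}_j]}$ for every $j$; and to choose a non-identity $\delta_2 \in \Gamma$ that is non-central modulo each $\mathfrak{p}_j$. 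Then, for every $\eta_j$, the subgroup $\Delta_j := \Gamma^*[\mathfrak{p}_j] \in \mathcal{F}$ contains $\delta_1^{-1}\eta_j$ but not $\delta_2$, so $\eta_j$ is excluded.

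To keep each $\phi \in \Phi$, I need every $\Delta \in \mathcal{F}$ containing $\delta_1^{-1}\phi$ to contain $\delta_2$. This will be arranged by additionally controlling $\delta_1$ so that, for each $\phi$, the element $\delta_1\phi^{-1}$ is non-central modulo every proper ideal of $A$; then the only $\Delta \in \mathcal{F}$ containing $\delta_1^{-1}\phi$ is $\Gamma^*[A] = \Gamma$, which trivially contains $\delta_2$. The hard part will be this simultaneous arrangement: maintaining the prescribed congruences at the $\mathfrak{p}_j$ while forcing universal non-centrality of $\delta_1\phi^{-1}$ for every $\phi \in \Phi$. I plan to resolve it via strong approximation at a further auxiliary prime, choosing $\delta_1$ so that each $\delta_1\phi^{-1}$, read in the fixed embedding $G \hookrightarrow \GL_D$, carries a non-scalar matrix entry that is an $S$-unit of $A$; such an entry is non-zero modulo every proper $\mathfrak{q}$, yielding the desired universal non-centrality and giving $E_{(\gamma,\delta_1,\delta_2)} = \Phi$.
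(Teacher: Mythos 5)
Your construction of $\gamma$ and $\delta_1$ (prescribing $\delta_1 \equiv \eta_j \pmod{\Gamma^*[\mathfrak{p}_j]}$ via strong approximation at the auxiliary primes) agrees with the paper, and the exclusion of the $\eta_j$ is handled correctly. The gap is in the step that keeps every $\phi\in\Phi$ inside $E_{(\gamma,\delta_1,\delta_2)}$.

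You try to force the condition to be vacuous by arranging that no proper $\Delta\in\mathcal{F}$ contains $\delta_1^{-1}\phi$. There are two problems with this. First, $\mathcal{F}$ is \emph{not} the collection $\{\Gamma^*[\mathfrak{q}]\}$; Theorem \ref{thm:principal.definable} only gives a definable family of normal congruence subgroups \emph{containing} all the $\Gamma^*[\mathfrak{q}]$, and its members are products $\prod\gcl_\Gamma(\alpha_i)\Lambda$ that need not be of that form. So arranging $\delta_1\phi^{-1}\notin\Gamma^*[\mathfrak{q}]$ for every proper $\mathfrak{q}$ does not rule out $\delta_1^{-1}\phi\in\Delta$ for other $\Delta\in\mathcal{F}$. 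Second, even the weaker goal --- making some off-diagonal entry of $\delta_1\phi^{-1}$ an $S$-unit --- is not attainable by strong approximation at a finite auxiliary set of places: being an $S$-unit is a condition at \emph{every} place $v\notin S$, and prescribing $\delta_1$ modulo finitely many primes leaves the valuations of the entries at the remaining primes uncontrolled.

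The paper instead keeps $\delta_2$ as a free parameter to be chosen \emph{after} $\delta_1$. Having fixed $\delta_1$ (with $\delta_1\not\equiv\phi$ mod each $\mathfrak{p}_j$, which is guaranteed by the injectivity of reduction on $D_\gamma$), let $\mathcal{C}\subseteq\mathcal{F}$ be the set of $\Delta$ containing some $\delta_1^{-1}\phi$ with $\phi\in\Phi$. Since each such $\delta_1^{-1}\phi$ is non-trivial, Margulis's normal subgroup theorem forces its normal closure to have finite index, so only finitely many subgroups can contain it; hence $\mathcal{C}$ is finite. Moreover no $\Delta\in\mathcal{C}$ sits inside any $\Gamma^*[\mathfrak{p}_j]$ (again by the injectivity of the reductions of $D_\gamma$). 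Since the quotients $\Gamma/\Gamma^*[\mathfrak{p}_j]$ are simple and each $\Delta\in\mathcal{C}$ is normal, $\bigcap_{\Delta\in\mathcal{C}}\Delta$ surjects onto $\prod_j\Gamma/\Gamma^*[\mathfrak{p}_j]$, so one can pick $\delta_2\in\bigcap_{\Delta\in\mathcal{C}}\Delta$ with $\delta_2\notin\Gamma^*[\mathfrak{p}_j]$ for every $j$. This $\delta_2$ simultaneously evicts every $\eta_j$ and retains every $\phi\in\Phi$. You should replace the ``vacuous condition'' strategy with this adaptive choice of $\delta_2$.
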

\begin{proof}
Choose non-identity $\gamma \in \Gamma$ such that $\Phi \subseteq D_\gamma$. Assume that $D_\gamma=\{\beta_i \mid 1\le i\le r+s\}$ and $\Phi=\{\beta_i \mid 1 \le i \le r\}$. Choose distinct prime ideals $\mathfrak{p}_1, \ldots, \mathfrak{p}_s$  such that for every $1 \le j \le s$, $\Gamma/\Gamma^*[\mathfrak{p}_j]$ is non-abelian and simple and the map $\beta_i \mapsto \beta_i\Gamma^*[\mathfrak{p}_j]$ is injective on $D_\gamma$. 

 By the strong approximation theorem, $\Gamma$ projects onto $\prod_{1 \le j \le s}\Gamma/\Gamma^*[\mathfrak{p}_j]$. Hence, 
 there exists $ \delta_1 \in \Gamma$ such that for every $1 \le j \le s$, $\beta_{r+j}\Gamma^*[\mathfrak{p}_j]=\delta_1\Gamma^*[\mathfrak{p}_j]$. Let $\mathcal{C}\subseteq \mathcal{F}$ consists of the $\Delta \in \mathcal{F}$ for which there exits  $1 \le i \le r$ such that $\delta_1\Delta=\beta_i\Delta$. Then $\mathcal{C}$ is finite and every $\Delta \in \mathcal{C}$ is not contained in any of the subgroups  $\Gamma^*[{\mathfrak{p}_1}],\ldots, \Gamma^*[{\mathfrak{p}_s}]$. Since  for every $1 \le j \le s$,  $\Gamma/\Gamma^*[\mathfrak{p}_j]$ is simple and every $\Delta \in \mathcal{C}$ is normal in $\Gamma$, $\cap_{\Delta \in \mathcal{C}}\Delta$ projects onto $\prod_{1 \le j \le s} \Gamma/\Gamma^*[\mathfrak{p}_j]$. Thus, there exists $\delta_2 \in \cap_{\Delta \in \mathcal{C}}\Delta$ such that for every $1 \le j \le s$, $\delta_2 \notin \Gamma^*[\mathfrak{p}_j]$. It follows that $\Phi=E_{(\gamma,\delta_1,\delta_2)}$.	
\end{proof}

Claim \ref{lemma:finite_def} implies that the collection $\mathcal{E}$ of finite subsets of $\Gamma$ is uniformly definable. Let $C$ be the subset of $\Theta \times \Theta$ such that for every $\beta,\gamma \in \Theta$, $(\beta,\gamma)\in C$ if and only if there exists $\Phi \in \mathcal{E}$ for which the following two conditions hold:
\begin{enumerate}[a)]
	\item\label{item:rob1} $\beta \in \Phi$.
	\item\label{item:rob2} If $\delta \in \Phi$ then either $\delta=\gamma^{\pm 1}$ or $\beta\delta\in \Phi $. 
\end{enumerate}

We claim that  $C$ is the desired definable subset. Let $\beta \in \Theta$ be of infinite order. For every $r \in \N$, the set $\Phi:=\{\beta^i \mid 0 \le |i| \le |r|\} $ satisfies items \ref{item:rob1} and \ref{item:rob2}  so $(\beta,\beta^r)\in C$. On the other hand, if  $\gamma \notin \langle \beta\rangle$ then every set which satisfies items \ref{item:rob1} and \ref{item:rob2} contains all positive powers of $\beta$ and thus is not finite. The proof of Theorem \ref{thm:interpretation} is now complete. 
\end{proof}

\section{Bi-interpretation}\label{sec:bi}

The goal of this section is to prove Theorem \ref{thm:main.biint}. The following Lemma follows from Corollary 2.8  of \cite{AKNS}:

\begin{lemma}\label{lemma:self _interpretation} Every self interpretation of $\mathbb{Z}$ is trivial.
\end{lemma}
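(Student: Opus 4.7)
The plan is to write down, by hand, a first-order formula $\psi(n,a)$ in the language of $\mathbb{Z}$ that defines the graph of $f:F\to\mathbb{Z}$; once $f$ itself is definable, the interpretation $\mathscr{F}=(F,f)$ is trivial by definition.

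Since $\mathscr{F}$ is an interpretation of the ring $\mathbb{Z}$, the imaginary $F$ comes equipped with definable ring operations $+_F,\times_F$ and constants $0_F,1_F$ making $f$ a ring isomorphism $(F,+_F,\times_F,0_F,1_F)\to(\mathbb{Z},+,\times,0,1)$. In particular, the successor map $S_F(a):=a+_F 1_F$ is definable, as is the additive inverse $a\mapsto -_F a$. Write $F=X/E$ with $X\subseteq\mathbb{Z}^m$ definable and $E$ a definable equivalence relation. Using G\"odel's $\beta$-function coordinate-wise, we obtain a first-order coding, inside $\mathbb{Z}$, of arbitrary finite sequences $(a_0,a_1,\ldots,a_k)$ of elements of $F$ (via a chosen system of representatives in $X$) by a few parameters from $\mathbb{Z}$.

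With this in hand, define $\psi(n,a)$ to assert: either $n\ge 0$ and there exists a coded sequence $(a_0,\ldots,a_n)$ of elements of $F$ with $a_0=0_F$, $a_{i+1}=S_F(a_i)$ for each $0\le i<n$, and $a_n=a$; or $n<0$ and $\psi(-n,-_F a)$ holds. A straightforward induction on $|n|$ shows that $\psi(n,a)$ is equivalent to $f(a)=n$, because $f$ is a ring isomorphism and hence sends the $n$-fold iterated successor of $0_F$ to $n$. This exhibits the graph of $f^{-1}$ as a definable imaginary, and hence so is the graph of $f$.

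The main obstacle is purely bookkeeping: one must verify that G\"odel coding can be executed uniformly for sequences of elements of the imaginary sort $F=X/E$, taking care to pass between representatives in $X$ and $E$-classes in $F$, and that all of the resulting predicates remain first-order definable in $\mathbb{Z}$. Once the coding is set up, the verification that $\psi$ defines the graph of $f^{-1}$ is immediate from the isomorphism property of $f$.
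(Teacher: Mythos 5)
Your argument is correct, but it takes a genuinely different route from the paper: the paper simply cites Corollary~2.8 of \cite{AKNS}, which establishes this fact (in the broader context of finitely generated commutative rings) as a black box, whereas you give a self-contained, elementary proof. Your approach is essentially the standard folklore argument: since the interpretation $\mathscr{F}=(F,f)$ equips the imaginary $F=X/E$ (with $X\subseteq\mathbb{Z}^m$ definable) with a definable ring structure for which $f$ is an isomorphism, the element $f^{-1}(n)$ is forced to be the $n$-fold iterate of the definable successor $S_F$ applied to $0_F$ (for $n\ge 0$), and since $\mathbb{Z}$ proves enough arithmetic to code finite sequences of integers via the $\beta$-function, one can write a single first-order formula asserting the existence of such a chain of length $n$, which then defines the graph of $f$. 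The bookkeeping you flag (coding sequences of $m$-tuples by $m$ parallel $\beta$-codings, and verifying the formula is $E$-invariant in the $F$-argument) is routine and does go through. One small stylistic point: the final verification that $\psi(n,a)\leftrightarrow f(a)=n$ is not really an induction carried out \emph{inside} the formula but a metatheoretic check for each standard $n$; this is exactly what is needed, since one only needs $\psi$ to define the correct set in the standard model $\mathbb{Z}$, not provably in some theory. The advantage of citing \cite{AKNS} is that it packages this coding argument once and for all; the advantage of your route is that it makes the paper self-contained and exposes the mechanism (sequence-coding in $\mathbb{Z}$) on which the triviality of self-interpretations actually rests.
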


Robinson \cite{Rob2} proved that $\Z$ is a definable subset in the ring of integers of any number field. Using the fact that every such ring is a free $\Z$-module, the following lemma can be easily proved for rings of integers. A similar argument works for rings of $S$-integers.  Alternatively, it follows from the main Theorem of  \cite{AKNS} that every finitely generated infinite integral domain is bi-interpretable with $\Z$.

\begin{lemma}\label{lemma:ring_int_bi} Every ring of $S$-integers of a number field is in bi-interpretation with $\mathbb{Z}$.
\end{lemma}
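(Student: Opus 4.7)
The plan is to construct interpretations in each direction and then appeal to Lemma \ref{lemma:self _interpretation} to cut the bi-interpretation check in half. Write $A = O_K[\pi^{-1}]$, where $\pi \in O_K$ is an element divisible by exactly the non-archimedean primes in $S$; then $A$ is a finitely generated $\Z$-algebra containing $O_K$, which itself is a free $\Z$-module of rank $d := [K:\Q]$.

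First I would interpret $\Z$ in $A$: by Robinson's theorem \cite{Rob2}, there is a formula $\varphi$ that defines $\Z$ as a subset of $O_K$, and a small variant adapted to the localization still cuts out $\Z$ inside $A$ (one only needs to exclude denominators, which is definable since divisibility by $\pi$ is). This gives an interpretation $\mathscr{F}_{A,\Z}$. In the other direction, fix a $\Z$-basis $e_1,\ldots,e_d$ of $O_K$ with structure constants $c_{ij}^k \in \Z$, so that $O_K$ is interpretable in $\Z$ as $\Z^d$ with componentwise addition and multiplication governed by the $c_{ij}^k$. I then realize $A$ as the quotient of $O_K \times \N$ by the definable equivalence relation
\[
(x,n) \sim (y,m) \iff \pi^m x = \pi^n y,
\]
with the obvious ring operations. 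This yields an interpretation $\mathscr{F}_{\Z,A}$.

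To conclude bi-interpretation, the composition $\mathscr{F}_{A,\Z}\circ\mathscr{F}_{\Z,A}$ is a self-interpretation of $\Z$ and is therefore trivial by Lemma \ref{lemma:self _interpretation}; only the reverse composition remains. Its associated bijection sends $a \in A$ to the coded pair $((a_1,\ldots,a_d),n)$ with $\pi^n a \in O_K$ and $\pi^n a = \sum a_i e_i$, and this is definable because $\pi$ and the $e_i$ are fixed parameters in $A$ and the copy of $\Z$ inside $A$ is definable. I expect the only delicate bookkeeping to be the verification that this coordinate map is uniformly definable (in particular, the choice of $n$); once Robinson's result and the existence of the $\Z$-basis are in hand, this is routine. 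Alternatively, the lemma follows immediately from the main theorem of \cite{AKNS}, which proves bi-interpretability with $\Z$ for every finitely generated infinite integral domain.
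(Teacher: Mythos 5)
Your proof takes essentially the same route as the paper's sketch: invoke Robinson's result to define $\Z$ inside the ring, use the free $\Z$-module structure of $O_K$ to interpret the ring in $\Z$, handle the $S$-localization via a quotient construction, reduce the bi-interpretation check using Lemma \ref{lemma:self _interpretation}, and fall back on \cite{AKNS} for the general statement. Two of the steps you call ``small variant'' or ``routine bookkeeping'' hide a bit more: defining $\Z$ directly inside $A=O_K[\pi^{-1}]$ is not a near-verbatim adaptation of Robinson's formula for $O_K$ (one would first need $O_K$ itself to be definable in $A$, which is genuine work), and the definability of $n\mapsto\pi^n$ — needed both for the equivalence relation $\pi^m x=\pi^n y$ in your interpretation of $A$ in $\Z$ and for the coordinate map in the reverse composition — rests on G\"odel's theorem (Theorem \ref{thm:Godel}), not on purely algebraic manipulation. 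Since the paper's own proof is equally terse and likewise cites \cite{AKNS} as the clean alternative, these are not defects relative to the source, just points at which your ``this is routine'' should be read as ``this is standard but nontrivial.''
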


The following is a well known theorem of G\"{o}del:

\begin{theorem}\label{thm:Godel} Every recursive function $\N \rightarrow \Z^m$ and every  recursively enumerable subset $B \subseteq \Z^m$ are definable in $\Z$. 
\end{theorem}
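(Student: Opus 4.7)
The plan is the classical one, going back to G\"odel. I will first show that every recursively enumerable subset $B\subseteq\N^m$ is $\Sigma_1$-definable in the structure $(\N,+,\times,0,1)$, and then transfer the result to $\Z$.

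The key technical device is G\"odel's $\beta$-function, which allows one to code an arbitrary finite sequence of natural numbers by a pair of natural numbers. Define
\[
\beta(a,b,i):=a\ \mathrm{mod}\ (1+(i+1)b).
\]
Using the Chinese Remainder Theorem, one shows that, for every finite sequence $(c_0,\ldots,c_k)$ of natural numbers, there exist $a,b\in\N$ with $\beta(a,b,i)=c_i$ for each $0\le i\le k$; the moduli $1+(i+1)b$ are pairwise coprime once $b$ is chosen to be a multiple of $k!\cdot\max_i c_i$. The graph of $\beta$ is visibly $\Sigma_1$-definable in $(\N,+,\times)$ since it is defined by a bounded quantifier formula over the arithmetic operations.

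Using $\beta$, one defines each primitive recursive function by a $\Sigma_1$ formula in the standard way: the value of a function defined by primitive recursion at $(n,\vec x)$ is expressed by asserting the existence of a coding pair $(a,b)$ such that $\beta(a,b,0)$ equals the base case and $\beta(a,b,i+1)$ is obtained from $\beta(a,b,i)$ and $\vec x$ by the recursion step, with $\beta(a,b,n)$ being the output. Adding unbounded search (Kleene's $\mu$-operator) preserves $\Sigma_1$-definability, so every partial recursive function has $\Sigma_1$-definable graph in $\N$. A recursively enumerable set is by definition the projection of a recursive set (equivalently, the domain of a partial recursive function), hence $\Sigma_1$-definable in $(\N,+,\times)$. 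A recursive function $\N\to\N^m$ has a recursive graph, hence is definable.

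To pass from $\N$ to $\Z$, note that by Lagrange's four squares theorem
\[
\N=\left\{n\in\Z\ \Big|\ \exists\, a,b,c,d\in\Z,\ n=a^2+b^2+c^2+d^2\right\},
\]
so $\N$ is definable in $(\Z,+,\times)$ and the operations $+,\times$ on $\N$ are the restrictions of those on $\Z$. Therefore any formula in the language of rings interpreted in $\N$ can be relativized to $\Z$ by bounding each quantifier to the definable subset $\N$, yielding definability of any r.e. $B\subseteq\N^m$ inside $\Z$. An arbitrary r.e. set $B\subseteq\Z^m$ is then obtained by splitting according to sign patterns: writing each $x_i=\varepsilon_i|x_i|$ with $\varepsilon_i\in\{\pm1\}$, $B$ is a finite union of images of r.e. subsets of $\N^m$ under sign-change maps, and each of these is definable. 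Definability of recursive functions $\N\to\Z^m$ follows by the same argument applied to their graphs. The main (and only really substantive) step is the construction of $\beta$ and the verification of its properties via CRT; everything after that is bookkeeping.
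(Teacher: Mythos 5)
The paper does not give a proof of this theorem; it simply cites it as a classical result of G\"odel (with the textbook reference [Sm] in the bibliography presumably intended as a source). Your sketch is the standard modern proof and is correct: the $\beta$-function argument (via CRT, with the moduli $1+(i+1)b$ made pairwise coprime by taking $b$ divisible by $k!$ and large enough to dominate the $c_i$) shows that primitive recursion is $\Sigma_1$-expressible, the $\mu$-operator preserves $\Sigma_1$, and Lagrange's four-squares theorem lets you relativize from $\N$ to $\Z$ and then handle arbitrary sign patterns. The one point worth stating explicitly, since it is easy to elide, is that the relativization step works because the defining formula for $\N\subseteq\Z$ is existential, so a $\Sigma_1$ formula over $\N$ becomes a genuine first-order formula over $\Z$ after bounding all quantifiers to $\N$; this is exactly the form of definability the paper needs. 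There is nothing to compare against here since the source gives no argument, but your proof is the one G\"odel's theorem rests on.
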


\begin{lemma}\label{lemma:Godel2}
 Let $A$, $\Gr$ and $\Gamma$ be as in Setting \ref{nota:Gamma}. If $G=\Spin_q$, assume further that $n \geq 9$. There exist an interpretation  $\mathscr{R}=(R,r)$ of $\mathbb{Z}$ in $\Gamma$, an interpretation $\mathscr{E}=(E,e)$  of $\Gamma $ in $\Z$ and an infinite order element $\alpha \in \Gamma$ such that for  $\mathscr{H}=(H,h):=\mathscr{E}  \circ \mathscr{R}$ the restriction of $h^{-1}$ to $\langle \alpha\rangle$ is definable. 
\end{lemma}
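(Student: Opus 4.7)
The plan is to extract $\mathscr{R}$ directly from Theorem \ref{thm:interpretation}, construct $\mathscr{E}$ from a finite presentation of $\Gamma$ in which the element $\alpha$ appears explicitly as a generator, and then use G\"odel's Theorem \ref{thm:Godel} to push definability across both interpretations. Concretely, let $\alpha \in \Gamma$ be the infinite order element supplied by Theorem \ref{thm:interpretation}. Since $\langle \alpha \rangle$ is definable and $(\alpha^r, \alpha^s) \mapsto \alpha^{rs}$ is definable (while group multiplication realizes addition), the pair $\mathscr{R} = (R, r)$ with $R = \langle \alpha \rangle$ and $r(\alpha^k) = k$ is an interpretation of the ring $\mathbb{Z}$ in $\Gamma$.

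For $\mathscr{E}$, we use that $\Gamma$ is finitely presented (cf.~\cite[Theorem 5.11]{PR}) to fix a presentation $\Gamma = \langle x_0, x_1, \dots, x_d \mid w_1, \dots, w_m \rangle$ in which the image of $x_0$ is $\alpha$; this can always be arranged by enlarging a finite generating set by one element and adjoining a single defining relation that expresses it in terms of the old generators. Let $\pi: F_{d+1} \to \Gamma$ be the presentation map. Encode reduced words in $F_{d+1}$ as tuples of integers; the universe $W$ of codes and the concatenation-with-reduction operation on $W$ are primitive recursive, hence definable in $\mathbb{Z}$ by Theorem \ref{thm:Godel}. Moreover $\ker \pi \subset W$, as the set of codes of words in the normal closure of $\{w_1,\dots,w_m\}$, is recursively enumerable, hence also definable in $\mathbb{Z}$ by Theorem \ref{thm:Godel}. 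Consequently the equivalence $w \sim w' \Leftrightarrow w(w')^{-1} \in \ker\pi$ is definable, and the quotient $E := W/{\sim}$ with the induced group operation, together with the bijection $e: [w] \mapsto \pi(w)$, furnishes an interpretation $\mathscr{E} = (E, e)$ of $\Gamma$ in $\mathbb{Z}$.

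Form the composition $\mathscr{H} = \mathscr{E} \circ \mathscr{R} = (H, h)$, where $H = \mathscr{R}^* E$ and $h = e \circ r_E$. The map $\sigma: \mathbb{Z} \to E$ defined by $\sigma(k) = [x_0^k]$ is primitive recursive, and so is definable in $\mathbb{Z}$ by Theorem \ref{thm:Godel}; its translation $\mathscr{R}^*\sigma: \langle \alpha \rangle \to H$ is therefore a definable function in $\Gamma$. By functoriality of $\mathscr{R}^*$ and the commuting squares that define it,
\[
h\bigl(\mathscr{R}^*\sigma(\alpha^k)\bigr) \;=\; e\bigl(\sigma(r(\alpha^k))\bigr) \;=\; e([x_0^k]) \;=\; \pi(x_0^k) \;=\; \alpha^k
\]
for every $k \in \mathbb{Z}$, so $\mathscr{R}^*\sigma$ coincides with $h^{-1}|_{\langle \alpha \rangle}$, proving the latter is definable. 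The only genuine point of care in the whole argument is arranging for the specific $\alpha$ of Theorem \ref{thm:interpretation} to coincide with one of the generators in the chosen presentation of $\Gamma$; this is handled by the standard device above, after which the rest is a routine transfer of G\"odel's definability theorem across both interpretations.
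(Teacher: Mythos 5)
Your proof is correct and follows essentially the same strategy as the paper's: take $\mathscr R$ from Theorem \ref{thm:interpretation}, build an interpretation $\mathscr E$ of $\Gamma$ in $\mathbb Z$, observe that $m \mapsto e^{-1}(\alpha^m)$ is recursive and hence definable by G\"odel's theorem, and then transport that definable map through $\mathscr R^*$. The one place you diverge is in the construction of $\mathscr E$: the paper interprets $\Gamma$ in $\mathbb Z$ by composing the bi-interpretation of $A$ with $\mathbb Z$ (Lemma \ref{lemma:ring_int_bi}) with the tautological matrix realization of $G(A)$ in $A$, and then cuts out $\Gamma$ as a recursively enumerable subset, whereas you use a finite presentation of $\Gamma$ in which $\alpha$ appears as a generator and take the quotient of the set of (G\"odel-coded) words by the r.e.\ kernel. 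Both are standard G\"odelian constructions; your route has the small advantage of not invoking Lemma \ref{lemma:ring_int_bi} here, at the modest cost of the bookkeeping needed to adjoin $\alpha$ as an extra generator so that $[x_0^k]$ is a primitive-recursive function of $k$. The crucial final step --- that $\mathscr R^* p$ (resp.\ $\mathscr R^*\sigma$) is definable in $\Gamma$ and agrees with $h^{-1}$ on $\langle\alpha\rangle$ --- is identical in both proofs.
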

\begin{proof}
Theorem \ref{thm:interpretation} implies that there is  an infinite order element element $\alpha \in \Gamma$ 
 and an interpretation $\mathscr{R}=(R,r)$ of $\mathbb{Z}$ in $\Gamma$ such that $R=\langle \alpha \rangle$ and $r(\alpha^m)=m$. 
 
 Lemma \ref{lemma:ring_int_bi} gives an interpretation $\mathscr{B}=(B,b)$ of $A$ in $\Z$. Let $\mathscr{C}:=(C,c)$ be the standard interpretation of $\Gr(A)$ in $A$. Then $\mathscr{D}=(D,d):=\mathscr{C} \circ \mathscr{B}$ is an interpretation of $\Gr(A)$ in $\Z$.  Since $\Gamma$ is finitely generated, $E:=d^{-1}(\Gamma)$ is  recursively enumerable subset of $D$. Denote $e:=d\restriction_E$. Theorem \ref{thm:Godel} implies that:
 \begin{enumerate}[a)]
 	\item  $E$ is definable so $\mathscr{E}=(E,e)$ is an interpretation of $\Gamma$ in $\Z$.
 	\item\label{item:power_def} The map $p:\Z \rightarrow E$ given by $p(m) = e^{-1}(\alpha^m)$ is definable in $\Z$. 
 \end{enumerate}
 Item \ref{item:power_def} implies that the map $\mathscr{R}^*p:R \rightarrow \mathscr{R}^*E$ is definable in $\Gamma$. Denote $\mathscr{H}=(H,h):=\mathscr{E} \circ \mathscr{R}$. Then $H=\mathscr{R}^*E$  and the restriction of $h^{-1}$ to $\alpha$ is $\mathscr{R}^*p$.
\end{proof}

 The following is Theorem 2 of \cite{PySz}. It can also be deduced from Theorem 2.3 of \cite{BGT} under the assumption that $X$ is symmetric.

\begin{theorem}[\cite{PySz}]\label{thm:BGT}
	Let $G$ be a finite simple group of Lie type of rank $r$ and $X$ a generating set of L. Then either $X^3 =G$ or
$|X^3| > |A|^{1+\epsilon}$ where $\epsilon$ depends only on $r$.
\end{theorem}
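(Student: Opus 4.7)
The plan is to argue by contradiction via the approximate-groups framework that underlies the Pyber--Szab\'o / Breuillard--Green--Tao proof. Suppose $X$ generates $G$, $X^3\neq G$, and $|X^3|\leq |X|^{1+\epsilon}$ for a very small $\epsilon=\epsilon(r)$ to be chosen later. A Pl\"unnecke--Ruzsa covering argument lets me replace $X$ by a symmetric set $A\ni 1$ of comparable size satisfying $|A^k|\leq |A|^{1+\eta_k(\epsilon)}$ for every fixed $k$, where $\eta_k(\epsilon)\to 0$ as $\epsilon\to 0$; equivalently, $A$ is a $K$-approximate group for $K=|X|^{O(\epsilon)}$. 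The task becomes: show that any such $A$ which generates a simple group of Lie type of rank $r$ must satisfy $A^{C(r)}=G$, which contradicts $X^3\ne G$ once $\epsilon$ is small enough to absorb the polynomial overhead.

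The core technical input is an \emph{escape from subvarieties} estimate in the spirit of Eskin--Mozes--Oh and Larsen--Pink: if $V\subsetneq G$ is a proper subvariety defined over $\overline{\mathbb{F}_q}$ of degree bounded only in terms of $r$, then $|A\cap V|\leq |A|^{1-\delta}$ for some $\delta=\delta(r)>0$. The idea is that $\langle A\rangle = G$ forces a bounded-length word in $A$ outside $V$; pigeonholing over translates of $V\cap A^m$ by such a word then produces multiplicative expansion that contradicts the approximate-group hypothesis unless $|A\cap V|$ is small.

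Using this, I extract a regular semisimple $g\in A$; its connected centralizer $T=C_G(g)^\circ$ is a maximal torus, which is a proper subvariety of $G$, so $|A\cap T|\leq |A|^{1-\delta}$. Commutator computations $[g,h]$ for $h\in A$, combined with Chevalley's commutator formula, show that short words in $A$ meet many distinct maximal tori whose conjugates cover a Zariski-dense set in $G$. Comparing the escape bounds for the different tori, via Larsen--Pink-type inequalities, one deduces that $A^{O_r(1)}$ cannot be contained in any proper algebraic subgroup of $G$ and in fact contains a set of bounded index in a Borel subgroup; a Bruhat decomposition argument then promotes this to $A^{O_r(1)}=G$, completing the contradiction.

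The main obstacle is the uniformity of all constants in $q$: the dimensions, the degrees of the auxiliary subvarieties, and the exponent $\delta$ must depend only on the Lie rank $r$, not on the residue field. This requires (i) a Bezout-type effective algebraic geometry input, using that a simple group of Lie type of rank $r$ embeds into $\GL_{n(r)}$ by equations of degree bounded by $d(r)$, and (ii) a quantitative Zariski-density statement for regular semisimple elements with an effective lower bound on the density of their $G$-conjugates. These uniformities feed back into the escape lemma and make the whole argument survive as $q\to \infty$.
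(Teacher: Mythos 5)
The paper does not prove this statement; it is Theorem 2 of Pyber--Szab\'o \cite{PySz}, cited as a black box (with the remark that one can alternatively deduce it from Theorem 2.3 of \cite{BGT} when $X$ is symmetric). So there is no ``paper's own proof'' to compare against.

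What you have written is a high-level roadmap of the Breuillard--Green--Tao / Pyber--Szab\'o argument, and the ingredients you name --- passage to a symmetric approximate group via Pl\"unnecke--Ruzsa, escape from subvarieties, Larsen--Pink non-concentration inequalities, extraction of regular semisimple elements and maximal tori, a Bruhat-type closing argument, and the need for rank-uniform effective bounds on degrees and dimensions --- are the correct pillars. But as a proof it has genuine gaps rather than merely omitted routine details. The escape estimate $|A\cap V|\le |A|^{1-\delta}$ is not what one proves in one step from a pigeonhole over translates; what is actually available is the dimensional Larsen--Pink inequality $|A\cap V|\lesssim |A^{O(1)}|^{\dim V/\dim G}$ together with a separate escape-from-subvarieties lemma for generating sets, and wiring these together requires a delicate induction on dimension that is the real content of the theorem. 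The claim that short commutator words meet ``many distinct maximal tori whose conjugates cover a Zariski-dense set'' and that one thereby obtains a set of bounded index in a Borel is not how either published proof closes: both go through a careful count of the conjugacy classes of tori meeting $A^{O(1)}$ and a comparison of upper and lower bounds for $|A\cap T|$, not through Borel subgroups. Finally, the uniformity in $q$ of the degree and dimension bounds is correctly flagged as the main obstacle, but it is precisely what makes the theorem deep; asserting that a Bezout-type input and an ``effective Zariski-density for regular semisimple elements'' resolve it does not supply an argument. In short, the structure of your sketch is sound and recognizably that of \cite{BGT} and \cite{PySz}, but it is a description of the strategy rather than a proof, and several of the steps as phrased (the escape estimate, the Borel step) would not go through as written.
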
 

\begin{lemma}\label{lemma:pro_boinded_gen} Let $\Gamma$ be as in Setting \ref{nota:Gamma} and let $\alpha \in \Gamma$ be of infinite order. There exist infinitely many prime ideals  $\mathfrak{p} \lhd A$ such that the order of $\alpha$ in $\Gamma/\Gamma[\mathfrak{p}]$ is at least $|A/\mathfrak{p}|^{\frac{1}{3[K:\Q]}}$.
\end{lemma}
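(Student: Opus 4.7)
The plan is to produce, on the eigenvalue side, infinitely many primes at which some eigenvalue of $\alpha$ has large multiplicative order, and then to note that this order divides the order of $\bar\alpha$ in $\Gamma/\Gamma[\mathfrak{p}]$. To start, consider the Jordan decomposition $\alpha=\alpha_s\alpha_u$ in $\Gr(K)$, which exists because $\Gr_K$ is reductive and $K$ has characteristic zero. Fix an embedding $\Gr\hookrightarrow \GL_D$ and split into two cases.

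If $\alpha_u\neq 1$, then for all but finitely many primes $\mathfrak{p}\lhd A$ the group scheme $\Gr$ is smooth at $\mathfrak{p}$, the residue characteristic $p$ exceeds $D$, and the unipotent part of $\bar\alpha$ coincides with $\overline{\alpha_u}$ and is non-trivial. Any non-trivial unipotent element of $\GL_D(A/\mathfrak{p})$ has order divisible by $p$, so $\mathrm{ord}(\bar\alpha)\ge p$. Since $|A/\mathfrak{p}|=p^f$ with $f\le [K:\Q]$, this yields $\mathrm{ord}(\bar\alpha)\ge |A/\mathfrak{p}|^{1/[K:\Q]}$, which is stronger than required.

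Otherwise $\alpha=\alpha_s$ is semisimple, and, having infinite order, it must have an eigenvalue $\lambda\in \bar K$ that is not a root of unity. Set $L=K(\lambda)$, so that $[L:\Q]\le D[K:\Q]$, and let $B$ be the ring of integers of $L$ localized at the places above $S$. Since $\alpha\in \Gr(A)\subseteq \GL_D(A)$, the characteristic polynomial of $\alpha$ has coefficients in $A$, hence $\lambda\in B$. I now apply the following standard counting argument. For each $n\ge 1$ the element $\lambda^n-1\in B$ is non-zero and its $L/\Q$-norm satisfies $|N_{L/\Q}(\lambda^n-1)|\le C^n$ for some constant $C$ depending only on $\lambda$ (bound each archimedean conjugate $|\sigma(\lambda)^n-1|$). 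Since every prime of $B$ has residue field of order $\ge 2$, the number of prime ideals of $B$ dividing $\lambda^n-1$ is at most $\log_2 |N_{L/\Q}(\lambda^n-1)|=O(n)$. Summing over $n\le M$, the number of primes $\mathfrak{P}\lhd B$ for which $\mathrm{ord}(\lambda\bmod\mathfrak{P})\le M$ is $O(M^2)$.

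Set $M=x^{1/(3[K:\Q])}$. A prime $\mathfrak{p}\lhd A$ of norm at most $x$ for which every $\mathfrak{P}\mid\mathfrak{p}$ satisfies $\mathrm{ord}(\lambda\bmod\mathfrak{P})\le M$ contributes at least one such $\mathfrak{P}$ to the $O(M^2)=O(x^{2/(3[K:\Q])})$ bound above. On the other hand, by the prime number theorem for $K$, there are $\sim x/\log x$ primes $\mathfrak{p}\lhd A$ of norm at most $x$. Because $2/(3[K:\Q])\le 2/3<1$, the difference tends to infinity, so there are arbitrarily large $x$ producing a prime $\mathfrak{p}\lhd A$ of norm at most $x$ admitting some $\mathfrak{P}\mid\mathfrak{p}$ with $\mathrm{ord}(\lambda\bmod\mathfrak{P})>|A/\mathfrak{p}|^{1/(3[K:\Q])}$. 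Since $\lambda\bmod\mathfrak{P}$ is an eigenvalue of $\bar\alpha$ over $B/\mathfrak{P}$, the relation $\bar\alpha^N=1$ forces $(\lambda\bmod\mathfrak{P})^N=1$, so $\mathrm{ord}(\lambda\bmod\mathfrak{P})$ divides the order of $\bar\alpha$ in $\Gamma/\Gamma[\mathfrak{p}]$, completing Case~2. The only real content is the norm bound on prime divisors of $\lambda^n-1$, and the exponent $3[K:\Q]$ appears precisely because one needs the $O(M^2)$ count to beat the $x/\log x$ total prime count.
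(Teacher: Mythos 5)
Your proof is correct and follows essentially the same approach as the paper's: an easy estimate from unipotence in one case, and in the other case the counting argument based on an eigenvalue $\lambda$ that is not a root of unity, bounding primes modulo which $\lambda$ has small multiplicative order via $\log_2|N(\lambda^n-1)|=O(n)$, and comparing a quadratic count with the prime-counting function. The exponent $3[K:\Q]$ plays the same role in both proofs.

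The two points where you diverge are cosmetic. First, you split on $\alpha_u\neq 1$ versus $\alpha=\alpha_s$, whereas the paper splits on ``$\alpha$ virtually unipotent'' (i.e.\ $\alpha_s$ of finite order) versus not; your unipotent case is in fact larger, but the argument there (reduction of a nontrivial unipotent has order divisible by $p$) goes through unchanged, and both splits cover everything. Second, you count prime ideals $\mathfrak{P}\lhd B$ with $\mathrm{ord}(\lambda\bmod\mathfrak{P})\le M$ and then push forward to primes of $A$, while the paper counts the rational primes dividing $N(\lambda^m-1)$ and observes there are at most $[K:\Q]$ primes of $A$ over each; these give the same $O(M^2)$ bound.

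One small imprecision, which the paper's own write-up shares: with $B$ the ring of $T$-integers, $\lambda^n-1$ may have negative valuation at places of $T$, so the inequality ``number of prime divisors of $\lambda^n-1$ in $B$ is at most $\log_2|N_{L/\Q}(\lambda^n-1)|$'' is not quite literal. The correction is that the $T$-denominator of $\lambda^n-1$ has norm at most $C_1^n$ for a constant $C_1$ depending only on $\lambda$, so the count is still $\le \log_2\bigl(|N_{L/\Q}(\lambda^n-1)|\,C_1^n\bigr)=O(n)$, and your $O(M^2)$ bound and the rest of the argument are unaffected.
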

\begin{proof} 
	Assume first that $\alpha$ is virtually-unipotent, Then there exists $m>0$ such that  $\alpha^m$ is unipotent. If $\alpha^m \notin  \Gamma[\mathfrak{p}]$ then the order of the image of $\alpha^m$ in $\Gamma/\Gamma[\mathfrak{p}]$ is at least $p=\mathrm{char}(A/\mathfrak{p})$. The claim follows since $p^{[K:\Q]} \ge |A/\mathfrak{p}|$. 

	For every rational prime $p$ there exists a prime ideal of $\mathfrak{p} \lhd A$ for which  $ \mathrm{char}(A/\mathfrak{p})=p$. Moreover, if $\mathfrak{p} \lhd A$  and $ \mathrm{char}(A/\mathfrak{p})=p$ then $|A/\mathfrak{p}| \le p^{[K:\Q]}$. It follows from the prime number theorem that for a large enough $m$, the number of prime ideals  $\mathfrak{p} \lhd A$ for which $|A/\mathfrak{p}| \le m^{3[K:\Q]}$  is at least $\frac{m^{3}}{6\ln m}$. Therefore, in order to prove the lemma it is enough to show that if $\alpha$ is not virtually-unipotent, then the number of prime ideals $\mathfrak{p} \lhd A$ for which the image of $\alpha$  in  $\Gamma/\Gamma[\mathfrak{p}]$ has order at most $m$, is bounded by a quadratic function of $m$. In order to show this it is enough to show that the number of prime ideals $\mathfrak{p} \lhd A$ for which the image of $\alpha$  in  $\Gamma/\Gamma[\mathfrak{p}]$ has order exactly $m$, is bounded by a linear function of $m$.

	Assume that  $\alpha$ is not virtually-unipotent. Then $\alpha$ has an eigenvalue $\lambda$  which is not a root of unity.
	Let $E$ be a finite extension of $K$ which contains $\lambda$ and let $B$ be the ring of integers of  $E$. Then $B$ contains $\lambda$.  If $\mathfrak{p} \lhd A$ is a prime ideal and the order of the image of $\alpha$ in $\Gamma/\Gamma[\mathfrak{p}]$ is $m$, then $\lambda^m -1 \in \mathfrak{p}B$. If follows that $|B/ \mathfrak{p}B|$ divides $|B/(\lambda^m -1)B|=N_{E/\Q}(\lambda^m-1)$. In particular,  $\mathrm{char}(A/\mathfrak{p})$ divides $N_{E/\Q}(\lambda^m-1)$. The number of distinct prime divisors of $N_{E/\Q}(\lambda^m-1)$ is at most $\log_2(N_{E/\Q}(\lambda^m-1))$ so it is bounded by a linear function in $m$. The result follows since for every prime $p$, there exists at most $[K:\Q]$ prime ideals $\mathfrak{p} \lhd A$ such that  $\mathrm{char}(A/\mathfrak{p})=p$.

\end{proof}

\begin{corollary}\label{cor:set_proj} Let $\Gamma$ be as in Setting \ref{nota:Gamma} and let $\alpha \in \Gamma$ be of infinite order.  There are $\beta_1,\ldots,\beta_d \in \Gamma$ such that the set $\prod_{1 \le i \le d}\langle \beta_{i}\alpha\beta_i^{-1}\rangle$ projects onto $\Gamma/\Gamma[\mathfrak{p}]$, for infinitely many prime ideals  $\mathfrak{p} \lhd A$.
\end{corollary}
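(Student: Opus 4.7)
Plan: I would combine Margulis' Normal Subgroup Theorem, strong approximation, Lemma \ref{lemma:pro_boinded_gen}, and Theorem \ref{thm:BGT}. The goal is first to produce finitely many $\beta_1,\dots,\beta_m\in\Gamma$ whose conjugates of $\alpha$ generate $\bar G:=\Gamma/\Gamma[\mathfrak{p}]$ for almost all $\mathfrak{p}$, and then to bound (uniformly in $\mathfrak{p}$, on an infinite subset) the number of products of the corresponding cyclic subgroups needed to exhaust $\bar G$.

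First, since $\alpha\in\Gamma$ has infinite order and $\Gamma$ is an irreducible higher-rank arithmetic lattice, Margulis' Normal Subgroup Theorem implies that the normal closure $N$ of $\alpha$ in $\Gamma$ has finite index in $\Gamma$; by the Congruence Subgroup Property, $N$ contains some principal congruence subgroup $\Gamma[\mathfrak{q}]$. Being a finite-index subgroup of the finitely generated group $\Gamma$, $N$ is itself finitely generated, so one can find $\beta_1,\dots,\beta_m\in\Gamma$ with $N=\langle \beta_i\alpha\beta_i^{-1}:i\le m\rangle$. Strong approximation gives $\Gamma[\mathfrak{p}]\cdot\Gamma[\mathfrak{q}]=\Gamma$ for every $\mathfrak{p}$ coprime to $\mathfrak{q}$, hence $\Gamma[\mathfrak{p}]\cdot N=\Gamma$, so the images $\bar\beta_i\bar\alpha\bar\beta_i^{-1}$ generate $\bar G$ for all such $\mathfrak{p}$.

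Next, Lemma \ref{lemma:pro_boinded_gen} yields an infinite set $\mathcal{I}$ of primes (which we may take coprime to $\mathfrak{q}$) with $|\langle\bar\alpha\rangle|\ge|A/\mathfrak{p}|^{1/(3[K:\Q])}$. Since $|\bar G|\le C|A/\mathfrak{p}|^{\dim G}$ for a uniform $C$, this gives $|\langle\bar\alpha\rangle|\ge c|\bar G|^{\delta}$ for uniform $c,\delta>0$. Setting $\bar S_i:=\langle\bar\beta_i\bar\alpha\bar\beta_i^{-1}\rangle$, the set $\bar X:=\bigcup_{i=1}^m \bar S_i$ is a symmetric generating subset of $\bar G$ containing the identity, of size at least $c|\bar G|^{\delta}$. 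Passing to the simple quotient $\pi:\bar G\to\bar G/Z(\bar G)$ (a finite simple group of Lie type of rank $\le\rank G$ for all sufficiently large $\mathfrak{p}$), $\pi(\bar X)$ is a symmetric generating set of size $\ge c'|\bar G/Z(\bar G)|^{\delta}$. Iterating Theorem \ref{thm:BGT} produces a constant $N_0=N_0(\delta,\rank G)$ with $\pi(\bar X)^{N_0}=\bar G/Z(\bar G)$. A cycling argument---setting $\gamma_i:=\beta_{((i-1)\bmod m)+1}$---then gives
\[
\prod_{i=1}^{mN_0}\langle\gamma_i\alpha\gamma_i^{-1}\rangle=(\bar S_1\bar S_2\cdots\bar S_m)^{N_0}\supseteq \bar X^{N_0},
\]
since inserting an element of $\bar S_j$ in the $j$-th slot of each of the $N_0$ blocks (and the identity elsewhere) realizes any product of $N_0$ elements of $\bar X$. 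In particular, the left-hand side projects onto $\bar G/Z(\bar G)$ for every $\mathfrak{p}\in\mathcal{I}$ large enough.

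The main obstacle is upgrading this to a full projection onto $\bar G$ itself, not merely $\bar G/Z(\bar G)$: Theorem \ref{thm:BGT} is formulated for simple groups of Lie type, whereas our $\bar G$ is only quasi-simple, with bounded center $|Z(\bar G)|\le|Z(G)|$. To close the gap I would invoke a quasirandom product-set estimate for finite quasi-simple groups of Lie type (in the spirit of Gowers, Nikolov--Pyber, or Breuillard--Green--Tao): since $|\bar X^{N_0}|\ge|\bar G|/|Z(\bar G)|\ge|\bar G|^{1-o(1)}$ as $|\bar G|\to\infty$, multiplying $\bar X^{N_0}$ by itself a bounded number of further times (with the bound depending only on $\rank G$ and $|Z(G)|$) suffices to cover all of $\bar G$. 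Combining this with the cycling argument gives $d=mN_1$ and the elements $\gamma_1,\dots,\gamma_d\in\Gamma$ such that $\prod_{i=1}^d\langle\gamma_i\alpha\gamma_i^{-1}\rangle=\bar G$ for every $\mathfrak{p}\in\mathcal{I}$ large enough, which is the desired conclusion.
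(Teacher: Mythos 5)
Your proposal is correct and follows the same skeleton as the paper's (one-line) proof: Margulis' Normal Subgroup Theorem to get $\langle \gcl_\Gamma(\alpha)\rangle$ of finite index and hence generated by finitely many conjugates of $\alpha$; strong approximation to make those conjugates generate $\Gamma/\Gamma[\mathfrak{p}]$ for almost all $\mathfrak{p}$; Lemma~\ref{lemma:pro_boinded_gen} to bound $|\langle\bar\alpha\rangle|$ from below by a fixed positive power of $|\Gamma/\Gamma[\mathfrak{p}]|$ on an infinite set of primes; and the Pyber--Szab\'o product theorem (Theorem~\ref{thm:BGT}) iterated a bounded number of times. The cycling trick $(\bar S_1\cdots\bar S_m)^{N_0}\supseteq\bar X^{N_0}$ is exactly the observation needed to turn a bounded power of a union of cyclic subgroups into a bounded product of cyclic subgroups, and it is correct.

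The one place where you go beyond the paper is the quasi-simple/center step, and you are right to flag it. The paper cites Lemma~\ref{lem:simple.quotients} (which concerns the \emph{simple} quotient $\Gamma/\Gamma^*[\mathfrak{p}]$) and Theorem~\ref{thm:BGT} (stated only for finite simple groups), so the argument as written literally yields surjection onto $\Gamma/\Gamma^*[\mathfrak{p}]$; that is also the only form needed where the corollary is invoked in the proof of Theorem~\ref{thm:main.biint} (condition~(a) there is modulo $\Gamma^*[j]$). Your Gowers-type quasirandomness step --- noting $|\bar X^{N_0}|\ge|\bar G|/|Z(\bar G)|$ while the minimal nontrivial complex degree of the quasi-simple group $\bar G$ is $\gg|\bar G|^{\delta}$, so three more multiplications cover $\bar G$ --- is a clean way to upgrade to the literal statement with $\Gamma[\mathfrak{p}]$, at the cost of invoking a result not explicitly cited in the paper. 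So your argument is a correct and slightly more complete version of the intended proof; the two approaches differ only in whether one works modulo the center or lifts past it.

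Two small points of hygiene: "$\Gamma[\mathfrak{p}]\cdot\Gamma[\mathfrak{q}]=\Gamma$ for every $\mathfrak{p}$ coprime to $\mathfrak{q}$" should read "for all but finitely many such $\mathfrak{p}$" (one needs $\Gamma[\mathfrak{q}]$ to have full closure at $\mathfrak{p}$, which holds away from a finite bad set); and in the quasirandom step you in fact only need one further application of Gowers' trick (i.e.\ $\bar X^{3N_0}=\bar G$), not an unbounded iteration.
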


\begin{proof} By Margulis's Normal Subgroup Theorem, $[\Gamma:\langle \gcl_\Gamma(\alpha)\rangle]<\infty$, so $\langle \gcl_\Gamma(\alpha)\rangle$ is generated by finitely many conjugates of $\alpha$. By the strong approximation theorem, for all but finitely many prime ideals $\mathfrak{p}$, the normal subgroup generated by $\alpha$  projects onto $\Gamma/\Gamma[\mathfrak{p}]$. The result follows from  Theorem \ref{thm:BGT}, Lemma \ref{lem:simple.quotients}, and Lemma \ref{lemma:pro_boinded_gen}.
\end{proof}

\begin{lemma}\label{lemma:fibres} Let $G$ be a group and let $L,M$ be normal subgroups of $G$. Let $\Phi \subseteq G$ be a symmetric generating subset which contains the identity and  projects onto $G/L$ and $G/M$. If the quotients maps $\Phi^2 \rightarrow  G/L$ and $\Phi^2 \rightarrow  G/M$ have the same fibers then $L=M$.

\end{lemma}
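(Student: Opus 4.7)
My plan is to construct a canonical bijection $\sigma\colon G/L \to G/M$ from the equal-fibers hypothesis, upgrade it to a group isomorphism, and then show by induction on word length in $\Phi$ that $\sigma(gL) = gM$ for every $g \in G$; the conclusion $L = M$ will then be immediate from the special case $\sigma(L) = M$, since $\ell \in L \iff \sigma(\ell L) = \sigma(L) \iff \ell \in M$.

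For the construction of $\sigma$, I would first observe that since $e \in \Phi$, we have $\Phi \subseteq \Phi^2$, so the two quotient maps $\pi_L \colon \Phi^2 \to G/L$ and $\pi_M \colon \Phi^2 \to G/M$ are each surjective (inheriting surjectivity from the maps $\Phi \to G/L$ and $\Phi \to G/M$). The hypothesis that $\pi_L$ and $\pi_M$ have identical fibers then says exactly that both maps factor through a common quotient of $\Phi^2$, giving a well-defined bijection $\sigma\colon G/L \to G/M$ characterized by $\sigma(xL) = xM$ for every $x \in \Phi^2$. In particular, $\sigma(\phi L) = \phi M$ for all $\phi \in \Phi$, and $\sigma(L) = M$.

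For the multiplicative structure, I would take $a, b \in G/L$ and pick $\phi, \psi \in \Phi$ with $a = \phi L$ and $b = \psi L$; then $ab = \phi\psi L$ with $\phi\psi \in \Phi^2$, so $\sigma(ab) = \phi\psi M = \sigma(a)\sigma(b)$. Thus $\sigma$ is a group isomorphism. Finally, a straightforward induction on $n$ using $G = \bigcup_{n \geq 1} \Phi^n$ extends the identity $\sigma(gL) = gM$ from $g \in \Phi$ to all $g \in G$: if $\sigma(gL) = gM$ and $\phi \in \Phi$, then $\sigma(g\phi L) = \sigma(gL)\sigma(\phi L) = (gM)(\phi M) = g\phi M$. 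There is no substantial obstacle here; the content of the lemma is really just to extract, from the purely set-theoretic hypothesis on fibers of the two restriction maps $\Phi^2 \to G/L$ and $\Phi^2 \to G/M$, enough algebraic information to promote it to an equality of normal subgroups, and the surjectivity of $\Phi$ onto each quotient together with closure of $\Phi^2$ under the relevant products is exactly what is needed.
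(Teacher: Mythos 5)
Your proof is correct and follows essentially the same route as the paper's: define a map $G/L \to G/M$ from the equal-fibers hypothesis (you use representatives in $\Phi^2$, the paper uses representatives in $\Phi$), then propagate the identity $\sigma(gL)=gM$ to all of $G$ by induction on word length over $\Phi$, and conclude $L=M$. The only cosmetic difference is that you isolate the homomorphism property of $\sigma$ as an intermediate step before the induction, whereas the paper folds this into the induction itself; the key use of the $\Phi^2$ (rather than $\Phi$) fiber equality is the same in both.
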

\begin{proof}
Assume that the fibers are the same. Define a map $\rho:G/L\rightarrow G/M$ by setting $\rho(\phi L):=\phi M$, for every $\phi \in \Phi$.  Every $g\in G$ is a product of elements in $\Phi\cup \Phi^{-1}$ and induction on the length of this product shows that  $\rho(gL)=gM$. It follows that $\rho$ is an isomorphism. In particular, $g \in L$ if and only if $gL \in \ker\rho$ if and only if $g \in M$.
\end{proof}


 \begin{proof}[Proof of Theorem \ref{thm:main.biint}]  
 
Lemma \ref{lemma:Godel2} implies that there exist an interpretation  $\mathscr{R}=(R,r)$ of $\mathbb{Z}$ in $\Gamma$, an interpretation $\mathscr{E}=(E,e)$  of $\Gamma $ in $\Z$ and an infinite order element $\alpha \in \Gamma$ such that for  $\mathscr{H}=(H,h):=\mathscr{E}  \circ \mathscr{R}$ the restriction of $h^{-1}$ to $\langle \alpha\rangle$ is definable. Lemma \ref{lemma:self _interpretation} states that every self interpretation of $\Z$ is trivial. Therefore,  in order to show that $\Gamma$ is bi-interpretable with $\Z$, it is enough to prove that the isomorphism $h^{-1}: \Gamma \rightarrow H$ is definable. Since $h ^{-1}$ is a homomorphism, if $D_1,D_2$ are definable subsets of $\Gamma$ and the restrictions of $h^{-1}$ to each $D_i$ is definable, then the restriction of $h^{-1}$ to $D_1^{-1}$ and $D_1 D_2$ are definable. 

Margulis's Normal Subgroup Theorem implies that non-trivial normal subgroups of $\Gamma$ have finite index. Since finite index subgroups of $\Gamma$ are finitely generated, we can choose $\beta_1,\ldots,\beta_d \in \Gamma$ such that $\Lambda:=\langle \beta_i\alpha\beta_i^{-1} \mid 1 \le i \le d\rangle$ is a normal finite index subgroup of $\Gamma$. Corollary \ref{cor:set_proj} allows us to further assume that   $D_1:= \langle \beta_1\alpha\beta_1^{-1}\rangle\langle \beta_2\alpha\beta_2^{-1}\rangle\cdots \langle \beta_{d}\alpha\beta_{d}^{-1}\rangle$ projects onto $\Gamma/\Gamma[\mathfrak{p}]$ for infinitely many prime ideals $\mathfrak{p} \lhd A$. Choose a finite representative set $D_2$ for $\Gamma/\Lambda$ which contains the identity and denote $D:=D_1\cup D_1^{-1}\cup D_2 \cup D_2^{-1}$. Since  the restriction of $h^{-1}$ to $\langle \alpha\rangle$ is definable, the previous paragraph implies that the restriction of $h^{-1}$ to $D^2$ is also definable.

Every ideal of $A$ is generated by two elements. Therefore, there exist  definable sets $I$ and $X \subseteq I \times H$  such that $\mathcal{H}:=\{h^{-1}(\Gamma^*[\mathfrak{q}]) \mid \mathfrak{q} \lhd A\}=\{X_i \mid i \in I\}$. Theorem \ref{thm:principal.definable} implies that there exists a uniformly definable collection $\mathcal{F}$ of normal congruence subgroups of $\Gamma$ which contains $\{\Gamma^*[\mathfrak{q}] \mid \mathfrak{q} \lhd A\}$. 
Let $\tilde{J}$ and $Y \subseteq \tilde{J} \times \Gamma$ be definable sets  such that $\mathcal{F}=\{Y_j \mid j \in \tilde{J}\}$. For every $i \in I $ and $j \in \tilde{J}$, denote $H[i]=X_i$ and $\Gamma^*[j]=Y_j$. 

 Let $J \subseteq \tilde{J}$ be the definable subset such that $j \in J$ if and only if the following condition holds:
\begin{enumerate}[a)]
	\item\label{item:bi_int_0}  $D\Gamma^*[j]/\Gamma^*[j]=\Gamma/\Gamma^*[j]$. 	
	 \setcounter{my_counter}{\value{enumi}}
	\end{enumerate}
By the construction of $D$ we get that
	\begin{enumerate}[a)]
	\setcounter{enumi}{\value{my_counter}}
		\item\label{item:bi_int_1} The set $\{\Gamma^*[j] \mid j \in  J \}$ is infinite.
	 \setcounter{my_counter}{\value{enumi}}
	\end{enumerate}

	We claim that the set $W:=\{(i,j) \in I \times J \mid H[{i}]=h^{-1}(\Gamma^*[j])\}$ is definable.  We first show that if the claim is true then  $h^{-1}$ is definable. Since $\Gamma$ is centerless, two elements of $\Gamma$ are equal if and only if they are equal modulo infinitely many $\Gamma^*[\mathfrak{q}]$. Thus, for every $\gamma\in \Gamma$ and $\eta \in H$, $h^{-1}(\gamma)=\eta$ if and only if, for every $(i,j)\in W$, there exists $\delta \in D$ such that $\delta\Gamma^*[j]=\gamma\Gamma^*[j]$ and $h^{-1}(\delta)H[i]=\eta H[i]$. Since $h^{-1}\restriction_D$ is definable, the later statement can be expressed as a first order statement. 

It remains to show that  $W$ is definable.  Let $U \subseteq I \times J $ be the set such that $(i,j) \in U$ if and only if the following two conditions hold:
\begin{enumerate}[a)]
\setcounter{enumi}{\value{my_counter}}
	\item\label{item:bi_int_5.5} $h^{-1}(D)H[i]/H[i]=H/H[i]$. 
	\item\label{item:bi_int_6} For every $\delta_1,\delta_2 \in D^2$, $\delta_1\Gamma^*[j]=\delta_2\Gamma^*[j]$ if and only if   $h^{-1}(\delta_1)H[i]=h^{-1}(\delta_2)H[i]$. \end{enumerate}

Since $h^{-1}$ is definable on $D^2$, conditions \ref{item:bi_int_5.5} and \ref{item:bi_int_6} are first order conditions. Thus, $U$ is definable. Clearly, $W \subseteq U$ so in order to complete the proof it is enough to show that $U \subseteq W$. 
Item \ref{item:bi_int_6} implies that, for  every $(i,j) \in U$, $h^{-1}$ induces a bijection between the fibers of the reduction  map $D^2 \rightarrow \Gamma/\Gamma^*[j]$ and the fibers of the reduction map $h^{-1}(D^2)\rightarrow H/H[i]$. Moreover, items \ref{item:bi_int_0} and \ref{item:bi_int_5.5} imply that, for every $(i,j) \in U$, $D$ and  $h^{-1}(D)$ project onto $\Gamma/\Gamma^*[j]$ and $H/H[i]$, respectively. Thus,  Lemma \ref{lemma:fibres} implies that $U \subseteq W$.
\end{proof}

\section{Width of squares and bi-interpretation}\label{sec:width}

The following Lemma is well known, we include a proof for the convenient of the reader.

\begin{lemma}\label{lemma:Z_int_fp}
	The ring $\Z$ interprets every finitely presented group. 
\end{lemma}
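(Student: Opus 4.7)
The plan is to realize a finitely presented group $G = \langle x_1, \ldots, x_n \mid r_1, \ldots, r_m\rangle$ as a definable quotient of (an interpretation of) the free group $F_n$ inside $\Z$, and then to invoke Theorem \ref{thm:Godel} at every step to move from ``recursively enumerable'' to ``definable''.

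First, I would interpret $F_n$ itself in $\Z$. Fix a computable G\"odel coding $\lceil \cdot \rceil$ of finite sequences over the alphabet $\{x_1^{\pm 1}, \ldots, x_n^{\pm 1}\}$ by integers. The set $W \subseteq \Z$ of codes of reduced words is recursive, and hence definable by Theorem \ref{thm:Godel}. Concatenation-followed-by-reduction gives a computable function $W \times W \to W$, whose graph is recursively enumerable and therefore definable; the same applies to the inversion map. Thus $(W, \cdot, {}^{-1}, 1)$ is a definable copy of $F_n$ inside $\Z$.

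Next, I would write down the normal closure $N$ of $\{r_1, \ldots, r_m\}$. An element $w \in F_n$ lies in $N$ precisely when there exist $k \in \N$, indices $i_1, \ldots, i_k \in \{1, \ldots, m\}$, signs $\epsilon_1, \ldots, \epsilon_k \in \{\pm 1\}$, and $g_1, \ldots, g_k \in F_n$ with $w = \prod_{j=1}^k g_j\, r_{i_j}^{\epsilon_j}\, g_j^{-1}$. Enumerating tuples $(k, (i_j), (\epsilon_j), (g_j))$ and using the already-definable multiplication on $W$, the code of the product is a recursive function of the data, so $N \subseteq W$ is a recursively enumerable subset of $\Z$, and hence definable by Theorem \ref{thm:Godel}.

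Finally, I would define the equivalence relation $E$ on $W$ by $u\,E\,v \iff uv^{-1} \in N$; this is definable because $N$ and the group operations on $W$ are. The imaginary $W/E$ is naturally a group under the induced operation, which is definable since the multiplication on $W$ is, and by construction $W/E \cong F_n / N = G$. No step is a genuine obstacle; the only thing to watch is that one really has a recursively enumerable (indeed, recursive) description of $W$, of the group operations on $W$, and of the normal closure $N$, so that Theorem \ref{thm:Godel} applies uniformly. In particular, the word problem for $G$ need not be solvable: the equivalence relation $E$ is only r.e., but r.e.\ is all we need for definability.
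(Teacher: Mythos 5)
Your proof is correct and follows essentially the same route as the paper: interpret a free group $F_n$ in $\Z$ via G\"odel coding of reduced words, observe that the kernel of the surjection $F_n\to G$ (equivalently, the normal closure $N$ of the relators) is recursively enumerable and hence definable by Theorem \ref{thm:Godel}, and take the quotient imaginary. The paper compresses all of this into two sentences and also offers an alternative way to get the free group (as a congruence subgroup $\SL_2(\Z;\mathfrak{p})$), but the underlying argument is the one you give; you have simply spelled out the details of why the group operations on $W$, the set $N$, and the induced quotient structure are definable.
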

\begin{proof}
	We first claim that for every $d \ge 1$,  the ring $\Z$ interprets a finitely generated free group of rank at least $d$. This could be proved directly using G\"{o}del's encoding or in the following way: For every $d$, there exists $\mathfrak{p}\lhd \Z$ such that  $\SL_2(\Z;\mathfrak{p})$ is a free group of rank at least $d$. Clearly, $\Z$ interprets  $\SL_2(\Z;\mathfrak{p})$.
	
	Let $\Gamma$ be a finitely presented group and let $\rho:F \rightarrow \Gamma$ be an epimorphism where $F$ is a free group of finite rank. Since $\Gamma$ is finitely presented,  $\ker \rho$ is a recursively enumerable subset of $F$. The result follows from Theorem \ref{thm:Godel}. 	 
\end{proof}

\begin{lemma}\label{lemma:with_to_int}
	Let $\Gamma$ be a finitely presented group which is bi-interpretable with the ring $\Z$. Let $k \ge 2$ and assume that the word $x^k[y,z]$ has finite width in $\Gamma$. If $\Delta$ is a finite central extension of $\Gamma$ by a group of size $k$ then $\Delta$ is bi-interpretable with $\Z$.
\end{lemma}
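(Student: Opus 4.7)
The plan is to establish a bi-interpretation between $\Delta$ and $\Gamma$, and then compose with the given bi-interpretation of $\Gamma$ and $\Z$. For the interpretation $\mathscr{P}$ of $\Gamma$ in $\Delta$: since $\Gamma$ is centerless, any $\delta \in Z(\Delta)$ projects to $Z(\Gamma) = \{1\}$, so $Z(\Delta) \subseteq Z$; combined with the central inclusion $Z \subseteq Z(\Delta)$, this gives $Z(\Delta) = Z$. Hence $Z$ is a definable finite subgroup of $\Delta$ and $\Gamma = \Delta/Z(\Delta)$ is interpretable in $\Delta$.

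For the interpretation $\mathscr{Q}$ of $\Delta$ in $\Gamma$: since $|Z|=k$, each value $\delta^k[\eta_1,\eta_2]$ of the word $w(x,y,z) = x^k[y,z]$ in $\Delta$ depends only on the images $\bar\delta, \bar\eta_1, \bar\eta_2$ in $\Gamma$ (as $(z\delta)^k = z^k \delta^k = \delta^k$ for $z \in Z$, and commutators of central translates are equal). By finite width $N$ of $w$ in $\Gamma$, every $\gamma \in \Gamma$ admits a representation $\gamma = \prod_{i=1}^N g_i^k [h_{i,1}, h_{i,2}]^{\epsilon_i}$, and lifting each factor canonically defines a well-defined element $L(g, h, \epsilon) \in \Delta$ lifting $\gamma$. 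Thus every $\delta \in \Delta$ has the form $\delta = L(g, h, \epsilon) \cdot z$ for some tuple $(g, h, \epsilon, z) \in \Gamma^{3N} \times \{\pm 1\}^N \times Z$. I would interpret $\Delta$ in $\Gamma$ as the quotient of this set of tuples by the equivalence ``represent the same $\delta$'', with the group operations transferred from $\Delta$; this yields $\mathscr{Q}$.

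Triviality of the compositions follows directly from the construction: $\mathscr{P} \circ \mathscr{Q}\colon \Gamma \to \Gamma$ has imaginary $\mathscr{Q}^{\ast}(\Delta/Z)$, whose natural bijection to $\Gamma$ is simply $(g, h, \epsilon, z) \mapsto \prod g_i^k [h_{i,1}, h_{i,2}]^{\epsilon_i}$, directly definable in $\Gamma$; and $\mathscr{Q} \circ \mathscr{P}\colon \Delta \to \Delta$ has imaginary $\mathscr{P}^{\ast}$ of the tuple imaginary, whose natural bijection to $\Delta$ is the ``lift canonically and multiply by $z$'' map, which is definable in $\Delta$ because the maps $\bar\delta \mapsto \tilde\delta^k$ and $(\bar\eta_1, \bar\eta_2) \mapsto [\tilde\eta_1, \tilde\eta_2]$ on $Z$-cosets are well-defined, and hence definable in $\Delta$. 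Composing the bi-interpretation $(\mathscr{P}, \mathscr{Q})$ of $\Delta$ with $\Gamma$ with the given bi-interpretation of $\Gamma$ with $\Z$ finishes the proof.

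The hard part is verifying that the equivalence relation on tuples is definable in $\Gamma$, which reduces to the definability in $\Gamma$ of the ``$Z$-discrepancy function'' $d$: it sends two tuples whose word-product projections agree in $\Gamma$ to the element of $Z$ by which the corresponding $\Delta$-lifts differ. Since $Z$ is finite, $d$ decomposes into finitely many relations, each of which is decidable from the finite presentation of $\Delta$ (the word problem for $\Delta$ is solvable because $\Delta$ is finitely presented and interprets, via $\Gamma$, the ring $\Z$); by Theorem \ref{thm:Godel}, such relations are definable in $\Z$, and the given bi-interpretation of $\Gamma$ with $\Z$ transports this definability back to $\Gamma$. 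The finite-width hypothesis is essential here: without it, not every $\delta \in \Delta$ admits a tuple representation of bounded length $N$, so the interpretation $\mathscr{Q}$ fails to cover $\Delta$ and the whole construction collapses.
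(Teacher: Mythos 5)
Your overall plan (bi-interpret $\Delta$ with $\Gamma$ and then compose with the given bi-interpretation of $\Gamma$ with $\Z$) matches the paper, and the interpretation $\mathscr{P}$ of $\Gamma$ in $\Delta$ via $Z=Z(\Delta)$ is exactly the paper's $\mathscr{C}=(\Gamma,\id_\Gamma)$. The observation that $w(\delta_1,\delta_2,\delta_3)$ with $w=x^k[y,z]$ depends only on the images of the $\delta_i$ in $\Gamma$ is the same key fact the paper uses. The divergence, and the gap, is in how you get the interpretation of $\Delta$ in $\Gamma$ and how you verify triviality of the compositions.

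The paper does not build the interpretation $\mathscr{D}$ of $\Delta$ in $\Gamma$ from a tuple/finite-width encoding. Instead it invokes Lemma \ref{lemma:Z_int_fp} ($\Z$ interprets every finitely presented group, via a free group and G\"odel applied to the r.e.\ kernel) to interpret $\Delta$ in $\Z$, composes with the given interpretation of $\Z$ in $\Gamma$ to obtain $\mathscr{D}=(D,d)$, and then only has to prove that $d^{-1}$ is \emph{definable}. That last step is where finite width enters: $d^{-1}$ restricted to $w(\Delta)$ is seen to be definable from the commutative square $\rho\circ d=\mathscr{D}^*\rho\circ d_\Gamma$ (with $d_\Gamma$ definable because every self-interpretation of $\Z$ is trivial), finite width upgrades this to $\langle w(\Delta)\rangle$, and finite index finishes. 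Crucially, the hard ``is this set definable?'' questions are all resolved abstractly via triviality of self-interpretations, never by hand.

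Your construction of $\mathscr{Q}$ has a genuine gap at precisely the point you flag. You need the equivalence relation ``two tuples $(g,h,\epsilon,z)$, $(g',h',\epsilon',z')$ produce the same element of $\Delta$'' to be definable \emph{in $\Gamma$}. Your justification is: the word problem for $\Delta$ is solvable because $\Delta$ is finitely presented and interprets $\Z$, then apply G\"odel, then transport through the bi-interpretation. The first claim is not correct reasoning: interpreting $\Z$ has nothing to do with solvability (or even recursive enumerability) of the word problem; there are finitely presented groups with undecidable word problem that interpret $\Z$. What you actually have, from finite presentability alone, is that the word problem of $\Delta$ is r.e., which is enough for G\"odel---but this needs to be said, and more importantly it needs to be applied to the \emph{coded} version: the relation must be shown to be an r.e.\ (hence definable) subset of $\Z^m$ \emph{under the interpretation of $\Gamma$ in $\Z$}, and then pulled back to $\Gamma$ using the triviality of $\mathscr{F}\circ\mathscr{G}$. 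None of that is spelled out, and unpacking it correctly essentially reproduces the paper's Lemma \ref{lemma:Z_int_fp} plus the paper's argument. A second, smaller gap: finite width of $w$ only gives bounded representations for elements of $\langle w(\Delta)\rangle$, not all of $\Delta$; $\langle w(\Delta)\rangle$ has finite index but need not be everything, so your tuple set needs an extra coordinate ranging over coset representatives (the paper handles this with the final ``finite index'' step).

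In short: right ingredients, but the by-hand interpretation of $\Delta$ in $\Gamma$ does not close cleanly. The paper's route---interpret $\Delta$ in $\Z$ by G\"odel, transport to $\Gamma$, and then use triviality of self-interpretations of $\Z$ together with finite width only to show the resulting $d^{-1}$ is definable---sidesteps the equivalence-relation-definability issue entirely and is the step you would need to rediscover to make your proposal rigorous.
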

\begin{proof} Since bi-interpretability is an equivalence relation, it is enough to show that $\Gamma$ and $\Delta$ are bi-interpretable. Identifying $\Gamma$ with a quotient of $\Delta$ by a central subgroup of size $k$, we can view $\Gamma$ as an imaginary in $\Delta$.  Then $\mathscr{C}:=(\Gamma,\id_\Gamma)$ is an interpretation of $\Gamma$ in $\Delta$. Lemma \ref{lemma:Z_int_fp}  implies that there exists an interpretation $\mathscr{D}:=(D,d)$ of $\Delta$ in $\Gamma$. Since $\Gamma$ is an imaginary in $\Delta$, $D$ is also an imaginary in $\Delta$.

We want to show that $\mathscr{D}\circ \mathscr{C}=\mathscr{D}$ and  $\mathscr{C}\circ \mathscr{D}=[\mathscr{D}^*\Gamma,d_\Gamma]$ are trivial.  Since $\Gamma$ is bi-interpretable with $\Z$, every self interpretation, in particular $\mathscr{C}\circ \mathscr{D}$, is trivial. Thus, $d_\Gamma$ is definable in $\Gamma$. Since we view $\Gamma$, and thus also $\mathscr{D}^*\Gamma$, as imaginaries of $\Delta$, $d_\Gamma$ is also definable as a function between two imaginaries in $\Delta$. Let $\rho:\Delta \rightarrow \Gamma$ be the quotient map.  We have a commutative square: 

$$\begin{CD}
	D @>d>> \Delta\\
	@VV\mathscr{D}^*\rho V @VV\rho V\\
	\mathscr{D}^*\Gamma @>d_\Gamma>> \Gamma
\end{CD} 
$$
Since $\rho$ is definable in $\Delta$, $\mathscr{D}^*\rho$ is definable as a function between imaginaries of  $\Gamma$ and thus also  as a function between imaginaries of  $\Delta$. It follows that $\rho \circ d= \mathscr{D}^*\rho \circ d_\Gamma$ is definable in $\Delta$. 

Recall that the bijection  $d:D \rightarrow \Delta$ induces a group structure on $D$ and that the induced multiplication $D \times D \rightarrow D$ is definable in $\Delta$.   Denote $w=x^k[y,z]$. For every $\delta_1,\delta_2,\delta_3,\delta'_1,\delta'_2,\delta'_3 \in D$ satisfying $\rho \circ d(\delta_i)=\rho \circ d(\delta'_i)$, for $1 \leq i \leq 3$, we have
$$d\left(w(\delta_1,\delta_2,\delta_3)\right)=w(d(\delta_1),d(\delta_2),d(\delta_3))=w(d(\delta'_1),d(\delta'_2),d(\delta'_3))=d\left(w(\delta'_1,\delta'_2,\delta'_3)\right).$$ 
It follows that if $\alpha=w(\alpha_1,\alpha_2,\alpha_3) \in w(\Delta)$, $\delta=w(\delta_1,\delta_2,\delta_3) \in w(D)$ and, for every $1 \le i \le 3$,  $\rho\circ d(\delta_i)=\rho(\alpha_i)$, then 
 $d(\delta)=\alpha$. Thus, the restriction of $d^{-1}$ to ${w(\Delta)}$ is definable. Since $w$ has finite width in $\Gamma$ and, thus, in $\Delta$, the restriction of  $d^{-1}$ to  ${\langle w(\Delta)\rangle}$ is definable. Since $\Delta$ is finitely generated, $[\Delta:\langle w(\Delta) \rangle ]<\infty$ so $d^{-1}$ is definable. 
\end{proof}


\begin{definition}
Suppose that $L$ is a first order language and that $M$ is an $L$-structure. We denote by $\Aut_L(M)$ the group of automorphisms of $M$ as an $L$-structure. In particular, the elements of $\Aut_L(M)$ point-wise fix the constants.

For every $\varphi \in \Aut_L(M)$ and every imaginary $I$ in $M$, we also denote  by $\varphi_I$ the automorphism that $\varphi$ induces on $I$. Note that, if $F
 :I\rightarrow J$ is a definable function between imaginaries, then, for every $x \in I$, $F(\varphi_I(x))=\varphi_J({F}(x))$.

\end{definition}

\begin{definition}
	Suppose that $L$ is a first order language, $M$ is an $L$-structure and $I$ is a definable subset of $M$. Denote the $L$-theory of $M$ by $\Th_M$ and let $\phi$ be an $L_M$-formula such that $I=\phi(M)$. Then for every $\Th_M$ model $M'$, $I':=\phi(M')$ is a definable subset of $M'$ which is independent of the choice of the formula $\phi$. Thus, there is no ambiguity in denoting the set $I'$ by $I(M')$. 
	
	We use similar definition and notation in the case where $I$ is an imaginary. 
\end{definition}

\begin{lemma}\label{lemma:nec_def}
	Suppose that $L$ is a first order language,  $M$ is an $L$-structure and $I$ is an imaginary. Let $\Th_M$ be the $L$-theory of $M$.
	Then a necessary condition for the existence of a definable surjective function form $I$ onto $M$ is that for every $\Th_M$ model $M'$ and every automorphism $\varphi\in \Aut_{L}(M')$, if $\varphi_{I(M')}=\id_{I(M')}$ then $\varphi=\id$. 	
\end{lemma}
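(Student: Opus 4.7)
The plan is to prove the implication directly, in the natural ``functoriality'' style. Assume a definable surjection $F\colon I\to M$ is given. I will show that for any $\Th_M$-model $M'$ and any $\varphi\in\Aut_L(M')$ with $\varphi_{I(M')}=\id_{I(M')}$, one must have $\varphi=\id$.

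First, I would fix an $L_M$-formula $\phi(x,y)$ defining the graph of $F$ inside $M$. The statement ``the relation defined by $\phi$ is the graph of a surjective function from $I$ onto the universe'' is itself a first-order sentence in $\Th_M$, so it remains true in $M'$. Consequently $\phi$ defines a surjective function $F_{M'}\colon I(M')\to M'$.

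Second, I would apply the compatibility of definable functions with automorphisms, noted explicitly in the definition of $\varphi_I$ just above the lemma: for every $x\in I(M')$,
$$\varphi(F_{M'}(x)) \;=\; F_{M'}(\varphi_{I(M')}(x)) \;=\; F_{M'}(x),$$
the second equality using the hypothesis $\varphi_{I(M')}=\id_{I(M')}$. Now given an arbitrary $y\in M'$, surjectivity of $F_{M'}$ provides some $x\in I(M')$ with $F_{M'}(x)=y$, so $\varphi(y)=y$. As $y$ was arbitrary, $\varphi=\id$.

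I do not anticipate any genuine obstacle here: the argument is essentially a two-line invocation of the principle that definable data is preserved under automorphisms, combined with surjectivity. The only mild point worth clarifying is notational: for $\varphi_{I(M')}$ to be well-defined, $\varphi$ must preserve the (finitely many) parameters from $M$ used to define the imaginary $I$; this is implicit in treating $M'$ as a model of $\Th_M$ rather than merely as an $L$-structure, and is the natural convention suggested by the preceding definitions.
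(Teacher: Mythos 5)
Your proof is correct and matches the paper's own argument almost line for line: both observe that the defining formula still defines a surjective function on any model of $\Th_M$, apply the compatibility of definable maps with automorphisms fixing the parameters, and then use surjectivity to conclude $\varphi=\id$. The extra remark about why the surjectivity transfers to $M'$ and about parameter preservation is a reasonable (and correct) elaboration of what the paper leaves implicit.
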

\begin{proof}
	Assume that $F:I \rightarrow M$ is a definable surjective function. Let $M'$ be a $\Th_M$ model  and $\varphi\in \Aut_{L}(M')$ be such that $\varphi_{I(M')}=\id_{I(M')}$. Then  for every $x \in I(M')$, $F(M')(x)= F(M')(\varphi_I(x))=\varphi({F(M')}(x))$. Since $F(M'):I(M')\rightarrow M'$ is surjective, $\varphi=\id$. 
\end{proof}

\begin{proof}[Proof of Theorem \ref{thm:width}]
	The if part is Lemma \ref{lemma:with_to_int}. For the only if part, assume that the word $w=x^d[y,z]$ has infinite width in $\Gamma$ and, thus, also in $\Delta$. View $\Gamma$ as the  quotient of $\Delta$ by  a central subgroup $\Lambda$ of size $d$. In particular, $\Gamma$ is an imaginary in $\Delta$. By Lemma \ref{lemma:Z_int_fp}, there exists an interpretation $\mathscr{C}=(C,c)$ of $\Delta$ in $\Gamma$. Since $\Gamma$ is an imaginary of $\Delta$, $\mathscr{C}$ is also an interpretation of $\Delta$ in itself. If $\Delta$ is bi-interpretable with $\Z$ then  Lemma \ref{lemma:self _interpretation} implies that $\mathscr{C}$  is trivial so $c:C \rightarrow \Delta$ is definable. Lemma \ref{lemma:nec_def} implies that, in order to show that $c$ is not definable, it is enough to show that there exists a $\Th_\Delta$-model $\Delta'$ and a non-identity automorphism $\varphi \in \Aut_{L_\Delta}(\Delta')$ such that $\varphi\restriction_{C(\Delta')}=\id_{C(\Delta')}$. Note that $\Aut_{L_\Delta}(\Delta')$ is the subgroup of $\Aut(\Delta')$ consisting of the group automorphisms  which fix every element of $\Delta$ and that, if $\varphi\restriction_{\Delta'/\Lambda}=\id_{\Delta'/\Lambda}$, then $\varphi\restriction_{C(\Delta')}=\id_{C(\Delta')}$.
	
	Let $U$ be a non-principal ultrafilter on $\N$ and denote $\Delta':=\prod_{n \in \N}\Delta/U$.  Identify $\Delta$ as a subgroup of $\Delta'$ via the diagonal embedding so $\Delta'$ is a $\Th_\Delta$-model. Since the width of $w$ in $\Delta$ is infinite, $[\Delta':\langle w(\Delta')\rangle]=\infty$ and $\Delta' / \langle w(\Delta') \rangle$ is an uncountable abelian group of exponent $d$. Since $\Delta$ is finitely generated, there exists a non-trivial homomorphism $\rho:\Delta'\rightarrow \Lambda$ such that $\rho\restriction_\Delta=\id_\Delta$. The automorphism  $\varphi \in \Aut_{L_\Delta}(\Delta')$ defined by $\varphi(x)=x\rho(x)$ is the desired automorphism. 	
\end{proof}

\section{Proof of Theorem \ref{thm:effective_kneser} and Lemma \ref{lemma:uff}}\label{sec:orbit} 
\begin{setting} \label{asum:open} $K$ is a number field, $S$ is a finite set of places containing all archimedean ones, $A$ is the ring of $S$-integers,  $\Theta_q=\Spin_q$, where $q$ is a regular integral quadratic form on $A^n$ and  $w\in S$ is a place such that $i_q(K_w^n) \ge 1$. Finally,  $\Gamma$ is a congruence subgroup of $G(A)$. 

For every subspace $C$ of  $K^n$, we view $\Theta_{q\restriction_C}(K):=\Spin_{q\restriction_C}(K)$ as a subgroup of $\Theta_q(K)$. 

For every place $v$, let  $K_v$ be the $v$-completion of $K$. 
	 For a subset $C \subseteq K^n$ let $C_v$ be its closure in $(K_v)^n$. In particular, $A_v=K_v$ for $v\in S$. For $v \not \in S$, let $k_v$ be the residue field of $A_v$ and $p_v:A_v \rightarrow k_v$ be the residue map. The kernel of $p_v$ is denote by $\mathfrak{p}_v$. 
\end{setting}

\begin{remark}
	In some places we assume the stronger condition $i_q(K_w^n) \ge 2$. In particular, in the proofs of Theorem \ref{thm:effective_kneser} and Lemma \ref{lemma:uff} we assume that  $i_q(K_w^n) \ge 2$. 
\end{remark}

\subsection{Proof of Theorem \ref{thm:effective_kneser}}

The following definition is essential in what follows.

\begin{definition}\label{def:good_trip} Under Assumption \ref{asum:open},  let $a_1\in A^n$ be non-isotropic and $a_2,a_3\in \Gamma a_1$. We say that $(a_1,a_2,a_3)$ is $A$-good if there exist $a_4\in A^n$ and $\sigma,\tau\in \Gamma$ such that $\sigma(a_1,a_3)=(a_1,a_4)$ and $\tau(a_1,a_3)=(a_2,a_4)$. Similarly, for any place $v$, if we replace $A$ by $A_v$ and $\Gamma$ by $\Gamma_v$, we get the notion of an $A_v$-good triple.
\end{definition}

The following lemma is the motivation for  Definition \ref{def:good_trip}.

\begin{lemma}\label{lem:main_idea}  Under Setting \ref{asum:open},	let $a_1,a_2 \in  A^n$ and let $M$ be a symmetric normal subset of $\Gamma$. If there are $\beta,\gamma \in M$ such that $(a_1,\beta (a_2),\gamma( a_1))$ is  $A$-good, then $a_2 \in M^3 a_1$.
\end{lemma}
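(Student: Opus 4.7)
Plan. The strategy is to unpack the definition of $A$-goodness to obtain auxiliary elements $\sigma,\tau\in\Gamma$ and a witness $a_4\in A^n$, and then to exhibit explicit $m_1,m_2,m_3\in M$ such that $m_1m_2m_3(a_1)=a_2$. The first step is purely bookkeeping: since $(a_1,\beta(a_2),\gamma(a_1))$ is $A$-good, there exist $a_4\in A^n$ and $\sigma,\tau\in\Gamma$ with
\[
\sigma(a_1)=a_1,\qquad \sigma(\gamma(a_1))=a_4,\qquad \tau(a_1)=\beta(a_2),\qquad \tau(\gamma(a_1))=a_4.
\]
Although $\sigma,\tau$ need not lie in $M$, the key point is that $M$ is normal and symmetric in $\Gamma$, so any $\Gamma$-conjugate of $\beta,\beta^{-1},\gamma,\gamma^{-1}$ again lies in $M$.

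The plan is to route $a_1$ to $a_2$ through the intermediate points $a_4$ and $\beta(a_2)$ by the following chain:
\[
a_1\ \xrightarrow{\check\gamma}\ a_4\ \xrightarrow{\hat\gamma}\ \beta(a_2)\ \xrightarrow{\beta^{-1}}\ a_2,
\]
where $\check\gamma:=\sigma\gamma\sigma^{-1}$ and $\hat\gamma:=\tau\gamma^{-1}\tau^{-1}$. By normality of $M$ and $\gamma\in M$ (and symmetry for the inverse), both $\check\gamma,\hat\gamma\in M$, and $\beta^{-1}\in M$ by symmetry.

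The verification is a short calculation using only the four identities above. Since $\sigma(a_1)=a_1$ (hence $\sigma^{-1}(a_1)=a_1$), one has
\[
\check\gamma(a_1)=\sigma\gamma\sigma^{-1}(a_1)=\sigma\gamma(a_1)=a_4.
\]
Since $\tau(\gamma(a_1))=a_4$ (hence $\tau^{-1}(a_4)=\gamma(a_1)$), one has
\[
\hat\gamma(a_4)=\tau\gamma^{-1}\tau^{-1}(a_4)=\tau\gamma^{-1}(\gamma(a_1))=\tau(a_1)=\beta(a_2),
\]
and finally $\beta^{-1}(\beta(a_2))=a_2$. Composing gives $a_2=\beta^{-1}\hat\gamma\check\gamma(a_1)\in M^3 a_1$.

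The only genuinely substantive point in the argument is the choice of the right conjugates $\check\gamma,\hat\gamma$: one needs to conjugate $\gamma$ by an element of $\Gamma$ that (i) does not disturb the starting point (this is why we use $\sigma$, which fixes $a_1$, in $\check\gamma$) and (ii) carries the edge $\gamma(a_1)\to a_4$ to the edge $a_1\to\beta(a_2)$ (this is the role of conjugation by $\tau$ in $\hat\gamma$). Once this decomposition is spotted, no additional ingredients beyond normality and symmetry of $M$ are needed, so there is no real obstacle.
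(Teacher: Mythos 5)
Your proof is correct and is essentially identical to the paper's: the element $\beta^{-1}\hat\gamma\check\gamma = \beta^{-1}\tau\gamma^{-1}\tau^{-1}\sigma\gamma\sigma^{-1}$ that you construct is exactly the element $\delta$ the paper writes down, and you simply spell out the verification that each factor lies in $M$ and that the composition sends $a_1$ to $a_2$.
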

\begin{proof} If  $\sigma(a_1,\gamma(a_1))=(a_1,a_4)$ and $\tau(a_1,\gamma(a_1))=(\beta(a_2),a_4)$ then
	$\delta(a_1)=a_2$ where $\delta:=\beta^{-1}\tau \gamma ^{-1} \tau ^{-1}\sigma \gamma\sigma^{-1} \in M^3$.
\end{proof}

 Lemma \ref{lem:main_idea} implies that, in order to prove Theorem \ref{thm:effective_kneser}, it is enough to show that there exists $N$ such that, for every non-isotropic $a_1$, there exists an $S$-adelic neighborhood $V$ of $a_1$ in $\Gamma a_1$ such that, for every $a_2 \in V$, there exist $\beta,\gamma \in \ccl_\gamma(\alpha)^N$ for which  $(a_1,\beta(a_2),\gamma(a_1))$ is $A$-good. \\

 We start by stating a local-to-global condition for being $A$-good. Recall that if $B$ is a commutative ring then a matrix $M \in M_k(B)$ is said to be in general position if for every  non-empty $I \subseteq \{1,\ldots,k\}$, $\det M_I\ne 0$ where $M_I$ is the principal $I$-minor of $M$. The following lemma is a reformulation of Lemma 8.1 of \cite{Kne}.

\begin{lemma}[Local-to-global principle for $A$-good triplets, {\cite[Lemma 8.1]{Kne}}] \label{lem:kne_8.1} Under Setting \ref{asum:open}, assume that   $n \ge 6 $ and $i_q(K_w^n) \ge 2$. Let $a_1,a_2,a_3 \in A^n$ be non-isotropic vectors such that $i_q\left( (K_wa_1+K_w a_2)^\perp \right)  \ge 1$, $i_q\left( (K_w a_1+K_w a_3) ^\perp \right) \ge 1$, and the matrix
\begin{equation}\label{eq:matrix}
M(a_1,a_2,a_3)=\left(\begin{matrix}
q(a_1,a_1) & q(a_1,a_2) & q(a_1,a_3) \\
q(a_2,a_1) & q(a_2,a_2) & q(a_1,a_3) \\
q(a_3,a_1) & q(a_3,a_1) & q(a_3,a_3) \end{matrix}\right) 
\end{equation}
is in general position (note that the (2,3) and (3,2) entries of the above matrix are equal to $q(a_1,a_3)$ and not to $q(a_2,a_3)$). Assume that, for every place $v$, $(a_1,a_2,a_3)$ is $A_v$-good. Then $(a_1,a_2,a_3)$ is $A$-good.
\end{lemma}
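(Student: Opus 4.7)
The plan is to translate $A$-goodness into three existence statements that can be checked by passing between local and global points of certain spin-homogeneous spaces, then to apply Hasse principle and strong approximation at the place $w$. Concretely, $(a_1,a_2,a_3)$ is $A$-good if and only if there is $a_4\in A^n$ together with $\sigma,\tau\in \Gamma$ such that (i) the Gram matrix of $(a_1,a_2,a_4)$ equals $M(a_1,a_2,a_3)$, (ii) $\sigma$ stabilizes $a_1$ and sends $a_3$ to $a_4$, and (iii) $\tau$ sends $(a_1,a_3)$ to $(a_2,a_4)$. The general-position hypothesis on $M$ guarantees that the $2\times 2$ block attached to $(a_1,a_2)$ is nondegenerate, so that the linear constraints on $a_4$ cut out an affine subspace of codimension $2$, and the quadratic constraint $q(a_4)=q(a_3)$ then cuts out a smooth affine quadric $V$ inside this subspace whose associated quadratic form is the restriction of $q$ to $\langle a_1,a_2\rangle^\perp$.

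First I would build $a_4\in A^n$. The Witt-index hypothesis $i_q((K_wa_1+K_wa_2)^\perp)\ge 1$ gives that $V$ is isotropic at $w$, while the $A_v$-good hypothesis for each $v$ supplies local points $a_4^v\in V(A_v)$. Since the spin group of the restriction of $q$ to $\langle a_1,a_2\rangle^\perp$ has rank $\ge 1$ at $w$ and is simply connected (hence satisfies strong approximation away from $w$), the integral Hasse principle for the affine quadric $V$ produces a global $a_4\in V(A)$ which is adelically close to the collection $(a_4^v)_v$. For (ii), the stabilizer of $a_1$ in $\Spin_q$ is isogenous to $\Spin_{q\restriction_{(Ka_1)^\perp}}$, and Witt's extension theorem over each $K_v$ together with the $A_v$-good data yields local elements $\sigma^v$; applying the implicit function theorem, one adjusts these so that they send $a_3$ to the fixed global $a_4$ (not merely to the local $a_4^v$), and strong approximation for the stabilizer (its rank at $w$ is $\ge 1$ by $i_q((K_wa_1+K_wa_3)^\perp)\ge 1$) then provides a global $\sigma\in \Gamma$. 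The same argument, applied to the pointwise stabilizer of $(a_1,a_3)$, which is isogenous to $\Spin_{q\restriction_{(Ka_1+Ka_3)^\perp}}$, produces the required $\tau$.

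The main obstacle is coordinating the three strong-approximation steps while keeping track of the congruence conditions that define $\Gamma$ (as opposed to the full group $\Spin_q(A)$). In particular, once the global $a_4$ is chosen in step (i), the local data for $\sigma^v$ and $\tau^v$ provided by the $A_v$-good hypothesis is for the \emph{local} $a_4^v$; to transfer this into data for our fixed global $a_4$, I must use a quantitative open-mapping argument (as in Lemma \ref{lem:open.mapping.quantitative}) together with the nondegeneracy from the general position of $M$. This gives the needed local $\sigma^v,\tau^v\in \Gamma_v$ modulo a small congruence perturbation, after which strong approximation can be applied uniformly across the finitely many bad places so that the approximants lie in $\Gamma$. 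A subsidiary subtlety is the passage through the central isogeny between the stabilizers inside $\Spin_q$ and the spin group of the orthogonal complement, which is handled by the standard argument that the isogeny is an isomorphism after a finite congruence correction.
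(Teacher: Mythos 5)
The paper does not prove this lemma; it is imported verbatim as a reformulation of Kneser's Lemma~8.1 from \cite{Kne}, so there is no in-paper argument to compare against. Your sketch reconstructs what is indeed the expected shape of Kneser's proof: split $A$-goodness into finding $a_4$ on the quadric $V\subset (Ka_1+Ka_2)^\perp$, then $\sigma$ fixing $a_1$ with $\sigma a_3=a_4$, then $\tau$ with $\tau(a_1,a_3)=(a_2,a_4)$; produce solutions at each place from the $A_v$-good hypothesis; and globalize via strong approximation for the spin groups of the relevant orthogonal complements, whose non-compactness at $w$ is exactly what the Witt-index hypotheses supply. Two points deserve tightening, since they are where the real work lives. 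What you call ``the integral Hasse principle for $V$'' is really strong approximation for $\Spin_{q\restriction_{(Ka_1+Ka_2)^\perp}}$ acting on $V$ away from $w$: integral affine quadrics have non-trivial class numbers in general, and the hypothesis $i_q\bigl((K_wa_1+K_wa_2)^\perp\bigr)\ge 1$ is precisely what removes that obstruction. And the step of replacing the local $a_4^v$ by the global $a_4$ in $\sigma_v,\tau_v$ does not rest on Lemma~\ref{lem:open.mapping.quantitative} so much as on local transitivity of $\Gamma_v\cap\Stab(a_1,a_2)(A_v)$ on a small $v$-adic neighbourhood of $a_4^v$ in $V(A_v)$; it is the general-position hypothesis on $M$ that makes the relevant Gram minors $v$-adic units at almost all places, keeping the correcting element $\rho_v$ (with $\rho_v a_4^v=a_4$) inside $\Gamma_v$. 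You flag these as the crux but leave them unargued; coordinating the three strong-approximation steps while respecting the congruence conditions defining $\Gamma$ is the actual content of Kneser's proof.
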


The next task is to find local conditions for being $A_v$-good.  The following Lemma is a reformulation of Lemmas 7.1, 7.2, 7.3 and 7.4 of \cite{Kne}.
\begin{lemma}[Local conditions for being $A_v$-good]\label{lem:local_conditions}Under Assumption \ref{asum:open}, assume that $n \ge 5$. Let $M(a_1,a_2,a_3)$ be the matrix defined in Equation \eqref{eq:matrix}. Let $v$ be a place.
\begin{enumerate}
\item\label{item:local_1}  Let $a_1,a_2,a_3\in A_v^n$  be non-isotropic vectors that belong to  the same $\Gamma_v$-orbit. If $v\in S$, the matrix $M(a_1,a_2,a_3)$ is in general position, and $i_q(\left( K_va_1+K_v a_3 \right)^\perp)\geq 1$, then $(a_1,a_2,a_3)$ is $A_v$-good.
\item\label{item:local_2} Let $a_1\in A_v^n$ be non-isotropic, and let $\Delta_v$ be an open subgroup of $\Gamma_v$. Then there are open sets $U_v^2,U_v^3 \subset A_v^n$ such that $U_v^2 \cap \Delta_v a_1 \neq \emptyset, U_v^3 \cap \Delta_v a_1 \neq \emptyset$ and, for every $a_2\in U_v^2 \cap \Delta_va$ and $a_3\in U_v^3 \cap \Delta_v a_1$, the matrix $M(a_1,a_2,a_3)$ is in general position and  $(a_1,a_2,a_3)$ is $A_v$-good.
\item\label{item:local_3} Assume that $v\notin S$, that $v$ is not dyadic and that $q$ is regular on $A_v^n$. Let $a_1,a_2,a_3 \in A_v^n$ be in the same $\Gamma_v$ orbit such that  $M(a_1,a_2,a_3)$ is in general position and $q(a_1) \in A_v^\times$. Then $(a_1,a_2,a_3)$ is $A_v$-good if  the following two conditions hold:
\begin{enumerate}[(i)]
\item At least one of $\disc_q(a_1,a_2)$ or $\disc_q(a_1,a_3)$ belongs to $A_v^\times$.
\item $p_v(a_1),p_v(a_2)$, as well as $p_v(a_1),p_v(a_3)$, are linearly independent over $k_v$.
\end{enumerate}
\end{enumerate}
\end{lemma}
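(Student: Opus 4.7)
\medskip

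My plan is to unpack the definition of $A_v$-good and reduce each part of the lemma to a Witt-type extension problem. A triple $(a_1,a_2,a_3)$ is $A_v$-good precisely when we can find $a_4 \in A_v^n$ such that the Gram matrix of $(a_1,a_3)$ agrees with that of $(a_1,a_4)$ and with that of $(a_2,a_4)$; once this holds, Witt's extension theorem produces the isometries $\sigma,\tau$ in $\SO_q$, and the Witt index hypotheses supply enough room in the orthogonal complement to lift them to $\Spin_q$. Unpacking the Gram matrix equalities, the conditions on $a_4$ reduce to the affine-quadratic system
\[
q(a_4)=q(a_3), \qquad q(a_1,a_4)=q(a_1,a_3), \qquad q(a_1-a_2,a_4)=0,
\]
which is precisely why the $(2,3)$-entry of $M(a_1,a_2,a_3)$ appears as $q(a_1,a_3)$ rather than $q(a_2,a_3)$. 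So in every part, the heart of the matter is producing $a_4$ in this affine slice with prescribed norm, and then checking that the resulting Witt isometries actually lie in $\Gamma_v$.

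For part \ref{item:local_1}, at a place $v \in S$ the group $\Gamma_v$ is open of finite covolume in $\Spin_q(K_v)$, so it contains (a finite-index subgroup of) the whole $K_v$-points when combined with the conjugation invariance inherent in Witt's theorem. General position of $M(a_1,a_2,a_3)$ guarantees that $q|_{K_v a_1+K_v a_2}$ is non-degenerate, so the equation $q(a_1-a_2,a_4)=0$ cuts out a hyperplane on which $q$ remains regular; the additional Witt index condition $i_q((K_v a_1+K_v a_3)^\perp)\ge 1$ means the restricted form represents $q(a_3)$, so $a_4$ exists, and Witt extension gives $\sigma,\tau$. For part \ref{item:local_2}, start from any base pair $(a_2^0,a_3^0)$ in $\Gamma_v a_1 \times \Gamma_v a_1$ for which $M$ is in general position and $(a_1,a_2^0,a_3^0)$ is already $A_v$-good—such a pair is easy to construct directly by applying suitable Eichler transvections to $a_1$. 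Then the conditions defining $A_v$-good are open in $(a_2,a_3)$ (since the orbit map $\Gamma_v \to \Gamma_v a_1$ is a submersion and the construction of $a_4$ depends continuously on $a_2,a_3$), so any sufficiently small neighborhoods $U_v^2,U_v^3$ in $\Gamma_v a_1$ work.

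Part \ref{item:local_3} is the main obstacle and occupies the bulk of Kneser's argument. Here one must produce $a_4$ over $A_v$ (not just $K_v^n$) and lift the Witt isometries to $\Spin_q(A_v)$. The idea is to reduce mod $\mathfrak{p}_v$: condition (ii) says that after reduction, the pairs $(\bar a_1,\bar a_2)$ and $(\bar a_1,\bar a_3)$ remain linearly independent, so the problem makes sense over the residue field $k_v$; condition (i) guarantees that at least one of these pairs spans a unimodular plane, so the system for $a_4$ has a unimodular solution $\bar a_4 \in k_v^n$. Because $v$ is non-dyadic and $q$ is regular on $A_v^n$, Hensel's lemma lifts both $\bar a_4$ and the Witt extension isometries from $\SO_q(k_v)$ to $\SO_q(A_v)$, and from there to $\Spin_q(A_v) \subseteq \Gamma_v$. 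The technical heart is the Hensel lift of the isometries: one must check that the derivative of the relevant Witt-extension map is invertible modulo $\mathfrak{p}_v$, which is where conditions (i) and (ii) combine to give the needed non-degeneracy. General position of $M(a_1,a_2,a_3)$ modulo $\mathfrak{p}_v$, which is implied by $q(a_1)\in A_v^\times$ together with (i) and (ii), is what keeps the construction from degenerating.
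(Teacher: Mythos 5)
The paper offers no proof of this lemma: it is stated as a direct reformulation of Lemmas 7.1--7.4 of Kneser's paper \cite{Kne}, and the argument is deferred there. Your sketch does capture the mechanism Kneser uses---unwinding the $A_v$-good condition into the affine-quadratic system $q(a_4)=q(a_3)$, $q(a_1,a_4)=q(a_1,a_3)$, $q(a_1-a_2,a_4)=0$, then producing $a_4$ and invoking Witt extension, with Hensel lifting at good non-dyadic primes---so the general route is right.

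Several assertions need tightening, however. The opening claim that a triple is $A_v$-good ``precisely when'' such an $a_4$ exists over-states: those Gram-matrix conditions are necessary, and the whole content of each part is establishing sufficiency under the listed side-conditions (extension over a local ring or with a prescribed spinor norm is not automatic from the Gram matrix alone). For $v \in S$ you say $\Gamma_v$ is ``open of finite covolume'' in $\Spin_q(K_v)$, which is self-contradictory unless $\Spin_q(K_v)$ is compact; what the argument actually uses is that, by strong approximation away from $w$, $\Gamma_v=\Theta_q(K_v)$ for $v\in S$. The phrase ``the Witt index hypotheses supply enough room $\dots$ to lift to $\Spin_q$'' conceals the real point, which is the spinor norm: positive Witt index of $(K_v a_1+K_v a_3)^\perp$ makes the spinor norm on the special orthogonal group of that complement surjective onto $K_v^\times/(K_v^\times)^2$, so the Witt isometry can be corrected by an element supported there without disturbing $a_1,a_3,a_4$. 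Finally, in part~\ref{item:local_3} the inclusion $\Spin_q(A_v)\subseteq\Gamma_v$ is backwards---the congruence subgroup $\Gamma_v$ sits inside $\Spin_q(A_v)$, and the two coincide only outside a finite bad set, which is exactly why the main proof only invokes part~\ref{item:local_3} at places $v\notin T_{\Gamma,a,\alpha}\cup S$.
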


Let $a_1 \in A^n$ be non-isotropic vector. We will apply part \ref{item:local_1} of Lemma \ref{lem:local_conditions} only for $v=w$. Part \ref{item:local_2} will be used for a finite set of places with bad properties, and part \ref{item:local_3} will be used for the remaining places. Note that, if $T \supseteq S$ is a finite set of places, then the set consisting of the vectors $a_2 \in A^n$ such that for every $v \not \in T$, $p_v(a_1),p_v(a_2)$ are linearly independent over $k_v$, is not open in the $S$-adelic topology. {Thus, if $a_1 \in A^n$ is non-isotropic, then the subset of $(\Gamma a_1)^3$ consisting of the triplets that satisfy the conditions of  Lemma \ref{lem:local_conditions}, is not open in the $S$-adelic topology and we cannot  directly  use  Lemmas \ref{lem:kne_8.1} and \ref{lem:local_conditions} in order to prove Theorem \ref{thm:effective_kneser}. Lemma \ref{lem:kneser_5.27} below allows us to overcome this issue. }

\begin{definition}Under Assumption \ref{asum:open},
\begin{enumerate}
\item Let $T_\Gamma$ be as in Definition \ref{def:T_Gamma} with respect to $\underline{\Gr}:=\Theta_q$.
\item For $\alpha \in \Gamma$, let $T_\alpha$ be the set of places $v\notin S$ for which $\langle \ccl_\Gamma(\alpha)\rangle \ne \Theta_q(A_v)$.
\item For $a \in A^n$, let $T_a$   be the set of places $v\notin S$ for which   $q(a) \notin A_v^\times$.
\item For $\alpha \in \Gamma$ and $a \in A^n$, denote $T_{\Gamma,a,\alpha}:=T_{\Gamma} \cup T_a \cup T_\alpha.$
\end{enumerate}
\end{definition}

The following lemma is an effective version of Lemma 5.2 of \cite{Kne}. 

\begin{lemma}[cf. Lemma 5.2 of \cite{Kne}]\label{lem:kneser_5.27} Under Setting \ref{nota:Gamma}, for every $n \ge 7$ and every $\epsilon>0$ there exists $N=N(n,\epsilon)$ such that  the following claim holds:

If $\alpha \in \Gamma$ is $\epsilon$-separated, then there exists an open neighborhood of the identity,  $W \subseteq \prod_{v \in T_\Gamma}\langle\ccl_\Gamma(\alpha)\rangle$, such that, for every $b_1,b_2\in A^n$  with $q(b_1)=q(b_2)\ne 0$ and every finite set of places $T\supseteq T_{\Gamma,b_1,\alpha}  \cup S$, the set of elements $\beta \in \ccl_\Gamma(\alpha)^N$ for which
\begin{enumerate}
\item\label{item:req1} $i_q \left( K_wb_1+K_w\beta b_2 \right) = 1$.
\item\label{item:req2}$p_v(b_1),p_v(\beta b_2)$ are linearly independent, for every $v\notin T$.
\end{enumerate}
contains a dense subset of $W \times \prod_{v \in T \setminus (T_\Gamma  \cup \{w\})}\langle \ccl_\Gamma(\alpha)\rangle$.
\end{lemma}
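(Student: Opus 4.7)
The plan is to apply Proposition \ref{cor:bounded_prod} to obtain enough elements of $\gcl_\Gamma(\alpha)^N$ with controlled local behavior, verify Conditions (1) and (2) as open local conditions at $w$ and at each $v \notin T$, and exploit the integrality of $\Gamma$-elements to reduce the infinite set of conditions to a finite one.

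\textbf{Density setup.} Applying Proposition \ref{cor:bounded_prod} to our $\epsilon$-separated element $\alpha$ yields $N_0 = N_0(n,\epsilon)$ and an identity neighborhood $W\subseteq \prod_{v \in T_\Gamma} \Theta_q(K_v)$ such that $\gcl_\Gamma(\alpha)^{N_0}$ is dense in $W \times \prod_{v \notin T_\Gamma \cup \{w\}} \langle \gcl_{\Gamma_v}(\alpha) \rangle$. Combining with Lemma \ref{lem:conj.open.noncompact.Lie} at the non-compact place $w$ (yielding $N_1$ such that $\gcl_{\Gamma_w}(\alpha)^{N_1} = \Theta_q(K_w)$), one gets that $\gcl_\Gamma(\alpha)^{N_0+N_1}$ has dense image under any finite-dimensional projection including the $w$-factor.

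\textbf{Local openness.} For Condition (1), since $i_q(K_w^n) \geq 2$ the group $\Theta_q(K_w)$ acts transitively on $Q_{q(b_2)}(K_w) := \{v \in K_w^n : q(v) = q(b_2)\}$ by Witt's extension theorem; a direct construction (using an isotropic vector in $(K_w b_1)^\perp$, which exists since $i_q((K_w b_1)^\perp) \geq 1$) shows that $\Omega_w := \{v \in Q_{q(b_2)}(K_w) : i_q(K_w b_1 + K_w v) = 1\}$ is non-empty open, and hence $V_w := \{\beta_w : \beta_w b_2 \in \Omega_w\}$ is a non-empty open subset of $\Theta_q(K_w)$. For Condition (2) at each $v \notin T$, the hypotheses $v \notin T_{\Gamma,b_1,\alpha}$ give good reduction, $p_v(b_1) \neq 0$, and a surjective orbit map $\Theta_q(k_v) \cdot p_v(b_2) = Q_{q(b_2)}(k_v)$. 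Thus the bad set $B_v := \{\beta_v \in \Theta_q(A_v) : p_v(\beta_v b_2) = \pm p_v(b_1)\}$ is a proper Zariski-closed subvariety and its complement $\Omega_v$ is dense open.

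\textbf{Assembly via integrality.} Given a target $\gamma$ in $W \times \prod_{v \in T \setminus (T_\Gamma \cup \{w\})} \langle \gcl_{\Gamma_v}(\alpha) \rangle$ and an open neighborhood $U \ni \gamma$, use the density setup to find $\beta_0 \in \gcl_\Gamma(\alpha)^{N_0+N_1}$ with $\pi_T(\beta_0) \in U$, $\beta_{0,w} \in V_w$, and, for generic choice within $U$, $\beta_0 b_2 \neq \pm b_1$. The key observation is then that $b_1 \wedge \beta_0 b_2$ is a non-zero element of the lattice $\wedge^2 A^n$, so the set $T_0 := \{v \notin S : p_v(b_1) \parallel p_v(\beta_0 b_2)\}$ of places where Condition (2) fails for $\beta_0$ is finite. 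Apply the density setup a second time to find $\tau \in \gcl_\Gamma(\alpha)^{N_0}$ with image prescribed at the finite set $T \cup \{w\} \cup T_0$: close to the identity at $v \in T \cup \{w\}$ (so that $\pi_T(\tau\beta_0) \in U$ and Condition (1) is preserved), and with $\tau_v p_v(\beta_{0,v} b_2)$ lying off the line $k_v \cdot p_v(b_1)$ at each $v \in T_0 \setminus T$. Then $\beta := \tau \beta_0 \in \gcl_\Gamma(\alpha)^{2N_0+N_1}$ satisfies both conditions at places in $T_0 \cup T \cup \{w\}$, and one takes $N = 2N_0+N_1$.

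\textbf{Main obstacle.} The essential difficulty is arranging Condition (2) at the infinitely many places $v \notin T$ simultaneously, while Proposition \ref{cor:bounded_prod} addresses only any finite set of places at a time. The resolution hinges on integrality: because $\beta \in \Gamma$ sends the integral vector $b_2$ into $A^n$, the defect $b_1 \wedge \beta b_2 \in \wedge^2 A^n$ is a non-zero lattice element (once $\beta b_2 \neq \pm b_1$), so it vanishes modulo $\mathfrak{p}_v$ at only finitely many $v$. Controlling the residual bad places that the correction $\tau$ may create at $v \notin T \cup T_0$ requires a careful tracking of how $\tau_v$ distributes on $\Theta_q(A_v)$ away from the proper subvariety $B_v(\beta_0)$; combined with the finite adelic corrections made at $T_0 \setminus T$, this produces a $\beta$ satisfying both conditions within $\gcl_\Gamma(\alpha)^N$ and yields the desired density statement with a uniform bound $N = N(n,\epsilon)$.
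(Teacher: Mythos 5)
Your approach diverges sharply from the paper's, and the divergence is exactly where your argument breaks down. You correctly identify the central obstacle — Condition (2) imposes infinitely many local constraints, while Proposition \ref{cor:bounded_prod} only gives control at finitely many places at a time — and your idea to use integrality (that $b_1 \wedge \beta_0 b_2$ is a nonzero element of the lattice $\wedge^2 A^n$, hence vanishes mod $\mathfrak{p}_v$ at only finitely many $v$) is the right first move. But the "Assembly" paragraph does not close the loop. After you pick $\beta_0$ and compute its finite bad set $T_0$, you apply a correction $\tau \in \gcl_\Gamma(\alpha)^{N_0}$ prescribed only at the finite set $T \cup \{w\} \cup T_0$. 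Outside that finite set, strong approximation gives you no control over $\tau_v$, and the new product $\beta = \tau\beta_0$ has its own finite bad set $T_1$ (coming from $b_1 \wedge \beta b_2$) which may contain entirely new places $v \notin T \cup T_0$. You have simply traded one uncontrolled finite bad set for another. Iterating produces a possibly non-terminating chase, and the closing sentence of the proposal ("requires a careful tracking...this produces a $\beta$...") asserts the conclusion without providing a mechanism.

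The paper resolves this by first passing, via the effective Lemma \ref{lem:kne_4.8}, to a strongly $1$-separated element $\beta \in \gcl_\Gamma(\alpha)^{N_1}$ supported on a $5$-dimensional regular subspace $C$ with $b_1 \notin C \cup C^\perp$, and performing the correction step inside $\Lambda := \Gamma \cap \Theta_{q|_C}(K)$. Any $\gamma' \in \Theta_{q|_C}(K)$ acts trivially on $K^n/C$ (identified with $C^\perp$), so if one first chooses a global $\gamma$ making the images of $b_1$ and $\gamma b_2$ in $K^n/C$ linearly independent — a rational condition that then fails mod $\mathfrak{p}_v$ at only a finite residual set $T''$ — then multiplying by any $\gamma' \in \Lambda$ cannot destroy this at any place. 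Thus the correction at the finite set $T \cup T' \cup T''$ is safe: it cannot create new bad places, because linear independence mod $p_v(M_v)$ is preserved by $\gamma'$ and implies linear independence in $k_v^n$. Simultaneously, the $w$-condition is handled via Lemma \ref{lem:kne_4.7} applied to $\Lambda$. This structural confinement of the correction to a subgroup stabilizing $K^n/C$ is the missing idea in your proposal, and without it your argument has a genuine gap.
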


The proof of Lemma \ref{lem:kneser_5.27} is given in the next subsection. 

\begin{proof}[Proof of Theorem \ref{thm:effective_kneser} ] The proof closely follows the proof of Theorem 6.1 of  \cite{Kne}. 

Denote $a_1:=a \in A^n$, $\Delta = \langle \ccl_\Gamma(\alpha) \rangle$, $T:=T_{\Gamma,a_1,\alpha} \cup S$. Lemma  \ref{lem:kneser_5.27} implies that there are a constant $N$ and  an open neighborhood of the identity  $W \subseteq \prod_{v \in T_\Gamma}\langle\ccl_\Gamma(\alpha)\rangle$ such that, for any $b_1,b_2\in A^n$ such that $q(b_1)=q(b_2)=q(a)$, and any finite $T'$ containing $T$, 

\begin{equation} \label{cond:5.27}
\parbox{10cm}{the set of $\beta \in \ccl_\Gamma(\alpha)^N$ for which $i_q \left( K_wb_1+K_w\beta b_2 \right) = 1$ and  $p_v(b_1),p_v(\beta b_2)$ are linearly independent, for every $v\notin T'$, contains a dense subset of  $W_{T'}:=W \times \prod_{v \in T' \setminus (T_\Gamma  \cup \{w\})}\langle \ccl_\Gamma(\alpha)\rangle$.}
\end{equation}

For every $v \in T_\Gamma$, choose an open normal subgroup $\Delta^*_v \subseteq \Delta_v$ such that $\prod_{v \in T_\Gamma}\Delta^*_v \subseteq W$. For every  $v \in T \setminus T_\Gamma$, denote $\Delta^*_v =\Delta_v$. Item \ref{item:local_2} of Lemma \ref{lem:local_conditions} implies that for every $v \in T\setminus \{w\}$,  there are open sets  $U_v^2,U_v^3 \subset A_v^n$ such that $U_v^2 \cap \Delta^*_v a_1 \neq \emptyset$, $ U_v^3 \cap \Delta^*_v a_1 \neq \emptyset$ and, for every $a_2\in U_v^2 \cap \Delta^*_va$ and $a_3\in U_v^3 \cap \Delta^*_v a_1$, the matrix $M(a_1,a_2,a_3)$ is in general position and  $(a_1,a_2,a_3)$ is $A_v$-good.

Let $V=\Gamma a_1 \cap \bigcap_{v\in T \smallsetminus S}U_v^2$. We will show that $\ccl(\alpha)^{3N} a_1 \supseteq V$, which implies that $\ccl(\alpha)^{6N}a_1$ contains an $S$-adelic neighborhood of $a$ in $\Gamma  a$.

Let $a_2 \in V$. Lemma \ref{lem:main_idea} implies that it is enough to find  $\beta,\gamma \in \ccl_\Gamma(\alpha)^N$ such that $(a_1,\beta a_2,\gamma a_3)$ is $A$-good. We start by finding $\beta$. Applying \eqref{cond:5.27} with $b_1=a_1$, $b_2=a_2$, and $T'=T$, and since $\left\{ \beta \in \Gamma \cap W_T \mid \beta a_2 \in U_v^2 \right\}$ is non-empty and open, we can find $\beta \in \ccl_\Gamma(\alpha)^{N}$  such that:
\begin{enumerate}[(a)]
\setcounter{enumi}{\value{my_counter}}
\item\label{item:h2} For every $v \in T \setminus \{w\}$, $\beta \in \Delta^*_v$ and $\beta a_2 \in  U^2_v$.
\item\label{item:h1} $i_q (K_wa_1 + K_w\beta a_2) = 1$ so $i_q((K_wa_1+K_w\beta a_2)^\perp)\ge 1$.
\item\label{item:h3} $p_v(a_1),p_v(\beta a_2)$ are linearly independent, for every $v\notin T$.
\setcounter{my_counter}{\value{enumi}}
\end{enumerate}

Item \ref{item:h2} and the choice of $U_v^2,U_v^3$ imply that, for every $v \in T \setminus \{w\}$ and $a_3 \in U_v^3 \cap \Delta^*_v a_1$, the triple $(a_1,\beta a_2,a_3)$ is $v$-good and $M(a_1,\beta a_2,a_3)$ is in general position. 

We now find $\gamma$. One of the requirements on $\gamma$ will be that $\gamma a_1 \in U_v^3 \cap \Delta ^*_v a_1$, for every $v \in T \smallsetminus \left\{ w \right\}$. It then follows that the triple $(a_1,\beta a_2, \gamma a_1)$ is $v$-good, for every $v\in T \smallsetminus \left\{ w \right\}$.

Since $M(a_1,\beta a_2,a_3)$ is in general position, $\disc(a_1,\beta a_2) \ne 0$. It follows that the set $T(\beta)$ consisting  of the places $v\not \in T$ for which  $\disc(a_1,\beta a_2) \not \in A^\times_v$ is finite. For every $v\in T(\beta)$, $q(a_1)\in A_v^\times$ and $\Delta_v=\Theta_q(A_v)$. Thus, for every $v\in T(\beta)$, the set of $\gamma_v\in  \Delta_v$ such that $\disc(a_1,\gamma_v a_1)\in A_v^ \times$ is non-empty and open. Applying \eqref{cond:5.27} with $b_1=b_2=a_1$ and $T'=T \cup T(\beta)$, there is $\gamma \in \ccl_\Gamma(\alpha)^N $ such that

\begin{enumerate}[(a)]
\setcounter{enumi}{\value{my_counter}}
\item\label{item:n4} For every $v \in T \setminus \{w\}$, $\gamma \in \Delta^*_v$ and $\gamma a_1 \in U_v^3$.

\item\label{item:n2} $\disc (a_1, \gamma a_1) \in  A_v^\times$  for $v \in T(\beta)$.

\item\label{item:n1} $i_q (K_wa_1 + K_w\gamma a_1) = 1$ so $i_q((K_wa_1+K_w\gamma a_1)^\perp)\ge 1$.

\item\label{item:n3} $p_v(a_1),p_v(\gamma a_1)$ are linearly independent, for every $v\notin T \cup T(\beta)$. It follows from item \ref{item:n2} that $p_v(a_1),p_v(\gamma a_1)$ are linearly independent also for $v\in T(\beta)$.
\setcounter{my_counter}{\value{enumi}}
\end{enumerate}

Since  $\gamma a_1 \in U_v^3 \cap \Delta ^*_v a_1$, $M(a_1,\beta a_2,\gamma a_1)$ is in general position and  $(a_1,\beta a_2,\gamma a_1)$ is $A_v$-good for every $v \in T \smallsetminus \{w\}$. Item \ref{item:local_1} of Lemma \ref{lem:local_conditions} and item \ref{item:n1}  imply that $(a_1,\beta a_2,\gamma a_1)$ is $w$-good. Item \ref{item:local_3} of Lemma \ref{lem:local_conditions}, items \ref{item:h3}, \ref{item:n2}, \ref{item:n3}, and the definition of $T(\beta)$ imply that $(a_1,\beta a_2,\gamma a_1)$ is $A_v$-good for every $v \not \in T$. We conclude that  $(a_1,\beta a_2,\gamma a_1)$   is $A_v$-good for every $v$. Lemma \ref{lem:kne_8.1} and items \ref{item:h1} and \ref{item:n1} imply that $(a_1,\beta a_2,\gamma a_1)$ is $A$-good. Lemma \ref{lem:main_idea} shows that $a_2 \in \ccl_\Gamma(\alpha)^{3N}$.
\end{proof}

\subsection{Proof of Lemma \ref{lem:kneser_5.27}}

\begin{lemma}[Lemmas 4.2, 4.3, 4.4, 4.5 and 4.6 of \cite{Kne}]\label{lem:kne_4} Under Setting \ref{asum:open}, assume that 
  $n =3 $ or $n \ge 5$. Let $\alpha \in\Gamma \setminus Z(\Gamma)$ and denote $\Delta:=\langle \ccl_\Gamma(\alpha) \rangle$. Then:
		\begin{enumerate}
		\item\label{item:4.2} $\Delta$ is dense in $\prod_{v \ne w}\Delta_v$.
		\item\label{item:4.3}  Let $v \in S$. Then, $\Delta_v=\Gamma_v=\Theta_q(K_v)$.
		\item\label{item:4.4}  For every $v \not \in S$, $\Delta_v$ is open in $\Theta_q(K_v)$.
		\item\label{item:4.5} For all but finitely many $v \not \in S$, $\Delta_v=\Gamma_v=\Theta_q(A_v)$.
 		\item\label{item:4.6} If $v \not \in S \cup T_{\Gamma}$, then $p_v(\Theta_q(A_v))= \Theta_q (k_v)$. 
 		\end{enumerate}
\end{lemma}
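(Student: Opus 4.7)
The central engine is strong approximation for the simply connected group $\Theta_q = \Spin_q$: since $\Theta_q(K_w)$ is non-compact (as $i_q(K_w^n) \geq 1$), Kneser--Platonov gives that $\Gamma$ is dense in $\prod_{v\neq w} \Theta_q(K_v)$. Once this is in hand, each of the five claims reduces to the local analysis of the closed subgroup generated by a single non-central conjugacy class, which has already been carried out in Section \ref{sec:local}.

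I would first dispatch the local statements (2), (3) and (5). For (2), density of $\Gamma$ in $\Theta_q(K_v)$ gives $\Gamma_v = \Theta_q(K_v)$ when $v\in S$, so $\Delta_v$ is a closed normal subgroup of $\Theta_q(K_v)$ containing the non-central element $\alpha$; Lemmas \ref{lem:conj.open.compact.Lie} and \ref{lem:conj.open.noncompact.Lie}, combined with the almost-simplicity of $\Theta_q(K_v)$ modulo its center, force $\Delta_v = \Theta_q(K_v)$. For (3), apply Lemma \ref{lem:conj.compact.p-adic.open}(\ref{item:conj.compact.1}) inside the compact open subgroup $\Gamma_v$: some bounded power of $\gcl_{\Gamma_v}(\alpha)$ contains a principal congruence subgroup, so $\Delta_v$ is open. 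Part (5) is a direct consequence of smoothness of $\Theta_q$ over $A_v$ together with Hensel's lemma, valid for every $v \notin S \cup T_\Gamma$.

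Part (4) combines these inputs in the generic setting. For all but finitely many $v \notin S$, three things hold simultaneously: $\Theta_q$ is good at $v$ in the sense of Definition \ref{defn:good.reduction.big.char}; $v\notin T_\Gamma$, so $\Gamma$ is dense in $\Theta_q(A_v)$ and thus $\Gamma_v = \Theta_q(A_v)$; and $\alpha \notin Z(\Theta_q(A_v))\cdot \Theta_q(A_v)[\mathfrak{p}_v]$. Under these conditions Lemma \ref{lem:dense.p-adic} yields $\langle \gcl_{\Theta_q(A_v)}(\alpha)\rangle = \Theta_q(A_v)$, so $\Delta_v = \Gamma_v = \Theta_q(A_v)$.

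Finally I would prove (1). By the initial strong approximation and the normality of $\Delta$ in $\Gamma$, the closure $\overline{\Delta}$ inside $\Pi := \prod_{v\neq w}\Theta_q(K_v)$ is a closed normal subgroup of $\Pi$ whose projection to the $v$-th factor is, by definition, $\Delta_v$. The remaining step --- upgrading this to $\overline{\Delta} = \prod_{v\neq w}\Delta_v$ --- is the place I expect to require the most care. The plan is a finite approximation argument: given a finite set $F$ of places and a target $(g_v)_{v\in F}\in\prod_{v\in F}\Delta_v$, I would invoke strong approximation to produce $\gamma\in\Gamma$ approximating $g_v$ at each $v\in F$ while also lying in the open subgroup $\Delta_v$ of $\Gamma_v$ at every $v$ in the finite bad set $(S\cup T_\Gamma)\setminus F$ (this is where the openness established in (3) is essential). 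Since (4) gives $\Delta_v = \Gamma_v$ at every other place, the element $\gamma$ automatically lies in $\Delta$, providing the required approximation.
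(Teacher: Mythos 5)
The paper supplies no proof of this lemma: it is imported verbatim as Lemmas 4.2--4.6 of Kneser \cite{Kne}, so there is no internal argument to compare against. With that understood, your sketches of items (2)--(5) correctly identify the relevant machinery of Section~\ref{sec:local} (Lemmas~\ref{lem:conj.open.compact.Lie}, \ref{lem:conj.open.noncompact.Lie}, \ref{lem:conj.compact.p-adic.open}, \ref{lem:dense.p-adic}) and are essentially sound. One caveat on (2): at $v=w$ itself, density of $\Gamma$ in $\Theta_q(K_w)$ is not a consequence of strong approximation, which only controls the places other than $w$; it fails outright when $w$ is the unique noncompact place of $S$, so the case $v=w$ either needs a separate argument or is not actually asserted.

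The argument for (1), however, has a genuine gap at the final step. You construct $\gamma\in\Gamma$ approximating the target $(g_v)_{v\in F}$ on a finite set $F$ and lying in $\Delta_v\subseteq\Gamma_v$ at the remaining finitely many bad places, and conclude that ``the element $\gamma$ automatically lies in $\Delta$.'' This does not follow. What the construction gives is $\gamma\in\Gamma\cap\prod_{v\neq w}\Delta_v$, i.e.\ that $\gamma$ lies in the $w$-adic closure of $\Delta$ \emph{taken inside $\Gamma$}. To deduce $\gamma\in\Delta$ you would need $\Delta = \Gamma\cap\prod_{v\neq w}\Delta_v$, i.e.\ that $\Delta$ is $w$-adically closed in $\Gamma$ --- a statement at least as strong as (1) itself and of the same flavour as the congruence subgroup property, so it cannot be assumed. (Compare: a non-congruence finite-index normal subgroup of $\SL_2(\Z)$ is not closed in the congruence topology; the analogous approximation argument would go through and nevertheless produce elements outside the subgroup.) Your argument therefore only establishes that $\Gamma\cap\prod_{v\neq w}\Delta_v$ is dense in $\prod_{v\neq w}\Delta_v$, which is weaker than (1). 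A correct argument must work on the completed side throughout: set $\hat\Delta := \overline{\Delta}\subseteq\overline{\Gamma}=\prod_{v\neq w}\Gamma_v$ (the inclusion $\hat\Delta\subseteq\prod_{v\neq w}\Delta_v$ is automatic), observe that $\hat\Delta$ is a closed normal subgroup of $\overline{\Gamma}$ containing the diagonal element $\alpha$, and deduce from this --- e.g.\ by noting that the commutators $[g,\alpha]$ with $g\in\Gamma_v\times\{1\}\times\cdots$ lie in $\hat\Delta$ and are supported at the single place $v$, and then invoking the local normal-subgroup structure of the $\Gamma_v$ --- that $\hat\Delta$ contains $\prod_{v\neq w}\Delta_v$. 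At no point in such an argument does one need to decide whether a given element of $\Gamma$ belongs to $\Delta$, which is precisely what your version cannot avoid.
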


\begin{lemma}[Lemma 4.7 of \cite{Kne}]\label{lem:kne_4.7} Under Assumption \ref{asum:open}, assume that 
  $n =3 $ or $n \ge 5$. Let  $b_1,b_2 \in K^n$ be non-zero vectors and let $U$ be a non-empty open subset of $\Theta_q(\mathbb{A}^{\{w\}})$. For every $r>0$, there is a non-empty open subset $W \subseteq U$ such that every element $\alpha \in  \Theta_q(K) \cap W$, $|q(b_1,\alpha b_2)|_w>r$. In particular, if $r$ is large enough then  $i_q(K_w b_1  + K_w\alpha b_2) = 1$.
\end{lemma}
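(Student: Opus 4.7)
The plan is to exploit the product formula $\prod_v |c|_v = 1$ for $c \in K^\times$. For $\alpha \in \Theta_q(K)$, the scalar $c := q(b_1, \alpha b_2)$ lies in $K$, so by constraining the valuation of $c$ at all places $v \ne w$, one can force $|c|_w$ to be arbitrarily large. The key insight is that $W$ need not constrain the $w$-component of $\alpha$ directly: the constraints at the non-$w$ places, together with rationality of $\alpha$, do the work.

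First I would shrink $U$ to a basic open of the form $U = \prod_{v \in T_0} U_v \times \prod_{v \notin T_0 \cup \{w\}} \Theta_q(A_v)$, where $T_0 \supseteq S$ is a finite set containing every place at which $b_1$ or $b_2$ fails to be integral, and each $U_v$ is relatively compact. Then the continuous function $\alpha_v \mapsto |q(b_1, \alpha_v b_2)|_v$ is bounded by some $C_v < \infty$ on each $U_v$, while for $v \notin T_0 \cup \{w\}$ with $\alpha_v \in \Theta_q(A_v)$, integrality forces $|q(b_1, \alpha_v b_2)|_v \leq 1$ for free. The crux is to refine the restriction at one auxiliary prime $\mathfrak{p}_0 \notin T_0 \cup \{w\}$: choose $\mathfrak{p}_0$ of sufficiently large residue characteristic that $q$ has good reduction, and produce an element $\alpha_{\mathfrak{p}_0}^0 \in \Theta_q(K_{\mathfrak{p}_0})$ satisfying $q(b_1, \alpha_{\mathfrak{p}_0}^0 b_2) = 0$. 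Equivalently, find a vector $v_0 \in K_{\mathfrak{p}_0}^n$ lying on $\{q = q(b_2)\} \cap b_1^\perp$, to which Witt's extension theorem attaches such an $\alpha_{\mathfrak{p}_0}^0$ with $\alpha_{\mathfrak{p}_0}^0 b_2 = v_0$. For $n \geq 5$, the form $q|_{b_1^\perp}$ is $(n-1)$-dimensional with $n-1 \geq 4$, hence universal over the non-archimedean $K_{\mathfrak{p}_0}$, so $v_0$ exists and can be obtained by Hensel-lifting a smooth reduction.

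Given $\alpha_{\mathfrak{p}_0}^0$, let $V_\epsilon \subseteq \Theta_q(K_{\mathfrak{p}_0})$ be the open neighborhood on which $|q(b_1, \alpha_{\mathfrak{p}_0} b_2)|_{\mathfrak{p}_0} < \epsilon$, and define
\[
W_\epsilon = \Bigl(\prod_{v \in T_0} U_v\Bigr) \times V_\epsilon \times \prod_{v \notin T_0 \cup \{w, \mathfrak{p}_0\}} \Theta_q(A_v).
\]
To exclude the degenerate case $c = 0$ (which would spoil the product formula), I would further shrink some $U_{v_1}$ (take $v_1 \in T_0$ archimedean) to avoid the proper Zariski-closed locus $\{\alpha_{v_1} : q(b_1, \alpha_{v_1} b_2) = 0\}$, which is non-trivial because $q(b_1, \cdot \, b_2)$ is a non-constant regular function on $\Theta_q$. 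Then for any $\alpha \in \Theta_q(K) \cap W_\epsilon$, the scalar $c$ is non-zero, and the product formula yields
\[
|c|_w = \prod_{v \ne w} |c|_v^{-1} \;\geq\; \frac{1}{\epsilon \cdot \prod_{v \in T_0} C_v}.
\]
Choosing $\epsilon < \bigl(r \prod_{v \in T_0} C_v\bigr)^{-1}$ proves the main claim. The Witt-index assertion follows because once $|q(b_1, \alpha b_2)|_w$ dominates $\max\{|q(b_1)|_w, |q(b_2)|_w, 1\}$, the discriminant $q(b_1) q(b_2) - q(b_1, \alpha b_2)^2$ has the sign or valuation class forcing $i_q(K_w b_1 + K_w \alpha b_2) = 1$.

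The hardest step is producing $\alpha_{\mathfrak{p}_0}^0$ in the exceptional case $n = 3$, where $q|_{b_1^\perp}$ is only binary over $K_{\mathfrak{p}_0}$ and generally fails to represent $q(b_2)$. The natural remedy is to use the identification $\Theta_q \cong \mathrm{SL}_1(D)$ for a quaternion algebra $D / K$ and to select $\mathfrak{p}_0$ among the (infinitely many, by Chebotarev density) primes at which $D$ splits; the required local element can then be produced directly inside $\mathrm{SL}_2(K_{\mathfrak{p}_0})$, where the corresponding equation is easily solved.
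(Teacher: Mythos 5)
The paper attributes this lemma to Kneser [Kne, Lemma~4.7] and does not reproduce a proof, so there is nothing in the paper to compare against; I assess your argument on its own merits. Your global strategy is sound and natural: bound $|c|_v$ for $c=q(b_1,\alpha b_2)$ at all $v\neq w$ (compactness at the finite bad set $T_0$, integrality elsewhere, and a deep congruence at an auxiliary good prime $\mathfrak{p}_0$), rule out $c=0$ by shrinking the open set at one place to avoid a proper Zariski-closed locus, and then read off $|c|_w$ from the product formula for $c\in K^\times$.

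The genuine gap is at the auxiliary prime. Witt's extension theorem carries $b_2$ to $v_0\in b_1^\perp$ by an element of $\Or_q(K_{\mathfrak{p}_0})$, but $\Theta_q=\Spin_q$, and the natural map $\Spin_q(K_{\mathfrak{p}_0})\to \SO_q(K_{\mathfrak{p}_0})$ is \emph{not} surjective: its image is the kernel of the spinor norm, a proper open subgroup. So Witt alone does not produce $\alpha^0_{\mathfrak{p}_0}\in\Theta_q(K_{\mathfrak{p}_0})$. The correct input is Kneser's vanishing theorem $H^1(K_{\mathfrak{p}_0},H)=0$ for $H$ simply connected semisimple over a non-archimedean local field: when $q(b_2)\neq 0$, the stabilizer of $b_2$ in $\Spin_q$ is $\Spin_{q|_{b_2^\perp}}$, which is simply connected semisimple for $n-1\ge 3$, so $\Spin_q(K_{\mathfrak{p}_0})$ acts transitively on the level set; the cases $q(b_1)=0$ or $q(b_2)=0$ need small adjustments (the relevant form drops to dimension $n-2$, and the stabilizer of an isotropic vector is no longer reductive, though its Levi is a smaller spin group). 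You flag $n=3$ as delicate, but the spinor-norm/$H^1$ issue is present for every $n$. Two smaller points: for $W_\epsilon\subseteq U$ you need $V_\epsilon\subseteq\Theta_q(A_{\mathfrak{p}_0})$, hence $\alpha^0_{\mathfrak{p}_0}\in\Theta_q(A_{\mathfrak{p}_0})$ rather than just $\Theta_q(K_{\mathfrak{p}_0})$; and the phrase ``Hensel-lifting a smooth reduction'' tacitly uses Lang's theorem (or Lang--Weil) to supply a suitable $k_{\mathfrak{p}_0}$-point, which should be made explicit. All of these local subtleties are avoided by instead imposing only the mild congruence $c\equiv 0\pmod{\mathfrak{p}_i}$ at several auxiliary primes $\mathfrak{p}_1,\dots,\mathfrak{p}_N$ of large residue characteristic, where the required element of $\Theta_{\bar q}(k_{\mathfrak{p}_i})$ exists by Lang's theorem and lifts to $\Theta_q(A_{\mathfrak{p}_i})$ by smoothness; taking $N$ large then drives $\prod_{v\neq w}|c|_v$ as small as desired.
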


\begin{definition} For an element $\alpha \in \Theta_q(K)$, the support of $\alpha$ is the subspace $\supp(\alpha):=\left\{ a\in K^n \mid \alpha(a)=a \right\} ^{\perp}$. We say that $\alpha$ is strongly $\epsilon$-separated if $\alpha \restriction_{\supp(\gamma)}$ is $\epsilon$-separated in $\Theta_{q\restriction_{\supp(\gamma)}}(K)$.
\end{definition}

For example, any reflection is $\sqrt{2}$-separated, but not strongly $\epsilon$-separated, for any $\epsilon >0$. The following lemma is an effective version of Lemma 4.8 of \cite{Kne}.

\begin{lemma}[cf. Lemma 4.8 of \cite{Kne}]\label{lem:kne_4.8}
For every $n \ge 5$ and every $\epsilon>0$ there exists $N=N(n,\epsilon)$ such that the following claim holds:

If $\alpha \in \Gamma$ is $\epsilon$-separated then  the set $\ccl_\Gamma(\alpha)^N$ contains a strongly 1-separated element $\beta$ such that $\supp(\beta)$ is a 5-dimensional regular subspace and $i_q(\supp(\beta)_w) \ge 1$.
\end{lemma}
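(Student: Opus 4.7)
The plan is to produce $\beta$ by approximating a carefully chosen target element within a fixed subgroup of $\Theta_q$ whose elements have support in a prescribed $5$-dimensional regular subspace, and then to force the approximation into that subgroup via a Hensel-type correction. The two main inputs are (i) the density of products of conjugacy classes from Proposition~\ref{cor:bounded_prod}, and (ii) strong approximation, which controls the closure of $\Gamma$ at every place.

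First I fix a $5$-dimensional regular $K$-subspace $V \subseteq K^n$ with $i_q(V_w) \geq 1$. Such $V$ exists by weak approximation: the admissibility conditions cut out a Zariski-open locus in $\mathrm{Gr}(5,n)_K$ which is non-empty over $K_w$ (using $n \geq 5$ and $i_q(K_w^n) \geq 1$), so a $K$-rational point exists. Let $H := \Theta_{q\restriction_V}$ be the $K$-subgroup of $\Theta_q$ acting trivially on $V^\perp$, and set $\Lambda := H(A) \cap \Gamma$; every element of $H(K)$ has support contained in $V$. At each $v \in S_{def}$, choose a strongly $1$-separated target $\beta_v^\ast \in H(K_v)$ whose support equals all of $V_v$; such $\beta_v^\ast$ exist because $H(K_v)$ is a positive-dimensional compact simple Lie group, so its set of strongly $1$-separated regular elements is open and non-empty. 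At $w$ pick $\beta_w^\ast \in H(K_w)$ to be regular semisimple with trivial fixed subspace in $V_w$.

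Since $\alpha$ is $\epsilon$-separated, Proposition~\ref{cor:bounded_prod} yields a constant $N_0$ such that $\ccl_\Gamma(\alpha)^{N_0}$ is dense, with respect to the $\{w\}$-adelic topology on $\prod_{v\ne w}\Gamma_v$, in a neighborhood of the identity at the places of $T_\Gamma$ times the subgroups $\langle\ccl_{\Gamma_v}(\alpha)\rangle$ at the remaining places. At every $v \in S_{def} \cup \{w\}$, these subgroups cover $\Theta_q(K_v)$, so we can find $\tilde\beta \in \ccl_\Gamma(\alpha)^{N_0}$ whose image in each $\Theta_q(K_v)$ for $v \in S_{def} \cup \{w\}$ is arbitrarily close to $\beta_v^\ast$, and which is close to $H$ at the finitely many remaining places that matter.

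The main obstacle is that $\tilde\beta$ only lies close to $H$ at finitely many places rather than exactly in $\Lambda$. The plan to overcome this is a Hensel-style correction: multiply $\tilde\beta$ by a further bounded product of conjugates of $\alpha$ drawn from a sufficiently deep principal congruence subgroup, designed so that the product exactly cancels the off-$H$ deviation of $\tilde\beta$. The existence of such a corrective element is ensured by combining Proposition~\ref{cor:bound_width_cong} (dense generation of $\langle\ccl_\Gamma(\alpha)\rangle$ by a bounded product of conjugates) with strong approximation in $\Theta_q$, applied to the image of the deviation in $\Gamma/\Lambda\Gamma[I]$ for a suitable ideal $I$ determined by the approximation quality. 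Once $\beta$ has been placed in $\Lambda$, its global support lies in $V$; genericity of $\beta_w^\ast$ (ensuring distinct non-trivial eigenvalues) forces the support to be all of $V$, whence it is $5$-dimensional, regular, and has $i_q(\supp(\beta)_w) \geq 1$; strong $1$-separation at each $v \in S_{def}$ is inherited from the approximation of $\beta_v^\ast$. The uniform bound $N = N(n,\epsilon)$ is obtained by tracking the constants $N_0+N_1$ through the argument and exploiting that, for each fixed $n$, the relevant local groups $\Theta_q(K_v)$ fall into only finitely many isomorphism classes, which eliminates the apparent dependence on $(K,S,q,\Gamma)$.
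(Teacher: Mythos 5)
Your strategy of fixing a $5$-dimensional regular subspace $V$ in advance and then forcing $\beta$ to land exactly inside $\Lambda := \Theta_{q\restriction_V}(A)\cap\Gamma$ has a fundamental flaw that the ``Hensel-style correction'' cannot repair. The set $\Lambda$ (or even $H(K)\cap\Gamma$ with $H=\Theta_{q\restriction_V}$) is contained in a proper algebraic subgroup of $\Theta_q$, hence is a measure-zero, nowhere-dense subset of $\Theta_q(K_v)$ at every place. The input you have -- Proposition~\ref{cor:bounded_prod} and Proposition~\ref{cor:bound_width_cong} -- only gives density of $\gcl_\Gamma(\alpha)^N$ in certain \emph{open} neighborhoods in $\prod_{v\ne w}\Gamma_v$. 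Density in open sets never forces intersection with a measure-zero subset: the element $\tilde\beta$ you produce will generically miss $H$ at every place, and there is no $\eta$ in a congruence subgroup that you can multiply by to fix this, since $\tilde\beta\eta\in\Lambda$ forces $\eta\in\tilde\beta^{-1}\Lambda$, another measure-zero set that density gives you no access to. The appeal to ``strong approximation applied to the image of the deviation in $\Gamma/\Lambda\Gamma[I]$'' does not make sense as stated: $\Lambda$ is not normal in $\Gamma$, $\Lambda\Gamma[I]$ need not be a subgroup, and strong approximation controls reductions modulo congruence subgroups, not cosets of an arbitrary algebraic subgroup.

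The paper avoids this obstruction by \emph{not} fixing the $5$-dimensional subspace in advance. Instead it proceeds by induction on $n$. For the base case $n=5$ your outline is essentially correct: there is no proper subgroup to land in, one just approximates strongly $1$-separated targets at the definite places and cites Proposition~\ref{cor:bounded_prod}. For $n>5$, the paper first uses Proposition~\ref{cor:bounded_prod} and Lemma~\ref{lem:kne_4.7} to choose an auxiliary $a\in A^n$ and $\beta\in\gcl_\Gamma(\alpha)^{M}$ with good local behaviour, and then builds the subspace \emph{from} these data: setting $c_i:=\beta^i a$, $C:=\Span_K\{c_0,c_1,c_2\}$, $D:=\Span_K\{c_0,\dots,c_4\}$. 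The key trick is a commutator: choosing $\gamma\in\Gamma\cap\Theta_{q\restriction_C}(K)$ by strong approximation, the element $\delta=\beta\gamma\beta^{-1}\gamma^{-1}$ automatically has $\supp(\delta)\subseteq D$ because $\supp(\beta\gamma\beta^{-1})=\beta\,\supp(\gamma)\subseteq\Span\{c_1,c_2,c_3\}$, and the local data are arranged so that $\delta$ is $1$-separated in $\Theta_{q\restriction_D}(K)$. One then invokes the base case for $\Gamma\cap\Theta_{q\restriction_D}(K)$ (a congruence subgroup of a $\Spin$ group in dimension $5$ with $i_q(D_w)\ge 1$). In other words, landing in a codimension-positive subgroup is achieved \emph{algebraically} by a commutator identity, not by approximation; you should rewrite your proof along these lines.

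One further inaccuracy worth noting: the existence of $V$ with $i_q(V_w)\ge 1$ is not a Zariski-open condition on $\mathrm{Gr}(5,n)_K$; isotropy over $K_w$ is a $w$-adically open-closed condition (governed by local invariants), not a Zariski condition. This does not affect the existence of a suitable $V$, but the justification should use weak approximation in the $v$-adic, not Zariski, topology.
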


\begin{proof} 
The proof is by induction on $n$. Assume that $n=5$. Let $N=N(5,\epsilon)$ be the constant in Proposition \ref{cor:bounded_prod}. For every $v \in S_{def}$, choose $\beta_v \in \Theta_v(K_v)$ such that
$\dist_v(\beta_v,Z(\Theta_v))>1$ and $\supp(\beta_v)=K_v^5$.  The definition of $N$ implies that there exists $\beta \in \ccl_\Gamma(\alpha)^{N}$ such that $\beta$ is arbitrary close to $\beta_v$, for every $v \in S_{def}$. If the approximation is good enough, then $\beta$ has the required properties.

Assume $n>5$ and let $M=M(n,\epsilon)$  be the constant in Proposition \ref{cor:bounded_prod}.
For every $v\in S \setminus   \{w\}$, choose  $a_v \in A_v^n$ and  $\beta_v \in \langle\ccl_\Gamma(\alpha)\rangle_v=\Theta_q(K_v)$ such that, for $c_{v,i}:=\beta_v^i a_v$, the following hold:
\begin{enumerate} [(a)]
\item For every $v \in S_{def}$,  $c_{v,0},c_{v,1},c_{v,2},c_{v,3},c_{v,4} $ is an orthonormal basis to a regular 5-dimensional subspace.
 \item For every $v \in S \setminus (S_{def}\cup \{w\})$, $\Span_{K_{v}}\{c_{v,i} \mid 0\le i \le 4\}$ is a regular  isotropic subspace.
 \setcounter{my_counter}{\value{enumi}}
\end{enumerate}
A straightforward computation shows that, for every $v \in S_{def}$ and every $\gamma_v \in \Theta_q(K_v)$, if $\supp(\gamma_v)=\Span_{K_v}\{c_{v,0},c_{v,1}\}$,
$\gamma_v(c_{v,0})=c_{v,1}$ and $\gamma_v(c_{v,1})=-c_{v,0}$ then, for $\delta_v=\beta_v\gamma_v\beta_v^{-1}\gamma_v^{-1}$, we have $\dist_v(\delta_v \restriction_{\supp(\delta_v)} ,Z(\Theta_{q\restriction_{\supp(\delta_v)}}(K_v)))) = \sqrt{2}$.
Lemma \ref{lem:kne_4.7} implies that we can choose $a \in A_v^n$ and $\beta \in \ccl_\Gamma(\alpha)^{M}$ which are  arbitrary close to $a_v$ and $\beta_v$, for every $v \in S\setminus \{w\}$, such that $i_q(K_w a +K_w\beta a) \ge 1$. If the approximation is good enough, then for $c_i:=\beta^i a$, the following hold:
\begin{enumerate} [(a)]
\setcounter{enumi}{\value{my_counter}}
\item  $C:=\Span_{K}\{c_i \mid 0 \le i \le 2\}$ and $D:=\Span_{K}\{c_i \mid 0 \le i \le 4\}$ are regular.
\item\label{item:good_gamma} For every $v \in S_{def}$ and every $\gamma_v \in \Theta_q(K_v)$ satisfying $\supp(\gamma_v)=\Span_{K_v}\{c_{0},c_{1}\}$, $\gamma_v(c_{0})=c_{1}$, and $\gamma_v(c_{1})=-c_{0}$, we have $\supp(\beta\gamma_v\beta^{-1}\gamma_v^{-1}) \subseteq C$ (because $\supp(\beta\gamma_v\beta^{-1})=\Span_{K_v}\left\{ c_1,c_2 \right\}$) and $\dist_v(\beta\gamma_v\beta^{-1}\gamma_v^{-1}\restriction_{C},Z(\Theta_{q\restriction_C}(K_v))) > 1$.
 \item For every $v \in S \setminus (S_{def}\cup \{w\})$, $D_v$ is an isotropic subspace.
\end{enumerate}

The group $\Gamma \cap \Theta_{q\restriction_C}(K)$  is a congruence subgroup in $\Theta_{q\restriction_C}(K)$ and $i_q(C_w) \ge 1$. Hence, the strong approximation theorem and item \ref{item:good_gamma} imply that there exists $\gamma \in\Gamma \cap \Theta_{q\restriction_C}(K)$ such that  $\delta:=\beta\gamma\beta^{-1}\gamma^{-1} \in \ccl_\Gamma(\alpha)^{2M}\cap \Theta_{q \restriction_D}(K) $ is $1$-separated in $\Theta_{q\restriction_D}(K)$. Since  $\Gamma \cap \Theta_{q\restriction_D}(K)$  is a congruence subgroup in $\Theta_{q\restriction_D}(K)$ and $i_q(D_w) \ge 1$, the induction basis implies that  $\ccl_{\Theta_{q \restriction_D}(K)\cap \Gamma}(\delta)^{N} \subseteq \ccl_{\Gamma}(\alpha)^{2NM}$ contains the required element.
\end{proof}


\begin{proof}[Proof of Lemma \ref{lem:kneser_5.27}]  The proof closely follows the proof of Theorem 5.2 of \cite{Kne}. 
Lemma \ref{lem:kne_4.8} implies that there is $N_1$ for which there exists an $\beta \in \ccl_\Gamma(\alpha)^{N_1}$ such that:
\begin{enumerate}[(a)]
	\item $C:=\supp(\beta)$ is a regular 5-dimensional plane.
	\item $\beta$ is strongly 1-separated.
	\item $i_q(C_w)=1$.
\setcounter{my_counter}{\value{enumi}}
\end{enumerate}

By replacing $\beta$ with a conjugate element, we can assume that $b_1 \not \in C \cup C^\perp$. Denote $\Lambda:=\Gamma \cap \Theta_{q \restriction_C}(K)$, then $\Lambda$ is a congruence subgroup of $\Theta_{q \restriction_C}(K)$ and $\beta \in \Lambda$. By choosing  a free $A$-lattice $M \subseteq C$, {we get a form $\Theta_{M,q \restriction_C}$ of $\Theta_{q\restriction_C}$ defined over $A$.}
 There is a finite set $T'$ of places, disjoint form $T$, a constant $N_2$ and a neighborhood of the identity  $W' \subseteq \prod_{v \in (T \cup T')\setminus \{w\} }\Theta_{q \restriction C}(K_v)$ such that the following items hold:
\begin{enumerate}[(a)]
\setcounter{enumi}{\value{my_counter}}
	\item\label{item:5.2_0} For every $v \not \in T \cup T'$, $A_v^n \cap C_v=M_v$ and $q$ is regular on $p_v(M_v)$. In particular,  $k_v^n$ is an orthogonal sum of $p_v(M_v)$ and $p_v(M_v)^\perp$
	\item\label{item:5.2_2}  For every $v \not \in T \cup T'$,  $p_{v}(b_1) \notin p_{v}(M_v)^\perp$.
	\item\label{item:5.2_2.5}  For every $v \not \in T \cup T'$,  $\langle \ccl_\Lambda(\beta) \rangle_v=\Theta_{M,q \restriction_C}(A_v)$ and $\pi_{M,v}(\Theta_{M,q \restriction_C}(A_v))=\Theta_{M,q\restriction_C}(k_v)$.
	\item\label{item:5.2_3}  $\ccl_\Lambda(\beta)^{N_1}$  contains a dense subset of $W' \times \prod_{v \notin T \cup T' \cup\{w\}}\langle \ccl_\Lambda(\beta) \rangle_v$.
\setcounter{my_counter}{\value{enumi}}
\end{enumerate}

Indeed, item \ref{item:5.2_0} follows from the fact that for all but finitely many places $v$, $A_v^n \cap C_v=M_v$ and $q$ is regular on $M_v$.  Item \ref{item:5.2_2} follows from the assumption that $b_1 \not \in C^\perp$.  Item \ref{item:5.2_2.5} follows from Lemma \ref{lem:kne_4} applied to $\beta$ and $\Lambda$. The existence of $N_2$ and $W'$ for which Item \ref{item:5.2_3} holds follows from  Proposition \ref{cor:bounded_prod} applied to $\beta$ and $\Lambda$.

Proposition \ref{cor:bounded_prod} applied to $\Gamma$ and $\alpha$  shows that there  exit a constant $N_3$ and an open neighborhood of the identity $W \subseteq \prod_{v\in T_\Gamma} \Theta_q(K_v)$ such that $\ccl_\Gamma(\alpha)^{N_3}$ contains a dense subset of $W \times \prod_{v \notin T_{\Gamma}\cup \left\{ w \right\} } \langle \ccl_\Gamma(\alpha) \rangle_v$. Let  $U$ be a non-empty open subset of $W \times \prod_{v \in T \setminus (T_{\Gamma}\cup \left\{ w \right\} )} \langle \ccl_\Gamma(\alpha) \rangle_v$. We will show that  the intersection of  $\ccl_{\Gamma}(\alpha)^{N_1N_2+N_3}$ with $U$ contains an element which satisfies the desired properties.

Fix some place $u \notin T \cup T'$. {Then $q(b_1)=q(b_2)\in A_u^\times$ so $p_v(b_2) \ne 0$.} 
Since $\langle\ccl_\Gamma(\alpha) \rangle_u=\Theta_u$ and $\dim C^\perp \ge 2$, there exist $\gamma_u \in \langle\ccl_\Gamma(\alpha) \rangle_u$ such that $b_1+C_u$ and $\gamma_u b_2+C_u$ are linearly independent in $K_u^n/C_u$. For every $v \in T'$, $p_v(\langle \ccl_\Gamma(\alpha)\rangle_v )=\Theta_q(k_v)$, $q$ is regular on $k_v^n$ and $q(b_1)=q(b_2)\in A_v^\times$, thus $p_v(b_2) \ne 0$ and there exists $\gamma_v \in \langle \ccl_\Gamma(\alpha)\rangle_v$ for which $p_v(b_1)$ and  $p_v(\gamma_v b_2)$ are linearly independent over $k_v$. Approximation at the places in $T \cup T' \cup \{u\} $ implies that there is $\gamma \in \ccl_\Gamma(\alpha)^{N_3}$ with the following properties:
\begin{enumerate}[(a)]
\setcounter{enumi}{\value{my_counter}}
\item\label{item:5.2_3.5} $\gamma \in U$.
\item\label{item:5.2_5} $b_1+C$ and $\gamma b_2+C$ are linearly independent in $K^n/C$.
\item\label{item:5.2_6} $p_v(b_1)$ and  $p_v(\gamma b_2)$ are linearly independent over $k_v$ for $v \in  T'$.
\setcounter{my_counter}{\value{enumi}}
\end{enumerate}

Item \ref{item:5.2_5} implies that there is a finite set  of places $T''$ which is disjoint from $T \cup T'$ such that:
\begin{enumerate}[(a)]
\setcounter{enumi}{\value{my_counter}}
    \item\label{item:5.2_8}  For every $v \not \in T \cup T' \cup T''$, the images $p_v(b_1)+p_v(M_v)$ and  $p_v(\gamma b_2)+p_v(M_v)$ are linearly independent in $k_v^n/p_v(M_v)$.
\setcounter{my_counter}{\value{enumi}}
\end{enumerate}

For every $v \in T''$, $q(b_1)=q(b_2)\in A_v^\times$ so $p_v(b_2) \ne 0$. If  $v \in T''$ and $p_v(\gamma b_2) \in p_{v}(M)^\perp$,  denote $\gamma'_v=\id $. Item \ref{item:5.2_2} implies that $p_v(b_1)$ and $p_v(\gamma_v'\gamma b_2)$ are linearly independent over $k_v$. If $v \in T''$ and $p_v(\gamma b_2) \notin p_v(M)^\perp $, then items \ref{item:5.2_0}, \ref{item:5.2_2} and   \ref{item:5.2_2.5} imply that there is  $\gamma_v' \in\Theta_{M,q \restriction_C}(A_v)$ such that $p_v(b_1)$ and $p_v(\gamma_v'\gamma b_2)$ are linearly independent over $k_v$. Let $V$ be an open subset of $\Theta_{q\restriction_C}(\mathbb{A}^{\{w\}})$ such that for every $\gamma' \in \Theta_{q\restriction_C}(K) \cap V $:

\begin{enumerate}[(a)]
\setcounter{enumi}{\value{my_counter}}
	\item\label{item:5.2_9}   For every $v \in T  \setminus  \{w\}$, $\gamma'$ is  so close to $1$ such that $\gamma'\gamma \in U$.
	\item\label{item:5.2_10}  For every  $v \in T'$, $\gamma'$ is  so close to $1$ such that  $p_v(b_1)$ and  $p_v(\gamma'\gamma b_2)$ are linearly independent over $k_v$.
	\item\label{item:5.2_11}  For $v \in T''$, $\gamma'$ is so close to $\gamma'_v$ such that  $p_v(b_1)$ and  $p_v(\gamma'\gamma b_2)$ are linearly independent over $k_v$.
\end{enumerate}

Let $c_1$ and $c_2$ be the orthogonal projections of $b_1$ and $\gamma(b_2)$ to $C$.  Items \ref{item:5.2_2.5} and \ref{item:5.2_3} and Lemma \ref{lem:kne_4.7}  applied to  $\beta$ and $\Lambda$, imply that there exists $\gamma' \in \ccl_\Lambda(\beta)^{N_2} \cap V$ such that  $q(c_1,\gamma'c_2)$  is arbitrary large. If $q(c_1,\gamma'c_2)$ is large enough then  $i_q(K_w b_1 + K_w \gamma'\gamma b_2)=1$.  Denote $\delta:=\gamma'\gamma \in \ccl_\Lambda(\beta)^{N_2}\ccl_\Gamma(\alpha)^{N_3} \subseteq   \ccl(\alpha)^{N_1N_2+N_3}$. Item \ref{item:5.2_9} implies that $\delta \in U$. The linear independence of $p_v(b_1)$ and $ p_v(\delta b_2)$ follows for $v  \in  T'$ from item \ref{item:5.2_10} ,  for $v \in T''$ from item \ref{item:5.2_11} and for $ v \not \in T \cup T' \cup T''$ from item \ref{item:5.2_8}.
\end{proof}

\subsection{Proof of Lemma \ref{lemma:uff}}\label{subsection:uff}

\begin{lemma}[cf. Lemma 4.10 of \cite{Kne}]\label{lem:kne_4.10} Under Setting \ref{asum:open}, assume that $i_q(K_w) \ge 2$ and 
$n \ge 6$. For every $\epsilon>0$ there exists a constant $N=N(n,\epsilon)$ (in particular, $N$ does not depend on $q$ nor on $\Gamma$) such that the following claim hold:

If $\alpha$ is $\epsilon$-separated and  $a \in K^n$ is non-isotropic, then $\ccl_\Gamma(\alpha)^N$ contains an element which fixes $a$ and is strongly $1$-separated.
\end{lemma}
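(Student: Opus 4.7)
The plan is to apply Lemma~\ref{lem:kne_4.8} to produce a strongly $1$-separated element, and then to conjugate it into the stabilizer of $a$ by a single element of $\Gamma$. By Lemma~\ref{lem:kne_4.8}, for $N_1=N_1(n,\epsilon)$, the set $\gcl_\Gamma(\alpha)^{N_1}$ contains a strongly $1$-separated element $\beta_0$ with a $5$-dimensional regular support $C_0$ satisfying $i_q((C_0)_w)\ge 1$. Because $\gcl_\Gamma(\alpha)^{N_1}$ is closed under $\Gamma$-conjugation, and the bi-invariant metrics $\dist_v$ at each $v\in S_{def}$ are conjugation-invariant, for every $\tau\in\Gamma$ the element $\tau\beta_0\tau^{-1}$ is again a strongly $1$-separated member of $\gcl_\Gamma(\alpha)^{N_1}$ with support $\tau(C_0)$. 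In particular, $\tau\beta_0\tau^{-1}$ fixes $a$ iff $\tau(C_0)\subseteq a^\perp$, equivalently $\tau^{-1}(a)\in C_0^\perp$.

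It therefore suffices to establish the existence of $\tau\in\Gamma$ with $\tau^{-1}(a)\in C_0^\perp$, i.e., $\Gamma a\cap C_0^\perp\neq\emptyset$. I would use the flexibility built into Lemma~\ref{lem:kne_4.8}, where $C_0$ is governed by local data $c_{0,v}\in A_v^n$ and $\beta_v\in\Theta_q(K_v)$ at the places $v\in S$. At each such $v$, one can choose $C_{0,v}^\perp\subseteq K_v^n$ so that it contains some $b_v\in K_v^n$ with $q(b_v)=q(a)$ lying in the local orbit $\Gamma_v\cdot a$; this is an open (non-empty) constraint on the local data that is compatible with the other requirements of Lemma~\ref{lem:kne_4.8}. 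Once these local choices are fixed, Hasse--Minkowski applied to the regular form $q|_{C_0^\perp}$ yields a global $b\in C_0^\perp\cap K^n$ with $q(b)=q(a)$, and strong approximation for $\Spin_q$ (valid since $\Spin_q(K_w)$ is non-compact as $i_q(K_w^n)\ge 2$) combined with Witt's theorem then produces $\tau\in\Gamma$ with $\tau^{-1}(a)=b$. Setting $\beta:=\tau\beta_0\tau^{-1}$ gives the required element, with $N=N_1(n,\epsilon)$.

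The main obstacle is the Hasse--Minkowski step in the small-$n$ cases. For $n\ge 10$, $\dim C_0^\perp\ge 5$ and the form $q|_{C_0^\perp}$ is universal at every completion by Meyer's theorem, so $q(a)$ is represented automatically. For $n\in\{6,7,8,9\}$ this must be verified directly; the hypothesis $i_q(K_w^n)\ge 2$ is essential here, as it permits us to modify the local data at $w$ to realize any prescribed square-class in the image of $q|_{C_0^\perp}$. This is tightest in the case $n=6$, where $C_0^\perp$ is one-dimensional and one must force the square-class of a generator of $C_0^\perp$ to coincide with $q(a)\cdot(K^\times)^2$. This case analysis, parallel to the local-to-global reasoning in the proof of Theorem~\ref{thm:effective_kneser} (via Lemmas~\ref{lem:kne_8.1} and~\ref{lem:local_conditions}), is the most delicate part of the argument.
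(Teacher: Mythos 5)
Your plan has a genuine gap that I do not think can be repaired along the lines you sketch. The critical step is finding $\tau\in\Gamma$ with $\tau^{-1}(a)\in C_0^\perp$, i.e.\ ensuring $\Gamma a\cap C_0^\perp\neq\emptyset$. But $\Gamma a$ is a discrete orbit (inside $A^n$ with the congruence constraints imposed by $\Gamma$), and $C_0^\perp$ is a fixed $(n-5)$-dimensional rational subspace: generically these do not meet at all. Hasse--Minkowski produces a $K$-rational $b\in C_0^\perp$ with $q(b)=q(a)$, but that $b$ need not be integral, need not satisfy the congruence conditions defining $\Gamma$, and in general is \emph{not} in the orbit $\Gamma a$ (only in the $\Spin_q(K)$-orbit, which is strictly larger). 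Witt's theorem gives an isometry $\theta\in\Spin_q(K)$ with $\theta(a)=b$, but no amount of strong approximation puts $\theta$ in the lattice $\Gamma$ if $b\notin\Gamma a$: strong approximation approximates continuous targets, it does not solve exact Diophantine membership questions. Deciding whether $b\in\Gamma a$ for a given integral $b$ is precisely the orbit-determination problem that Kneser's theorem (Theorem~\ref{thm:effective_kneser} in the paper) addresses, and even that theorem gives only an open adelic neighborhood of $\Gamma a$, not a classification of which subspaces meet $\Gamma a$; invoking it here would in any case leave you with the harder problem of simultaneously constraining $C_0$ (whose construction in Lemma~\ref{lem:kne_4.8} is only controlled locally at $S$) so that $C_0^\perp$ passes through a chosen orbit point.

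The paper avoids this entire issue with a commutator trick. One finds $\beta\in\gcl_\Gamma(\alpha)^N$ using Proposition~\ref{cor:bounded_prod} together with Lemma~\ref{lem:kne_4.7} (so $\beta$ need \emph{not} stabilize $a$), and then, using strong approximation \emph{in the congruence subgroup} $\Lambda=\Gamma\cap\Theta_{q\restriction_{(Ka+K\beta a)^\perp}}(K)$, finds $\gamma\in\Lambda$ close to prescribed local data at $S_{def}$. Since $\gamma$ fixes both $a$ and $\beta a$, the commutator $\delta=\beta^{-1}\gamma^{-1}\beta\gamma$ automatically fixes $a$; and by arranging the local behaviour of $\gamma$ one makes $\delta\restriction_{(Ka)^\perp}$ $1$-separated. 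Lemma~\ref{lem:kne_4.8}, applied inside $\Gamma\cap\Theta_{q\restriction_{(Ka)^\perp}}(K)$, then converts $\delta$ into an element with a $5$-dimensional regular support. This sidesteps any need to hit the discrete orbit $\Gamma a$ with a subspace, because one only ever needs strong approximation for subgroups of $\Gamma$, which is available. I would advise restructuring your argument along these lines rather than trying to salvage the Hasse--Minkowski route.
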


\begin{proof}
Let $N:=N(n,\epsilon)$ be as in Proposition \ref{cor:bounded_prod}.  Lemma \ref{lem:kne_4.7} and approximation imply that there exist $\beta \in \ccl_\Gamma(\alpha)^N$ and $\gamma_v \in  \Theta_{q\restriction_{(K_va+K_v \beta a)^\perp}}(K_v)$, for every $v \in S_{def}$, such that:
\begin{enumerate}[(a)]
\item  $Ka+K\beta a + K \beta^2 a$ is a regular 3-dimensional subspace. 
\item  $i_q (K_wa+K_w\beta a) = 1$.
\item For every $v \in S_{def}$,  $\dist_v((\beta^{-1}\gamma_v^{-1}\beta\gamma_v)\restriction_{(K_va)^\perp},Z(\Theta_{q\restriction_{(K_va)^\perp}}(K_v)))>1$.
\end{enumerate}
Since $i_q((K_wa+K_w\beta a)^\perp) \ge 1$ and $\Lambda:=\Gamma \cap \Theta_{q\restriction_{(Ka+K\beta a)^\perp}}(K)$ is a congruence subgroup of  $\Theta_{q\restriction_{(Ka+K\beta a)^\perp}}(K)$, the strong approximation theorem implies that there exists  $\gamma \in \Lambda$ which is arbitrary close to $\gamma_v$, for every $v \in S_{def}$. If the approximation is good enough, then $\delta:=\beta^{-1}\gamma^{-1}\beta\gamma \in  \ccl_\Gamma(\alpha)^{2N}$ fixes
$a$ and the restriction of $\delta$ to $(Ka)^\perp$  is 1-separated in $\Theta_{q\restriction_{(Ka)^\perp}}(K)$. Lemma \ref{lem:kne_4.8} implies that there exists $M=M(n)$ such that $\ccl_{\Gamma \cap \Theta_{q\restriction_{(Ka)^\perp}}(K)}(\delta)^{M}\subseteq \ccl_\Gamma(\alpha)^{2NM}$ contains the required element.\end{proof}

\begin{corollary}\label{cor:simplified} Under Setting \ref{asum:open}, assume that $i_q(K_w) \ge 2$ and $n \ge 6$.
	For every $\epsilon>0$ there exists a constant $N=N(n,\epsilon)$ such that the following claim hold:

Let $d \le n-6$ and let $a_0,a_1,\ldots,a_d \in K^n$ be non-isotropic orthogonal vectors such that $i_q((K_wa_0+\ldots K_w a_{d-1})^\perp) \ge 2 $. If $\alpha \in \Gamma$ is $\epsilon$-separated and $\mathfrak{q} \lhd A$, then there exists  $\beta \in \ccl_\Gamma(\alpha)^{N} \cap  \Theta_q(A;\mathfrak{q})$ which is strongly $1$-separated and fixes $a_0,\ldots,a_d$. 
\end{corollary}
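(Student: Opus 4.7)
My plan is to reduce Corollary \ref{cor:simplified} to a strengthened version of Lemma \ref{lem:kne_4.10} asserting the following: for every $\epsilon > 0$ there is $N = N(n,\epsilon)$, independent of $\mathfrak{q}$, such that for every ideal $\mathfrak{q} \triangleleft A$, every $\epsilon$-separated $\alpha \in \Gamma$, and every non-isotropic $a \in K^n$, the set $\gcl_\Gamma(\alpha)^N \cap \Gamma[\mathfrak{q}]$ contains a strongly 1-separated element fixing $a$, where $\Gamma[\mathfrak{q}] := \Gamma \cap \Theta_q(A;\mathfrak{q})$. Tracing the proof of Lemma \ref{lem:kne_4.10}, the output is produced as a commutator $\delta = \beta^{-1}\gamma^{-1}\beta\gamma$ with $\gamma$ chosen by strong approximation inside $\Lambda = \Gamma \cap \Theta_{q\restriction_{(Ka+K\beta a)^\perp}}(K)$. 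Since the places dividing $\mathfrak{q}$ are non-archimedean and outside $S$ (as $A$ is a ring of $S$-integers), they are disjoint from the archimedean $S_{\mathrm{def}}$-places where 1-separation is secured; adding the local condition $\gamma \equiv 1 \pmod{v}$ at each $v \mid \mathfrak{q}$ to the strong approximation problem in $\Lambda$ thus yields $\gamma \in \Lambda \cap \Gamma[\mathfrak{q}]$. Normality of $\Gamma[\mathfrak{q}]$ in $\Gamma$ forces $\delta \in \Gamma[\mathfrak{q}]$, and the subsequent invocation of Lemma \ref{lem:kne_4.8} produces only $\Gamma$-conjugates of $\delta^{\pm 1}$ multiplied together, which again lie in $\Gamma[\mathfrak{q}]$. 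The resulting $N$ stays independent of $\Gamma$ and $\mathfrak{q}$ because the exponent in Proposition \ref{cor:bounded_prod} is uniform under Setting \ref{nota:Gamma}.

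Granted this strengthening, the corollary follows by induction on $i$. Set $V_i := (Ka_0+\cdots+Ka_{i-1})^\perp$, so $V_0 = K^n$, $\dim V_i = n-i \geq 6$ for $i \leq d$, and $V_d \subseteq V_i$ implies $i_q((V_i)_w) \geq 2$. The group $\Lambda_i := \Gamma \cap \Theta_{q\restriction_{V_i}}(K)$ is a congruence subgroup of $\Theta_{q\restriction_{V_i}}(A)$. With $\beta_{-1} := \alpha$, at step $i \in \{0,1,\ldots,d\}$ I apply the strengthened lemma inside $\Theta_{q\restriction_{V_i}}$ to $\beta_{i-1} \in \Lambda_i$ and $a_i \in V_i$, with separation parameter $\epsilon_0 := \epsilon$ and $\epsilon_i := 1$ for $i \geq 1$, producing
\[
\beta_i \in \gcl_{\Lambda_i}(\beta_{i-1})^{M_i} \cap \Gamma[\mathfrak{q}]
\]
that fixes $a_i$ and, inductively, $a_0,\ldots,a_{i-1}$, and that is strongly 1-separated in $\Theta_{q\restriction_{V_i}}$. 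Since conjugation in $\Lambda_i$ is a special case of conjugation in $\Gamma$, we have $\beta_i \in \gcl_\Gamma(\alpha)^{M_0 M_1 \cdots M_i}$, and $\beta := \beta_d$ works with $N := \prod_{i=0}^{d} M_i$, a quantity depending only on $n$ and $\epsilon$ because $d \leq n$.

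The induction requires one verification: that strong 1-separation of $\beta_{i-1}$ implies 1-separation in $\Theta_{q\restriction_{V_i}}$. At a real place $v$ where $q\restriction_{V_i}$ is definite, the restriction $q\restriction_{\supp(\beta_{i-1})}$ is also definite because $\supp(\beta_{i-1}) \subseteq V_i$, so strong 1-separation yields $\dist_v(\beta_{i-1},\mathrm{id}) \geq 1$. If $\supp(\beta_{i-1}) \subsetneq V_i$, then $\beta_{i-1}$ fixes pointwise a non-zero subspace of $V_i$, forcing $\dist_v(\beta_{i-1}, -\mathrm{id}) \geq 2$; otherwise $\supp(\beta_{i-1}) = V_i$ and strong 1-separation directly gives $\dist_v(\beta_{i-1}, -\mathrm{id}) \geq 1$. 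Hence $\beta_{i-1}$ is 1-separated from the (finite) center of $\SO_{q\restriction_{V_i}}(K_v)$ at every such $v$.

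The main obstacle is uniformity of $N$ in $\mathfrak{q}$. This rests on two points: the congruence condition at places dividing $\mathfrak{q}$ involves only finitely many non-archimedean places outside $S$ and is thus compatible with the strong approximation step producing the commutator factor; and the exponent in Proposition \ref{cor:bounded_prod}, on which Lemma \ref{lem:kne_4.10} is built, does not depend on the particular congruence subgroup within the ambient simply connected higher-rank spin group.
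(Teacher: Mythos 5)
Your proof is correct, and it uses the same inductive dimension-reduction skeleton as the paper (peel off one fixed vector per step via Lemma \ref{lem:kne_4.10}), but it handles the congruence requirement $\beta \in \Theta_q(A;\mathfrak{q})$ differently. The paper runs the induction with the unmodified Lemma \ref{lem:kne_4.10} (at separation parameter $\epsilon'=\min(1,\epsilon)$) to build strongly $1$-separated $\alpha_k \in \gcl_\Gamma(\alpha)^{N^k}\cap\Theta_{q\restriction_{U_k}}(K)$ for $1\le k\le d$, and only at the very end invokes Proposition \ref{cor:bounded_prod} for $\Gamma_d$ and $\alpha_d$ to produce an element of $\gcl_{\Gamma_d}(\alpha_d)^M\cap\Theta_q(A;\mathfrak{q})$. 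You instead strengthen Lemma \ref{lem:kne_4.10} itself so that its output lies in $\Gamma[\mathfrak{q}]$, with an exponent still uniform in $\mathfrak{q}$, and apply the strengthened lemma at every step. Your key observation — that the congruence condition can be imposed on the factor $\gamma$ chosen by strong approximation in $\Lambda$ (hence at no cost to the exponent), and then propagates to $\delta=\beta^{-1}\gamma^{-1}\beta\gamma$ and to $\gcl(\delta)^M$ by normality of $\Gamma[\mathfrak{q}]$ — is correct, and the same normality argument passes through the inductive construction in Lemma \ref{lem:kne_4.8}, since at each stage one only takes bounded products of $\Gamma$-conjugates of the current element together with one further commutator against a strong-approximation factor. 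Your verification that strong $1$-separation of $\beta_{i-1}$ (with $\supp(\beta_{i-1})\subseteq V_i$) implies $1$-separation inside $\Theta_{q\restriction_{V_i}}$ is also exactly the point one must check to re-enter the lemma. What your route buys: it avoids the somewhat delicate endgame in the paper's proof, where one must simultaneously land in the open congruence condition at the $\mathfrak{q}$-places and retain strong $1$-separation (which is not an open condition, since the support can jump) via a single density argument; and your indexing with $i=0,\dots,d$ cleanly produces an element fixing all $d+1$ vectors $a_0,\dots,a_d$, whereas the paper's induction as written stops at $\alpha_d\in\Theta_{q\restriction_{U_d}}$, which a priori fixes only $a_0,\dots,a_{d-1}$. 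The trade-off is that your version requires re-opening and re-verifying the proofs of Lemmas \ref{lem:kne_4.10} and \ref{lem:kne_4.8} rather than using them as black boxes.
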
 
 \begin{proof}   Denote  $\epsilon'=\min(1,\epsilon)$. For every $0 \le k \le d$, denote $U_k:=(\Span_K\{a_i \mid 0 \le i \le k-1 \})^\perp$ and $\Gamma_k:=\Gamma\cap \Theta_{q\restriction_{U_k}}(K)$. For every $6 \le n' \le n$ let $N(n',\epsilon')$ be the constant given by  Lemma \ref{lem:kne_4.10} with respect to $n'$ and $\epsilon'$ . Denote $N:=\max\{N(n',\epsilon') \mid 6  \le n' \le n\}$. 
 
We will prove by induction on $1 \le k \le d$ that  there exists a strongly $1$-separated $\alpha_k \in \ccl_\Gamma(\alpha)^{N^k}\cap \Theta_{q\restriction_{U_k}}(K)$. The case  $k=1$ follows from the definition of $N$. Assume that the claim is true for some $1 \le k < d$.
Since $\Gamma_k$ is a congruence subgroup in  $\Theta_{q\restriction_{U_k}}(K)$, by the definition of $N$, there exists a strongly $1$-separated $$\alpha_{k+1} \in \ccl_{\Gamma_k}(\alpha_k)^N \cap \Theta_{q\restriction_{U_{k+1}}}(K) \subseteq  \ccl_{\Gamma}(\alpha)^{N^{k+1}} \cap \Theta_{q\restriction_{U_{k+1}}}(K).$$

For every $5 \le n' \le n$ let $M(n',1)$ be the constant given by Proposition \ref{cor:bounded_prod} and denote $M=\max_{n'}M(n',1)$. By the definition of $M$, there exists a strongly $1$-separated 
$$\beta \in \ccl_{\Gamma_d}(\alpha_d)^M  \cap \Theta_q(A;\mathfrak{q})\subseteq \ccl_{\Gamma}(\alpha)^{N^dM} \cap \Theta_{q\restriction_{U_{d}}}(K)\cap  \Theta_q(A;\mathfrak{q}).$$ 

\end{proof}

\begin{proof}[Proof of Lemma \ref{lemma:uff}] Denote  $\epsilon'=\min(\epsilon,1)$ and let $N:=N(n,\epsilon')$ be the the constant given by Corollary \ref{cor:simplified}. For every $n-2 \le n'\le n$, let $M(n',\epsilon')$ be the constant given by Theorem \ref{thm:effective_kneser} and denote $M:=\max_{n'}M(n',\epsilon')$. For every $0 \le r \le 2$, let  $U_r:=(\Span_K\{c_i \mid 0 \le i \le r-1\})^\perp$ and $\Gamma_r:=\Gamma \cap \Theta_{q\restriction_{U_r}}(K)$. Note that $\Gamma_r$ is a congruence subgroup in  $\Theta_{q\restriction_{U_r}}(K)$, that $U_0=K^n$, and that $\Gamma_0=\Gamma$.

By the definition of $N$, there exists a strongly 1-separated element $\alpha_2 \in \ccl_\Gamma(\alpha)^N \cap \Theta_{q\restriction_{U_2}}(K)$. By the definition of $M$, there exists  $0 \ne \mathfrak{q}_2 \lhd A$ such that $\ccl_{\Gamma_2}(\alpha_2)^Mc_2$ contains the $\mathfrak{q}_2$-th neighborhood of $c_2$ in $\Gamma_2c_2$.

By the definition of $N$, there exists a strongly 1-separated element $\alpha_1 \in \ccl_\Gamma(\alpha)^N \cap \Theta_{q\restriction_{U_1}}(K)\cap \Theta_q(A;\mathfrak{q}_2)$. By the definition of $M$, there exists  $0 \ne \mathfrak{q}_1 \lhd A$ such that $ \mathfrak{q}_1 \subseteq \mathfrak{q}_2 $ and $\ccl_{\Gamma_1}(\alpha_1)^Mc_1$ contains the $\mathfrak{q}_1$-th neighborhood of $c_1$ in $\Gamma_1c_1$.

By the definition of $N$, there exists a strongly 1-separated element $\alpha_0 \in \ccl_\Gamma(\alpha)^N \cap \Theta_{q\restriction_{U_0}}(K)\cap \Theta_q(A;\mathfrak{q}_1)$. By the definition of $M$, there exists  $0 \ne \mathfrak{q}_0 \lhd A$ such that $ \mathfrak{q}_0 \subseteq \mathfrak{q}_1 $ and $\ccl_{\Gamma_0}(\alpha_0)^Mc_0$ contains the $\mathfrak{q}_0$-th neighborhood of $c_0$ in $\Gamma_0c_0$. 

We claim that the $\mathfrak{q}_0$-th  neighborhood $J$ of $(c_0,c_1,c_2)$ in $\Gamma(c_0,c_1,c_2)$ is contained in $\ccl_\Gamma(\alpha)^{3MN}(c_0,c_1,c_2)$. For every $0 \le r \le 2$, denote the $\mathfrak{q}_r$-th neighborhood of $c_r$ in $\Gamma_rc_r$ by $J_r$. 
Let $(b_0,b_1,b_2)\in J$. There exists $\beta_0 \in \ccl_{\Gamma_0}(\alpha_0)^M$ such that $\beta_0c_0=b_0$. Since 
$\alpha_0 \in  \Theta_q(A;\mathfrak{q}_1)$, $\beta_0^{-1}b_1\in J_1$. Thus, there exists $\beta_1 \in \ccl_{\Gamma_1}(\alpha_1)^M$ such that $\beta_1c_1=\beta_0^{-1}b_1$. Since $\alpha_0 ,\alpha_1 \in  \Theta_q(A;\mathfrak{q}_2)$ ,  $\beta_1^{-1}\beta_0^{-1}b_2 \in J_2$. Thus, there exists $\beta_2 \in \ccl_{\Gamma_2}(\alpha_2)^M$ such that $\beta_2c_2=\beta_1^{-1}\beta_0^{-1}b_2$. It follows that $\beta_0\beta_1\beta_2(c_0,c_1,c_2)=(b_0,b_1,b_2)$ and $\beta_0\beta_1\beta_2\in \ccl_\Gamma(\alpha)^{3MN}$.
\end{proof} 

\appendix

\section{Bi-interpretability of $A$ and $\PSL_n(A)$}

\begin{setting}\label{nota:appen}
$n \ge 3$ is an integer, $A$ is an infinite integral domain of Krull dimension $d <\infty$ which has trivial Jacobson radical and $\PSL_n(A):=\SL_n(A)/Z(\SL_n(A))$.
\begin{enumerate}
	\item 	For every $\mathfrak{q} \lhd A$, $\rho_{\mathfrak{q}}:\SL_n(A)\rightarrow \SL_n(A/\mathfrak{q})$ is the  quotient map,
	 $\SL_n(A;\mathfrak{q}):=\ker \rho_{\mathfrak{q}}$ is the $\mathfrak{q}$-th congruence subgroup and  $\SL^*_n(A;\mathfrak{q}):=\rho_\mathfrak{q}^{-1}(Z(\SL_n(A/\mathfrak{q})))$.
	\item For every $1 \le i \ne j \le n$, $a \in A$ and $\mathfrak{q} \lhd A$, ${e}_{i,j}(a) \in \SL_n(A)$ is the matrix with $1$ on the diagonal, $a$ in the $(i,j)$-entry and zero elsewhere, $e_{i,j}:=e_{i,j}(1)$, $E_{i,j}(A):=\{e_{i,j}(b)\mid b \in A\}$, $U(A):=E_{1,2}(A)E_{1,3}(A)\cdots E_{1,n}(A)$,
 $E_{i,j}(A;\mathfrak{q}):=E_{i,j}(A) \cap \SL_n(R;\mathfrak{q})$ and $U(A;\mathfrak{q}):=U(A) \cap \SL_n(A;\mathfrak{q})$. Finally, for every  $1 \le i \ne j \le n$, let $p_{i,j} \in \SL_n(A)$ be a permutation matrix such that for every $a \in A$, $p_{i,j}e_{1,n}(a)p_{i,j}^{-1}=e_{i,j}(a)$. 

	\item $\PSL_n(A;\mathfrak{q})$ and $\PSL_n^*(A;\mathfrak{q})$ are the images in $\PSL_n(A)$ of $\SL_n(A;\mathfrak{q})$  and $\SL^*_n(A;\mathfrak{q})$ respectively.  By abuse of notation, we denote by $e_{i,j}(a)$, $e_{i,j}$, $p_{i,j}$, $E_{i,j}(A)$, $U(A)$, $E_{i,j}(A;\mathfrak{q})$ and $U(A;\mathfrak{q})$ the images of $e_{i,j}(a)$, $e_{i,j}$, $p_{i,j}$, $E_{i,j}(A)$, $U(A)$, $E_{i,j}(A;\mathfrak{q})$ and $U(A;\mathfrak{q})$ in $\PSL_n(A)$.
\end{enumerate}	
 \end{setting}
	 
\begin{lemma}\label{lemma:ad_good}
	Under Setting \ref{nota:appen}, let $\mathfrak{q} \lhd A$ be a maximal ideal and let $e_1,\ldots,e_n$ be the standard basis of $A^n$. Let $\beta \in \SL_n(A)$ be such that $\beta $ is equivalent  to $e_{2,1}$ modulo $\mathfrak{q}$. Then there exists $\gamma \in \SL_n(A)$ which fixes the vectors  $e_1$ and $\beta e_1$ and is equivalent  to $e_{1,3}$ modulo $\mathfrak{q}$.
\end{lemma}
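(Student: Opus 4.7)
The plan is to exhibit $\gamma$ explicitly as an element of the abelian root subgroup
\[
U := \{I + e_1 u^T : u \in A^n,\ u_1 = 0\} \subseteq \SL_n(A),
\]
whose elements are precisely the products $e_{1,2}(a_2)\, e_{1,3}(a_3)\cdots e_{1,n}(a_n)$. Every element of $U$ automatically fixes $e_1$, and since $(I + e_1 u^T) w = w + (u^T w) e_1$, the element $I + e_1 u^T$ fixes a vector $w$ if and only if $u^T w = 0$.

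Set $v := \beta e_1$. Because $\beta \equiv e_{2,1} \pmod{\mathfrak{q}}$, we have $v \equiv e_1 + e_2 \pmod{\mathfrak{q}}$; in particular $v_2 \equiv 1 \pmod{\mathfrak{q}}$ and $v_3 \in \mathfrak{q}$. I will take $u := -v_3\, e_2 + v_2\, e_3$, which satisfies $u_1 = 0$ and $u^T v = -v_3 v_2 + v_2 v_3 = 0$. Define
\[
\gamma := I + e_1 u^T = e_{1,2}(-v_3)\, e_{1,3}(v_2).
\]

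Three short verifications remain: $\gamma$ lies in $\SL_n(A)$ because it is a product of two elementary matrices; $\gamma$ fixes both $e_1$ and $v$ by the observations of the first paragraph; and reducing modulo $\mathfrak{q}$ the scalar $-v_3$ becomes $0$ and $v_2$ becomes $1$, so $\gamma \equiv e_{1,2}(0)\, e_{1,3}(1) = e_{1,3}$. The only nontrivial step in this argument is identifying that one should search inside the abelian subgroup $U$; once that choice is made, the problem collapses to finding a vector $u \equiv e_3 \pmod{\mathfrak{q}}$ with $u_1 = 0$ satisfying the single linear equation $u^T v = 0$, and the obvious choice $u = v_2 e_3 - v_3 e_2$ works by direct cancellation.
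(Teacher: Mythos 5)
Your proof is correct, and it produces exactly the same matrix $\gamma = I + e_1(0,-v_3,v_2,0,\dots,0)^T$ as the paper's, but arrives at it by a slightly cleaner route. The paper sets $\delta$ to be the change-of-basis matrix with columns $e_1, \beta e_1, e_3,\dots,e_n$, observes $\delta\in M_n(A)\cap\GL_n(K)$ where $K$ is the fraction field, and takes $\gamma := \delta\, e_{1,3}(v_2)\,\delta^{-1}$; fixing $e_1$ and $\beta e_1$ then follows formally from $e_{1,3}(v_2)$ fixing $e_1$ and $e_2$, while integrality and the congruence are checked by multiplying out. You instead work directly inside the abelian subgroup $U=\{I + e_1 u^T : u_1=0\}$ and reduce the problem to a single linear condition $u^T v=0$ with the congruence $u\equiv e_3\pmod{\mathfrak{q}}$, which has the obvious solution $u=v_2 e_3 - v_3 e_2$. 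This avoids passing to the fraction field and the conjugation entirely, making the integrality of $\gamma$ and all three required properties immediate. The two arguments are morally the same — both exploit that $v_2\equiv 1\pmod{\mathfrak{q}}$ and both land in the unipotent radical of the stabilizer of the line through $e_1$ — but yours is a touch more elementary in execution.
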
	 
\begin{proof} Let $K$ be the field of fractions of $A$. Denote $e_2^*=\beta e_1$ and for every $1 \le i \ne 2 \le n$, $e_i^*:=e_i$. Then $e^*_1, ,\ldots,e^*_n$ is a basis of $K^n$. Let $\delta$ be the matrix such that, for every $1 \le i \le n$, the $i$th column of $\delta$ is $e_i^*$. Then $\delta \in M_n(A) \cap \GL_n(K)$ and $\delta$ is equivalent to $e_{1,2}$ modulo $q$.  Denote the $(2,2)$-coordinate of $\delta$ by $a$.  Then $a$ is equivalent to 1 modulo $\mathfrak{q}$ and  $\gamma:=\delta  e_{1,3}(a) \delta^{-1}\in \SL_n(A)$ is the required matrix. 
	
\end{proof}
	 
\begin{lemma}\label{lemma:app_br}  Under Setting \ref{nota:appen}, 
if $\mathfrak{q}\lhd A$ is a maximal ideal and $\alpha \notin \SL^*_n(A;\mathfrak{q})$ then $\gcl_{\SL_n(A)}(\alpha)^{32} \cap U(A)$ is not contained in $U(A;\mathfrak{q})$. 	
\end{lemma}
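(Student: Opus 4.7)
The plan is to reduce modulo $\mathfrak{q}$ and exploit that $\bar\alpha$, the image of $\alpha$ in $\SL_n(k)$ for $k := A/\mathfrak{q}$, is non-central. Since $\mathfrak{q}$ is maximal, $k$ is a field and the reduction map $\SL_n(A)\to \SL_n(k)$ is surjective (its image contains all elementary matrices, which generate $\SL_n(k)$). The goal is therefore to produce, inside $\gcl_{\SL_n(A)}(\alpha)^{32}$, an element that lies in $U(A)$ and has non-trivial image in $U(k)$.

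First I would produce, for some small $m_1$, an element $\beta\in \gcl_{\SL_n(A)}(\alpha)^{m_1}$ with $\bar\beta = \bar{e}_{2,1}$. Since $\PSL_n(k)$ is simple for $n\ge 3$, the element $\bar\alpha$ normally generates $\SL_n(k)$ modulo center; more quantitatively, a case analysis on the Jordan structure of $\bar\alpha$ realizes a transvection as a short word in conjugates of $\bar\alpha^{\pm 1}$. For instance, if $\bar\alpha$ is conjugate to a matrix with distinct eigenvalues $\lambda_1\ne \lambda_2$ in the upper-left $2\times 2$ block, then $[\bar\alpha,\bar{e}_{1,2}(1)] = \bar{e}_{1,2}(\lambda_1\lambda_2^{-1}-1)$ is a non-trivial transvection in $\gcl(\bar\alpha)^2$; the unipotent and mixed cases succumb to a further auxiliary commutator. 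A final conjugation (still inside $\gcl_{\SL_n(A)}(\alpha)^{m_1}$) moves this transvection to $\bar{e}_{2,1}$ exactly, and one lifts to $\beta\in \SL_n(A)$.

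Next I would invoke Lemma~\ref{lemma:ad_good} with this $\beta$ to obtain $\gamma\in \SL_n(A)$ fixing $e_1$ and $\beta e_1$ with $\bar\gamma = \bar{e}_{1,3}$. Unwinding the construction $\gamma = \delta\,e_{1,3}(a)\,\delta^{-1}$ from the proof of Lemma~\ref{lemma:ad_good} (with $\delta$ sending $e_2\mapsto \beta e_1$ and acting as the identity on the remaining basis vectors, and $a = \delta_{2,2}$), a direct computation shows $(\gamma-I)(A^n)\subseteq Ae_1$ and $\gamma e_1 = e_1$; hence $\gamma\in U(A)$, and $\gamma\notin U(A;\mathfrak{q})$ since $\bar\gamma\ne 1$. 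So $\gamma$ is a witness of the required form, but it is a priori not an element of $\gcl_{\SL_n(A)}(\alpha)$.

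The hard part will be to realize such a $\gamma$-type witness inside $\gcl_{\SL_n(A)}(\alpha)^{32}$. The crucial leverage is that because $\gamma$ fixes $\beta e_1$, the conjugate $\beta^{-1}\gamma\beta$ also fixes $e_1$; consequently, products and commutators assembled from $\gamma$, $\beta^{-1}\gamma\beta$, and further conjugates of $\beta^{\pm 1}$ preserve $e_1$. Combining such a word with sufficiently many conjugates of $\beta$ and exploiting $\gamma\in U(A)$ should yield $\eta\in \gcl_{\SL_n(A)}(\alpha)^{32}\cap U(A)$ with $\bar\eta\ne 1$. The main obstacle is to manage three constraints simultaneously — membership in $U(A)$ (not merely in its image mod $\mathfrak{q}$), nontrivial reduction, and the tight count of $32$ factors — by careful commutator accounting, likely via an iteration that terminates inside $U(A)$ because of the specific way $\gamma\in U(A)$ interacts with the stabilizer of $\beta e_1$ in $\SL_n(A)$.
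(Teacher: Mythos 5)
Your setup matches the paper's through the application of Lemma~\ref{lemma:ad_good}: reduce modulo $\mathfrak{q}$, produce $\beta\in\gcl_{\SL_n(A)}(\alpha)^{8}$ with $\bar\beta=\bar e_{2,1}$ via an \cite{ALM}-type quantitative bound, and obtain $\gamma\in\SL_n(A)$ fixing $e_1$ and $\beta e_1$ with $\bar\gamma=\bar e_{1,3}$. Your additional observation that this $\gamma$ lies in $U(A)$ is correct but a red herring: the paper never needs it, and chasing a ``$\gamma$-type witness'' inside $\gcl$ is what leaves you stuck. The final paragraph of your proposal is a genuine gap --- ``combining such a word with sufficiently many conjugates of $\beta$ \ldots should yield $\eta$'' and ``careful commutator accounting'' declare an intent but supply no element, so the proof is not closed.

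The paper finishes with a short double commutator that your sketch circles but never writes down. Set $\eta:=[\beta,\gamma]=\beta^{-1}\gamma^{-1}\beta\gamma$. Since $\gcl$ is symmetric and conjugation-invariant, $\beta^{-1}$ and $\gamma^{-1}\beta\gamma$ both lie in $\gcl_{\SL_n(A)}(\alpha)^{8}$, hence $\eta\in\gcl_{\SL_n(A)}(\alpha)^{16}$ --- crucially, one never needs $\gamma$ itself to be in the conjugacy-class product. Because $\gamma$ fixes both $e_1$ and $\beta e_1$, $\eta$ fixes $e_1$; and $\bar\eta=[\bar e_{2,1},\bar e_{1,3}]=\bar e_{2,3}$. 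Note that $\eta$ is \emph{not} in $U(A)$, so one more step is needed. Take $[e_{1,2},\eta]=e_{1,2}^{-1}\bigl(\eta^{-1}e_{1,2}\eta\bigr)\in\gcl_{\SL_n(A)}(\alpha)^{32}$; it lies in $U(A)$ because $U(A)$ is normal in the stabilizer of $e_1$ and both $e_{1,2}$ and $\eta$ fix $e_1$. Its reduction is $[\bar e_{1,2},\bar e_{2,3}]=\bar e_{1,3}\ne 1$, so $[e_{1,2},\eta]\notin U(A;\mathfrak{q})$, which is the required witness. The key fact you isolated --- $\gamma$ fixing $\beta e_1$ makes $\beta^{-1}\gamma\beta$ fix $e_1$ --- is indeed the engine, but only in the packaged form ``$[\beta,\gamma]$ fixes $e_1$''; the missing second commutator with $e_{1,2}$ is what pushes the construction into $U(A)$ while doubling the exponent from $16$ to $32$.
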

\begin{proof}

By Gauss elimination, $\SL_n(A)$ projects onto $\SL_n(A/\mathfrak{q})$. It is easy to see that, if $F$ is a field and $g\in \SL_n(F)$ in not-central, then $e_{2,1}\in \gcl_{\SL_n(F)}(g)^8$ (cf. the proof of Lemma 2.9 in \cite{ALM}). Choose $\beta \in \gcl_{\SL_n(A)}(\alpha)^8$ such that  $\beta $ is equivalent to $e_{2,1}$ modulo $\mathfrak{q}$. Lemma \ref{lemma:ad_good} implies that there exists  $\gamma \in \SL_n(A)$ that fixes $e_1$ and $\beta e_1$ and is equivalent  to $e_{1,3}$ modulo $\mathfrak{q}$. Then $\eta:=[\beta,\gamma]:=\beta^{-1}\gamma^{-1}\beta \gamma \in \gcl_{\SL_n(A)}(\alpha)^{16} $ fixes $e_1$ and is equivalent to $e_{2,3}$ modulo $q$. It follows that $[e_{1,2},\eta] \in  \gcl_{\SL_n(A)}(\alpha)^{32} \cap U(A)$ is equal to $e_{1,3}$ modulo $\mathfrak{q}$. 
\end{proof}

\begin{lemma}\label{lemma:app_int}  Under Setting \ref{nota:appen}, 
\begin{enumerate}
	\item $Z(\Cent_{\PSL_n(A)}(e_{1,n}))=E_{1,n}(A)$.
	\item For every $a,b\in A$, $e_{1,n}(a)e_{1,n}(b)=e_{1,n}(a+b)$.
	\item For every $a,b \in A$, $[p_{1,n-1}e_{1,n}(a)p_{1,n-1}^{-1},p_{n-1,n}e_{1,n}(b)p_{n-1,n}^{-1}]=e_{1,n}(ab)$.
\end{enumerate}
	In particular, $\PSL_n(A)$ interprets the ring $A$. 
\end{lemma}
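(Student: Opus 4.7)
The plan is to establish parts (2) and (3) by direct matrix computation and to reduce part (1) to a centralizer calculation in $\SL_n(A)$, from which the interpretation of $A$ follows formally.

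Parts (2) and (3) are direct. For (2), $(I + aE_{1,n})(I + bE_{1,n}) = I + (a+b)E_{1,n}$ because $E_{1,n}^2 = 0$, which uses $1 \neq n$. For (3), the definition of $p_{i,j}$ gives $p_{1,n-1} e_{1,n}(a) p_{1,n-1}^{-1} = e_{1,n-1}(a)$ and $p_{n-1,n}e_{1,n}(b)p_{n-1,n}^{-1} = e_{n-1,n}(b)$, so the left-hand side equals $[e_{1,n-1}(a), e_{n-1,n}(b)]$; the standard Chevalley commutator computation, using $E_{1,n-1}E_{n-1,n} = E_{1,n}$ and the vanishing of the other relevant products $E_{ij}E_{k\ell}$ appearing in the expansion, evaluates this four-fold product to $I + abE_{1,n} = e_{1,n}(ab)$. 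The hypothesis $n \geq 3$ is used here to ensure that $1$, $n-1$, $n$ are distinct.

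For part (1), I plan first to show that $\Cent_{\PSL_n(A)}(e_{1,n})$ equals the image of $\Cent_{\SL_n(A)}(e_{1,n})$: if $\bar h$ centralizes $\bar e_{1,n}$ in $\PSL_n(A)$, then $he_{1,n}h^{-1} = \zeta e_{1,n}$ for some $\zeta \in Z(\SL_n(A))$, i.e.\ $I + hE_{1,n}h^{-1} = \zeta I + \zeta E_{1,n}$; comparing identity parts forces $\zeta = I$. The same comparison shows that whenever $[h,k] \in Z(\SL_n(A))$ with $k$ a single elementary matrix $e_{i,j}(b)$, the commutator is already trivial. Next, $hE_{1,n} = E_{1,n}h$ constrains $h$ to the block form
\[
h = \begin{pmatrix} c & v^T & a \\ 0 & M & w \\ 0 & 0 & c \end{pmatrix}, \qquad c^2 \det M = 1,
\]
with $M \in \Mat_{n-2}(A)$, $v,w \in A^{n-2}$, and $c,a \in A$. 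By the Chevalley formula, the elementary matrices in $\Cent_{\SL_n(A)}(e_{1,n})$ are precisely the $e_{i,j}(b)$ with $i \neq n$ and $j \neq 1$. Requiring $h$ to commute with every $e_{1,j}(b)$ for $2 \leq j \leq n-1$ forces $w=0$ and $M = cI_{n-2}$ (reading off the row constraints $(hE_{1,j})_{1,\cdot} = (E_{1,j}h)_{1,\cdot}$), and requiring $h$ to commute with every $e_{i,n}(b)$ for $2 \leq i \leq n-1$ forces $v=0$; when $n=3$ the two families collapse to the single matrices $e_{1,2}$ and $e_{2,3}$, and the same constraints still yield $M=(c)$, $v=w=0$. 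Together with $c^2 \det M = 1$, this gives $c^n = 1$, so $h = cI + aE_{1,n} = c \cdot e_{1,n}(c^{n-1}a) \in Z(\SL_n(A)) \cdot E_{1,n}(A)$. The reverse inclusion is immediate: for any $h \in \Cent_{\SL_n(A)}(e_{1,n})$, $he_{1,n}(a)h^{-1} = I + a(hE_{1,n}h^{-1}) = I + aE_{1,n} = e_{1,n}(a)$.

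Finally, to interpret $A$ inside $\PSL_n(A)$, take the universe to be $E_{1,n}(A)$, which is definable using the parameter $e_{1,n}$ by part (1); interpret addition by the group operation via (2), and multiplication by the two-step commutator of (3) with parameters $p_{1,n-1}$ and $p_{n-1,n}$. The assignment $a \mapsto e_{1,n}(a)$ is well-defined and injective on $\PSL_n(A)$ because $I + aE_{1,n} \in Z(\SL_n(A))$ forces $a = 0$ (the diagonal of $I + aE_{1,n}$ is already $I$). The main technical point is the centralizer-in-$\SL_n(A)$ computation, which is an elementary block-matrix exercise once one identifies which elementary matrices lie in the centralizer and handles the low-dimensional case $n=3$ uniformly alongside $n\geq 4$.
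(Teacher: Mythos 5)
Your proof is correct, and since the paper itself omits the argument with the remark ``simple computations which are omitted,'' your write-up is a reasonable account of exactly those computations: parts (2) and (3) are the standard elementary-matrix and Chevalley commutator identities, and part (1) reduces to the block-matrix computation of $\Cent_{\SL_n(A)}(e_{1,n})$ together with the observation that the further constraints of commuting with $e_{1,j}(b)$ ($2\le j\le n-1$) and $e_{i,n}(b)$ ($2\le i\le n-1$) pin the matrix down to $cI+aE_{1,n}$ with $c^n=1$.

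One small point worth tightening: the phrase ``comparing identity parts forces $\zeta = I$'' is doing nontrivial work. The cleanest justification (which you invoke implicitly and which also covers the later use of the same trick for general elementary $k$) is a rank argument: if $hE_{i,j}h^{-1} = (\omega-1)b^{-1}I + \omega E_{i,j}$ with $\omega\neq 1$, the right side is invertible on the $(n-1)$-dimensional kernel of $E_{i,j}$ over $\mathrm{Frac}(A)$, contradicting that the left side has rank $1$; here $n\geq 3$ is what makes $n-1\geq 2$. A trace comparison alone would not suffice if $\operatorname{char}A$ divides $n$, so the rank (or unipotency-of-eigenvalues) argument is the right one to state. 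With that made explicit, the rest goes through exactly as you wrote it, including injectivity of $a\mapsto e_{1,n}(a)$ into $\PSL_n(A)$ (a nontrivial scalar has nonzero off-diagonal, contradiction) and the reverse inclusion $E_{1,n}(A)\subseteq Z(\Cent_{\PSL_n(A)}(e_{1,n}))$ via $hE_{1,n}h^{-1}=E_{1,n}$.
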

\begin{proof}
	The proof consists of simple computations which are omitted. 
\end{proof}

\begin{lemma}\label{lemma:app_uni} Under Setting \ref{nota:appen}, The collections $\{U(A;\mathfrak{q}) \mid \mathfrak{q}\lhd A\text{ is maximal} \}$
	and  $\{\PSL^*_n(A;\mathfrak{q}) \mid \mathfrak{q}\lhd A\text{ is maximal} \}$ are uniformly definable.
\end{lemma}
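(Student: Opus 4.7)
\medskip

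My plan is to reduce everything to the interpretation of the ring $A$ inside $\PSL_n(A)$ supplied by Lemma \ref{lemma:app_int}, and then use Lemma \ref{lemma:app_br} to pass from the family $\{U(A;\mathfrak{q})\}$ to $\{\PSL_n^*(A;\mathfrak{q})\}$. Concretely, by Lemma \ref{lemma:app_int} the subgroup $E_{1,n}(A)=Z(\Cent_{\PSL_n(A)}(e_{1,n}))$ is a definable copy of $(A,+)$, and the commutator formula in part (3) of that lemma defines the multiplication on it. Hence the full ring $A$ is uniformly interpreted, and for each $k\geq 1$ and each tuple $(a_1,\ldots,a_k)\in A^k$ the ideal $I(a_1,\ldots,a_k):=a_1A+\cdots+a_kA$ is uniformly definable by the formula $x\in I\iff\exists b_1,\ldots,b_k,\ x=\sum a_ib_i$. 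The condition that $I(a_1,\ldots,a_k)$ be maximal is then expressible by a first-order formula over $A$, namely
\[
\forall b,\;\bigl(\exists c_1,\ldots,c_k,\ b=\textstyle\sum a_ic_i\bigr)\;\vee\;\bigl(\exists c_1,\ldots,c_k,d,\ 1=\textstyle\sum a_ic_i+bd\bigr).
\]

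Second, for any ideal $I\lhd A$, the definitions give $U(A;I)=\{\prod_{j=2}^{n}(p_{1,j}e_{1,n}(b_j)p_{1,j}^{-1}):b_j\in I\}$ (with $p_{1,n}$ interpreted as the identity if desired), so the map $(a_1,\ldots,a_k)\mapsto U(A;I(a_1,\ldots,a_k))$ is a uniformly definable family of subgroups of $U(A)$. Provided every maximal ideal of $A$ can be generated by some fixed number $k$ of elements — which holds in the motivating cases (e.g.\ when $A$ is Noetherian, and in particular for $A=\mathbb{Z}[X_1,\ldots,X_m]$ in the remark after Theorem \ref{thm:app}, using finite Krull dimension to bound $k$) — this family already contains every $U(A;\mathfrak{q})$ with $\mathfrak{q}$ maximal, and intersecting with the ``maximality'' condition gives a uniformly definable family equal to $\{U(A;\mathfrak{q}):\mathfrak{q}\text{ maximal}\}$.

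Third, for the companion family define, for each $V\subseteq U(A)$ in the family just constructed,
\[
H_V:=\{\alpha\in\PSL_n(A):\gcl_{\PSL_n(A)}(\alpha)^{32}\cap U(A)\subseteq V\}.
\]
Lemma \ref{lemma:app_br} (contrapositive) gives $H_{U(A;\mathfrak{q})}\subseteq\PSL_n^*(A;\mathfrak{q})$. Conversely, if $\alpha\in\PSL_n^*(A;\mathfrak{q})$ then $\gcl(\alpha)\subseteq\PSL_n^*(A;\mathfrak{q})$ by normality, and any element of $U(A)\cap\PSL_n^*(A;\mathfrak{q})$ is a unipotent matrix whose reduction mod $\mathfrak{q}$ is central in $\SL_n(A/\mathfrak{q})$; since the only unipotent scalar matrix is the identity, such an element lies in $U(A;\mathfrak{q})$. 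Thus $H_{U(A;\mathfrak{q})}=\PSL_n^*(A;\mathfrak{q})$, and $\{H_V\}$ is the required uniformly definable family containing $\{\PSL_n^*(A;\mathfrak{q})\}$.

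The main obstacle is the uniform parametrization of maximal ideals of $A$: fixing a single $k$ that bounds the number of generators needed is easy under Noetherian hypotheses but is delicate in the generality of Setting \ref{nota:appen} (integral domain of finite Krull dimension with trivial Jacobson radical). The most likely way around this is to use the finite Krull dimension to show that, up to passing to a ``witness tuple'' of auxiliary elements encoding maximality via a first-order condition, some fixed bound $k=k(d,n)$ suffices; the rest of the argument is then mechanical.
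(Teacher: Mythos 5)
Your overall architecture matches the paper's proof: interpret $A$ inside $\PSL_n(A)$ via $E_{1,n}(A)$, uniformly define the subgroups $U(A;\mathfrak{q})$ by conjugating $E_{1,n}(A;\mathfrak{q})$ by the $p_{1,j}$, and then use Lemma \ref{lemma:app_br} to characterize $\PSL_n^*(A;\mathfrak{q})$. Your treatment of the third step is fine and in fact more explicit than the paper's: you correctly observe that Lemma \ref{lemma:app_br} gives only one inclusion, and supply the converse via normality of $\PSL_n^*(A;\mathfrak{q})$ and the fact that a unipotent matrix whose reduction modulo $\mathfrak{q}$ is scalar must lie in $\SL_n(A;\mathfrak{q})$.

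However, there is a genuine gap, and you flag it yourself: you need the family $\{\mathfrak{q}\lhd A : \mathfrak{q}\text{ maximal}\}$ to be \emph{uniformly} definable, but your proposed route — fix a $k$ such that every maximal ideal is $k$-generated, then parametrize by $A^k$ — is not justified in the generality of Setting \ref{nota:appen}. The ring $A$ is only assumed to be an infinite integral domain of finite Krull dimension with trivial Jacobson radical; it is not assumed Noetherian, so maximal ideals need not be finitely generated at all, and even in the Noetherian case there is no elementary bound on the number of generators of a maximal ideal in terms of Krull dimension (one can only bound the number of generators up to radical). The paper sidesteps this entirely by citing Lemma~1.5 of \cite{AKNS}, which directly asserts that the collection of maximal ideals of $A$ is uniformly definable under exactly these hypotheses (finite Krull dimension plus trivial Jacobson radical). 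That lemma is the nontrivial input here; the first-order scheme encoding it is not ``$k$-tuples generating the ideal'' but a more delicate criterion exploiting the finite Krull dimension. Without it (or an equivalent argument), the proof does not go through. Once you invoke that citation, the rest of your argument is correct and essentially the same as the paper's.
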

\begin{proof}
	Lemma 1.5 of \cite{AKNS} implies that $\{\mathfrak{q} \lhd A \mid \mathfrak{q}\text{ is maximal}\}$ is uniformly definable in $A$. Lemma \ref{lemma:app_int} implies that  the family $\{E_{1,n}(A;\mathfrak{q}) \mid \mathfrak{q}\lhd A\text{ is maximal} \}$ is uniformly definable in $\PSL_n(A)$. For every ideal $\mathfrak{q} \lhd A$, $$U(A;\mathfrak{q})=\prod_{2 \le j \le n}E_{1,j}(A;\mathfrak{q})=\prod_{2 \le j \le n} p_{1,j}E_{1,n}(A;\mathfrak{q})p_{1,j}^{-1}$$
so $\{U(A;\mathfrak{q}) \mid \mathfrak{q}\lhd A\text{ is maximal} \}$ is uniformly definable.  Lemma \ref{lemma:app_br} implies that  for every maximal ideal $\mathfrak{q}\lhd A$,  $\alpha \in \PSL^*_n(A;\mathfrak{q})$ if and only if $\gcl_{\PSL_n(A)}(\alpha)^{32} \cap U(A) \subseteq U(A;\mathfrak{q})$. Thus, $\{\PSL^*_n(A;\mathfrak{q}) \mid \mathfrak{q}\lhd A\text{ is maximal} \}$ is uniformly definable. 
\end{proof}

\begin{theorem}\label{thm:app} Under Setting \ref{nota:appen}, $A$ and $\PSL_n(A)$ are bi-interpretable. 
\end{theorem}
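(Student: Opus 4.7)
The plan is to combine the interpretation $\mathscr{G}$ of $A$ in $\PSL_n(A)$ provided by Lemma \ref{lemma:app_int} with the standard interpretation $\mathscr{F}$ of $\PSL_n(A)$ in $A$ (whose imaginary is the set of $n \times n$ matrices of determinant one modulo the center $Z(\SL_n(A))$, a definable equivalence relation in $A$), and to show that both compositions are trivial. The composition $\mathscr{G}\circ\mathscr{F}$, a self-interpretation of $A$, unfolds to $a \mapsto a$: starting from $a \in A$ we form the class $[I + aE_{1,n}] \in \SL_n(A)/Z$, and then recover $a$ via $\mathscr{G}$ as the $(1,n)$-entry; this is the identity map, which is trivially definable in $A$. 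The substance of the theorem is the triviality of $\mathscr{F}\circ\mathscr{G}$, namely that the map sending each $\alpha \in \PSL_n(A)$ to the class $[(e_{1,n}(a_{ij}))_{i,j}] \in E_{1,n}(A)^{n^2}/Z$ of the matrix entries of some lift $(a_{ij}) \in \SL_n(A)$ of $\alpha$ is definable in the pure group language of $\PSL_n(A)$.

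The strategy for this definability is local-to-global through the uniformly definable family of congruence subgroups. By Lemma \ref{lemma:app_uni}, the subgroups $\PSL^*_n(A;\mathfrak{q})$ are uniformly definable as $\mathfrak{q}$ ranges over maximal ideals of $A$, and for each such $\mathfrak{q}$ the quotient $\PSL_n(A)/\PSL^*_n(A;\mathfrak{q})$ is naturally isomorphic to $\SL_n(A/\mathfrak{q})/Z(\SL_n(A/\mathfrak{q}))$. Over any field $F$, and for $n \ge 3$, there is a classical bi-interpretation between $\SL_n(F)/Z$ and $F$, given uniformly in $F$ by a single group-theoretic formula that expresses matrix entries via the Chevalley relations $[e_{i,k}(x), e_{k,j}(y)] = e_{i,j}(xy)$ and conjugation by the fixed permutations $p_{i,j}$. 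Pulling back this field-independent recovery formula through Lemmas \ref{lemma:app_int} and \ref{lemma:app_uni} yields, uniformly in $\mathfrak{q}$, a definable formula $\varphi(\alpha, (e_{1,n}(a_{ij}))_{i,j}, \PSL^*_n(A;\mathfrak{q}))$ expressing that the image of $(a_{ij})$ in $\SL_n(A/\mathfrak{q})/Z$ coincides with the image of $\alpha$ in $\PSL_n(A)/\PSL^*_n(A;\mathfrak{q})$.

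The final step is the local-to-global principle: $[(a_{ij})] = [\alpha]$ in $\SL_n(A)/Z$ if and only if $\varphi$ holds for every maximal $\mathfrak{q}$. The forward implication is trivial. For the converse, fix any actual lift $\tilde\alpha \in \SL_n(A)$ of $\alpha$ and set $N := (a_{ij})\tilde\alpha^{-1} \in \SL_n(A)$; the local hypothesis says $N \bmod \mathfrak{q}$ is central in $\SL_n(A/\mathfrak{q})$ for every maximal $\mathfrak{q}$. Since $A$ has trivial Jacobson radical, $\bigcap_\mathfrak{q}\mathfrak{q} = 0$, so all off-diagonal entries of $N$ vanish and all diagonal entries of $N$ agree, forcing $N = \zeta I$ with $\zeta^n = \det N = 1$, i.e., $N \in Z(\SL_n(A))$. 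This yields the definability of the graph of $\mathscr{F}\circ\mathscr{G}$ and completes the proof. The main obstacle is securing the field-independent formula underlying the classical bi-interpretation $\SL_n(F)/Z \leftrightarrow F$: one must verify that a single group-theoretic sentence in the parameters $(e_{1,n}, \{p_{i,j}\})$ fixed in Setting \ref{nota:appen} recovers matrix entries over every field $F$. For $n \ge 3$ this follows from the Chevalley relations together with the identities $p_{i,j}e_{1,n}(a)p_{i,j}^{-1} = e_{i,j}(a)$, which hold in $\PSL_n(A)$ and hence pass uniformly to every residue quotient.
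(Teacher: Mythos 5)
Your proposal takes essentially the same approach as the paper: the same pair of interpretations, triviality of the $A$-side composition by direct inspection, and a local-to-global argument for the $\PSL_n(A)$-side composition that combines the uniformly definable family of congruence subgroups from Lemma \ref{lemma:app_uni}, Gauss elimination over the residue fields, and the vanishing of the Jacobson radical. The only difference is presentational: the paper works with an explicit set $V = \prod_{i \neq j} p_{i,j}E_{1,n}(A)p_{i,j}^{-1}$ on which $d^{-1}$ is definable by construction, shows a bounded power $V^C$ surjects onto each $\PSL_n(A/\mathfrak{q})$, and explicitly matches the definable families $\{X_i\}$ in $\PSL_n(A)$ and $\{Y_j = d^{-1}(X_i)\}$ in $D$ via their intersections with $U(A)$, whereas you package the same content into a single ``field-independent recovery formula'' $\varphi$ pulled back from the classical $\SL_n(F)/Z \leftrightarrow F$ bi-interpretation — but the formula witnessing your $\varphi$ is exactly the paper's bounded-product-of-elementaries condition together with the family matching, so the mathematical content is identical.
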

\begin{proof}
	Denote $E=E_{1,n}(A)$ and let $\mathscr{E}=(E,e)$ be the interpretation of $A$ in $\PSL_n(A)$ given by Lemma \ref{lemma:app_int}. Let $\mathscr{S}=(S,s)$ be the standard interpretation of $\PSL_n(A)$ in $A$ (i.e. as $n \times n$ matrices with determinant 1, up to scalars). Viewing $\PSL_n(A)$ as an imaginary in $\SL_n(A)$, we get that $\mathscr{P}=(\PSL_n(A),\Id)$ is an interpretation of $\PSL_n(A)$ in $\SL_n(A)$ and $\mathscr{C}=(C,c):=\mathscr{P}\circ \mathscr{S}$ is an interpretation of $\PSL_n(A)$ in $A$. It is easy to see that $\mathscr{E}\circ \mathscr{C}$ is trivial. Therefore, in order to show that $A$ and $\PSL_n(A)$ are bi-interpretable, it is enough to show that $\mathscr{D}=(D,d):=\mathscr{C}\circ \mathscr{E}$ is trivial. 
	
By construction, the restriction of $d^{-1}$ to $E_{1,n}(A)$ is definable. Therefore, the restriction of $d^{-1}$ to 
$V:=\prod_{1 \le j \le n}\prod_{1 \le i \ne j \le n}p_{i,j}E_{1,n}(A)p_{i,j}^{-1}$ is definable. By Gauss elimination, there is a constant $C$ such that, for every maximal ideal $\mathfrak{q}$,  $V^C$ projects onto $\PSL_n(A)/\PSL^*_n(A;\mathfrak{q})\cong \PSL_n(A/\mathfrak{q})$. 
	
Lemma \ref{lemma:app_uni} implies that there are definable sets $I$, $J$, $X \subseteq I \times \PSL_n(A)$ and $Y \subseteq J \times D$ such that 
$\{X_i \mid i \in I\}=\{\PSL^*_n(A;\mathfrak{q}) \mid \mathfrak{q}\lhd A\text{ is maximal} \}$ and $\{Y_j \mid j \in J\}=\{d^{-1}(X_i) \mid i \in I\}$. We claim that there exists a definable  $Z \subseteq I \times J$ such that $(i,j)\in Z$ if and only if $Y_j=d^{-1}(X_i)$. Indeed, $d^{-1}$ is definable on $U(A) \subseteq V$ and $Y_j=d^{-1}(X_i)$ if and only if $Y_j \cap d^{-1}(U(A))=d^{-1}(X_i\cap U(A))$. 

Finally, $d^{-1}$ is definable since for every $\alpha \in \PSL_n(A)$ and $\delta \in D$, $d^{-1}(\alpha)=\delta$ if and only if for every $(i,j)\in Z$, there exists $\nu\in V$ such that  $\alpha X_i=\nu X_i$ and $\delta Y_j=d^{-1}(\nu)Y_j$. 
\end{proof}


\begin{thebibliography}{WWW}



\bibitem[AKNS]{AKNS} M.~ Aschenbrenner, A.~Kh\'{e}lif, E.~Naziazeno and T.~Scanlon, {\em The logical complexity of finitely generated commutative rings.} Int. Math. Res. Not. IMRN 2020, no. 1, 112--166.

\bibitem[ALM]{ALM} N.~Avni, A.~Lubotzky and C.~Meiri {\em  First order rigidity of non-uniform higher rank arithmetic groups.} Invent. Math. 217 (2019), no. 1, 219--240.
 
\bibitem[AM]{AM}  N.~Avni and C.~Meiri, {\em Words have bounded width in $\SL(n,\mathbb{Z})$}. Compos. Math. 155 (2019), no. 7, 1245--1258.

\bibitem[BBF]{BBF} M.~Bestvina, K.~Bromberg, K.~Fujiwara, {\em The verbal width of acylindrically hyperbolic groups.} Algebr. Geom. Topol. 19 (2019), no. 1, 477--489.

\bibitem[BT65]{BT65} A.~Borel, J.~Tits, {\em Groupes reductifs.} Inst. Hautes Etudes Sci. Publ. Math. No. 27 (1965), 55--150.

\bibitem[BGT]{BGT}  E.~Breuillard, B.~Green and T.~Tao, {\em Approximate subgroups of linear groups. }
Geom. Funct. Anal. 21 (2011), no. 4, 774--819.

\bibitem[Cas]{Cas} J.~W.~S.~Cassels, {\em Rational quadratic forms.} London Mathematical Society Monographs, 13. Academic Press, Inc. 1978.

\bibitem[Khe]{Kh} A.~Khelif, {\em Bi-interprétabilit\'{e} et structures QFA: \'{e}tude de groupes r\'{e}solubles et des anneaux commutatifs}, C. R. Math. Acad. Sci. Paris 345 (2007), no. 2, 59--61.

\bibitem[KM1]{KM}  O.~Kharlampovich and A.~Myasnikov, {\em Elementary theory of free non-abelian groups.} J. Algebra 302 (2006), no. 2, 451--552.

\bibitem[KM2]{KM2}  O.~Kharlampovich and A.~Myasnikov, {\em
Definable sets in a hyperbolic group.} 
Internat. J. Algebra Comput. 23 (2013), no. 1, 91--110.

\bibitem[KM3]{KM3}  O.~Kharlampovich and A.~Myasnikov, {\em Decidability of the Elementary Theory of a Torsion-Free Hyperbolic Group.} arXiv:1303.0760.

\bibitem[Kne]{Kne} M.~Kneser, {\em Normalteiler ganzzahliger Spingruppen.} (German) J. Reine Angew. Math. 311(312) (1979), 191--214.

\bibitem[Las1]{Las0}  C.~Lasserre,  {\em  Polycyclic-by-finite groups and first-order sentences} J. Algebra 396 (2013), 18--38.

\bibitem[Las2]{Las}  C.~Lasserre, {\em R. J. Thompson’s groups F and T are bi-interpretable with the ring of the integers.} J. Symbolic Logic 79 (2014), no. 3, 693--711.

\bibitem[LM]{LM} A.~Lubotzky and C.~Meiri, {\em Sieve methods in group theory I:} Powers in linear groups. J. Amer. Math. Soc. 25 (2012), no. 4, 1119–1148. 


\bibitem[Mar]{Mar}  G.A.~Margulis, {\em Discrete Subgroups of Semisimple Lie Groups.} Ergebnisse der Mathematik und ihrer Grenzgebiete (3), 17. Springer-Verlag, Berlin, 1991.

\bibitem[MM]{MM}  M.~Sohrabi and A.~Myasnikov, {\em
Bi-interpretability with $\Z$ and models of the complete elementary theories of $\SL_n(\mathcal{O})$, $\mathrm{T}_n(\mathcal{O})$ and $\mathrm{GL}_n(\mathcal{O})$, $n \ge 3$.} arXiv:2004.03585.  


\bibitem[Nie1]{Nie}  A.~Nies. {\em Separating classes of groups by first-order sentences.} Intern. J. Algebra Comput. 13 (2003), 287--302.

\bibitem[Nie2]{Nie1.5}  A.~Nies. {\em  Comparing quasi-finitely axiomatizable and prime groups.} J. Group Theory 10 (2007), no. 3, 347--361.

\bibitem[Nie3]{Nie2} A.~Nies, {\em Describing groups.} Bull. Symbolic Logic 13 (2007), no. 3, 305--339.

\bibitem[Nos]{Nos}  G.A.~Noskov, {\em The elementary theory of a finitely generated almost solvable group.} (Russian) Izv. Akad. Nauk SSSR Ser. Mat. 47 (1983), no. 3, 498--517. 

\bibitem[OS]{OS} F.~Oger and G.~Sabbagh, {\em Quasi-finitely axiomatizable nilpotent groups.} J. Group Theory 9 (2006), no. 1, 95--106.

\bibitem[PR]{PR} V.~Platonov and  A.~Rapinchuk, {\em Andrei Algebraic groups and number theory.} Translated from the 1991 Russian original by Rachel Rowen. Pure and Applied Mathematics, 139. Academic Press, Inc., Boston, MA, 1994. xii+614 pp.


\bibitem[PS]{PySz}  L.~Pyber and E.~ Szab\'{o},{\em  Growth in finite simple groups of Lie type.}
J. Amer. Math. Soc. 29 (2016), no. 1, 95--146.

\bibitem[Rag]{Rag} M. S.~Raghunathan, {\em On the congruence subgroup problem.} Inst. Hautes Etudes Sci. Publ. Math. No. 46 (1976), 107--161.

\bibitem[Rob1]{Rob1}  J.~Robinson, {\em Definability and decision problems in arithmetic.} J. Symbolic Logic 14 (1949), 98--114. 

\bibitem[Rob2]{Rob2} J,~Robinson, {\em The undecidability of algebraic rings and fields.} Proc. Amer. Math. Soc. 10 (1959), 950--957.


\bibitem[Seg]{Seg} D.~Segal, {\em 
Words: notes on verbal width in groups. }
London Mathematical Society Lecture Note Series, 361. Cambridge University Press, Cambridge, 2009. xii+121 pp. 

\bibitem[Sel1]{Sel} Z~ Sela, {\em Diophantine geometry over groups and the elementary theory of free and hyperbolic groups.} Proceedings of the International Congress of Mathematicians, Vol. II (Beijing, 2002), 87--92, Higher Ed. Press, Beijing, 2002.
\bibitem[Sel2]{Sel2} Z.~Sela, {\em Diophantine geometry over groups. VII. The elementary theory of a hyperbolic group.} Proc. Lond. Math. Soc. (3) 99 (2009), no. 1, 217--273.

\bibitem[Sha]{Sha} A.~Shalev, {\em
Some results and problems in the theory of word maps.} Erdös centennial, 611--649, 
Bolyai Soc. Math. Stud., 25, János Bolyai Math. Soc., Budapest, 2013. 

\bibitem[SGA3]{SGA3} Schemas en groupes (SGA 3). Tome III. Structure des schemas en groupes reductifs. Seminaire de Geometrie Algebrique du Bois Marie 1962?64. Documents Mathematiques (Paris), 8. Societe Mathematique de France, Paris, 2011.

\bibitem[SGA4$\frac{1}{2}$]{SGA} P.~Deligne, {\em Cohomologie etale.} Seminaire de geometrie algebrique du Bois-Marie SGA $4\frac12$. Lecture Notes in Mathematics, 569. Springer-Verlag, Berlin, 1977.

\bibitem[Sm]{Sm} P.~Smith, {\em An introduction to G\"{o}del's theorems.} Second edition. Cambridge Introductions to Philosophy. Cambridge University Press, Cambridge, 2013. xvi+388 pp.

\bibitem[ST]{ST} D.~Segal and K.~Tent , {\em
Defining $R$ and $\mathrm{G}(R)$.} arXiv:2004.13407.

\bibitem[VW]{VW} L.~Vaserstein, E.~Wheland, {\em Products of conjugacy classes of two by two matrices.} Linear Algebra Appl. 230 (1995), 165--188.
\end{thebibliography}
\end{document}